\newcommand\N{{\mathbb N}}
\newcommand\R{{\mathbb R}}
\newcommand\C{{\mathbb C}}
\def\AA{{\mathcal A}}
\def\BB{{\mathcal B}}
\def\CC{{\mathcal C}}
\def\DD{{\mathcal D}}
\def\FF{{\mathcal F}}
\def\HH{{\mathcal H}}
\def\KK{{\mathcal K}}
\def\OO{{\mathcal O}}
\def\RR{{\mathcal R}}
\def\TT{{\mathcal T}}
\def\UU{{\mathcal U}}
\def\VV{{\mathcal V}}
\def\WW{{\mathcal W}}
\def\ZZ{{\mathcal Z}}
\def\BBB{{\mathscr{B}}}
\def\MMM{{\mathscr{M}}}
\DeclareMathOperator{\Supp}{Supp}
\newcommand{\dom}{\mathrm D}
\def\eps{{\varepsilon}}
\newcommand{\wto}{\rightharpoonup}
\newtheorem{theo}{Theorem}[section]
\newtheorem{prop}[theo]{Proposition}
\newtheorem{lem}[theo]{Lemma}
\newtheorem{cor}[theo]{Corollary}
\newtheorem{defin}[theo]{Definition}
\theoremstyle{remark}
\newtheorem{rem}[theo]{Remark}
\newtheorem*{rem*}{Remark}
\numberwithin{equation}{section}
\newcommand{\beqn}{\begin{equation}}
\newcommand{\eeqn}{\end{equation}}
\newcommand{\bear}{\begin{eqnarray}}
\newcommand{\eear}{\end{eqnarray}}
\newcommand{\bean}{\begin{eqnarray*}}
\newcommand{\eean}{\end{eqnarray*}}
\newcommand{\bal}{\begin{aligned}}
\newcommand{\eal}{\end{aligned}}
\newcommand{\la}{\langle}
\newcommand{\ra}{\rangle}
\def\Nt{|\hskip-0.04cm|\hskip-0.04cm|}
\newcommand{\indiq}{{\bf 1}}
\begin{document}

\title[Keller-Segel equation]{Uniqueness and long time asymptotics for the parabolic-parabolic 
Keller-Segel equation}

\author{K. Carrapatoso$^{1}$, S. Mischler$^2$}

\thanks{(1) \'Ecole Normale Sup\'erieure de Cachan, 
CMLA, UMR CNRS 8536, 
61 av. du pr\'esident Wilson
94235 Cachan, 
FRANCE. E-mail: {\tt{carrapatoso@cmla.ens-cachan.fr}}
}
\thanks{(2)
IUF \& Ceremade, UMR CNRS 7534, UniversiteÃ Paris-Dauphine, PSL research university, Place de Lattre de Tassigny, 75775 Paris 16, France.
E-mail: {\tt{mischler@ceremade.dauphine.fr}}
}

\begin{abstract}
The present paper deals with the parabolic-parabolic Keller-Segel equation in the plane in
the general framework of weak (or ``free energy") solutions associated to initial data with finite mass $M< 8\pi$,  finite second log-moment and 
finite entropy. The aim of the paper is twofold:

(1) We prove the uniqueness of the ``free energy" solution. The proof uses a DiPerna-Lions renormalizing argument which makes possible 
to get the  ``optimal regularity" as well as an estimate of the difference of two possible solutions in the critical $L^{4/3}$ Lebesgue norm 
similarly as for the $2d$  vorticity Navier-Stokes equation. 

(2) We prove a radially symmetric and polynomial weighted $H^1 \times H^2$ exponential stability of the self-similar profile in the quasi parabolic-elliptic regime. The proof is based on a perturbation argument which takes advantage of the  exponential stability of the self-similar profile for the parabolic-elliptic Keller-Segel equation established by 
Campos-Dolbeault and Egana-Mischler. 

\end{abstract}

\maketitle

\begin{center} {\bf \today}
\end{center}

\bigskip
\textbf{Keywords}: Keller-Segel system; uniqueness; regularisation; self-similar variables; long-time behaviour; stability.

 \smallskip

\textbf{AMS Subject Classification (2000)}:   35B45, 35B60, 35B65, 35K15, 35Q92, 92C17, 92B05

%
%
%
%
%
%
%
%
	
\vspace{0.3cm}

\bigskip

\tableofcontents

 
\section{Introduction} 
\label{sec:intro}


\subsection{The KS equation, motivation and main biological result}
The Keller-Segel (KS) system (or Patlak-Keller-Segel system) for chemotaxis describes the collective motion of cells that are attracted by a chemical substance that 
they are able to emit (\cite{P,KS}).  In this paper we are concerned with the parabolic-parabolic KS model in the plane which takes
the form
\bear \label{eq:KS}
\partial_t f &=& \Delta f - \nabla (f \, \nabla u) \quad \hbox{in} \quad (0,\infty) \times \R^2, 
\\ \nonumber
\eps \partial_t u &=& \Delta u+ f - \alpha \, u  \quad \hbox{in} \quad (0,\infty) \times \R^2,
\eear
and which is complemented with an initial condition 
\beqn\label{eq:KSt=0}
f(0,\cdot) = f_0 \ge 0\quad \text{and} \quad
u(0,\cdot) = u_0  \ge 0
\quad \hbox{in}\quad \R^2. 
\eeqn 
Here $t \ge 0$ is  the time variable, $x \in \R^2$ is the space variable, $f=f(t,x) \ge 0$ stands for the {\it mass density of cells} while $u = u(t,x) \ge 0$ is the
{\it chemo-attractant concentration} and $\eps >0$, $\alpha\geq 0$ are constants. We refer to the work \cite{MR2433703} as well as to the reviews \cite{HillenPainter,TindallMPA} and the references quoted therein for biological motivation and mathematical introduction.

\smallskip

In short, the KS  equation models a cells population which is subject to two inverse mechanisms: 

\smallskip
- a brownian motion (responsable to the diffusion term $\Delta f$ in the first equation of \eqref{eq:KS}) modeling the fact that any cell  change of direction and move in a completely erratic way and which global effect is to spread out the population all over the plane $\R^2$; 

\smallskip 
- an aggregation mechanism (responsable to the drift term  $\nabla (-  \nabla u \, f )$  in the first equation of  \eqref{eq:KS})  
modeling the fact that cells  have a tendency to  follow the gradient lines of the chemo-attractant, which is itself produced and diffused according to the 
second equation in \eqref{eq:KS}. That mechanism has a concentration effect, which is quite strong due to the fact that the associated {\it interaction kernel} is singular. 

\smallskip 
From a mathematical point of view, both mechanisms are almost at the same order, and that makes the rigorous analysis of the model particularly difficult and interesting.

\medskip

Let us first discuss the case when we make the strong modeling and mathematical simplification $\eps=0$, that we refer to as the {\it parabolic-elliptic KS model}, which corresponds to the situation when the diffusion of chemo-attractant occurs with infinite speed (quasistatic approximation). 

\smallskip
The parabolic-elliptic KS system 
has been introduced by Patlak \cite{P} in the 1950's and by Keller, Segel \cite{KS}. It is has latter been rigorously justified from a more microscopical level of description. In particular, the parabolic-elliptic KS model has been obtained as a mean-field limit of a system of a  finite number of cells in interaction for regularized interaction kernel (at least at the level of the microscopic description) by Stevens \cite{Stevens}, Ha{\v{s}}kovec, Schmeiser \cite{HaskovecSchmeiser1,HaskovecSchmeiser2} and Godinho, Qui\~ninao \cite{GodinhoQuininao}.  The true interaction kernel at the level of the microscopic description has  been considered by Fournier, Jourdain \cite{FournierJourdain}, but they were only able to get a consistency result in the mean-field limit. 
The parabolic-elliptic KS has also been obtained as a diffusion limit of a run-and-tumble kinetic equation by Chalub et al. \cite{CMPS}, that last equation describing the cells population at a mesoscopic (or statistical) level.

\smallskip


During the last decades, a huge literature concerning the mathematical analysis of the parabolic-elliptic KS has grown up. 
We only give below some part of the main aspects. Probably the most important feature of the parabolic-elliptic KS equation is the existence of a mass threshold, the mass $M$ (or total number) of cells being conserved along time. 

For a supercritical mass $M > 8\pi$, there does not exist global in time nonnegative function solution:  {\it chemotactic collapse} occurs in finite time, or mathematically speaking, any solution blows up in finite time. Particular blowing up solutions have been exhibited by Herrero, Vel{\'a}zquez \cite{MR1627338} and the universality of that phenomenon (the stability under small perturbation of these blowing up solutions) has been considered by Rapha\"el,  Schweyer \cite{RS}. In other words, for large initial mass, the aggregation mechanism prevails on the spreading mechanism: the diffusion is not strong enough to prevent  mass concentration. 

On the other hand, for  a subcritical mass $M < 8\pi$, function solutions are well-defined globally in time and behave more similarly as for a pure diffusion. More precisely, after self-similar change of variables, any solution converges to the unique self-similar profile with same mass. In other words, for small initial mass, the spreading mechanism prevails on the aggregation mechanism, and no concentration occurs in finite nor infinite time.
We refer to Blanchet et al. \cite{BDP}, Campos,  Dolbeault \cite{MR2996772} and Ega{\~n}a, Mischler \cite{EM} for a detailed description of these results. For the critical mass $M=8\pi$, Blanchet et al. \cite{BCM} established that solutions are global in time and a complete concentration occurs in infinite time (any solution convergences to a Dirac distribution with mass $8\pi$). That qualitative analysis is mainly based on the existence of a  free-energy functional which behaves nicely along the flow of the parabolic-elliptic KS equation. 


\medskip

When the ratio of the speed of the cells over the speed of the chemo-attractant is very small, the parabolic-elliptic KS equation can be considered as a good approximation of \eqref{eq:KS}. However, from a biological modeling point of view the simplification $\eps=0$ is not completely satisfying. It is clearly irrelevant in the case when the speed of cells and chemo-attractant are of comparable  order, as it can be the case for a {\it Escherichia coli} population, see  Saragosti et al.  \cite{SaragostiCBBSP}. 

\smallskip
There are only very few works on the parabolic-parabolic KS system compared to the parabolic-elliptic KS system, and the parabolic-parabolic KS system is far from being well-understood.  
%
%
%
Let us however present some of available results.  

The parabolic-parabolic KS system  has been obtained as a diffusion limit from a kinetic equation by Erban, Hillen and Othmer \cite{HillenOthmer1,OthmerHillen2,ErbanOthmer4,ErbanOthmer3,ErbanOthmer2}. But to our knowledge, no derivation as a mean-field limit of a microscopic cell-system has been performed. Concerning the qualitative behaviour of solutions, the  threshold between blowing up solutions and solutions that spread out is not clear. 
It is known though that solutions are global in time for sub-critical masses $M \le 8\pi$, see Calvez, Corrias \cite{MR2433703}. It is also known that solutions are global for any mass when $\eps>0$ is large enough, see Biler et al. \cite{BGK} and Corrias et al.  \cite{CEM}, that regime corresponds to a small chemo-attractant production by cells, and thus to a small nonlinearity. 
Chemotactic collapsing solutions have been exhibited by Herrero, Vel{\'a}zquez \cite{MR1627338} for supercritical masses. On the other hand, for any sub-critical mass (and for any mass if $\eps >0$ is large enough) unique self-similar solutions have been constructed by Biler et al. \cite{BCD}, see also Corrias et al. \cite{CEM}.

One of the major difficulty for performing a qualitative analysis of generic solutions to the parabolic-parabolic KS system is that the free-energy functional does not behave as nicely as for  the parabolic-elliptic KS system, in particular it does not provide any Lyapunov functional after self-similar rescaling of variable. 

\smallskip
Our work concerns the general parabolic-parabolic KS equation $\eps > 0$ and  the quasi-parabolic-elliptic regime, corresponding to the case when $\eps >0$ is small,  for which a deeper mathematical analysis of the qualitative  behavior of solutions can be performed. 
Our main result shows that  for some class of initial data with sub-critical mass and in a quasi-parabolic-elliptic regime, 
the associated solution $(f,u)$ to the parabolic-parabolic KS system \eqref{eq:KS} satisfies
\beqn\label{eq:fuTOGV}
f(t,x) \sim \frac1t \, G_\eps \left( \frac{x}{t^{1/2}} \right), \quad
u(t,x) \sim   V_\eps \left( \frac{x}{t^{1/2}} \right), \quad\text{as}\quad t \to \infty,
\eeqn
for some self-similar profile $(G_\eps,V_\eps)$. In other words, we prove, for the very first time, that the parabolic-parabolic KS system \eqref{eq:KS} behaves in that regime similarly as a diffusion: no concentration occurs (even in large time) and the diffusion phenomenon is really the dominant phenomenon for any time.

\subsection{Mathematical analysis of the KS equation}

The two fundamental identities associated to the Keller-Segel equation \eqref{eq:KS} are that any solution satisfies, at least formally, the conservation of {\it ``mass"}
\beqn\label{eq:MassConserv}
M(t) := \langle f(t,.) \rangle = \langle f_0 \rangle = : M, \quad\hbox{with} \quad \langle g \rangle := \int_{\R^2}g(x) \, dx, 
\eeqn
and the {\it ``free energy-dissipation of the free energy identity"} 
\beqn\label{eq:FreeEnergy}
\FF(t) + \int_0^t \DD_\FF(s) \, ds  = \FF_0,
\eeqn
where the free energy $\FF(t) = \FF(f(t),u(t))$, $\FF_0 = \FF(f_0,u_0)$ is defined by 
\beqn\label{def:FF}
\FF = \FF(f,u) := \int_{\R^2} f \log f \, dx -  \int_{\R^2} f u \, dx + {1 \over 2} \int_{\R^2} |\nabla u |^2  \, dx +{\alpha \over 2}  \int_{\R^2} u^2dx, 
\eeqn
and the dissipation of free energy $\DD_\FF(s) = \DD_\FF(f(s),u(s))$ by 
\beqn\label{def:DD}
\DD_\FF = \DD_\FF(f,u) := \int_{\R^2} f \, |\nabla (\log f) - \nabla u |^2 \, dx + {1 \over \eps} \int_{\R^2} |\Delta u + f - \alpha \, u |^2 \, dx. 
\eeqn

\smallskip
Following \cite{MR2433703}, throughout this paper, we shall assume that the initial data 
$(f_0,u_0)$ satisfy
\beqn\label{eq:BdsInitialDatum}
\left\{
\bal
& f_0 \, (1 + \log \langle x \rangle^2 ) \in L^1 (\R^2)
\quad\text{and}\quad
f_0 \log f_0 \in L^1(\R^2);  \\
& u_0 \in H^1(\R^2)\; \text{if } \alpha > 0
\quad\text{or}\quad
u_0 \in L^1(\R^2) \cap \dot H^1(\R^2)  \; \text{if } \alpha=0;  \\
& f_0 \, u_0 \in L^1(\R^2),
\eal
\right.
\eeqn
where here and below we define the weight function $\langle x \rangle := (1 + |x|^2)^{1/2}$ and the homogeneous Sobolev space $\dot H^1(\R^2)$
is defined by $\dot H^1(\R^2) := \{ u \in L^1_{loc}(\R^2); \,  \nabla u  \in L^2(\R^2) \}$.
We also make the important restriction of subcritical mass 
$$
M := \langle f_0 \rangle \in (0,8\pi),
$$
as a suitable global existence theory is available in that case (see \cite{MR2433703,MR3116019}), whereas for $M > 8\pi$ there exist solutions which blow up in finite time (see \cite{MR1627338,MR1799300,MR1970697} and the discussion in \cite[1.~Introduction]{MR3116019}). 
We also refer to \cite{BGK,CEM} where a global existence theory is developed in the possible supercritical case $M > 8\pi$ and the condition that   $\eps $ is large enough (which corresponds to a case where the nonlinearity in \eqref{eq:KS} is small). 


\smallskip
As in \cite{MR2433703}, we consider the following definition of weak solution. 

\begin{defin}\label{def:sol} For any initial datum $(f_0, u_0)$ satisfying \eqref{eq:BdsInitialDatum} with $M < 8\pi$, we say that the couple $(f,u)$ of nonnegative functions satisfying
\beqn\label{eq:bddL1}
\bal
&\quad
f \in L^\infty(0,T; L^1(\R^2)) \cap C([0,T); \DD'(\R^2)), \quad \forall \, T \in (0,\infty), \\
&
\begin{cases}
u \in L^\infty(0,T; H^1(\R^2)) & \text{if } \alpha>0 ;\\
u \in L^\infty(0,T; L^1(\R^2) \cap \dot H^1(\R^2)) & \text{if } \alpha = 0; 
\end{cases} 
\\
&\quad
fu \in L^\infty(0,T; L^1(\R^2))
\eal
\eeqn
is a global in time weak solution to the Keller-Segel equation associated to the initial condition $(f_0, u_0)$ whenever $(f,u)$ satisfies the mass conservation 
\eqref{eq:MassConserv},  the bound  
\beqn\label{eq:bddFD}
\sup_{[0,T]} \FF(t) + \sup_{[0,T]}  \int_{\R^2} f \,  \log \langle x \rangle^2 \, dx  + \int_0^T \DD_\FF(t) \, dt \le C_T, 
\eeqn
as well as the Keller-Segel system of equations \eqref{eq:KS}-\eqref{eq:KSt=0} in the distributional sense, namely
\bear\label{eq:KSweak}
 \int_{\R^2} f_0(x) \, \varphi(0,x) \, dx  
 &=& \int_0^T \int_{\R^2} f  \, \Bigl\{ (\nabla_x (\log f) - \nabla_x u  )\cdot \nabla_x \varphi - \partial_t \varphi   \Bigr\} \, dx \, dt \\
\eps \int_{\R^2} u_0(x) \, \psi(0,x) \, dx  
&=&  \int_0^T \int_{\R^2} \{ u  \, ( -\Delta \psi + \alpha \psi - \eps \partial_t \psi   ) - f(t,x) \psi \} \, dx \, dt , 
\eear
for any $T > 0$ and $\varphi, \psi \in C^2_c([0,T) \times \R^2)$. 
\end{defin}

It is worth emphasizing  that it is not assumed that the free energy-dissipation of the free energy identity  \eqref{eq:FreeEnergy} holds, but with \eqref{eq:bddFD}, only that the  feee energy and the dissipation of the free energy are  bounded. 
Thanks to the Cauchy-Schwarz inequality, we have 
\bean
\int_{\R^2}  f \,  | \nabla_x (\log f) -  \nabla_x u  | \, dx 
\le M^{1/2} \, \DD_\FF^{1/2},
\eean
and the RHS of \eqref{eq:KSweak} is then well defined thanks to \eqref{eq:MassConserv} and  \eqref{eq:bddFD}. 

\smallskip
This framework is well adapted for a global existence theory in the subcritical mass case.

\begin{theo}(\cite[Theorem 1]{MR2433703}) \label{theo:exist} For any initial datum $(f_0,u_0)$ satisfying \eqref{eq:BdsInitialDatum}
and $M < 8\pi$, there exists at least one global in time weak solution in the sense of Definition~\ref{def:sol} to the Keller-Segel equation  \eqref{eq:KS}-\eqref{eq:KSt=0}. 
  \end{theo}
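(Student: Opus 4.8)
The plan is to construct a solution by a regularization--compactness scheme in which the free energy \eqref{def:FF}--\eqref{def:DD} serves as the basic a priori estimate, in the spirit of the theory of drift--diffusion equations. First I would set up an approximate problem: mollify the data into smooth, nonnegative, rapidly decaying $(f_0^n,u_0^n)$ converging to $(f_0,u_0)$ in the topologies dictated by \eqref{eq:BdsInitialDatum}, so that in particular $\langle f_0^n\rangle\to M$, $\int f_0^n\log f_0^n\to\int f_0\log f_0$, $\int f_0^n\log\langle x\rangle^2\to\int f_0\log\langle x\rangle^2$ and $\FF(f_0^n,u_0^n)\to\FF_0$; and regularize the coupling, for instance by replacing $\nabla u$ in the first equation by $(\nabla u)\ast\rho_n$ and $f$ in the second by $f\ast\rho_n$. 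For each fixed $n$ this is a semilinear parabolic system whose coupling is, in the relevant norms, tame enough that a fixed point together with a continuation argument yields a unique global smooth solution $(f^n,u^n)$ with $f^n\ge0$, $\langle f^n(t)\rangle=\langle f_0^n\rangle$, and --- once the algebra is checked --- the free energy identity $\FF(f^n,u^n)(t)+\int_0^t\DD_\FF(f^n,u^n)\,ds=\FF(f_0^n,u_0^n)+o_n(1)$.

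The core is the family of uniform-in-$n$ bounds, and this is where subcriticality $M<8\pi$ enters. Minimizing the map $v\mapsto-\int f v+\tfrac12\int|\nabla v|^2+\tfrac\alpha2\int v^2$ over $v$ (the minimizer solving $-\Delta v+\alpha v=f$) shows $\FF(f,u)\ge\int f\log f-\tfrac12\int f\,(-\Delta+\alpha)^{-1}f$; for $\alpha=0$ the right-hand side is exactly $\int f\log f+\tfrac1{4\pi}\iint f(x)f(y)\log|x-y|\,dx\,dy$, and by the logarithmic Hardy--Littlewood--Sobolev inequality (plus a comparison of the Bessel kernel with the Newtonian one when $\alpha>0$) it is bounded below by a constant depending only on $M$ because $M<8\pi$. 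Together with $\FF(f^n,u^n)(t)\le\FF_0+o_n(1)$ this yields uniform control of $\|f^n\log f^n\|_{L^\infty(0,T;L^1)}$, $\|\nabla u^n\|_{L^\infty(0,T;L^2)}$, $\|u^n\|_{L^\infty(0,T;L^2)}$ (when $\alpha>0$), $\|f^nu^n\|_{L^\infty(0,T;L^1)}$ and $\int_0^T\DD_\FF(f^n,u^n)\,dt$. Differentiating $\int f^n\log\langle x\rangle^2\,dx$ along the flow and using $|\nabla\log\langle x\rangle^2|+|\Delta\log\langle x\rangle^2|\le C$ together with $f^n\nabla u^n=\nabla f^n-f^n(\nabla\log f^n-\nabla u^n)$ and the dissipation bound gives the uniform second log-moment estimate on $[0,T]$.

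It remains to extract compactness and pass to the limit. The $L\log L$ bound gives equi-integrability of $f^n$ and the log-moment gives tightness, so $f^n\rightharpoonup f$ in $L^1$ along a subsequence, the limit being a genuine density of total mass $M$. Parabolic regularity for $\eps\partial_tu-\Delta u+\alpha u=f^n\ast\rho_n$, using that $\partial_tu^n$ is bounded in $L^2((0,T)\times\R^2)$ (this is precisely the second piece of $\DD_\FF$), upgrades the regularity of $u^n$ and yields $u^n\to u$ strongly in $L^2(0,T;H^1_{loc})$ with $\nabla u^n\rightharpoonup\nabla u$ in $L^2$. The delicate point is the strong compactness of $f^n$ needed to identify $f^n\nabla u^n$: I would derive the additional a priori estimate $\sqrt{f^n}\in L^2(0,T;H^1_{loc})$, hence $f^n\in L^2$ locally, by combining $\int f^n|\nabla\log f^n|^2=4\int|\nabla\sqrt{f^n}|^2\le 2\DD_\FF+2\int f^n|\nabla u^n|^2$ with the just-obtained regularity of $u^n$ in a bootstrap, after which an Aubin--Lions argument (with $\partial_tf^n$ bounded in a negative-order space) gives $f^n\to f$ strongly in $L^2((0,T)\times B_R)$. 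Writing the weak form of the drift as $-\int f^n\Delta\varphi-\int f^n\nabla u^n\cdot\nabla\varphi$, the first term passes to the limit by the weak convergence of $f^n$ and the second because $f^n\nabla u^n\rightharpoonup f\nabla u$ (strongly times weakly in $L^2_{loc}$); the linear $u$-equation is immediate. Finally, weak lower semicontinuity of the convex functionals $\FF$, $\int f\log\langle x\rangle^2$ and $\int_0^T\DD_\FF$, with tightness for the mass, yields \eqref{eq:MassConserv} and \eqref{eq:bddFD}, and $f\in C([0,T);\DD'(\R^2))$ follows from the equation.

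The main obstacle throughout is the interplay between the subcritical-mass hypothesis and the entropy dissipation: without the logarithmic HLS lower bound the entropy $\int f^n\log f^n$ could be destroyed by concentration, ruining the equi-integrability of $f^n$ and with it any possibility of making sense of $f^n\nabla u^n$ in the limit; the subordinate technical nuisance is the bootstrap producing $\sqrt{f^n}\in L^2(0,T;H^1_{loc})$, together with the verification that the regularized solutions obey the free energy identity with errors that genuinely vanish, which is what turns the formal estimates into rigorous ones.
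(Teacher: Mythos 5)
The paper does not reprove this statement: Theorem~\ref{theo:exist} is cited verbatim from \cite{MR2433703}, and Section~\ref{sec:aposteriori} only recalls the a priori estimates (via the modified free energy $\FF_H$, the inequality \eqref{eq:ineqFH-D}, and the logarithmic HLS bound \eqref{eq:logHLSineqBessel}) that underpin that result. Your overall strategy --- regularize, derive uniform bounds from free energy plus log--HLS under $M<8\pi$, extract compactness and pass to the limit --- is indeed the strategy of the cited reference, and your ``core'' step coincides in substance with what the paper recalls in \S 2.1.

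There is, however, a genuine gap in the specific regularization you propose. Mollifying $\nabla u$ by $\rho_n$ in the $f$-equation while mollifying $f$ by $\rho_n$ in the $u$-equation breaks the variational structure that produces the identity \eqref{eq:FreeEnergy}. Concretely, computing $\tfrac{d}{dt}\FF(f^n,u^n)$ for your scheme, the entropy term yields $\int \nabla f^n\cdot(\nabla u^n\ast\rho_n)$ while the term $-\int u^n\,\partial_t f^n$ yields $\int\nabla f^n\cdot\nabla u^n - \int f^n(\nabla u^n\ast\rho_n)\cdot\nabla u^n$, and the $u$-block yields $-\tfrac1\eps\int(\Delta u^n-\alpha u^n)^2 -\tfrac1\eps\int(\Delta u^n-\alpha u^n)(f^n\ast\rho_n)-\tfrac1\eps\int f^n(\Delta u^n-\alpha u^n)-\tfrac1\eps\int f^n(f^n\ast\rho_n)$; these do not assemble into $-\DD_\FF(f^n,u^n)$ nor into any sign-definite quadratic form, and the leftover terms such as $\int\nabla f^n\cdot(\nabla u^n\ast\rho_n - \nabla u^n)$ involve exactly the quantities you are trying to control, so one cannot show they ``genuinely vanish'' without a circular argument. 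You therefore cannot invoke the free energy bound at the approximate level, and the rest of your a priori program collapses. The fix is to choose a regularization that preserves the gradient-flow/energy structure (e.g.\ a Galerkin truncation, or a truncation of the Bessel/Laplace kernel so that the regularized $\FF$ dissipates a regularized $\DD_\FF$), which is what is done in \cite{MR2433703}. A smaller divergence from the paper's presentation: you handle the second log-moment by differentiating $\int f^n\log\la x\ra^2$ separately, whereas \S\ref{sec:aposteriori} absorbs it into the single functional $\FF_H=\FF-\int f\log H$, which simultaneously controls $\HH^+(f)$, the log-moment and $\int_0^T\DD_\FF$ via the chain \eqref{eq:Hbd2}--\eqref{eq:F<H}; this is a cleaner packaging and saves you one estimate.
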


Our first main result establishes that this framework is also well adapted for the well-posedness issue. 

\begin{theo}\label{theo:uniq} For any initial datum $(f_0,u_0)$ satisfying \eqref{eq:BdsInitialDatum} with $M < 8 \pi$, there 
exists at most one weak solution in the sense of Definition~\ref{def:sol} to the Keller-Segel equation \eqref{eq:KS}-\eqref{eq:KSt=0}.
This one is furthermore a classical solution in the sense that 
\beqn\label{eq:f&uC2b}
f,u \in C^2_b((0,\infty) \times \R^2)
\eeqn
and satisfies the accurate small time estimate
\beqn\label{eq:intro:fto0}
\forall\, q \in (1,\infty), \qquad 
t^{1 - \frac1q} \, \| f (t) \|_{L^{q}} \to 0 \quad\hbox{as}\quad t \to 0.
\eeqn
Finally, the free energy-dissipation of the free energy identity \eqref{eq:FreeEnergy} holds. 
\end{theo}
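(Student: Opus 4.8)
The plan is to run a DiPerna--Lions renormalization argument on the $f$-equation to upgrade the a priori control provided by \eqref{eq:bddFD} into genuine parabolic regularity, then to close a stability estimate for the difference of two solutions in the scaling-critical $L^{4/3}$ norm, mimicking the treatment of the $2d$ vorticity formulation of Navier--Stokes. First I would fix a weak solution $(f,u)$ in the sense of Definition~\ref{def:sol}. From \eqref{eq:bddFD} one has $\sqrt f\,\nabla(\log f)-\sqrt f\,\nabla u\in L^2((0,T)\times\R^2)$ and, using the logarithmic Hardy--Littlewood--Sobolev inequality together with $M<8\pi$ (as in \cite{MR2433703,MR3116019}), one extracts from the finiteness of the free energy a bound on $\|f\log f\|_{L^\infty_tL^1_x}$ and, crucially, on $\nabla\sqrt f\in L^2_{t,x}$, hence $f\in L^2_tL^\infty\!\!-\!\varepsilon$-type bounds; in particular $f\in L^2((0,T);L^2(\R^2))$ after an interpolation, and $\nabla u\in L^\infty_tL^2_x$. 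This identifies the drift $\nabla u$ as a field of Sobolev regularity to which the DiPerna--Lions theory applies: writing the equation as $\partial_t f=\Delta f-\nabla u\cdot\nabla f-f(\Delta u+f-\alpha u)\cdot\varepsilon^{-1}$ (using the second equation) one sees the zeroth-order coefficient is $f-\alpha u$ plus the already-controlled term, so renormalization $\beta(f)$ is legitimate and the commutator errors vanish. Choosing $\beta(r)=r^{q/2}$ and running the standard energy identity for $\beta(f)$ gives, after using the regularizing effect of the heat semigroup and bootstrapping, the smoothing estimate $t^{1-1/q}\|f(t)\|_{L^q}\to 0$ as $t\to0$ for $q\in[4/3,2)$, and then parabolic Schauder theory promotes $(f,u)$ to $C^2_b((0,\infty)\times\R^2)$.

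With both solutions now smooth on $(0,\infty)\times\R^2$ and satisfying \eqref{eq:intro:fto0}, I would set $g=f_1-f_2$, $v=u_1-u_2$ and derive the difference system $\partial_t g=\Delta g-\nabla\!\cdot(g\nabla u_1)-\nabla\!\cdot(f_2\nabla v)$ and $\varepsilon\partial_t v=\Delta v+g-\alpha v$. The key is a Gronwall estimate for $\|g(t)\|_{L^{4/3}}$: multiplying by $|g|^{-2/3}\mathrm{sgn}(g)$ (i.e.\ the $L^{4/3}$ duality pairing), the diffusion term yields a good dissipative contribution $\sim\|\nabla(|g|^{2/3})\|_{L^2}^2$; the term $\int\nabla\!\cdot(g\nabla u_1)\,|g|^{-2/3}\mathrm{sgn}(g)$ is absorbed after integration by parts using $\Delta u_1=\varepsilon\partial_t u_1-f_1+\alpha u_1$ and the bound on $f_1$; the genuinely dangerous term $\int\nabla\!\cdot(f_2\nabla v)\,|g|^{-2/3}\mathrm{sgn}(g)$ is handled by Hölder, the Hardy--Littlewood--Sobolev/Calder\'on--Zygmund estimate $\|\nabla v\|_{L^4}\lesssim\|g\|_{L^{4/3}}+\varepsilon\|\partial_t v\|\cdots$ (estimating $v$ through the heat kernel for the parabolic equation, so one picks up $\int_0^t$ of $\|g\|_{L^{4/3}}$), and the smoothing bound $t^{1-3/4}\|f_2(t)\|_{L^4}\to0$ from \eqref{eq:intro:fto0}, producing a term of the form $h(t)\|g\|_{L^{4/3}}^2$ with $h\in L^1_{loc}$ near $0$. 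A Gronwall argument then forces $g\equiv0$, hence $f_1=f_2$ and consequently $u_1=u_2$.

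For the final assertion, once uniqueness and $C^2_b$ regularity on $(0,\infty)\times\R^2$ are known, the free energy--dissipation identity \eqref{eq:FreeEnergy} follows by differentiating $\FF(f(t),u(t))$ in time on $(\delta,T)$ — all terms are now classically justified and the boundary and integration-by-parts manipulations are legitimate — and then letting $\delta\to0$, using the lower semicontinuity of $\FF$, the strong convergence $f(\delta)\to f_0$, $u(\delta)\to u_0$ guaranteed by \eqref{eq:bddFD} and the continuity in $\DD'$, together with \eqref{eq:intro:fto0} to control the entropy term, to recover the identity at $t=0$ with constant equal to $\FF_0$.

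\medskip

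\textbf{Main obstacle.} I expect the crux to be the $L^{4/3}$ stability estimate: one must show the coupling term involving $\nabla v$ is subcritical, which requires both the Calder\'on--Zygmund smoothing for the parabolic (not elliptic) $v$-equation — introducing the extra time integral and the factor $\varepsilon$ — and the sharp vanishing rate \eqref{eq:intro:fto0} of $\|f_2(t)\|_{L^4}$ near $t=0$, since a mere boundedness of $f_2$ in $L^4$ would leave a borderline (logarithmically divergent) term in Gronwall. Getting the renormalization to yield exactly the exponent pair that makes this work — and verifying the DiPerna--Lions commutator lemma applies with the low regularity available only from \eqref{eq:bddFD} at small times — is the delicate technical heart of the argument.
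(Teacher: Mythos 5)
Your overall architecture matches the paper's: DiPerna--Lions renormalization of the $f$-equation (justified because the drift $\nabla u$ is in $L^2_t H^1_x$ via the dissipation bound and the $u$-equation) to bootstrap to the optimal regularity and the sharp vanishing rate \eqref{eq:intro:fto0}, followed by a stability estimate in the scaling-critical $L^{4/3}$ norm whose closability hinges on that vanishing rate, and finally lower semicontinuity of $\FF$ to pass the free-energy identity to $t=0$. The regularity and free-energy parts of your plan are essentially the paper's.

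Where you genuinely diverge is the mechanism for the $L^{4/3}$ uniqueness estimate, and this is the technical heart. The paper does \emph{not} run an energy argument on the difference system: it works with the Duhamel (mild) formulation $f_i(t)=e^{t\Delta}f_i(0)-\int_0^t\nabla e^{(t-s)\Delta}(f_i\nabla u_i)\,ds$ and the analogous integral formula for $\nabla u_i$, then estimates the difference $F=f_2-f_1$ in the weighted quantity $\Delta(t):=\sup_{0<s\le t}s^{1/4}\|F(s)\|_{L^{4/3}}$ via $L^p\!\to\!L^q$ heat-kernel bounds and Young's inequality for convolution; the factors $Z_p^i(t)=\sup_{0<s\le t}s^{\frac12-\frac1{2p}}\|f_i(s)\|_{L^{2p/(p+1)}}$ go to zero by \eqref{eq:intro:fto0}, which lets the contraction close for small $t$. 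By contrast you propose to multiply the difference equation by $|g|^{-2/3}\mathrm{sgn}(g)$ and run a differential Gronwall. This has two concrete problems. First, the multiplier is wrong: to differentiate $\|g\|_{L^{4/3}}^{4/3}$ you need $\beta'(g)=\frac43|g|^{1/3}\mathrm{sgn}(g)$, not $|g|^{-2/3}\mathrm{sgn}(g)$, and you then get a right-hand side whose natural homogeneity is $\|g\|_{L^{4/3}}^{4/3}$, so your claimed $h(t)\|g\|_{L^{4/3}}^2$ term has the wrong power for a Gronwall closure. Second, and more seriously, because $4/3<2$ the "dissipation" is $\sim\|\nabla(|g|^{2/3})\|_{L^2}^2$, and the cross terms after integrating $\nabla\cdot(f_2\nabla v)$ by parts against $\nabla(|g|^{1/3}\mathrm{sgn}(g))=\tfrac13|g|^{-2/3}\nabla g$ involve uncontrolled singular weights $|g|^{-1/3}$ where $g$ is small; closing these against the dissipation by Cauchy--Schwarz would require $L^p$ control of negative powers of $g$ which is unavailable. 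This is precisely the difficulty the Duhamel approach avoids, since the heat kernel carries the smoothing at the level of $\|\cdot\|_{L^{4/3}}$ directly and the time singularities $(t-s)^{-3/4}s^{-1/2}$ are integrable. (For the same reason the classical Ben-Artzi/Kato uniqueness proof for the 2d vorticity equation, which the paper explicitly follows, is formulated in the integral form, not via an energy method.)

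A smaller point: your derivation of the Fisher-information bound directly "from the finiteness of the free energy" skips a step. The free-energy dissipation $\DD_\FF$ controls $I(f)$ only after using the $u$-equation to rewrite $\Delta u=\eps\partial_t u-f+\alpha u$ and absorbing the resulting $\int f^2$ term via the entropy-type cutoff in \eqref{eq:f2<I}; the paper's Proposition~\ref{prop:Fisher} carries this out and it is not automatic. In the free-energy step you also invoke "strong convergence $f(\delta)\to f_0$," which is not available; the paper works with weak $\DD'$ convergence plus lower semicontinuity of $\FF$ (proven via decomposing $\FF$ with the Bessel/Laplace kernel), and this is the route you should take.

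\end{document}
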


Theorem~\ref{theo:uniq} improves the uniqueness result proved in \cite{CLM} in the class of solutions $f \in C([0,T]; $ $L^1_2(\R^2)) \cap L^\infty((0,T) \times \R^2)$, $\forall \, T > 0$, which can be built under the additional assumption $f_0 \in L^\infty(\R^2)$ as well as the uniqueness part of the well-posedness results   \cite{Bil98,MR2785976,CEM,BGK}  for solutions satisfying \eqref{eq:intro:fto0} which are established  in some particular regimes (smallness assumption on the initial datum or on some parameters). We also refere to \cite{MR1654677} where a uniqueness result is established for a related model.
Our proof follows a strategy introduced in \cite{FHM} for the 2D viscous vortex model and generalizes a similar result obtained in \cite{EM} for the parabolic-elliptic model (which corresponds to the case $\eps=0$). 
It is based on a DiPerna-Lions renormalization process (see \cite{dPL}) which makes possible to get the optimal regularity of solutions for small time \eqref{eq:intro:fto0} and 
then to follow the uniqueness argument introduced by Ben-Artzi for the 2D viscous vortex model (see \cite{MR1308857,MR1308858}) and  also used in \cite{Bil98,MR2785976,CEM,BGK} for the parabolic-parabolic Keller-Segel equation. Ben-Artzi's argument consists in writing the mild formulation of the difference of two solutions and to establish a contraction estimate for the norm introduced in \eqref{eq:intro:fto0} for $q=4/3$. The choice of the exponent $q=4/3$ is crucial and it is made in order to handle the singularity of the force field (thanks to sharp estimates of the smoothing effect of  the heat semigroup). 
It is worth emphasizing that such an argument is related to famous Kato's work on the Navier-Stokes equation (see e.g. \cite{MR760047}). 

\smallskip

The smoothing effect and the free energy-dissipation of the free energy identity established in Theorem~\ref{theo:uniq} are natural and physically relevant but new, even for stronger (and possibly local in time) notions of solutions.  

\bigskip

From now on in this introduction, we definitively restrict ourselves to the case $\alpha = 0$ and we focus on the long time asymptotic of the solutions. 
For that last purpose it is convenient to work with self-similar variables. We  introduce the rescaled functions $g$ and $v$ defined by 
\beqn\label{eq:gt=}
 f(t,x) := R(t)^{-2} g(\log R(t), R(t)^{-1} x), \quad 
u(t,x) :=  v(\log R(t), R(t)^{-1} x),  
\eeqn
with $R(t) := (1 + t)^{1/2}$. For these new unknowns, the rescaled parabolic-parabolic Keller-Segel system reads  
\bear \label{eq:KSresc}
&& \partial_t g =  \Delta g + \nabla ({1 \over 2} \, x \, g - g  \, \nabla v)  \quad \hbox{in} \quad (0,\infty) \times \R^2,
\\ \label{eq:KSresc2eme}
&&\eps \partial_t v =      \Delta v + g + {\eps \over 2} \, x \cdot \nabla v  \quad \hbox{in} \quad (0,\infty) \times \R^2.
\eear

We are interested in self-similar solutions to the Keller-Segel parabolic-parabolic equation  \eqref{eq:KS}, that is solutions which write as
$$
f(t,x) = {1 \over t} G_\eps( {x \over t^{1/2}}), \quad u(t,x) = V_\eps( {x \over t^{1/2}}),
$$
with 
\beqn\label{eq:KSprofileMass-intro}
\int_{\R^2} f(t,x) \, dx = \int_{\R^2} G_\eps(y) \, dy = M \in (0,8\pi).
\eeqn
Such a couple of functions $(f,u)$ is a solution to \eqref{eq:KS} if, and only if, the associated ``self-similar profile" $(G_\eps,V_\eps)$ satisfies the elliptic system
\bear \label{eq:KSprofile-intro}
&& \Delta G_\eps - \nabla (G_\eps \, \nabla V_\eps - {1 \over 2} \, x \, G_\eps) = 0 \quad \hbox{in} \quad   \R^2, 
\\ \nonumber
&& \Delta V_\eps +{\eps \over 2} \, x \cdot \nabla V_\eps + G_\eps = 0   \quad \hbox{in} \quad  \R^2,
\eear
and thus corresponds to a stationary solution to the  rescaled parabolic-parabolic Keller-Segel system  \eqref{eq:KSresc}. 
It is known that, for any $\eps \in (0,1/2)$ and any $M \in (0,8\pi)$, there exists a unique solution 
$(G_\eps,V_\eps)$ to \eqref{eq:KSprofile-intro} such that the mass of $G_\eps$ equals $M$, and which is furthermore radially symmetric and smooth (say $C^2(\R^2)$), see \cite{NSY,BCD,CEM}. 

\medskip

Our second main result concerns the exponential nonlinear stability of the self-similar profile for any given mass $M \in (0,8\pi)$ under the strong
restriction of radial symmetry and closeness to the parabolic-elliptic regime. We define the norm
$$
\Nt (g,v)   \Nt := \| g \|_{H^1_k} + \| v \|_{H^2}, \quad k>7,
$$
where the weighted Lebesgue space $L^p_k(\R^2)$, for $1 \le p \le \infty$, $k \ge 0$,  is defined by 
$$
L^p_k(\R^2) := \{ f \in L^1_{loc}(\R^2); \,\, \| f \|_{L^p_k} := \| f \, \langle x \rangle^k \|_{L^p} < \infty \},
$$
and the norm of the higher-order Sobolev spaces $W^{\ell,p}_k (\R^2)$ is defined by 
$$
\| f \|_{W^{\ell,p}_k}^p := \sum_{|\alpha| \le \ell} \| \la x \ra^k \, \partial^\alpha f \|_{L^p}^p.
$$

\begin{theo}\label{theo:stab} For any given mass $M \in (0,8 \pi)$, there exist $\eps^* > 0$ and $\delta^* > 0$ such that for any $\eps \in (0,\eps^*)$ and any radially symmetric initial datum $(g_0,v_0)$ satisfying   
$$
\Nt (g_0,v_0) - (G_\eps,V_\eps) \Nt \le \delta^*, \quad \int_{\R^2} g_0 \, dx = \int_{\R^2} G_\eps  \, dx = M, 
$$
the associated solution $(g,v)$ to \eqref{eq:KSresc}-\eqref{eq:KSresc2eme} satisfies 
$$
\Nt (g(t),v(t)) - (G_\eps,V_\eps) \Nt \le C_a \, e^{at}  \quad  \forall \, a \in  (-1/2,\infty), \,\, \forall \, t \ge 0,
$$
for some constant $C_a = C_a(g_0,v_0) \ge 1$.  \end{theo}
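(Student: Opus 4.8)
\medskip
\noindent
The plan is to treat the system as a perturbation of the parabolic--elliptic regime $\eps = 0$. Introducing the perturbation $(\phi,\psi) := (g - G_\eps,\, v - V_\eps)$ and subtracting \eqref{eq:KSprofile-intro} from \eqref{eq:KSresc}-\eqref{eq:KSresc2eme}, one gets
\[
\partial_t\phi = \Delta\phi + \nabla\cdot(\tfrac12 x\phi) - \nabla\cdot(G_\eps\nabla\psi) - \nabla\cdot(\phi\nabla V_\eps) - \nabla\cdot(\phi\nabla\psi),
\qquad
\eps\,\partial_t\psi = \Delta\psi + \tfrac\eps2 x\cdot\nabla\psi + \phi,
\]
with the mass of $\phi$ conserved and equal to $0$. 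I write this as $\partial_t(\phi,\psi) = \mathcal{L}_\eps(\phi,\psi) + \mathcal{Q}(\phi,\psi)$, where $\mathcal{L}_\eps$ collects all the linear terms and $\mathcal{Q}(\phi,\psi) := (-\nabla\cdot(\phi\nabla\psi),\,0)$ is the only quadratic term, both acting on the closed subspace $\mathcal{X}_0$ of radial elements of $\mathcal{X} := H^1_k(\R^2)\times H^2(\R^2)$ with $\langle\phi\rangle = 0$. The argument then has two parts: (i) prove that, \emph{uniformly for $\eps$ small enough}, the semigroup $S_\eps(t) = e^{t\mathcal{L}_\eps}$ on $\mathcal{X}_0$ satisfies $\|S_\eps(t)\|_{\mathcal{X}_0\to\mathcal{X}_0}\le C_a e^{at}$ for every $a>-1/2$, together with a parabolic gain of one derivative and one weight power; (ii) absorb $\mathcal{Q}$ by a Duhamel fixed point / continuation scheme using the smallness of $\delta^*$.

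For part (i) the key structural point is that $\eps\to0$ is a \emph{singular but regularising} limit. On the resolvent level the $\psi$-line reads $\psi = (-\Delta - \tfrac\eps2 x\cdot\nabla + \eps z)^{-1}\phi + O(\eps)$, which converges as $\eps\to0$ to $\psi = (-\Delta)^{-1}\phi$, well defined because $\langle\phi\rangle = 0$ and fast decaying (the radial symmetry, via the vanishing first moment, moreover makes this Newtonian potential decay like $|x|^{-2}$, hence lie in $H^2$); after eliminating $\psi$, the resulting $z$-dependent operator on $H^1_k$ converges, locally uniformly in the spectral parameter, to the parabolic--elliptic linearised operator $\mathcal{L}_0$, whose exponential stability (decay $e^{at}$, $a>-1/2$, on radial zero-mass data) is the content of \cite{CamposDolbeault2012,EM}. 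Concretely, I would: (a) record the quantitative convergence $(G_\eps,V_\eps)\to(G_0,V_0)$ of the profiles and of their Newtonian-type potentials as $\eps\to0$ (from \cite{BCD,CEM}, or by a direct elliptic estimate); (b) deduce uniform resolvent bounds $\|(\mathcal{L}_\eps - z)^{-1}\|_{\mathcal{X}_0\to\mathcal{X}_0}\le C_a$ on $\{\mathrm{Re}\,z\ge a\}$ for each fixed $a\in(-1/2,0)$ by a Neumann-series perturbation of the parabolic--elliptic resolvent estimates --- the weight exponent $k>7$ being used to keep the essential spectrum of $\mathcal{L}_\eps$ uniformly to the left of $\{\mathrm{Re}\,z = -1/2\}$ and to control the products with $G_\eps$ and $\nabla V_\eps$ (which stay bounded uniformly in $\eps$ even though $(G_\eps,V_\eps)$ itself does not remain bounded in $\mathcal{X}$ as $\eps\to0$); (c) conclude, by a Dunford contour integral --- or, equivalently, an $\eps$-uniform version of the factorisation/enlargement method --- the semigroup decay and the smoothing bound $\|S_\eps(t)(h,0)\|_{\mathcal{X}}\lesssim e^{at}\,t^{-(1+\sigma)/2}\,\|h\|_{H^{-\sigma}_k}$ for $\sigma\in(0,1)$. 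The rate $1/2$ here is sharp and cannot be beaten: the transport operator $\tfrac12 x\cdot\nabla$ acting on $\psi$ in \eqref{eq:KSresc2eme} generates on $L^2(\R^2)$ a semigroup of exact decay $e^{-t/2}$, which caps the rate of the whole coupled flow --- this is exactly why the statement is for all $a>-1/2$ and not for $a=-1/2$.

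For part (ii), I would run the Duhamel identity $(\phi,\psi)(t) = S_\eps(t)(\phi_0,\psi_0) + \int_0^t S_\eps(t-s)\mathcal{Q}(\phi,\psi)(s)\,ds$, which stays in $\mathcal{X}_0$ since $\mathcal{Q}$ is a perfect divergence. The nonlinearity obeys the quadratic bound $\|\mathcal{Q}(\phi,\psi)\|_{H^{-\sigma}_k\times\{0\}}\lesssim \Nt(\phi,\psi)\Nt^2$, $\sigma\in(0,1)$ --- indeed $\nabla\psi\in H^1(\R^2)\hookrightarrow L^4$ and $\phi\in H^1_k\hookrightarrow L^4_k$, hence $\phi\nabla\psi\in L^2_k$, and the two-dimensional product rule $H^1\cdot H^1\subset H^{1-\sigma}$ upgrades this to $H^{1-\sigma}_k$. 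Combined with the smoothing bound from (i)(c), the Duhamel map is a contraction on a small ball of $\{\sup_{t\ge0}e^{-at}\Nt(\phi,\psi)(t)\Nt\le\rho\}$ once $\delta^*$ and $\rho$ are taken small enough (in terms of $M,k,a,\eps^*$); a continuation argument then produces a global solution of the perturbed system with $\Nt(\phi,\psi)(t)\Nt\le C_a e^{at}$. Since Theorem~\ref{theo:exist} already supplies a global weak solution and Theorem~\ref{theo:uniq} --- transported through \eqref{eq:gt=} --- guarantees uniqueness, this solution coincides with $(g,v) - (G_\eps,V_\eps)$, and since $\Nt\cdot\Nt$ is exactly the $\mathcal{X}$-norm this is the desired conclusion.

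I expect the genuine obstacle to be the $\eps$-uniform linear analysis, parts (i)(b)-(c). The perturbation $\mathcal{L}_\eps - \mathcal{L}_0$ is \emph{singular} --- it multiplies the time derivative of the chemo-attractant by $\eps$ --- and the limiting potential $V_0\sim\log|x|$ does not belong to $H^2$, so the comparison with the parabolic--elliptic operator must be carried out entirely in terms of gradients and Newtonian potentials of zero-mass functions, where decay is restored; obtaining constants that stay bounded as $\eps\to0$ is the delicate point, and this is also where the radial symmetry and the threshold $k>7$ are used. Everything else, namely the nonlinear closure in part (ii), is by now a routine bootstrap, the only care required being the borderline two-dimensional product estimates --- which is precisely why the parabolic smoothing of $S_\eps(t)$ must be invoked rather than a plain Grönwall argument.
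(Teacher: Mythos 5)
Your proposal and the paper share the same backbone for the linear analysis: identify the Schur complement $s_\eps(z) = A(z) - B(C+\eps D(z))^{-1}$ as an $O(\eps)$ perturbation of the parabolic--elliptic linearized operator $\Omega = A_0 - B_0 C^{-1}$ (Section 4.3 of the paper), exploit the radial and zero-mean structure so that the Newtonian potential of the density perturbation is a bona fide $H^2$ object, and transfer the known spectral gap of $\Omega$ (from \cite{CamposDolbeault2012,EM}) to $\Lambda_\eps$ via the factorisation/enlargement machinery of \cite{GMM,MS*}. Your description of ``eliminating $\psi$'' on the resolvent side, the role of the weight $k>7$, and the need for estimates uniform in $\eps$ is essentially Propositions 4.4 and 4.14 of the paper.

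The nonlinear closure is where you take a genuinely different route. You propose a Duhamel fixed-point in the weighted-in-time space $\{\sup_t e^{-at}\Nt(\phi,\psi)(t)\Nt\le\rho\}$, placing the quadratic term $Q=(-\nabla\cdot(\phi\nabla\psi),0)$ into a negative Sobolev scale $H^{-\sigma}_k$ and compensating the derivative loss by a smoothing estimate $\|S_\eps(t)(h,0)\|_{\mathcal X}\lesssim e^{at}t^{-(1+\sigma)/2}\|h\|_{H^{-\sigma}_k}$. This is workable and the singularity at $t=0$ is integrable for $\sigma<1$, but you would have to build this smoothing bound for the \emph{full} semigroup $S_{\Lambda_\eps}$ out of the $\BB_\eps$-bounds via iterated Duhamel, and extend them from $L^2_k$ down to $H^{-\sigma}_k$ by duality/interpolation --- none of which the paper ever needs. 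The paper instead stays in the Sobolev scale $X\subset Y\subset Z$ (Lemmas 4.7 and 5.3 give $\|S_{\BB_\eps}(t)\|_{\BBB(X,Y)}\lesssim t^{-1/2}e^{at}$ and $\|S_{\BB_\eps}(t)\|_{\BBB(X,Z)}\lesssim t^{-1}e^{at}$), defines the \emph{dissipative} norm $\Nt(f,u)\Nt_Z^2 = \eta\|(f,u)\|_Z^2 + \int_0^\infty\|S_{\Lambda_\eps}(\tau)(f,u)\|_Z^2\,d\tau$, and directly proves the differential inequality (Lemma 5.4)
\begin{equation*}
\frac{d}{dt}\Nt(f,u)\Nt_Z^2 \le \bigl(-K + C\Nt(f,u)\Nt_Z\bigr)\,\|(f,u)\|_{\widetilde Z}^2,
\end{equation*}
where $\|\cdot\|_{\widetilde Z}$ carries one extra derivative; the quadratic term is then absorbed by the \emph{same} $\widetilde Z$-dissipation and a bare Gr\"onwall argument closes the estimate once $\delta^*$ is small. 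The energy route avoids negative Sobolev indices and the Banach fixed-point bookkeeping, at the modest price of the explicit nonlinear product bounds on $\|Q\|_Z$. Both approaches are legitimate; the paper's is shorter because the dissipative norm makes the linear decay monotone rather than oscillating under the constant in $\|S_\eps(t)\|\le Ce^{at}$.

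One side remark: your heuristic that $a=-1/2$ is ``sharp, capped by the transport operator $\tfrac12 x\cdot\nabla$ acting on $\psi$'' should be treated with caution. It is true that $-1/2$ is exactly where the hypo-dissipativity estimate of Lemma 4.6 saturates (the term $-\tfrac12\|u-\kappa_f\|_{L^2}^2$), but the paper only proves a spectral gap to the right of $-1/3$ (Theorem 4.1) and makes no sharpness claim; moreover, in the limit $\eps\to0$ the chemo-attractant becomes slaved to the density, whose linearized rate in the parabolic--elliptic setting is $-1$, so it is not clear that the $-1/2$ threshold survives as a genuine spectral obstruction rather than an artefact of this particular Lyapunov functional.
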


Coming back to the original unknowns $(f,u)$, this theorem asserts \eqref{eq:fuTOGV} holds. 

\medskip

That result extends to the parabolic-parabolic Keller-Segel equation similar results known on the parabolic-elliptic Keller-Segel equation,
see \cite{EM}. 
To our knowledge, Theorem~\ref{theo:stab} is the first exponential stability result for the system~\eqref{eq:KS} even under the two strong restrictions of radial symmetry and quasi parabolic-elliptic regime (we mean $\eps > 0$ small), and, moreover, the rate obtained here can be taken as close as we want to the optimal rate $- 1/2$ of the parabolic-elliptic case (see Theorem~\ref{Theorem:theoSOmega}). 
However, we refer again to the recent work \cite[Section 4]{CEM} where some results of convergence (without rate) of some solutions to the associated self-similar profile are established. 
We also refer to that work for further discussion and additional references. We finally refer to \cite{ZinslMatthes} where an exponential convergence to
the equilibrium for a somehow similar chemotaxis model is established. 
 
\smallskip

The proof of Theorem~\ref{theo:stab} is based on the spectral analysis of the associated linearized operator and on growth estimates on the related linear semigroup. We take advantage of the similar analysis made on the linearized parabolic-elliptic Keller-Segel equation performed in \cite{EM}
and we use a (quite singular) perturbation argument. The restriction to radially symetric functions is made in order to get sharp and convenient estimates on solutions to auxilliary elliptic equations (see Lemma~\ref{lem:du=fbis}) used in the accurate spectral analysis performed in Theorem~4.7. We think that this additional assumption is only technical and can be circumvented, but we were not able to fix it.

\medskip
Let us end the introduction by describing the plan of the paper. 
In Section~\ref{sec:aposteriori} we present some functional inequalities which will be useful in the sequel of the paper and 
we establish several a posteriori bounds satisfied by any weak solution.
Section~\ref{sec:uniqueness} is dedicated to the proof of the uniqueness result stated in Theorem~\ref{theo:uniq}. 
In Section~\ref{sec:autosimilaire},  we prove the exponential stability of the linearized problem associated to  \eqref{eq:KSresc}-\eqref{eq:KSresc2eme}.
Finally, in Section~\ref{sec:nonlinear},  we prove the long-time behaviour result as stated in Theorem~\ref{theo:stab}. 

\medskip
\paragraph{\bf Acknowledgments.} We thank J.-Y. Chemin, J. Dolbeault, I. Gallagher, G. Jankowiak and O.~Kavian for fruitful discussions and for having pointed out some interesting references related to our work. The research leading to this paper was (partially) funded by the French "ANR blanche" project Kibord: ANR-13-BS01-0004. K. C.\ is supported by the Fondation Math\'ematique Jacques Hadamard.

\section{Local in time a priori and a posteriori estimates  } 
\label{sec:aposteriori}


\subsection{A priori estimates}  In this short paragraph, we follow  \cite{MR2433703} and we explain  how to obtain the basic estimates which lead to the notion of weak solution as presented in Definition~\ref{def:sol}. 
We first observe that the following space logarithmic moment control holds true
\bean
{d \over dt} \int_{\R^2} f \,  (- \log H) dx 
&=& \int_{\R^2} f  \, \nabla (\log f - u) \cdot \nabla (\log H) \, dx
\\
&\le&   {\delta \over 2} \, \DD_\FF (f,u) + {1 \over 2\delta} \int_{\R^2} f  \, | \nabla \log H|^2  \, dx,
\eean
where 
$$
H(x) := {1 \over \pi}\, {1 \over \langle x \rangle^4} \quad \hbox{and then} \quad
|\nabla \log H(x)| \le 4 ,
$$
which together with \eqref{eq:FreeEnergy} implies that the modified free energy functional
$$
\FF_H = \FF(f,u) -  \int_{\R^2} f \, \log H 
$$
 satisfies
\beqn\label{eq:ineqFH-D}
{d \over dt}\FF_H (t) + {1 \over 2} \DD_\FF(t) \le M.
\eeqn

%

On the one hand, introducing the Laplace kernel $\kappa_0 (z) := - {1 \over 2\pi} \log |z|$ and the  Bessel kernel  $\kappa_\alpha (z) := \frac{1}{4\pi} \int_0^\infty t^{-1} \exp (-|z|^2/(4t) - \alpha t) \, dt$ for $\alpha > 0$, so that $\bar u = \bar u_\alpha := \kappa_\alpha * f$ is a solution to the Laplace type equation 
$$
- \Delta \bar u = f - \alpha \bar u \quad \hbox{in} \quad \R^2,
$$
and introducing as well  the {\it chemical energy} and the modified entropy
$$
F_\alpha (f,u) :=   {1 \over 2} \int |\nabla u|^2   + {\alpha \over 2}  \int u^2  - \int f \, u ,
\quad \HH_H(f) := \int f \log ( f/H),
$$
one can easily show (see e.g. \cite[Lemma 2.2]{MR2433703}) that
\beqn\label{eq:Hbd2}
\FF_H(f,u) = \HH_H(f) + F_\alpha(f,\bar u_\alpha) + {1 \over 2} \int |\nabla (u-\bar u_\alpha)|^2 + {\alpha \over 2} \int (u-\bar u_\alpha)^2
\eeqn
and 
\beqn\label{eq:Hbd3}
F_\alpha(f,\bar u_\alpha) = - {1 \over 2} \int\!\! \int_{\R^2 \times \R^2} f(x) \, f(y) \, \kappa_\alpha (x-y)\, dxdy.
\eeqn
On the other hand, we know from the classical logarithmic Hardy-Littlewood-Sobolev inequality (see e.g. \cite{MR1230930,MR1143664})
or its generalization for the Bessel kernel (see \cite[Lemma 4.2]{MR2433703}) that
\bear\label{eq:logHLSineqBessel}
\forall \, f \ge 0, \quad 
\int_{\R^2} f (x) \log f(x) \, dx &-& {4\pi \over M} \int\!\! \int_{\R^2 \times \R^2} f(x) \, f(y) \, \kappa_\alpha (x-y)\, dxdy \\
\nonumber
&-& \int_{\R^2} f (x) \log H(x) \, dx 
\ge -C_1(M),
\eear
where here and below $C_i(M)$ denotes a positive constant which only depends on the mass $M$. 


Then from  \eqref{eq:Hbd2}, \eqref{eq:Hbd3} and \eqref{eq:logHLSineqBessel} together with the 
 very classical functional inequality (see e.g. \cite[Lemma 2.4]{MR2433703}) 
\beqn\label{eq:H+<H}
\HH^+ := \HH^+(f) = \int f (\log f)_+ \le \HH_H (f) -  {1 \over 4} \int f \log \langle x \rangle^2+ C_2(M),  
\eeqn
one immediately obtains, for $M < 8\pi$,
\bean 
\FF_H(f,u) &\ge&   (1 - {M \over 8 \pi}) \, \HH_H(f) + {M \over 8\pi} \Bigl( \HH_H(f) -  {4\pi \over M} \int\!\! \int_{\R^2 \times \R^2} f(x) \, f(y) \, \kappa_\alpha (x-y)\, dxdy \Bigr) 
\\
&\ge&  C_3(M) \, \HH_+(f)  + C_4(M) \int f \log \langle x \rangle^2 - C_5(M).
\eean
One concludes that under the assumption \eqref{eq:BdsInitialDatum} on the initial datum, the identity  \eqref{eq:MassConserv} and the inequality \eqref{eq:ineqFH-D} provide a convenient family of a priori  estimates in order to define weak solutions, namely
\bear\label{eq:F<H}\quad
 &&C_3(M) \, \HH^+(f(t)) + C_4(M) \int f(t) \log \la x \ra^2 + {1 \over 2}  \int_0^t \DD_\FF(f(s), u(s)) \, ds  
   \\ \nonumber
 &&\quad \le    \FF_H(0)  + C_5(M) +  M \, t,
\eear
and one remarks that the RHS term is finite under assumption \eqref{eq:BdsInitialDatum} on $(f_0,u_0)$, because 
$$
\bal
\FF_H(0) &= \FF(f_0,u_0) - \int f_0 \log H \\
&= \FF(f_0,u_0) + M \log \pi + 2 \int f_0 \log \la x \ra^2 < + \infty.
\eal
$$
It is worth emphasizing that in order to get the bounds announced in Definition~\ref{def:sol} in the case $\alpha > 0$ one may use the inequality
\beqn\label{eq:ineqFH-u}
\FF_H \ge C_6(M) \int |\nabla u|^2 + C_7(M) \alpha \int u^2 + C_8(M) \int f \, u -  C_9(M) (1 + 1/\alpha)
\eeqn
which is established in \cite[(3.5)]{MR2433703}.

\subsection{Local in time a posteriori estimates  } 
\label{ssec:aposteriori}

We start by presenting some elementary functional inequalities which will be of main importance in the sequel. 
The two first estimates are picked up from \cite[Lemma 3.2]{FHM} but are probably classical and the third one
is a variant of the Gagliardo-Niremberg-Sobolev inequality.  

\begin{lem} \label{lem:FishInteg1}
For any $0 \le f \in L^1(\R^2)$ with finite mass $M$ and finite Fisher information
$$
I = I(f) := \int_{\R^2} {|\nabla f |^2 \over f}, 
$$
 there holds 
\bear
\label{eq:LpbdFisher} 
&\forall \; p\in [1,\infty), \quad \| f \|_{L^p(\R^2)} \leq C_p \,  M^{1/p} \,  I(f)^{1 - 1/p},\\
\label{eq:nablaLqbdFisher}
&\forall \; q\in [1,2), \quad
\| \nabla f \|_{L^q(\R^2)} \leq C_q  \, M^{1/q - 1/2} \,   I(f)^{ {3/2}-{1/q}}.
\eear
For any $0 \le f \in L^1(\R^2)$ with finite mass $M$, there holds 
\bear \label{eq:GNp+1}
&&\forall \; p \in [2,\infty) \quad
\| f \|_{L^{p+1}(\R^2)} \leq C_p  \, M^{1/(p+1)} \,   \|\nabla (f^{p/2}) \|_{L^2}^{2/(p+1)}.
 \eear

\end{lem}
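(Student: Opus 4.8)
The plan is to prove the three inequalities of Lemma~\ref{lem:FishInteg1} by standard interpolation and Sobolev-type arguments, treating the Fisher-information bounds \eqref{eq:LpbdFisher}--\eqref{eq:nablaLqbdFisher} together and the Gagliardo--Nirenberg-type bound \eqref{eq:GNp+1} separately.

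For \eqref{eq:LpbdFisher}, the key observation is that finite mass and finite Fisher information give control on $\sqrt f$: indeed $\nabla(\sqrt f) = \tfrac12 \nabla f/\sqrt f$, so $\|\nabla \sqrt f\|_{L^2}^2 = \tfrac14 I(f)$ and $\|\sqrt f\|_{L^2}^2 = M$. By the Gagliardo--Nirenberg--Sobolev inequality in $\R^2$ applied to the function $h := \sqrt f$, one has for every $r \in [2,\infty)$ an estimate of the form $\|h\|_{L^r} \le C_r \|h\|_{L^2}^{2/r} \|\nabla h\|_{L^2}^{1-2/r}$. Taking $r = 2p$ and squaring gives $\|f\|_{L^p} = \|h\|_{L^{2p}}^2 \le C_p \|h\|_{L^2}^{2/p}\|\nabla h\|_{L^2}^{2-2/p} = C_p M^{1/p} (I(f)/4)^{1-1/p}$, which is \eqref{eq:LpbdFisher} up to renaming the constant. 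For \eqref{eq:nablaLqbdFisher}, I would write $\nabla f = 2 \sqrt f\, \nabla \sqrt f$ and apply Hölder's inequality with exponents chosen so that $\|\nabla f\|_{L^q} \le 2 \|\sqrt f\|_{L^{a}} \|\nabla\sqrt f\|_{L^2}$ with $1/q = 1/a + 1/2$, i.e. $a = 2q/(2-q) \in [2,\infty)$ for $q \in [1,2)$; then bound $\|\sqrt f\|_{L^a}$ by the same Gagliardo--Nirenberg inequality as above. Collecting the powers of $M$ and $I(f)$ and simplifying the exponents yields exactly $M^{1/q-1/2} I(f)^{3/2-1/q}$.

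For \eqref{eq:GNp+1}, I would apply the Gagliardo--Nirenberg--Sobolev inequality directly to the function $\phi := f^{p/2}$. Since $f \in L^1$ with mass $M$, we have $\|\phi\|_{L^{2/p}}^{2/p} = \|f\|_{L^1} = M$, i.e. $\phi \in L^{2/p}(\R^2)$ with $2/p \le 1$ for $p \ge 2$. The embedding in dimension two gives, for the exponent $s$ determined by scaling, $\|\phi\|_{L^s} \le C\, \|\phi\|_{L^{2/p}}^{\theta} \|\nabla \phi\|_{L^2}^{1-\theta}$; one checks that the choice making $\|\phi\|_{L^s}^{2/p}$ equal to $\|f\|_{L^{p+1}}$, namely $s = 2(p+1)/p$, is admissible, and the interpolation parameter works out to $\theta = 1/(p+1)$. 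Raising to the power $2/p$ then gives $\|f\|_{L^{p+1}} \le C_p M^{1/(p+1)} \|\nabla(f^{p/2})\|_{L^2}^{2/(p+1)}$, as claimed. I should note for rigor that these computations are first carried out for smooth compactly supported $f$ (or after truncation/mollification) and then extended by density/Fatou, since a priori $f^{p/2}$ or $\sqrt f$ need only be checked to lie in the relevant spaces; this density step is routine.

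The only genuine subtlety — and the step I expect to require the most care — is the bookkeeping of the exponents to land on the precise powers $M^{1/p} I^{1-1/p}$, $M^{1/q-1/2} I^{3/2-1/q}$, and $M^{1/(p+1)} \|\nabla(f^{p/2})\|^{2/(p+1)}$ stated in the lemma, together with verifying that the interpolation parameters $\theta$ produced by the scaling relations genuinely lie in $[0,1]$ for the stated ranges of $p$ and $q$ (this is where the restrictions $p \ge 1$, $q < 2$, $p \ge 2$ enter). There is no analytic obstacle beyond the classical two-dimensional Gagliardo--Nirenberg--Sobolev inequality, which I am taking as known.
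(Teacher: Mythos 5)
Your proof is correct and follows the same route as the proofs the paper points to: the paper offers no argument of its own for Lemma~\ref{lem:FishInteg1} but simply cites \cite[Lemma 3.2]{FHM} and \cite[Lemma 2.1]{EM}, both of which derive \eqref{eq:LpbdFisher}--\eqref{eq:nablaLqbdFisher} from the Gagliardo--Nirenberg inequality applied to $h=\sqrt f$ (using $\|h\|_{L^2}^2 = M$ and $\|\nabla h\|_{L^2}^2 = I(f)/4$) together with $\nabla f = 2 h\,\nabla h$ and H\"older, and \eqref{eq:GNp+1} from the same inequality applied to $\phi = f^{p/2}$; your exponent bookkeeping checks out in each case. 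The one point worth spelling out explicitly is that for $p>2$ the lower Lebesgue exponent in the interpolation for \eqref{eq:GNp+1} is $q = 2/p < 1$, which lies outside the most common textbook statements of Gagliardo--Nirenberg. The inequality does hold in that range (Nirenberg's original formulation permits $q>0$); alternatively, if one prefers to invoke only the $q\ge 1$ case, one can start from $\|\phi\|_{L^{2(p+1)/p}} \le C\,\|\phi\|_{L^1}^{a}\|\nabla\phi\|_{L^2}^{1-a}$ with the scaling-determined $a$, interpolate $\|\phi\|_{L^1}$ between $\|\phi\|_{L^{2/p}}$ and $\|\phi\|_{L^{2(p+1)/p}}$, and absorb the $\|\phi\|_{L^{2(p+1)/p}}$ factor on the left; this reproduces the stated powers. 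Your density/truncation caveat is appropriate and is handled in the same way in the cited references.
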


We refer to \cite[Lemma 3.2]{FHM} and \cite[Lemma 2.1]{EM} for a proof.

\medskip
 
The proof of \eqref{eq:f&uC2b} in Theorem~\ref{theo:uniq} is split into several steps that we present as some 
intermediate autonomous a posteriori bounds.

\begin{prop}\label{prop:Fisher} 
For any weak solution $(f,u)$, we have 
$$
I(f(t)) \in L^1(0,T), \quad \forall \, T >0. 
$$
\end{prop}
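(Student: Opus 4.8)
\textbf{Proof strategy for Proposition~\ref{prop:Fisher}.}

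The plan is to control the time-integral of the Fisher information $I(f(t)) = \int |\nabla f|^2 / f$ by differentiating it (or rather a renormalized proxy of it) along the flow and absorbing the bad terms into the free-energy dissipation $\DD_\FF$, which is already known to be in $L^1(0,T)$ by \eqref{eq:bddFD}. The starting point is the standard computation of the evolution of the entropy-type quantities. Writing the first equation as $\partial_t f = \nabla \cdot (f \nabla(\log f - u))$, one has formally
$$
\frac{d}{dt} \int f \log f \, dx = - \int f \, \nabla(\log f) \cdot \nabla(\log f - u)\, dx = - I(f) + \int \nabla f \cdot \nabla u \, dx.
$$
Since $\int f |\nabla \log f - \nabla u|^2 \, dx$ is part of $\DD_\FF$, expanding the square gives $I(f) = \int f|\nabla\log f - \nabla u|^2 + 2\int \nabla f\cdot\nabla u - \int f |\nabla u|^2$, so the real task is to bound $\int_0^T \int f|\nabla u|^2 \, dx\, dt$ and $\int_0^T |\int \nabla f \cdot \nabla u|\, dt$ in terms of already-controlled quantities. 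Here one uses the second equation and elliptic regularity: $\nabla u$ is controlled in $L^\infty_t H^1_x$ (for $\alpha>0$) or $L^\infty_t \dot H^1_x$ (for $\alpha=0$) by \eqref{eq:bddL1}, and $f \in L^\infty_t L^1_x$ with an entropy bound, so interpolation-type inequalities (logarithmic Sobolev, or the Gagliardo--Nirenberg bounds of Lemma~\ref{lem:FishInteg1}) give $\int f |\nabla u|^2 \lesssim \|f\|_{L^2} \|\nabla u\|_{L^4}^2$ or similar, with the right-hand side made integrable using the $\Delta u + f - \alpha u \in L^2_{t,x}$ part of $\DD_\FF$.

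Because a genuine weak solution need not a priori have $\nabla f \in L^1_{loc}$, this differentiation must be justified through a \emph{DiPerna--Lions renormalization}: one regularizes by convolution, $f_\eta = f * \rho_\eta$, derives the transport equation for $f_\eta$ with a commutator error term $r_\eta \to 0$ in $L^1_{loc}$, applies a smooth truncated version $\beta_\delta(f_\eta)$ of $s\mapsto s\log s$ (so that everything is legitimate), and then passes to the limit $\eta \to 0$ and removes the truncation. This yields the renormalized entropy identity with an inequality $\le$ in place of $=$ coming from Fatou/lower semicontinuity, which is exactly what is needed: we get $\int_0^t I(f(s))\, ds \le$ (entropy at time $0$ and $t$) $+ \int_0^t (\text{controlled terms})\, ds < \infty$. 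The entropy at time $t$ is bounded above by \eqref{eq:bddFD} (the free energy controls $\int f\log f$ and $\int f u$ separately via \eqref{eq:Hbd2}--\eqref{eq:logHLSineqBessel}), and bounded \emph{below} using \eqref{eq:H+<H} together with the log-moment bound, so $\int f\log f$ is genuinely bounded in absolute value on $[0,T]$.

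The main obstacle is precisely the rigorous justification of the renormalization step for this particular nonlinear drift: the drift $\nabla(\log f) - \nabla u$ is not in a nice space uniformly (it only satisfies $f^{1/2}(\nabla\log f - \nabla u) \in L^2_{t,x}$), so the commutator lemma of DiPerna--Lions does not apply off the shelf and one must exploit the specific structure — splitting the flux as $\nabla f - f\nabla u$, handling $\nabla f$ via the degenerate-diffusion (porous-medium-flavoured) renormalization and $f\nabla u$ via the $L^1$-drift theory, and checking that the cross terms close. The second, more technical obstacle is obtaining uniform-in-time two-sided control of $\int f\log f$; the lower bound is delicate near $t=0$ and relies essentially on the finite log-moment hypothesis in \eqref{eq:BdsInitialDatum} propagating via \eqref{eq:bddFD}. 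Once these two points are secured, integrating the differential inequality in time gives $I(f) \in L^1(0,T)$ for every $T>0$, which is the claim.
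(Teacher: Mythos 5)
Your proposal over-engineers the argument and, in one place, attacks a term that does not need attacking.

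The paper's proof of Proposition~\ref{prop:Fisher} does \emph{not} differentiate the entropy in time and does \emph{not} require the DiPerna--Lions renormalization. Renormalization only enters later (Lemma~\ref{lem:Bdbeta} onward), to get $L^p$ and $C^2$ regularity. Here the bound is obtained pointwise in $t$ directly from the dissipation: expanding the square as you do, but keeping track of signs,
$$
\DD_\FF \ge \int f\,|\nabla\log f - \nabla u|^2 = I(f) - 2\int \nabla f\cdot\nabla u + \int f\,|\nabla u|^2 \ge I(f) + 2\int f\,\Delta u,
$$
because $\int f\,|\nabla u|^2\ge 0$ can simply be dropped. You instead list $\int_0^T\int f\,|\nabla u|^2$ as one of the two quantities that must be bounded; it is a favorable term, and trying to control it is wasted effort (and circular, since your interpolation estimate for it would bring $\|f\|_{L^2}$ — hence $I(f)$ — back in). Only $\int f\,\Delta u$ needs handling.

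For that term, the paper does not invoke elliptic regularity or the kernel $\kappa_\alpha$; it uses the \emph{second equation} to write $\Delta u = \eps\,\partial_t u - f + \alpha u$, and $\partial_t u \in L^2_{t,x}$ is already free from $\DD_\FF$. This is the move that is specific to the parabolic--parabolic system, and your route through $\nabla u\in L^\infty_tH^1_x$ plus Gagliardo--Nirenberg misses it. The remaining difficulty is the $\int f^2$ term, which is circular because $\|f\|_{L^2}^2\lesssim M\,I(f)$: this is resolved by a truncation $f = f\indiq_{f<A} + f\indiq_{f\ge A}$ with $\int f^2\indiq_{f\ge A}\le \bigl(\HH^+/\log A\bigr)^{1/2}\bigl(C_3^3MI(f)^2\bigr)^{1/2}$ by Cauchy--Schwarz and \eqref{eq:LpbdFisher} with $p=3$, and then choosing $A = A(M,\HH^+)$ large enough (depending on the entropy level) so that the coefficient of $I(f)$ can be absorbed by the left-hand side. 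This absorption is the one genuinely delicate technical step of the proof, and your sketch only gestures at it (``made integrable using\dots''). Finally, you do not need a two-sided bound on $\int f\log f$ for this proposition; only the upper bound $\HH^+(f)\le C$ from \eqref{eq:F<H} is used.
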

 
 \noindent
{\sl Proof of Proposition~\ref{prop:Fisher}. } We write 
\beqn\label{eq:D>I}
\DD_\FF(f)  \ge I(f) + 2 \int f \, \Delta u.
\eeqn
Next, by Young's inequality, we have
\bean
 \int f \, \Delta u 
 &=& \int f \, (\eps \partial_t u - f + \alpha u)
 \\
 &\ge&- (1+\alpha/2+\eps/2)  \int f^2  -  \eps/2 \int(\partial_t u)^2  - \alpha/2 \int u^2 .
 \eean
The second and third terms belong to $L^1(0,T)$ from \eqref{eq:bddFD}, so we only need to estimate the first term. 
 
For any $A > 1$, using the Cauchy-Schwarz inequality and the inequality \eqref{eq:LpbdFisher}
for $p=3$, we have 
\bean
\int f^2 \, {\bf 1}_{f \ge A} 
&\le&
 \Bigl( \int f \, {\bf 1}_{f \ge A} \Bigr)^{1/2}  \Bigl( \int f^3\Bigr)^{1/2} 
\\ \nonumber
&\le&
 \Bigl( \int f \, {(\log f )_+\over \log A}  \Bigr)^{1/2}  \Bigl( C^3_3 \, M  \, I(f)^{2}\Bigr)^{1/2} ,
\eean
from what we deduce for $A = A(M,\HH^+)$ large enough, and more precisely taking $A$ such that $\log A = 16 \, \HH_+ \, C_3^3 \, M (1+\alpha/2+\eps/2)^2$, 
\beqn\label {eq:f2<I}
\int f^2 \, {\bf 1}_{f \ge A}  \le  C_3^{3/2} \, M^{1/2} \, {\HH_+(f)^{1/2} \over (\log A)^{1/2}} \, I(f) \le {(1+\alpha/2+\eps/2)^{-1} \over 4} \, I(f).
\eeqn
Denoting $\Phi(u) = \eps \int (\partial_t u)^2 + \alpha \int u^2 \in L^1(0,T)$ and putting together the last estimate with \eqref{eq:D>I}, it follows  
\bean
{1 \over 2} I(f) 
&\le& \DD_\FF  + C \, \int f^2 \, {\bf 1}_{f \le A} + \Phi(u)
\\
&\le& \DD_\FF  + 2 \, M \, \exp (  C \, \HH_+  \, M) + \Phi(u), 
\eean
and we conclude thanks to \eqref{eq:MassConserv}--\eqref{eq:F<H}. 
\qed

 \begin{rem}\label{rem:Msupercritic} The logarithmic Hardy-Littlewood Sobolev inequality \eqref{eq:logHLSineqBessel} in the supercritical case $M \ge 8\pi$ does not lead to a global estimate as for the subcritical case $M \in (0,8\pi)$. However, introducing the function $\MMM := M H$ of mass $M$ and the modified free energy
$$
\tilde \FF_M(f,u) := \int_{\R^2} (f \, \log (f/\MMM) - f + \MMM) \, dx  -  \int_{\R^2} f u \, dx + {1 \over 2} \int_{\R^2} |\nabla u |^2  \, dx +{\alpha \over 2}  \int_{\R^2} u^2 \, dx ,
$$
one shows that any solution $(f,u)$ to the Keller-Segel equation \eqref{eq:KS} formally satisfies 
\bean
{d \over dt}  \tilde \FF_M(f,u)
&\le& - {1 \over 2} \DD_\FF(f,u) +  M
\\
&\le& - {1 \over 2} I(f) - \int f \Delta u - {\eps \over 2} \int (\partial_t u)^2 + M
\\
&\le& - {1 \over 2} I(f) -   {\eps \over 4} \int (\partial_t u)^2 + (1+\eps) \int f^2 + {\alpha \over 2} \int u^2 + M,
\eean
where we have just used \eqref{eq:ineqFH-D}, the estimate \eqref{eq:D>I} and the estimates at the beginning of the proof of Proposition~\ref{prop:Fisher}. We also observe that from \eqref{eq:H+<H} and \eqref{eq:ineqFH-u}, we may deduce
$$
\HH_+(f) + \int |\nabla u|^2 + \alpha \int u^2 \le K_1 \tilde \FF_M(f,u) + K_2 ,
$$
where $K_i$, $i=1,2$, are constants which may depend on $M > 0$ and $\alpha \ge 0$. Arguing then as in the proof of Proposition~\ref{prop:Fisher}, we easily get 
\bean
{d \over dt}  \tilde \FF_M(f,u)
&\le&  - {1 \over 2}  I(f) -   {\eps \over 4} \int (\partial_t u)^2 + (1+\eps)  
\Bigl\{  M A + C_3^{3/2} \, M^{1/2} \, {(K_1 \tilde \FF_M(f,u))^{1/2} \over (\log A)^{1/2}} \, I(f) \Bigr\}
\\
&& + {K_1 \over 2}  \tilde \FF_M(f,u) + {K_2 \over 2}  + M   \qquad (\forall \, A > 0)
\\
&\le&  - {1 \over 4}  I(f) -   {\eps \over 4} \int (\partial_t u)^2 +  K_3 \, \exp ( K_4 \,  \tilde \FF_M(f,u)) + K_5 ,
\eean
by making the appropriate  choice $\log A = K' \,  \tilde \FF_M(f,u)$ for $A$. 
This differential inequality provides a local a priori estimate on the modified free energy which can be used  in order to prove local existence result for supercritical mass. Because we will prove in Theorem~\ref{theo:uniq} that the above resulting bound is suitable in order to get the uniqueness of the solution, we can classically obtain the existence and uniqueness of maximal solutions (in the weak sense of Definition~\ref{def:sol}) $(f,u) \in C([0,T^*);\DD'(\R^2) \times \DD'(\R^2))$ such that 
$$
\sup_{[0,T)}  \tilde \FF_M(f(t),u(t)) + \int_0^T \Bigl\{  I(f(t)) + \DD_\FF(f(t),u(t)) \Bigr\} \, dt < \infty \quad \forall \, T \in (0,T^*)
$$
and the alternative 
$$
T_* = +\infty   \quad\hbox{or}\quad (T_* < \infty, \,\, \tilde \FF_M(f(t),u(t)) \to \infty \,\, \hbox{ as } \,\, t \to T^*).
$$
\end{rem}

\medskip
 
As an immediate consequence of Lemma~\ref{lem:FishInteg1} and Proposition~\ref{prop:Fisher}, we have 

\begin{lem}\label{lem:Bdf&K} For any $T > 0$, any  weak solution $(f,u)$ satisfies 
\bear \label{eq:wLqtLpx}
&& f \in L^{p/(p-1)}(0,T;L^p(\R^2)), \quad \forall \; p\in(1,\infty), 
\\
\label{eq:gradfLqtLpx}
&&\nabla f \in L^{2p/( 3p-2)} (0,T; L^p(\R^2)), \quad \forall \, p \in [1,2), 
\\
\label{eq:DeltauL2} 
&&  \Delta u   \in L^{2}(0,T;L^2(\R^2)).
\eear

\end{lem}

\begin{proof}[Proof of Lemma~\ref{lem:Bdf&K}] 
The bound \eqref{eq:wLqtLpx} is a direct consequence of \eqref{eq:LpbdFisher} and Proposition~\ref{prop:Fisher}. 
The bound \eqref{eq:gradfLqtLpx} is a consequence of \eqref{eq:nablaLqbdFisher} and Proposition~\ref{prop:Fisher}.
From \eqref{eq:bddFD} we have $\DD_\FF \in L^1(0,T)$, which implies $\Delta u + f - \alpha u \in L^2(0,T; L^2(\R^2))$ and then, using \eqref{eq:wLqtLpx}, we obtain \eqref{eq:DeltauL2}.
\end{proof}

\begin{lem}\label{lem:Bdbeta} Any weak solution $(f,u)$ satisfies 
\bear\label{eq:betaf}
&&\int_{\R^2} \beta(f_{t_1})  \, dx  + \int_{t_0}^{t_1} \int_{\R^2} \beta''(f_s) \, |\nabla f_s|^2 \, dx ds 
  \\ \nonumber
  &&\quad\le \int_{\R^2} \beta(f_{t_0})  \, dx  + \int_{t_0}^{t_1} \int_{\R^2} \{\beta(f_s) - f_s \beta'(f_s) \} \Delta u_s   \, dx ds ,
\eear
for  any times $0 \le t_0 \le t_1 < \infty$ and any renormalizing function $\beta : \R \to \R$ which is convex, piecewise of class $C^1$ and such that 
$$
|\beta (\xi)| \le C \, (1 + \xi \, (\log \xi)_+), \quad |\beta(\xi)  - \xi \beta'(\xi) |  \le C \,  \xi \quad \forall \, \xi \in \R.
$$
\end{lem}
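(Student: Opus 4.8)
The plan is to justify the renormalized identity \eqref{eq:betaf} by a regularization argument à la DiPerna--Lions, using the a posteriori integrability gained in Lemma~\ref{lem:Bdf&K}. First I would recall the weak formulation \eqref{eq:KSweak} for $f$ with velocity field $b := \nabla_x(\log f) - \nabla_x u$, and rewrite it in the (formally equivalent) conservative form $\partial_t f = \Delta f - \mathrm{div}(f\,\nabla u)$ in $\DD'$. The point is that by Lemma~\ref{lem:Bdf&K} we have $f \in L^2_{loc}(0,T;L^2) \cap L^q_{loc}(0,T;W^{1,q})$ for suitable $q$ (e.g.\ $f \in L^{4/3}_{loc}(0,T;W^{1,4/3})$ from \eqref{eq:gradfLqtLpx} with $p=4/3$), and $\nabla u \in L^\infty_{loc}(0,T;L^2)$ while $\Delta u \in L^2_{loc}(0,T;L^2)$ by \eqref{eq:DeltauL2}; this is exactly the regularity needed to make $f$ a renormalized solution of the transport-diffusion equation.

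The key steps, in order: (i) mollify in space, setting $f^\delta := f * \rho_\delta$, and write the equation satisfied by $f^\delta$: $\partial_t f^\delta = \Delta f^\delta - \mathrm{div}(f\,\nabla u)*\rho_\delta$. (ii) Multiply by $\beta'(f^\delta)$ (legitimate since $f^\delta$ is smooth in $x$ and absolutely continuous in $t$ after a further time-mollification, which I would also insert and remove at the end) and integrate over $\R^2$; the diffusion term produces $-\int \beta''(f^\delta)|\nabla f^\delta|^2$ after integrating by parts, and the drift term produces $\int \beta'(f^\delta)\,\mathrm{div}\big((f\nabla u)*\rho_\delta\big)$. (iii) Integrate in time over $[t_0,t_1]$, and take $\delta\to 0$. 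The diffusion term passes to the limit by Fatou (it has a sign, $\beta''\ge 0$ by convexity) on the left and by the $W^{1,q}$ bound on the right; more carefully, one uses $\nabla f^\delta \to \nabla f$ in $L^q_{loc}$ and $\beta''(f^\delta)$ bounded. (iv) For the drift term, the commutator lemma of DiPerna--Lions gives $\mathrm{div}\big((f\nabla u)*\rho_\delta\big) - \mathrm{div}(f\nabla u)*\rho_\delta \to 0$, or more directly one integrates by parts to write $\int \beta'(f^\delta)\,\mathrm{div}\big((f\nabla u)*\rho_\delta\big) = -\int \beta''(f^\delta)\nabla f^\delta \cdot \big((f\nabla u)*\rho_\delta\big)$ and passes to the limit using $f\nabla u \in L^{4/3}_{loc}(L^{4/3})$ (from $f\in L^4_{loc}(L^4)$-type bounds and $\nabla u\in L^\infty(L^2)$, balanced via Hölder — I would check the exponents match those in Lemma~\ref{lem:Bdf&K}) against $\beta''(f^\delta)\nabla f^\delta$. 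The limiting drift integrand is $\beta''(f)\nabla f\cdot f\nabla u = f\beta''(f)\nabla f\cdot\nabla u = \nabla(f\beta'(f)-\beta(f))\cdot\nabla u$, which after integrating by parts gives $-(f\beta'(f)-\beta(f))\Delta u$, i.e.\ the term $\int\{\beta(f_s)-f_s\beta'(f_s)\}\Delta u_s$ in \eqref{eq:betaf}. (v) Finally justify that $t\mapsto \int\beta(f(t))$ has well-defined values at $t_0,t_1$ using $f\in C([0,T);\DD'(\R^2))$ together with the uniform bound $\beta(\xi)\le C(1+\xi(\log\xi)_+)$ and the entropy bound \eqref{eq:bddFD}--\eqref{eq:F<H} (equi-integrability of $\{\beta(f(t))\}$), so that weak continuity upgrades to continuity of $\int\beta(f(t))$, and the equality at the endpoints (rather than merely an inequality) is inherited; if only lower semicontinuity at the endpoints is available, one still gets the stated inequality, which is all that is claimed.

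The growth conditions $|\beta(\xi)|\le C(1+\xi(\log\xi)_+)$ and $|\beta(\xi)-\xi\beta'(\xi)|\le C\xi$ are used precisely to control the two boundary integrals and the drift integral respectively: the first guarantees $\beta(f(t))\in L^1(\R^2)$ via the entropy bound, and the second guarantees that $\{\beta(f)-f\beta'(f)\}\Delta u \in L^1_{loc}(0,T;L^1(\R^2))$ since $f\in L^2_{loc}(0,T;L^2)$ and $\Delta u\in L^2_{loc}(0,T;L^2)$ by \eqref{eq:DeltauL2}, making the right-hand side of \eqref{eq:betaf} finite. The main obstacle I expect is the passage to the limit in the drift term: one must verify that the DiPerna--Lions commutator vanishes in the right topology given only the borderline integrability $f\in L^{p/(p-1)}_{loc}(0,T;L^p)$ and $\nabla f \in L^{2p/(3p-2)}_{loc}(0,T;L^p)$, which forces a careful choice of the exponent $p$ (the case $p$ close to $2$, giving $f$ nearly in $L^1_t L^2_x$ together with $\nabla f$ nearly in $L^{2/3}_t L^2_x$, or else $p=4/3$) and a Hölder bookkeeping to pair $\beta''(f^\delta)\nabla f^\delta$ against $(f\nabla u)*\rho_\delta$ uniformly in $\delta$ — since $\beta''$ need not be bounded globally but $\beta'$ is bounded (as $|\xi\beta'(\xi)|\le |\beta(\xi)|+C\xi \le C(1+\xi(\log\xi)_+ + \xi)$ forces $\beta'$ of at most logarithmic growth, hence on the support of a fixed cutoff $\{f\le R\}$ one has $\beta''$ and $\beta'$ controlled), one localizes with truncations $\beta_R$ of the form $\beta_R(\xi)=\beta(\xi\wedge R)$ and removes the truncation at the end using monotone/dominated convergence and the uniform entropy bound.
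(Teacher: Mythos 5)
Your overall strategy---spatial mollification \`a la DiPerna--Lions, truncation of $\beta$, use of the a posteriori bounds from Lemma~\ref{lem:Bdf&K} and of the two growth hypotheses on $\beta$---is the right one and matches the paper. But the order of operations in your step~(iv) creates an exponent problem that is not cosmetic. You propose to mollify the conservative form and then pass to the limit $\delta\to 0$ in $\int_{t_0}^{t_1}\!\int \beta''(f^\delta)\,\nabla f^\delta\cdot (f\nabla u)*\rho_\delta$ \emph{before} the final integration by parts. With the available regularity this pairing does not close: from \eqref{eq:gradfLqtLpx} one has $\nabla f\in L^{2p/(3p-2)}(0,T;L^p)$ only for $p\in[1,2)$, while $f\nabla u$ (built from $f\in L^{q/(q-1)}_t L^q_x$ and $\nabla u\in L^\infty_t L^2_x$ or $L^2_t W^{1,2}_x$) lands at best in spaces close to $L^{1}_t L^{2}_x$, so H\"older never balances, even after truncating $\beta$ so that $\beta''$ is bounded with compact support. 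Moreover, the commutator you write down, $\mathrm{div}\big((f\nabla u)*\rho_\delta\big)-\mathrm{div}(f\nabla u)*\rho_\delta$, is identically zero (convolution commutes with differentiation), so it cannot carry the argument. The commutator that actually matters is $(f\nabla u)*\rho_\delta - f^\delta\nabla u$, which does tend to $0$ in $L^1_{t,x,loc}$ given $f\in L^2_tL^2_x$ and $\nabla u\in L^2_tW^{1,2}_x$, but it must then be tested against something \emph{bounded}, not against $\beta''(f^\delta)\nabla f^\delta$.

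The paper avoids all of this by mollifying the non-conservative form $\partial_t f = \Delta f - \nabla u\cdot\nabla f - (\Delta u) f$ and invoking the commutation lemma for the two terms $\nabla u\cdot\nabla f$ and $(\Delta u)f$, so that the error $r^n$ is later tested against $\beta'(f^n)\chi$ with $\beta'$ bounded (after truncation) and $\chi\in C^2_c$. Crucially, the integration by parts is done \emph{at the level of the mollified renormalized equation against $\chi$}, so that the drift contribution is re-expressed \emph{before the limit} as $\{\beta(f^n)-f^n\beta'(f^n)\}\Delta u\,\chi + \beta(f^n)\{\Delta\chi + \nabla u\cdot\nabla\chi\}$; both integrands have at most linear growth in $f^n$ and pair cleanly with $\Delta u, \nabla u\in L^2_{t,x}$ using only $f\in L^2_tL^2_x$. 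To repair your write-up you should reverse the order: perform the identity $f^\delta\beta''(f^\delta)\nabla f^\delta = \nabla\big(f^\delta\beta'(f^\delta)-\beta(f^\delta)\big)$ and the spatial integration by parts against a cut-off $\chi$ at the mollified level, absorbing the commutator error via $\|\beta'\|_{L^\infty}$, and only then send $\delta\to 0$, $\chi_R\to 1$, and finally relax the truncation of $\beta$. One further remark: the paper's Step~1 establishes $f^n\to f$ in $C([0,T];L^1(\R^2))$ by a precise argument using the function $\beta_1(\xi)=\xi^2/2\wedge(|\xi|-1/2)$ on the differences $f^{n,k}$; your step~(v) gestures at the same conclusion via equi-integrability, which is roughly right but would need to be made precise to justify the pointwise-in-time evaluation at $t_0,t_1$.
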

 
\noindent
{\sl Proof of Lemma~\ref{lem:Bdbeta}.} We consider a weak solution $(f,u)$ to the Keller-Segel equation, and we write, in the distributional sense, 
$$
\partial_t f = \Delta f  - \nabla u  \cdot \nabla f  - (\Delta u) \, f.
$$
We split the proof into three steps. 
 
\smallskip\noindent
{\sl Step 1. Continuity. } 
Consider a mollifier sequence $(\rho_n)$ on $\R^2$, that is $\rho_n(x) := n^2 \rho(nx)$, $0 \le \rho \in \DD(\R^2)$, $\int \rho = 1$,  and introduce the
mollified function $f^n_t := f_t *_x \rho_n$. Clearly, $f^n  \in C([0,T); L^1(\R^2))$.
Using \eqref{eq:wLqtLpx} and \eqref{eq:bddFD}, a variant of the commutation
Lemma \cite[Lemma II.1 and  Remark 4]{dPL} tells us that 
\beqn\label{eq:weps}
\partial_t f^n = \Delta f^n  - \nabla u  \cdot \nabla f^n  - (\Delta u) \, f^n + r^n, 
\eeqn
with $r^n = r^n_1 + r^n_2$ given by
$$
\bal
r^n_1 &:= \nabla u \cdot \nabla f^n - (\nabla u \cdot \nabla f)* \rho_n \to 0  \quad\hbox{in}\quad L^1(0,T; L^1_{loc}(\R^2)), \\
r^n_2 &:= (\Delta u) f^n - [(\Delta u) f]*\rho_n \to 0 \quad\hbox{in}\quad L^1(0,T; L^1_{loc}(\R^2)).
\eal
$$
The important point here is that $f \in L^2(0,T; L^2(\R^2))$ thanks to \eqref{eq:wLqtLpx} and $\nabla u \in L^2(0,T; W^{1,2}(\R^2))$ thanks to \eqref{eq:DeltauL2}, hence the commutation lemma holds true.  

As a consequence,  the chain rule applied to the smooth function $f^n$ reads 
\beqn\label{eq:betaweps}
\partial_t \beta(f^n) =  \Delta  \beta(f^n) -  \beta''(f^n) \, |\nabla f^n|^2 
- \nabla u \cdot \nabla \beta(f^n) - (\Delta u) f^n \beta'(f^n) +
\beta'(f^n) \, r^n,
\eeqn
for any $\beta \in C^1(\R) \cap W^{2,\infty}_{loc}(\R)$ such that $\beta''$ is piecewise continuous and  
vanishes
outside of a compact set. Because the equation \eqref{eq:weps} with $u$
fixed is linear, the difference 
$f^{n,k} := f^{n} - f^{k}$ satisfies \eqref{eq:weps} with $r^n$ replaced by $r^{n,k} := r^{n} - r^{k} \to 0$
in $L^1(0,T;L^1_{loc}(\R^2))$, and then also \eqref{eq:betaweps} (with again $f^n$ and $r^n$ changed in 
$f^{n,k}$ and $r^{n,k}$). For any non-negative function  $\chi \in C^2_c(\R^d)$ and any time $t \in (0,T]$, we obtain
\bean
\int_{\R^2} \beta(f^{n,k}(t)) \, \chi  
&=& \int_{\R^2} \beta(f^{n,k}(0))\, \chi
- \int_0^t \!\!\! \int_{\R^2} \beta''(f^{n,k}(s)) |\nabla f^{n,k}(s)|^2 \, \chi \\
&+& \int_0^t \!\!\! \int_{\R^2} \beta(f^{n,k}(s))\, \{\Delta \chi + \nabla u(s)\cdot \nabla \chi\} \\
&+&\int_0^t \!\!\! \int_{\R^2} \{\beta(f^{n,k}(s)) - f^{n,k}(s) \beta'(f^{n,k}(s)) \}\, \Delta u(s)\,   \chi \\  
&+&  \int_{0}^{t} \!\!\!  \int_{\R^2}  \beta'(f^{n,k}(s)) \, r^{n,k}(s) \, \chi.
\eean
In that last 
equation, we  choose $\beta(\xi) = \beta_1(\xi) = \xi^2/2$ for $|\xi| \le 1$,  
$\beta_1(\xi) =  |\xi| - 1/2$ for $|\xi| \ge 1$. Using that $|\beta'_1|\leq 1$ and $\beta''_1 \ge 0$, it follows
\bean
\int_{\R^2} \beta_1(f^{n,k}(t)) \, \chi  
&\le& \int_{\R^2} \beta_1(f^{n,k}(0))\, \chi 
+ \int_0^t \!\!\! \int_{\R^2} \beta_1(f^{n,k}(s))\,\{ |\Delta \chi| + |\nabla u(s)| |\nabla \chi| \} \\
&+&
\int_0^t \!\!\! \int_{\R^2} |\beta_1(f^{n,k}(s)) - f^{n,k}(s) \beta'_1(f^{n,k}(s)) | \, |\Delta u(s)|\,   \chi
+  \int_{0}^{t} \!\!\!  \int_{\R^2}  |r^{n,k}(s)| \, \chi .
\eean

Since $f_0 \in L^1$, we have $f^{n,k}(0) \to 0$
in $L^1(\R^2)$ and we deduce from the previous inequality and the following convergences: $r^{n,k}  \to 0$ in  
$L^1(0,T; L^1_{loc}(\R^2))$; $ \beta_1(f^{n,k}) |\nabla u|  \to 0$ in $L^1(0,T;L^1_{loc}(\R^2))$, because $\beta_1(\xi)\leq |\xi|$, $f^{n,k}\to 0$ in $L^{2}(0,T,L^{2}(\R^2))$ 
by \eqref{eq:wLqtLpx} with $p=2$ and $\nabla u \in L^{2}(0,T;L^2(\R^2))$ 
 by Definition \ref{def:sol}; $ \beta_1(f^{n,k}) |\Delta u|  \to 0$ in $L^1(0,T;L^1 (\R^2))$, because $\beta_1(\xi)\leq |\xi|$, $f^{n,k}\to 0$ in $L^{2}(0,T,L^{2}(\R^2))$ and  $\Delta u \in L^{2}(0,T;L^2(\R^2))$ 
 by \eqref{eq:DeltauL2}; and $ f^{n,k} \beta'_1(f^{n,k}) |\Delta u|  \to 0$ in $L^1(0,T;L^1 (\R^2))$, because $|\beta'_1|\leq 1$, 
 $f^{n,k}\to 0$ in $L^{2}(0,T,L^{2}(\R^2))$ and $\Delta u \in L^{2}(0,T;L^2(\R^2))$, that
$$
\sup_{t \in [0,T]} \int_{\R^2} \beta_1(f^{n,k}(t,x)) \, \chi(x) \, dx  
\, \mathop{\longrightarrow}_{n,k \to \infty} \, 0.
$$
Since $\chi$ is arbitrary, we deduce that there exists $\bar f \in C([0,\infty); L^1_{loc}(\R^2))$ so that 
$f^n \to \bar f$ in $C([0,T]; L^1_{loc}(\R^2))$, $\forall \, T > 0$. Together with the
convergence $f^n \to f$ in  $C([0,\infty);\DD'(\R^2))$ and the bound \eqref{eq:bddFD}, we deduce  
that $f=\bar f$ and  
\beqn\label{eq:wcont}
f^n \to  f \quad \hbox{in} \quad C([0,T]; L^1(\R^2)), \quad \forall \, T > 0.
\eeqn

\smallskip\noindent
{\sl  Step 2. Linear estimates.} 
We come back to \eqref{eq:betaweps}, which implies, for all $0\le t_0<t_1$, all $\chi\in C^2_c(\R^2)$,
\beqn\label{eq:betawnbis}
\bal
\int_{\R^2} \beta(f^{n}_{t_1}) \, \chi   +    \int_{t_0}^{t_1} \! 
\int_{\R^2} 
\beta''(f^n_s) \, |\nabla_x f^n_s|^2 \, \chi
=  \int_{\R^2} \beta( f^{n}_{t_0}) \, \chi   
+ \int_{t_0}^{t_1} \!  \int_{\R^2} \beta(f^n_s) \, \{ \Delta \chi +\nabla u \cdot \nabla \chi \}
\\
+\int_{t_0}^{t_1} \! \int_{\R^2} \{\beta(f^{n}_s) - f^{n}_s \beta'(f^{n}_s) \}\, \Delta u_s\,   \chi
+ \int_{t_0}^{t_1} \!  \int_{\R^2} \beta'(f^n _s) \, r^n \, \chi .  
\eal
\eeqn
Choosing $0 \le \chi \in C^2_c(\R^2)$ and 
$\beta \in C^1(\R) \cap 
W^{2,\infty}_{loc}(\R)$ such that $\beta''$ is non-negative and vanishes
outside of a compact set,  and passing to the limit as $n\to\infty$,  we get 
\beqn\label{eq:betafchi}
\bal
\int_{\R^2} \beta(f_{t_1}) \, \chi   +    \int_{t_0}^{t_1} \! 
\int_{\R^2} \beta''(f_s) \, |\nabla_x f_s|^2 \, \chi
\le  \int_{\R^2} \beta( f_{t_0}) \, \chi 
+\int_{t_0}^{t_1} \! \int_{\R^2} \{\beta(f_s) - f_s \beta'(f_s) \}\, \Delta u_s\,   \chi \\ 
+ \int_{t_0}^{t_1} \!  \int_{\R^2} \beta(f_s) \, \{ \Delta \chi + \nabla u \cdot \nabla \chi \} .  
\eal
\eeqn
By approximating $\chi\equiv 1$ by the sequence $(\chi_R)$ with $\chi_R(x) = \chi(x/R)$, $0 \le \chi \in \DD(\R^2)$, 
we see that the last term in \eqref{eq:betafchi} vanishes and we get \eqref{eq:betaf} in the limit $R \to \infty$ for 
any renormalizing function $\beta$ with linear growth at infinity.

\smallskip\noindent
{\sl  Step 3. Super-linear estimates.} Finally, for any $\beta$ satisfying the growth condition as in the statement of the Lemma, 
we just approximate $\beta$ by an increasing sequence of smooth renormalizing functions $\beta_R$ with linear growth at infinity,
and we pass to the limit in \eqref{eq:betaf} in order to conclude.  \qed

\medskip

As a first application of the previous lemma we obtain the following estimate.

\begin{lem}\label{lem:BdH2}
For any weak solution $(f,u)$ there exists a constant $C := C (M,\HH_0,\FF_0,T,p)$ such that, for any $0 \le t_0 < t_1 \le T$, there holds
\beqn\label{eq:boundH2}
\int_{\R^2} f_{t_1} (\log f_{t_1})_+^2
+  \int_{t_0}^{t_1} \! \int_{\R^2} \frac{|\nabla f|^2}{f} \, \log f \, {\bf 1}_{f>e}
 \le \int_{\R^2} f_{t_0} (\log f_{t_0})_+^2 + C.
\eeqn

\end{lem}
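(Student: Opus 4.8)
The plan is to apply Lemma~\ref{lem:Bdbeta} with a carefully chosen renormalizing function $\beta$ whose second derivative produces the weight $(\log f)\,{\bf 1}_{f>e}/f$ when multiplied against $|\nabla f|^2$, and whose growth is $\beta(\xi)\sim \xi(\log\xi)_+^2$ at infinity, so that the left-hand side and the data term are exactly those appearing in \eqref{eq:boundH2}. Concretely, one takes $\beta$ to agree with $0$ for $\xi\le e$ and with (a primitive of a primitive of) $\xi\mapsto (\log\xi)/\xi$ beyond $\xi=e$; a direct computation gives $\beta''(\xi)=(\log\xi)/\xi$ for $\xi>e$, hence $\beta''(f)|\nabla f|^2 = \frac{|\nabla f|^2}{f}\log f\,{\bf 1}_{f>e}$, while integrating twice shows $\beta(\xi)\sim \frac12\xi(\log\xi)^2$ and $\xi\beta'(\xi)-\beta(\xi)\sim \xi\log\xi$ for large $\xi$. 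Unfortunately such a $\beta$ does \emph{not} satisfy the growth bound $|\beta(\xi)|\le C(1+\xi(\log\xi)_+)$ required in Lemma~\ref{lem:Bdbeta}, so the plan is to first prove the inequality for the truncated functions $\beta_R$ that coincide with $\beta$ on $[0,R]$ and are extended affinely (with slope $\beta'(R)$) for $\xi\ge R$; each $\beta_R$ is convex, piecewise $C^1$, with $|\beta_R(\xi)|\le C_R\,\xi$ and $|\beta_R(\xi)-\xi\beta_R'(\xi)|\le C_R\,\xi$, hence is an admissible renormalizing function, and \eqref{eq:betaf} holds for it.

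The main work is then controlling, uniformly in $R$, the right-hand side term $\int_{t_0}^{t_1}\!\int_{\R^2}\{\beta_R(f_s)-f_s\beta_R'(f_s)\}\,\Delta u_s\,dx\,ds$. On the region where the affine extension is active the integrand is bounded, up to a constant depending on $R$, by $|\Delta u|\,f$, which is in $L^1$; but to pass to the limit I need a bound independent of $R$. For this I use $|\beta_R(\xi)-\xi\beta_R'(\xi)|\le |\beta(\xi)-\xi\beta'(\xi)|\le C\,\xi(1+(\log\xi)_+)$ (valid because the affine extension only decreases this quantity) together with the splitting $f\le e$ versus $f>e$. On $\{f\le e\}$ the factor is $O(f)$ and the time integral of $\int f|\Delta u|$ is finite by Cauchy--Schwarz using $f\in L^\infty(0,T;L^1)\cap L^2(0,T;L^2)$ and $\Delta u\in L^2(0,T;L^2)$ from Lemma~\ref{lem:Bdf&K}. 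On $\{f>e\}$ one has $f\log f\le f\,(\log f)_+$, and I estimate $\int f(\log f)_+|\Delta u|\le \|\Delta u\|_{L^2}\,\|f(\log f)_+\|_{L^2}$; the quantity $\|f(\log f)_+\|_{L^2}$ is controlled in $L^2(0,T)$ because, for instance, $f(\log f)_+ \lesssim f^{1+\eta}$ pointwise for any $\eta>0$, and $\|f^{1+\eta}\|_{L^2}=\|f\|_{L^{2+2\eta}}^{1+\eta}$, which by \eqref{eq:wLqtLpx} lies in $L^{(2+2\eta)/(1+2\eta)}(0,T)\hookrightarrow L^2(0,T)$ once $\eta$ is small. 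This furnishes an $R$-independent bound on the right-hand side of \eqref{eq:betaf} for $\beta_R$, in terms of $M,\HH_0,\FF_0,T,p$.

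Having a uniform bound, I then let $R\to\infty$. The left-hand side terms are handled by monotone convergence: $\beta_R\uparrow\beta$ pointwise (the affine extension lies below $\beta$ by convexity, and increases to $\beta$ as $R\to\infty$), so $\int\beta_R(f_{t_1})\to\int\beta(f_{t_1})=\frac12\int f_{t_1}(\log f_{t_1})^2 + (\text{lower order})$ by Fatou on the $t_1$ term; similarly $\beta_R''(f_s)|\nabla f_s|^2\uparrow \beta''(f_s)|\nabla f_s|^2 = \frac{|\nabla f_s|^2}{f_s}\log f_s\,{\bf 1}_{f_s>e}$ and monotone convergence applies to the double integral. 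The data term $\int\beta_R(f_{t_0})\le\int\beta(f_{t_0})$, and one checks $\beta(\xi)\le C\,\xi(\log\xi)_+^2 + C$ so that it is bounded by $\int f_{t_0}(\log f_{t_0})_+^2 + C$. Collecting everything, absorbing the lower-order pieces (those with a single power of $\log$, which were already shown finite) into the constant $C$, and noting that $\int f_{t_1}(\log f_{t_1})^2 \ge \int f_{t_1}(\log f_{t_1})_+^2$, yields \eqref{eq:boundH2}. The main obstacle is precisely the $R$-uniform control of the $\{\beta_R(f)-f\beta_R'(f)\}\Delta u$ term; once the Gagliardo--Nirenberg/Fisher-information consequences of Lemma~\ref{lem:Bdf&K} are invoked to place $f(\log f)_+$ in $L^2(0,T;L^2)$, the rest is routine limiting.
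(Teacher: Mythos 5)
Your overall strategy — renormalize with a convex, piecewise $C^1$ function that behaves like $\xi(\log\xi)^2$ beyond $\xi=e$, truncate it to make it admissible in Lemma~\ref{lem:Bdbeta}, obtain \eqref{eq:betaf}, and pass to the limit in the truncation — is exactly the paper's strategy; the only cosmetic difference is that you extend $\beta$ affinely beyond the truncation level whereas the paper uses the slower-growing branch $(2+\log K)\xi\log\xi-2K\log K$, and both versions are admissible. The problem is the step you yourself flag as the main obstacle: the uniform-in-truncation control of $\int\{\beta_R(f)-f\beta_R'(f)\}\,\Delta u$. Your argument there has a genuine gap.

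Concretely, you claim that $f(\log f)_+\in L^2((t_0,T)\times\R^2)$ follows from \eqref{eq:wLqtLpx} by the interpolation ``$f(\log f)_+\lesssim f^{1+\eta}$, $\|f^{1+\eta}\|_{L^2_x}=\|f\|_{L^{2+2\eta}_x}^{1+\eta}\in L^{(2+2\eta)/(1+2\eta)}(0,T)\hookrightarrow L^2(0,T)$''. This fails on two counts. First, the time exponent is miscomputed: $\|f\|_{L^{2+2\eta}_x}\in L^{(2+2\eta)/(1+2\eta)}(0,T)$ gives $\|f\|_{L^{2+2\eta}_x}^{1+\eta}\in L^{2/(1+2\eta)}(0,T)$, not $L^{(2+2\eta)/(1+2\eta)}(0,T)$. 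Second, and decisively, both $2/(1+2\eta)$ and $(2+2\eta)/(1+2\eta)$ are $<2$ for every $\eta>0$, and on the \emph{bounded} interval $(0,T)$ the inclusion $L^r(0,T)\subset L^2(0,T)$ requires $r\ge 2$, not $r\le2$; so ``once $\eta$ is small'' does not help, the estimate degenerates exactly at $\eta=0$. In fact the bounds \eqref{eq:wLqtLpx} and Proposition~\ref{prop:Fisher} alone do \emph{not} place $f(\log f)_+$ in $L^2_{t,x}$ a priori, and trying to do so head-on is the wrong move. The paper circumvents this by a self-absorption: after Young's inequality $\int\gamma_K(f)|\Delta u|\le\delta\int\gamma_K(f)^2+C(\delta)\|\Delta u\|_{L^2_x}^2$, it estimates $\|f\,\widetilde\log_K f\|_{L^2_x}^2$ not through Lebesgue interpolation but through the Fisher-information inequality \eqref{eq:LpbdFisher} applied to $g=f\,\widetilde\log_K f$, which yields \eqref{eq:flogKf2}; the resulting term $\int\frac{|\nabla f|^2}{f}\,\widetilde\log_K f\,{\bf 1}_{f\ge e}$ is precisely the dissipation already present on the left of the renormalized inequality \eqref{eq:flogf2}, so for $\delta$ small it gets absorbed, while the remaining $I(f)$ and $\|\Delta u\|_{L^2_x}^2$ are integrable in time by Proposition~\ref{prop:Fisher} and Lemma~\ref{lem:Bdf&K}. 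That absorption of the dissipation term, rather than an a priori $L^2_{t,x}$ bound on $f(\log f)_+$, is the missing ingredient in your write-up.
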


\begin{proof}[Proof of Lemma~\ref{lem:BdH2}]
Let $K \ge e^2$ and define the renormalizing function $\beta_K : \R_+ \to \R_+$ by
$$
\beta_K(\xi) := 
\begin{cases}
\frac{\xi^2}{e}, \quad & \text{ if } \xi \le e; \\
\xi (\log \xi)^2, \quad & \text{ if } e \le \xi \le K; \\
(2+ \log K) \xi \log \xi - 2K \log K, \quad & \text{ if } \xi \ge K; \\
\end{cases}
$$
so that $\beta_K$ is convex and piecewise $C^1$. Moreover it holds
$$
\bal
|\beta_K(\xi) - \beta'_K (\xi) \xi |
&\le  \frac{\xi^2}{e} \, {\mathbf 1}_{\xi < e} + 2 \xi \log \xi \, {\mathbf 1}_{e < \xi < K}
+ 4 \, \xi \log K  \, {\mathbf 1}_{\xi > K} \\
&\le 4 \{ \xi \, {\mathbf 1}_{\xi < e} + \xi \log \xi \, {\mathbf 1}_{e < \xi < K } + \xi \log K \, {\mathbf 1}_{\xi >K} \} =: 4 \gamma_K(\xi),
\eal
$$
and
$$
\bal
\beta''_K(\xi) 
&\ge \frac{2}{e} \, {\mathbf 1}_{\xi < e} + 2\frac{\log \xi}{\xi} \, {\mathbf 1}_{e<\xi<K }
+ \frac{\log K}{\xi} \, {\mathbf 1}_{\xi >K}.
\eal
$$
We deduce from Lemma \ref{lem:Bdbeta} that
\beqn\label{eq:flogf2}
\bal
\int_{\R^2} \beta_K(f_{t_1})    
&+ \frac{2}{e} \int_{t_0}^{t_1} \! \int_{\R^2}  |\nabla f|^2  \, {\mathbf 1}_{f < e}
+ 2 \int_{t_0}^{t_1} \! \int_{\R^2}  \frac{|\nabla f|^2}{f} \log f  \, {\mathbf 1}_{e < f < K}\\&
+ \log K \int_{t_0}^{t_1} \! \int_{\R^2}  \frac{|\nabla f|^2}{f}  \, {\mathbf 1}_{f >K} 
\le \int_{\R^2} \beta_K(f_{t_0})    + 4 \int_{t_0}^{t_1} \! \int_{\R^2}   \gamma_K(f)  |\Delta u| .
\eal
\eeqn
On the one hand, for any $\delta>0$, we have
$$
\int_{t_0}^{t_1} \! \int_{\R^2}   \gamma_K(f)  |\Delta u| 
\le \delta \int_{t_0}^{t_1} \! \int_{\R^2} \gamma_K(f)^2 + C(\delta) \| \Delta u \|_{L^2(0,T;L^2(\R^2))}^2.
$$
On the other hand, defining $\widetilde \log_K \xi :=  {\mathbf 1}_{\xi \le e} + \log \xi \, {\mathbf 1}_{e < \xi \le K} + \log K \, {\mathbf 1}_{\xi >K}$, we have
$$
\int_{\R^2} \gamma_K(f)^2 \le \int_{\R^2} f^2 + \int_{\R^2} (f \widetilde \log_K f)^2,
$$
and thanks to inequality \eqref{eq:LpbdFisher} with $p=2$
\bear\nonumber
\int_{\R^2}  (f \widetilde \log_K  f )^2 
&\le& C_2^2 \left(  \int_{\R^2}  f \widetilde \log_K  f   \right)   \left( \int_{\R^2}{ |\nabla  (f  \widetilde \log_K  f ) |^2 \over f  \widetilde \log_K  f}  \right)
\\ \label{eq:flogKf2}
&\le& C' (M + \HH^+(f)) \left(  \int_{\R^2}  \frac{|\nabla f|^2}{f} \widetilde \log_K f \, {\mathbf 1}_{f \ge e} + I(f)  \right).
\eear
Coming back to \eqref{eq:flogf2}, Estimate \eqref{eq:boundH2} follows by taking $\delta>0$ small enough, using Lemma~\ref{lem:Bdf&K} 
and Proposition~\ref{prop:Fisher}, and then letting $K \to \infty$.
\end{proof}

\begin{lem}\label{lem:BdLp} For any weak solution $(f,u)$, any $p \ge 2$ and any $t_0 \in [0,T) $ such that $f(t_0) \in L^p(\R^2)$, there exists a constant $C := C (M,\HH_0,\FF_0,T,p, \| f(t_0)\|_{L^p})$ such that, for all $t_0 < t_1 \le T$, there holds
\beqn\label{eq:boundLp}
\| f (t_1) \|^p_{L^p}  + {1 \over 2} \int_{t_0}^{t_1} \| \nabla_x f ^{p/2} \|_{L^2}^2 \, dt \le C  .
\eeqn
\end{lem}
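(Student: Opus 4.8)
The goal is an $L^p$ bound on $f$ propagated forward from a time $t_0$ where $f(t_0)\in L^p$, together with a space-time bound on $\nabla f^{p/2}$. The natural tool is the renormalized inequality \eqref{eq:betaf} from Lemma~\ref{lem:Bdbeta}, applied to (a truncation of) the convex function $\beta(\xi)=\xi^p$. First I would introduce a truncated renormalizing function $\beta_K$ that equals $\xi^p$ for $\xi\le K$ and is continued affinely (in the $\xi\log\xi$ scale, or simply linearly) for $\xi\ge K$, so that $\beta_K$ is convex, piecewise $C^1$, and satisfies the growth bounds $|\beta_K(\xi)|\le C(1+\xi(\log\xi)_+)$ and $|\beta_K(\xi)-\xi\beta_K'(\xi)|\le C\xi$ required by Lemma~\ref{lem:Bdbeta} (with $C$ depending on $K$). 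Plugging this into \eqref{eq:betaf} gives, for $e\le\xi\le K$, $\beta_K''(\xi)=p(p-1)\xi^{p-2}$ and $\beta_K(\xi)-\xi\beta_K'(\xi)=(1-p)\xi^p$, hence an inequality of the form
\beqn
\int_{\R^2}\beta_K(f_{t_1}) + p(p-1)\int_{t_0}^{t_1}\!\!\int_{\R^2} f^{p-2}|\nabla f|^2\,{\bf 1}_{f\le K}
\le \int_{\R^2}\beta_K(f_{t_0}) + (p-1)\int_{t_0}^{t_1}\!\!\int_{\R^2} f^p\,|\Delta u|\,{\bf 1}_{f\le K} + (\text{l.o.t.}),
\eeqn
where the left Fisher-type term is exactly $\tfrac{4(p-1)}{p}\int\!\int|\nabla f^{p/2}|^2{\bf 1}_{f\le K}$ and the lower-order terms come from the $\xi\ge K$ piece and are controlled using $f\in L^{p/(p-1)}_tL^p_x$ for all finite $p$ (Lemma~\ref{lem:Bdf&K}) and $\Delta u\in L^2_tL^2_x$ \eqref{eq:DeltauL2}.

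\emph{Closing the estimate.} The crux is to absorb the nonlinear term $\int\!\int f^p|\Delta u|$. I would bound it by $\|\Delta u(t)\|_{L^2}\,\|f(t)^p\|_{L^2}=\|\Delta u(t)\|_{L^2}\,\|f(t)\|_{L^{2p}}^p$ and then invoke the Gagliardo–Nirenberg-type inequality \eqref{eq:GNp+1} — or rather its analogue — to interpolate $\|f\|_{L^{2p}}^p$ against $\|\nabla f^{p/2}\|_{L^2}$ and $\|f\|_{L^1}=M$. Concretely, $\|f\|_{L^{2p}}^{p} = \|f^{p/2}\|_{L^4}^{2}$, and by Gagliardo–Nirenberg in $\R^2$, $\|f^{p/2}\|_{L^4}^2 \le C\|f^{p/2}\|_{L^2}\|\nabla f^{p/2}\|_{L^2}$. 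Thus
\beqn
\int_{\R^2} f^p|\Delta u| \le C\,\|\Delta u\|_{L^2}\,\|f^{p/2}\|_{L^2}\,\|\nabla f^{p/2}\|_{L^2}
\le \eta\,\|\nabla f^{p/2}\|_{L^2}^2 + C(\eta)\,\|\Delta u\|_{L^2}^2\,\|f\|_{L^p}^p,
\eeqn
for any $\eta>0$. Choosing $\eta$ small enough lets the first term be absorbed into the Fisher term on the left. The remaining term $C(\eta)\|\Delta u(t)\|_{L^2}^2\|f(t)\|_{L^p}^p$ is, after letting $K\to\infty$ (monotone convergence on both the $L^p$ norm and the Fisher integral), a function of the form $a(t)\,y(t)$ with $a=C(\eta)\|\Delta u\|_{L^2}^2\in L^1(0,T)$ and $y(t)=\|f(t)\|_{L^p}^p$. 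Hence $y$ satisfies $y(t_1)+\tfrac{2(p-1)}{p}\int_{t_0}^{t_1}\|\nabla f^{p/2}\|_{L^2}^2 \le y(t_0)+\int_{t_0}^{t_1} a(s)y(s)\,ds + C$, and Grönwall's lemma yields $\sup_{[t_0,T]}y \le (y(t_0)+C)\exp(\|a\|_{L^1(0,T)})$, which is precisely a bound of the announced form $C(M,\HH_0,\FF_0,T,p,\|f(t_0)\|_{L^p})$; feeding this back gives the space-time bound $\int_{t_0}^{t_1}\|\nabla f^{p/2}\|_{L^2}^2\le C$.

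\emph{Main obstacle.} The delicate point is the truncation bookkeeping: one must verify that the $K\to\infty$ limit is legitimate on every term — the Fisher term increases monotonically so Fatou/monotone convergence applies on the left, while the affine tail contributions (the $\xi\ge K$ part of $\beta_K$ and of $\gamma_K$-type remainders) must be shown to vanish, or at least stay bounded uniformly in $K$, using only the a priori integrability from Lemma~\ref{lem:Bdf&K} (note $f\in L^2_tL^2_x$ and $f\in L^{p/(p-1)}_tL^p_x$ are available but nothing stronger a priori, so the tail estimate must be arranged to need no more than that). A secondary point is that $\beta_K$ as defined is only piecewise $C^1$, which is exactly the regularity Lemma~\ref{lem:Bdbeta} permits, so no extra smoothing is needed — but one should make sure the chosen affine continuation keeps $\beta_K$ convex and keeps both growth bounds in the hypothesis valid with a ($K$-dependent, which is harmless after passing to the limit) constant. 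Everything else is routine interpolation and Grönwall.
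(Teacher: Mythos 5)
Your treatment of the bulk region $\{f\le K\}$ is correct and matches the paper: a truncated convex $\beta_K$ equal to $\xi^p$ for $\xi\le K$, the renormalized inequality \eqref{eq:betaf}, absorption of $\int f^p|\Delta u|{\bf 1}_{f\le K}$ into the Fisher term via Gagliardo--Nirenberg, and Gr\"onwall with $a(t)=C\|\Delta u(t)\|_{L^2}^2\in L^1$. The gap is in your ``lower-order terms'' from the tail region $\{f>K\}$, which you dismiss as controllable with the $L^{q}_tL^{q}_x$ bounds of Lemma~\ref{lem:Bdf&K} and $\Delta u\in L^2_tL^2_x$. That does not close, and it is in fact the core difficulty of the lemma.

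Concretely: with a linear (affine) tail the $\{f>K\}$ contribution to the right-hand side is $\sim K^p\int|\Delta u|\,{\bf 1}_{f>K}$, and there is no diffusion term on $\{f>K\}$ because $\beta_K''\equiv 0$ there. Using mass conservation or $f\in L^p$ to control $|\{f>K\}|$ gives at best $K^p\int|\Delta u|\,{\bf 1}_{f>K}\lesssim K^{p/2}\|\Delta u\|_{L^2}$, which blows up as $K\to\infty$; replacing $|\{f>K\}|$ by $\|f\|_{L^q}^q/K^q$ for $q>2p$ gives decay in $K$ but destroys time-integrability, since only $\|f\|_{L^q}^{q/(q-1)}\in L^1_t$ is available, and $q/2>q/(q-1)$ for the relevant $q$. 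With the paper's $\xi\log\xi$-type tail one keeps a diffusion term on $\{f>K\}$, but it comes with a coefficient $\tfrac{K^{p-1}}{\log K}$, i.e.\ weaker than what is needed by a full $\log K$ factor, so the naive absorption of $K^{p-1}\int|\Delta u| f\,{\bf 1}_{f>K}$ still fails if $\Delta u$ is only in $L^2$.

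The paper closes this gap by upgrading the regularity of $\Delta u$ to an Orlicz space: using Lemma~\ref{lem:BdH2} one shows $f\in L_\Phi((t_0,T)\times\R^2)$ for $\Phi(\xi)=\xi^2(\widetilde\log\,\xi)^2$, and then, by Orlicz-space interpolation of the maximal-regularity operator $f\mapsto\Delta u$ between $L^{3/2}$ and $L^{9/2}$, one gets $\|\Phi(|\Delta u|)\|_{L^1_x}\in L^1_t$. This $\log^2$ improvement over $L^2$ is exactly what pays for the missing $\log K$ in the tail diffusion: the Young inequality $\xi\eta\le\Phi(\xi)+\Phi^*(\eta)$ with $\Phi^*(\eta)\lesssim\eta^2(\widetilde\log\,\eta)^{-2}$, applied to $|\Delta u|\cdot f{\bf 1}_{f>K}$, produces one piece controlled by $\int\beta_K(f)\cdot\bigl(\|\Delta u\|_{L^2}^2+\|\Phi(|\Delta u|)\|_{L^1}\bigr)$ and another piece that, after Sobolev and Cauchy--Schwarz, is absorbed into the left-hand diffusion with a genuine $1/(\log K)^2$ gain. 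Without this Orlicz step (Steps 3 and 4 of the paper's proof), the $K\to\infty$ passage is not justified, so your proof as written does not go through.
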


\noindent
{\sl Proof of Lemma~\ref{lem:BdLp}.} We split the proof into four steps.

\medskip
\noindent{\it Step 1.} 
We define the renormalizing function $\beta_K : \R_+ \to \R_+$, $K \ge e^2$,  by
$$
\beta_K(\xi) := \frac{\xi^p}{p} \,\,\, \hbox{if} \,\,\,  \xi \le K, \quad 
\beta_K(\xi)  := \frac{K^{p-1}}{\log K} \, (\xi  \log \xi - \xi) - \frac{K^p}{p'} + \frac{K^p}{\log K}  \,\,\, \hbox{if} \,\,\,  \xi \ge K,
$$
so that $\beta_K$ is 
convex, increasing and piecewise of class $C^1$. Moreover the following estimates hold
$$
|\beta_K(\xi) - \beta_K'(\xi) \xi | \le \frac{1}{p'} \, \xi^{p} \, {\bf 1}_{\xi < K} 
+  3 K^{p-1}  \, \xi  \, {\bf 1}_{\xi > K} , 
$$
$$ 
\beta''_K(\xi)  = (p-1) \, \xi^{p-2} \, {\bf 1}_{\xi < K} + {K^{p-1} \over \log K}  \, {1 \over \xi} \, {\bf 1}_{\xi > K}  \quad \text{ and } \quad
\beta_K(\xi) \ge 2^{1-p}p^{-1} K^{p-1} \, \xi \, {\bf 1}_{\xi > K/2}.
$$
Thanks to Lemma~\ref{lem:Bdbeta}, we may write
\beqn\label{eq:betaKf} 
\bal
\int_{\R^2} \beta_K(f_{t_1})    
+  {4 \over p p'}   \int_{t_0}^{t_1} \!  \int_{\R^2} |\nabla_x (f^{p/2})  |^2  \, {\bf 1}_{f \le K} 
+   {K^{p-1} \over \log K}   \int_{t_0}^{t_1} \!  \int_{\R^2} {|\nabla_x f  |^2 \over f}  \, {\bf 1}_{f \ge K} 
\\
\le  \int_{\R^2} \beta_K(f_{t_0})     
 +  \frac{1}{p'} \int_{t_0}^{t_1} \!  \int_{\R^2} |\Delta u| \, f^{p} \,  {\bf 1}_{f < K} 
  + 3 K^{p-1}   \int_{t_0}^{t_1} \!  \int_{\R^2}  |\Delta u| \,  f  \, {\bf 1}_{f > K}  .
\eal
\eeqn

\medskip
\noindent{\it Step 2.}
For the second term on the right-hand side of \eqref{eq:betaKf}, using the Gagliardo-Niremberg-Sobolev inequality
$$
\int_{\R^2} g^4 \, dx \le C \, \int_{\R^2} g^2 \, dx \, \int_{\R^2} |\nabla g|^2 \, dx,
$$
we have 
\bean
\TT_1 
&:= & \frac{1}{p'}  \int_{\R^2} |\Delta u| \, f^p  \,  {\bf 1}_{f < K} 
\\
&\le&  \frac{1}{p'}  \|  \Delta u \|_{L^2_x} \Bigl(  \int_{\R^2} \, (f \wedge K)^{2p}   \Bigr)^{1/2}
\\
&\le&   C \,  \|  \Delta u \|_{L^2_x} \Bigl(  \int_{\R^2} \, (f \wedge K)^{p}   \int_{\R^2} |\nabla (f \wedge K)^{p/2}|^2   \Bigr)^{1/2}
\\
&\le&   C \,  \|  \Delta u \|_{L^2_x}^2  \int_{\R^2} \, \beta_K(f)    +{1 \over p p'}  \int_{\R^2}  |\nabla (f^{p/2})  |^2  \, {\bf 1}_{f < K}. 
\eean

\medskip
\noindent{\it Step 3.} Let us define the convex function $\Phi$ by $\Phi(\xi) := \xi^2 (\widetilde{\log}\, \xi)^2$ for any $\xi \ge 0$, with $\widetilde{\log}\, \xi := {\bf 1}_{\xi \le e} + \log \xi \, {\bf 1}_{\xi >e}$. We observe that  $\Phi$ is a $N$-function so that we may associate to $\Phi$ the Orlicz spaces $L_\Phi $ (see Appendix \ref{app:orlicz}). We have already obtained that $\| \Delta u \|_{L^2_x}^2 \in L^1(0,T)$ in Lemma \ref{lem:Bdf&K} 
and we claim that we also have 
\beqn\label{eq:DeltauLPhi}
\| \Phi( |\Delta u| ) \|_{L^1_x} \in L^1(t_0,T).
\eeqn
Indeed, passing to the limit $K\to\infty$ in Estimate~\eqref{eq:flogKf2}, and using 
Proposition~\ref{prop:Fisher}  and Lemma~\ref{lem:BdH2}, 
we immediately deduce 
\beqn\label{eq:fLPhi}
f \in L_{\Phi} ((t_0,T) \times \R^2).
\eeqn
We now consider the linear operator $f \mapsto U(f) := \Delta u$ where $u$ is the solution to the linear parabolic equation $\eps \partial_t u - \Delta u + \alpha u = f$. Thanks to standard results (see e.g. Theorem X.12 stated in \cite{BrezisBook} and the quoted references therein)  
$$
U : L^p ((t_0,T) \times \R^2) \to L^p((t_0,T) \times \R^2)
$$
 is a bounded operator for any $p \in(1,\infty)$. Since $L_{\Phi} ((t_0,T) \times \R^2)$ is an interpolation space between $L^{3/2} ((t_0,T) \times \R^2)$ and $L^{9/2} ((t_0,T) \times \R^2)$ (see Appendix \ref{app:inter}), we also get 
 \beqn\label{eq:ULPhiLPhi}
U : L_{\Phi} ((t_0,T) \times \R^2) \to L_{\Phi} ((t_0,T) \times \R^2)
\eeqn
 bounded. We then deduce \eqref{eq:DeltauLPhi} from \eqref{eq:fLPhi}, \eqref{eq:ULPhiLPhi} and the fact that $\Phi$ satisfies the $\Delta_2$-condition (see Appendix~\ref{app:orlicz}).
 
%

\medskip
\noindent{\it Step 4.}
Now we estimate the last term in \eqref{eq:betaKf}. We denote by  $\Phi^*$ the conjugate function of $\Phi$ and we observe that  $\Phi^*(\eta) \le C \, \eta^2 (\widetilde\log \eta)^{-2} $ for any $\eta > 0$ and for some fixed constant $C \in (0,\infty)$ (see Appendix \ref{app:example}). 
We introduce the notation $f_K := f {\bf 1}_{f >K}$ and $A_K(f) := (\int_{\R^2} f {\bf 1}_{f >K/2} )^{1/2}$. 
Using Young's inequality $\xi \eta  \le \Phi(\xi) + \Phi^*(\eta)$,  we obtain
\bean
\TT_2 
&:=&  2 K^{p-1}   \int_{\R^2}  |\Delta u | \,  f \, {\bf 1}_{f \ge K} \\
&\le& 2  K^{p-1} \left\{ \int_{\R^2} \Phi( A_K(f) |\Delta u|) + \int_{\R^2} \Phi^*( A_K(f)^{-1} f_K)    \, {\bf 1}_{f \ge K}   \right\} =: \TT_{2,1} + \TT_{2,2}.
\eean
For the term $\TT_{2,1}$, using that $A_K(f) \le M^{1/2}$ and the elementary inequality $\widetilde\log (\xi\eta) \le \widetilde\log \, \xi + \widetilde\log \, \eta$,  we may write
$$
\bal
\TT_{2,1} 
&:= 2 K^{p-1} \int_{\R^2} \Phi( A_K(f) |\Delta u| ) \\
&\le C\, K^{p-1} \int_{\R^2} |\Delta u|^2 \, |A_K(f)|^2 \, (\widetilde\log (A_K(f) |\Delta u|))^2 \\
&\le C \, K^{p-1} \left(\int_{\R^2} f \, {\bf 1}_{f >K/2} \right)  \left( \int_{\R^2} |\Delta u|^2 (\widetilde\log A_K(f))^2 + \int_{\R^2} |\Delta u|^2 (\widetilde\log |\Delta u|)^2 \right) \\
&\le C  \left(\int_{\R^2} \beta_K(f) \right)  \left( C(M) \| \Delta u \|_{L^2_x}^2 + \| \Phi( |\Delta u| ) \|_{L^1_x} \right). 
\eal
$$
For the term $\TT_{2,2}$, using that $f_K > K$ on $\{ f > K \}$, we obtain, for $K > eM^{1/2} $,
$$
\bal
\TT_{2,2} 
&:= 2  K^{p-1} \int_{\R^2}  \Phi^* (  f_K /  A_K(f)) \, {\bf 1}_{f \ge K}   \\
&\le C \,  K^{p-1} \int_{\R^2}   \frac{(f_K / A_K(f))^2}{( \widetilde\log (f_K / A_K(f)))^2}  \, {\bf 1}_{f \ge K}  \\
&\le C \, \frac{K^{p-1} \, |A_K(f)|^{-2} }{(\log \frac{K}{M^{1/2}})^2} \, \int_{\R^2} (f - K/2)^2_+ \\
&\le C \, \frac{K^{p-1} \, |A_K(f)|^{-2}}{(\log \frac{K}{M^{1/2}})^2} \left( \int_{\R^2} |\nabla f| \, {\bf 1}_{f \ge K/2} \right)^2\\
&\le C \, \frac{K^{p-1} \,   |A_K(f)|^{-2}} {(\log \frac{K}{M^{1/2}})^2}   \left( \int_{\R^2}  f \, {\bf 1}_{f \ge K/2} \right)  \left( \int_{\R^2}  \frac{ |\nabla f|^2 }{f} \, {\bf 1}_{f \ge K/2} \right)\\
&\le C \, \frac{K^{p-1} }{(\log \frac{K}{M^{1/2}})^2}  \left( \int_{\R^2}  \frac{ |\nabla f|^2 }{f} \, {\bf 1}_{K/2 \le f \le K/2}
+ \int_{\R^2}  \frac{ |\nabla f| }{f} \, {\bf 1}_{ f > K} \right)\\
&\le C \, \frac{K^{p-1}}{(\log \frac{K}{M^{1/2}})^2}  \,  \left( K^{1-p}\int_{\R^2}   |\nabla (f^{p/2})|^2  \, {\bf 1}_{f \le K}
+ \int_{\R^2}  \frac{ |\nabla f| }{f} \, {\bf 1}_{ f > K} \right),\\
\eal
$$
where we have used the elementary inequality $f_K^2 \le C (f - K/2)_+^2$ in the second line, Sobolev's inequality in the third line and Cauchy-Schwarz inequality in the fourth line. 
Hence, for $K$ large enough, we have
$$
\bal
\TT_{2,2} 
&\le C \, \frac{1}{(\log \frac{K}{M^{1/2}})^2} \,   \int_{\R^2}   |\nabla (f^{p/2})|^2  \, {\bf 1}_{f \le K}
+ C \, \frac{K^{p-1}}{(\log \frac{K}{M^{1/2}})^2} \, \int_{\R^2}  \frac{ |\nabla f| }{f} \, {\bf 1}_{ f > K} \\
&\le \frac{1}{p p'} \,  \int_{\R^2}   |\nabla (f^{p/2})|^2  \, {\bf 1}_{f \le K}
+ \frac{1}{2} \, \frac{K^{p-1}}{\log K} \,\int_{\R^2}  \frac{ |\nabla f|^2 }{f} \, {\bf 1}_{ f > K} .
\eal
$$
Gathering $\TT_1$, $\TT_{2,1}$ and $\TT_{2,2}$, it follows that
\beqn\label{eq:boundLp0}
\bal
&\int_{\R^2} \beta_K(f_{t_1})    
+  {2 \over p p'}   \int_{t_0}^{t_1} \!  \int_{\R^2} |\nabla_x (f^{p/2})  |^2  \, {\bf 1}_{f \le K} \\
&\qquad
\le  \int_{\R^2} \beta_K(f_{t_0})    +  C \,  \int_{t_0}^{t_1} \left( \| \Delta u  \|_{L^2_x}^2 +   \| \Phi( |\Delta u|) \|_{L^1_x} \right)\int _{\R^2} \beta_K(f). 
\eal
\eeqn
Using the fact that $h(t) := \int_{\R^2} |\Delta u|^2 + \Phi (|\Delta u|) \in L^1(t_0,T)$ from Lemma \ref{lem:Bdf&K} and Step 3, we can conclude to \eqref{eq:boundLp} by applying first the Gronwall's lemma and by passing then to the limit $K \to \infty$.
%
%
\qed

\medskip

\begin{lem}\label{lem:fsmooth} Any weak solution $(f,u)$ satisfies 
$$
\partial_t f, \partial_x f, \partial^2_{x_ix_j} f, \partial_t u, \partial_x u, \partial^2_{x_ix_j} u   \in C_b((0,T] \times \R^2), \,\, \forall \, T >0, 
$$
so that it is  a ``classical solution" for positive time.
\end{lem}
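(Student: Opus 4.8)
The plan is a standard parabolic bootstrap, started from the integrability already gathered in Lemma~\ref{lem:Bdf&K}, Lemma~\ref{lem:BdH2} and Lemma~\ref{lem:BdLp}. Since $f\in L^{p/(p-1)}(0,T;L^p(\R^2))$ for every $p\in(1,\infty)$ by \eqref{eq:wLqtLpx}, the set of times $t_0$ at which $f(t_0)\in L^p(\R^2)$ for \emph{all} $p<\infty$ has full measure in $(0,T)$ (intersect the full-measure sets over $p\in\Q\cap(1,\infty)$ and interpolate); we fix one such $t_0$, as small as we please. Lemma~\ref{lem:BdLp} applied at $t_0$ then gives $f\in L^\infty(t_0,T;L^p(\R^2))$ together with $\nabla(f^{p/2})\in L^2(t_0,T;L^2(\R^2))$ for every $p<\infty$. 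Since $t_0>0$ can be taken arbitrarily small and $T$ arbitrarily large, it suffices to establish the regularity on each slab $[\tau,T]\times\R^2$ with $t_0<\tau\le T$.

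Next I would regularise $u$. Writing $u$ by Duhamel's formula for $\eps\partial_t u-\Delta u+\alpha u=f$ from the good time $t_0$, $u(t)=e^{(t-t_0)(\Delta-\alpha)/\eps}u(t_0)+\eps^{-1}\int_{t_0}^t e^{(t-s)(\Delta-\alpha)/\eps}f(s)\,ds$, and using the $\R^2$ heat-semigroup estimates $\|\nabla e^{r\Delta}g\|_{L^\infty}\lesssim r^{-1/2-1/p}\|g\|_{L^p}$ and $\|\nabla e^{r\Delta}g\|_{\dot C^{\gamma}}\lesssim r^{-1/2-\gamma/2-1/p}\|g\|_{L^p}$, which are integrable in $r$ near $0$ as soon as $p$ is large and $\gamma<1-2/p$, while the free term is controlled by $\nabla u(t_0)\in L^2(\R^2)$ and $t-t_0\ge\tau-t_0>0$, one gets $\nabla u$ of class $C^{\gamma}$ in space, uniformly on $[\tau,T]\times\R^2$, for every $\gamma<1$; standard time-regularity of the semigroup upgrades this to $\nabla u\in C^{\gamma,\gamma/2}_{\mathrm{loc}}((0,\infty)\times\R^2)$. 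Feeding $F:=f\,\nabla u\in L^\infty(t_0,T;L^p(\R^2))$ into the Duhamel formula for the divergence-form equation $\partial_t f-\Delta f=-\diver(f\,\nabla u)$ and using the same smoothing bounds yields $f\in L^\infty_{\mathrm{loc}}((0,\infty);C^\gamma(\R^2))$; combined with $\nabla(f^{p/2})\in L^2(L^2)$, the De Giorgi--Nash--Moser theorem (equivalently, $L^p$-then-$C^\alpha$ parabolic theory) applied to $\partial_t f-\Delta f=-\diver(f\,\nabla u)$ with bounded data gives $f\in C^{\gamma,\gamma/2}_{\mathrm{loc}}((0,\infty)\times\R^2)$.

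With $f$ now Hölder in space-time, parabolic Schauder estimates applied to $\eps\partial_t u-\Delta u+\alpha u=f$ give $u\in C^{2+\gamma,1+\gamma/2}_{\mathrm{loc}}((0,\infty)\times\R^2)$, so $\partial_t u,\partial_{x_i}u,\partial^2_{x_ix_j}u$ are Hölder, hence bounded and continuous, on every $[\tau,T]\times\R^2$. In particular the drift $\nabla u$ and the coefficient $\Delta u$ of $\partial_t f-\Delta f+\nabla u\cdot\nabla f+(\Delta u)\,f=0$ are now Hölder in space-time, and since $f$ itself is Hölder and bounded, a second application of parabolic Schauder estimates gives $f\in C^{2+\gamma,1+\gamma/2}_{\mathrm{loc}}((0,\infty)\times\R^2)$; hence $\partial_t f,\partial_{x_i}f,\partial^2_{x_ix_j}f$ are bounded and continuous on each $[\tau,T]\times\R^2$, and $(f,u)$ solves \eqref{eq:KS} pointwise for positive time.

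The delicate points are only mild: (i) producing a single small time $t_0$ with $f(t_0)\in L^p$ for all $p$, handled above by the diagonal/interpolation argument; (ii) the roughness of the datum $u_0$ (only $H^1$, resp. $L^1\cap\dot H^1$), handled by running the Duhamel representation from the positive time $t_0$ and absorbing the free term into the bounded part using $t-t_0\ge\tau-t_0>0$ on $[\tau,T]$; and (iii) passing from space-local parabolic estimates to bounds on all of $[\tau,T]\times\R^2$, which is immediate since both the equations and the a posteriori bounds of this section are translation-invariant in $\R^2$. The one genuine source of friction is bookkeeping: at each stage one must check that the exponents in the smoothing (or maximal-regularity) estimates remain in the admissible subcritical range, which is precisely why the \emph{full} scale $f\in L^\infty_{\mathrm{loc}}((0,\infty);L^p)$ for all $p<\infty$, supplied by Lemma~\ref{lem:BdLp}, is needed rather than a single large exponent.
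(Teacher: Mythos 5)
Your proof is correct and reaches the conclusion by a parabolic bootstrap, but the mechanism differs from the paper's in the middle step. The paper, after picking (for each $p$) a time $t_0'<t_0$ with $f(t_0')\in L^p$, runs the Duhamel representation of $u$ through $L^p$-\emph{maximal regularity} of the heat equation (Theorem~X.12 in \cite{BrezisBook}) to get $u,\nabla u\in L^\infty(t_0,T;L^p)$ and $\partial_t u, D^2u\in L^p((t_0,T)\times\R^2)$, then bootstraps $f$ through the same maximal-regularity machinery (with a careful bookkeeping of the exponents $s_1,s_2$) to reach $f,\nabla f\in L^\infty(L^p)$ and $\partial_t f,D^2f\in L^p_{t,x}$ for all $p$, and only then invokes \emph{Morrey's embedding} to convert these Sobolev bounds into $C^{0,\alpha}$ and finally Schauder (Theorem~X.13) for $C^{2,\alpha}$. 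You skip maximal regularity and Morrey entirely: you use the pointwise heat-semigroup smoothing estimates $\|\nabla e^{r\Delta}\|_{L^p\to C^\gamma}\lesssim r^{-1/2-\gamma/2-1/p}$ to land directly in spatial Hölder classes, then obtain time-Hölder regularity via De Giorgi–Nash–Moser before running Schauder. Both routes are sound; the paper's fits naturally with the maximal-regularity tools already deployed in Lemma~\ref{lem:BdLp}, while yours is more self-contained at the semigroup level. Two minor remarks: (i) your De Giorgi step is overkill — once $f\in L^\infty(\tau,T;C^\gamma)$ and $F=f\nabla u\in L^\infty_{t,x}$, the Duhamel identity gives $\|f(t)-f(s)\|_{L^\infty}\lesssim|t-s|^{\gamma/2}$ directly (split into $(e^{(t-s)\Delta}-I)f(s)$, controlled by $|t-s|^{\gamma/2}\|f(s)\|_{C^\gamma}$, and $\int_s^t\nabla e^{(t-\sigma)\Delta}F\,d\sigma$, controlled by $|t-s|^{1/2}\|F\|_{L^\infty}$), so no parabolic Harnack machinery is needed; (ii) your countable-intersection argument to produce a \emph{single} $t_0$ working for all $p$ is slightly cleaner than the paper's choice of a $p$-dependent $t_0'$, but both suffice since regularity is only claimed on $(t_0,T]$ for each fixed $t_0>0$.
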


%
%

\noindent
{\sl Proof of Lemma~\ref{lem:fsmooth}.} For any time $t_0 \in (0,T)$ and any exponent $p \in (1,\infty)$,
there exists $t'_0 \in (0,t_0)$ such that $f(t'_0) \in L^p(\R^2)$ thanks to \eqref{eq:wLqtLpx},
from what we deduce using \eqref{eq:boundLp} on the time interval $(t'_0,T)$ that  
\beqn\label{eq:uniqvortexregL2nabla}
 f \in L^\infty(t_0,T; L^p(\R^2))
\quad \hbox{and}\quad 
\nabla_x f \in L^2((t_0,T) \times \R^2).
\eeqn 
Since $u$ satisfies the parabolic equation 
$$
 \eps \partial_t u - \Delta u + \alpha u = f, 
 $$
 the maximal regularity of the heat equation in $L^p$-spaces (see Theorem X.12 stated in 
\cite{BrezisBook} and the quoted references) and the fact that 
$$
u(t) = \gamma_{t/\eps}^\alpha *_{t,x} f + \gamma_{t/\eps}^\alpha *_x  u_0
\quad\text{and}\quad 
\nabla u = \Gamma_{t/\eps}^\alpha *_{t,x}  f + \gamma_{t,\eps}^\alpha *_x \nabla u_0,
$$ 
where we denote $\gamma^{\alpha}_s = e^{-\alpha s}\,  \gamma_s$ and similarly for $\Gamma$, $\gamma_t$ is the heat kernel given by
$$
  \gamma_t (x) := {1 \over 4 \pi t} \, \exp\left( - \frac{|x|^2}{4t} \right) \in L^{z_1}(0,T;L^{z_2}(\R^2)),
\quad \forall \, z_1, z_2 \ge 1, \,\,  1/z_1+1/z_2 > 1, 
$$
and 
$$
\Gamma_t(x) := \nabla_x \gamma_t(x) 
\in L^{s_1}(0,T;L^{s_2}(\R^2)),
\quad \forall \, s_1, s_2 \ge 1, \,\,  1/s_1+1/s_2 > 3/2, 
$$
provide the bound 
\beqn\label{eq:uLpMaximal}
 u  \in L^\infty(t_0,T; L^p( \R^2)),
  \quad  \nabla u  \in L^\infty(t_0,T; L^p( \R^2)),
 \quad \partial_t u, D^2 u  \in L^p((t_0,T) \times \R^2), 
 \eeqn
for all $t_0 \in (0,T)$ and $p \in (1,\infty)$. 
Since now $f$ satisfies the parabolic equation 
$$
\partial_t f - \Delta f = -\nabla u \cdot \nabla f - (\Delta u) f =: Z
$$
with $Z \in L^2(t_0,T; L^q(\R^2))$ for all $t_0 \in (0,T)$ and all $q \in [1,2)$ from \eqref{eq:uniqvortexregL2nabla} and 
\eqref{eq:uLpMaximal}, the same maximal regularity of the heat equation in $L^q$-spaces (with the choice $s_1 = s_2 = (4/3)^-$) implies 
$$
\nabla f  \in L^p(t_0,T; L^p( \R^2)), \; \forall \, p \in [2,4),
$$
 and then $Z \in  L^p(t_0,T; L^p( \R^2))$,  $\forall \, p \in [2,4)$. By a bootstrap argument of  the regularity property of the heat equation, we easily get 
\beqn\label{eq:fLpMaximal}
f  \in L^\infty(t_0,T; L^p( \R^2)),
\quad  \nabla f  \in L^\infty(t_0,T; L^p( \R^2)),
\quad \partial_t f, D^2 f  \in L^p((t_0,T) \times \R^2), 
\eeqn
for all $t_0 \in (0,T)$ and $p \in (1,\infty)$. The Morrey inequality implies then $f,\nabla f, u, \nabla u \in C^{0,\alpha}((t_0,T) \times \R^2)$
for any $0< \alpha < 1$, and any $t_0 >0$.  Finally the classical Holderian regularity result for the heat equation (see Theorem X.13 stated in \cite{BrezisBook} and the quoted references) implies first $u \in C^{2,\alpha}((t_0,T) \times \R^2)$ and next $f \in C^{2,\alpha}((t_0,T) \times \R^2)$, which concludes the proof.
\qed

\medskip

We prove now the free energy-dissipation of the free energy identity \eqref{eq:FreeEnergy} in Theorem~\ref{theo:uniq}.

\begin{proof}[Proof of the free energy identity in Theorem~\ref{theo:uniq}] 
We split the proof into four steps.

\medskip
\noindent
{\it Step 1.} 
We claim that the free energy functional $\FF$
is lower semi-continuous (lsc) in the sense that for any sequences $(f_n)$ and $(u_n)$ of nonnegative functions such that $(f_n)$ is bounded in $L^1 \cap L^1(\log \la x \ra^2) $ with same mass $M < 8\pi$, $(u_n)$ is bounded in $H^1$ if $\alpha>0$ or in $L^1 \cap \dot H^1 $ if $\alpha=0$, $(f_n \, u_n)$ is bounded in $L^1$,  $(\FF(f_n,u_n))$ is  bounded
and $(f_n , u_n) \wto (f,u)$ in $\DD'(\R^2) \times \DD'(\R^2)$,  there holds 
\beqn\label{eq:Fsci}
0 \le f \in L^1 \cap L^1(\log \la x \ra^2)  
\quad\hbox{and}\quad
\FF(f,u) \le \liminf_{n\to\infty} \FF(f_{n}, u_n).
\eeqn 
Indeed, because of \eqref{eq:H+<H} and \eqref{eq:F<H}, we have  $\HH^+(f_n) \le C$ and we may apply the Dunford-Pettis lemma
which implies that $f_n \wto f$ weakly in $L^1(\R^2)$. 
We then rewrite the free energy functional as
$$
\FF(f_n,u_n) = \HH(f_n) + F_\alpha(f_n,u_n),
$$
with 
$$
\HH(f_n) := \int f_n \log f_n
\quad\text{and}\quad 
F_\alpha (f_n,u_n) := {1 \over 2} \int |\nabla u_n|^2   + {\alpha \over 2}   \int u_n^2  - \int f_n \, u_n ,
$$
and we consider separately the case $\alpha >0$ and $\alpha=0$.

\medskip
\noindent
{\it Step 2: Case $\alpha>0$.} We denote $\bar u_n = \kappa_\alpha * f_n$ where $\kappa_\alpha(z) = \frac{1}{4\pi} \int_{0}^{\infty}\frac1t e^{-\frac{|z|^2}{4t} - \alpha t} \, dt$ is the Bessel kernel. Since $f_n \ge0 $, $f_n \in L^1 \cap L \log L$ and $u_n \in H^1$, \cite[Lemma 2.2]{MR2433703} implies that $\bar u_n \in H^1$ and also that the functional $F_\alpha(f_n,u_n)$ is finite and satisfies
$$
F_\alpha(f_n,u_n) - F_\alpha(f_n, \bar u_n) = \frac12 \| \nabla(u_n-\bar u_n) \|_{L^2}^2 + \frac{\alpha}{2} \| u_n - \bar u_n \|_{L^2}^2.
$$
Hence, we can write
$$
\bal
\FF(f_n,u_n) 
&= \HH(f_n) -\frac12 \iint f_n(x) f_n(y) \kappa_\alpha (x-y)
+ \frac12 \| \nabla(u_n-\bar u_n) \|_{L^2}^2 + \frac{\alpha}{2} \| u_n - \bar u_n \|_{L^2}^2\\
&=: \HH(f_n) + \VV(f_n) + \UU_1(u_n - \bar u_n) + \UU_2(u_n -\bar u_n),
\eal
$$
where the functionals $\UU_1$ and  $\UU_2$ are defined through the third and fourth term respectively. 
We clearly have that $\UU_1 + \UU_2$ is lsc for the weak $H^1$ convergence and $\HH$ is lsc for the weak $L^1$ convergence, so we investigate the functional $\VV$. For any $\epsilon \in (0,1)$ we split $ \VV = \VV_{\epsilon} + \RR_{\epsilon}$ as
$$
\bal
\VV_{\epsilon} (g) &:= -\frac12 \iint g(x) g(y) \, \kappa_\alpha (x-y) \, {\mathbf 1}_{|x-y|>\epsilon} \\
\RR_{\epsilon} (g) &:= -\frac12 \iint g(x) g(y) \, \kappa_\alpha (x-y) \, {\mathbf 1}_{|x-y| \le\epsilon} .
\eal
$$ 
The Bessel kernel $\kappa_\alpha$ is a positive radial decreasing function with a singularity at the origin: $\kappa_\alpha (z) = -\frac{1}{2\pi} \log |z| + O(1)$ when $|z| \to 0$. Hence $\VV_{\epsilon}$ is continuous for the weak $L^1$ convergence and for the rest term we obtain, for any $\epsilon \in (0,1)$ and $\lambda >1$,
$$
\bal
|\RR_{\eps}(g)| 
&\le C \iint g(x)  g(y)  {\mathbf 1}_{|x-y| \le\epsilon}
 +C \iint g(x)  g(y)  (\log |x-y|)_{-}  {\mathbf 1}_{|x-y| \le\epsilon} \\
&\le  C \iint g(x) {\mathbf 1}_{g(x) \le \lambda} g(y) {\mathbf 1}_{|x-y| \le\epsilon}
+ C \iint g(x) {\mathbf 1}_{g(x) > \lambda} g(y) {\mathbf 1}_{|x-y| \le\epsilon} \\
&\quad
+C \iint g(x) {\mathbf 1}_{g(x) \ge \lambda} g(y) (\log |x-y|)_{-} {\mathbf 1}_{|x-y| \le\epsilon} 
+C \iint g(x) {\mathbf 1}_{g(x) > \lambda} g(y) \log (|x-y|^{-1}) {\mathbf 1}_{|x-y| \le\epsilon} \\
&\le  
 C \lambda \int_y g(y) \left\{ \int_{|z|\le \epsilon} dz \right\}
+C \int_y g(y) \left\{ \frac{1}{\log \lambda}\int_x g(x) \log g(x) \right\} \\
&\quad
+C \lambda \int_y g(y) \left\{ \int_{|z|\le \epsilon} (\log |z|)_- \, dz \right\}
+C \int_x g(x) {\mathbf 1}_{g(x) > \lambda} \int_y  \{ g(y) \log g(y) + |x-y|^{-1}  \} {\mathbf 1}_{|x-y| \le\epsilon} \\
&\le C \, M \lambda \epsilon^{2} + C \, M \frac{\HH(g)}{\log \lambda} +  C \, M \lambda \epsilon^{3/2}
+  C \, \frac{\HH(g)}{\log \lambda} \{   \HH(g) + \epsilon \},
\eal
$$ 
where we have used the convexity inequality $u v \le u \log u + e^v$ for all $u > 0$, $v \in \R$ and the elementary inequality $u\log u \ge - u^{1/2}$ for all $u \in (0,1)$. Hence $\sup_n |\RR_\eps (f_n)| \to 0$ as $\epsilon \to 0$ and we deduce that $\FF$ is lsc.

\medskip
\noindent
{\it Step 3: Case $\alpha = 0$.} We define $ \bar u_n = \kappa_0 (f_n - M H)$ where $H(x) = \la x \ra^{-4}/\pi$ and $\kappa_0(z) = -\frac1{2\pi} \log |z|$ is the Laplace kernel. Since $0 \le f_n \in L^1 \cap L^1(\log \la x \ra^2)$, $\HH(f_n)$ is finite, $\int (f-MH) = 0$ and $u_n \in \dot H^1$, \cite[Lemma 2.2]{MR2433703} implies that $\bar u_n \in \dot H^1$ and also that the functional $F_0(f_n-M H,u_n)$ is finite and verifies
$$
F_0(f_n - M H,u_n) - F_0(f_n - M H, \bar u_n) = \frac12 \| \nabla(u_n-\bar u_n) \|_{L^2}^2.
$$
Now we argue as in the case $\alpha >0$. First we write 
$$
\bal
\FF(f_n,u_n) 
&= \HH(f_n) -\frac12 \iint f_n(x) f_n(y) \kappa_0 (x-y)
+M \iint f_n(x) H(y) \kappa_0 (x-y)\\
&\quad
+ \frac12 \| \nabla(u_n-\bar u_n) \|_{L^2}^2 
-M \int H u_n 
- \frac{M^2}{2} \iint H(x) H(y) \kappa_0(x-y)\\
&=: \HH(f_n) + \VV(f_n) + \WW(f_n) +\UU_1(u_n - \bar u_n) + \UU_0(u_n)+ \ZZ(H) .
\eal
$$
The functional $\UU_1$ is lsc for the weak $\dot H^1$ convergence and $\HH$ is lsc for the weak $L^1$ convergence. For $\VV$ we just argue as in the preceding case $\alpha > 0$. 
In the same (even simpler) way we obtain that $\WW$ is lsc for the weak $L^1$ convergence.
Finally we conclude that $\FF$ is lsc.

\medskip
\noindent
{\it Step 4.} Now, we easily deduce that the free energy identity \eqref{eq:FreeEnergy} holds. Indeed, since $(f,u)$ is smooth for positive time, for any fixed $t > 0$ and any given sequence $(t_n)$ of positive real numbers which decreases to $0$, we clearly 
have 
$$
\FF(f(t_n), u(t_n)) = \FF(t) + \int_{t_n}^t \DD_\FF(f(s), u(s)) \, ds. 
$$  
Then, thanks to the Lebesgue convergence theorem, the lsc property of $\FF$ and the fact that $f(t_n) \wto f_0$ and $u(t_n) \wto u_0$ weakly in $\DD'(\R^2)$, we deduce 
from the above free energy identity  for positive time that 
$$
\bal
\FF(f_0,u_0) 
&\le \liminf_{n\to\infty} \FF(f(t_n), u(t_n)) \\
&\le \lim_{n\to\infty} \bigl\{\FF(t) + \int_{t_n}^t \DD_\FF(f(s),u(s)) \, ds \bigr\}= \FF(t) + \int_{0}^t \DD_\FF(f(s),u(s)) \, ds.
\eal
$$ 
Together with the reverse inequality \eqref{eq:bddFD} we conclude to \eqref{eq:FreeEnergy}. 
\end{proof}

\section{Uniqueness - Proof of Theorem~\ref{theo:uniq}} 
\label{sec:uniqueness}

In this section we prove the uniqueness part of Theorem~\ref{theo:uniq}. In order to do so,
 we first prove some estimates in Lemmas \ref{lem:LptC} and \ref{lem:L43to0}.

\begin{lem}\label{lem:LptC} Any weak solution $(f,u)$ to the Keller-Segel equation satisfies that for any $p \in (1,\infty)$, $T \in (0,\infty)$,
 there exists a constant $K = K(f_0, p, T)$ such that  
\beqn\label{eq:LptC}
t^{p-1} \| f(t) \|_{L^p}^p \leq K \qquad \forall\, t\in(0,T).
\eeqn
\end{lem}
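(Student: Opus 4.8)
\emph{Plan.} The idea is to run the $L^p$ energy estimate already underlying Lemma~\ref{lem:BdLp}, but keeping track of the natural time weight $t^{p-1}$, and to close a Bernoulli-type differential inequality whose homogeneity is tuned to the self-similar decay rate. \emph{Step 1 (a differential inequality).} By Lemma~\ref{lem:fsmooth} the couple $(f,u)$ is a classical solution on $(0,\infty)\times\R^2$ with the integrability \eqref{eq:fLpMaximal} (alternatively one argues with the renormalised identity of Lemma~\ref{lem:Bdbeta} and the truncations $\beta_K$, exactly as in the proof of Lemma~\ref{lem:BdLp}, and passes to the limit), so that for every $p\in(1,\infty)$ the function $t\mapsto\|f(t)\|_{L^p}^p$ is locally absolutely continuous on $(0,T)$ and satisfies
\[
\frac{d}{dt}\|f\|_{L^p}^p=-\frac{4(p-1)}{p}\,\|\nabla(f^{p/2})\|_{L^2}^2-(p-1)\int_{\R^2}f^{p}\,\Delta u .
\]
For the last term I would write $\bigl|\int_{\R^2}f^{p}\,\Delta u\bigr|\le\|\Delta u\|_{L^2}\,\|f^{p/2}\|_{L^4}^2$, apply the two-dimensional Gagliardo--Nirenberg inequality $\|g\|_{L^4}^2\le C\,\|g\|_{L^2}\,\|\nabla g\|_{L^2}$ (the one already used in the proof of Lemma~\ref{lem:BdLp}) to $g=f^{p/2}$, and then use Young's inequality to absorb half of the dissipation; this gives
\[
\frac{d}{dt}\|f\|_{L^p}^p\le-\frac{2(p-1)}{p}\,\|\nabla(f^{p/2})\|_{L^2}^2+h(t)\,\|f\|_{L^p}^p,\qquad h:=C_p\,\|\Delta u\|_{L^2}^2\in L^1(0,T),
\]
the integrability of $h$ being exactly \eqref{eq:DeltauL2}.

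\emph{Step 2 (the case $p\ge2$).} Here I would bound the dissipation from below by a superlinear power of $\|f\|_{L^p}^p$. Interpolating $\|f\|_{L^{p/2}}\le\|f\|_{L^1}^{1-\theta}\|f\|_{L^p}^{\theta}$ with $\theta=\frac{p-2}{p-1}\in[0,1)$ and feeding this into the two-dimensional Nash inequality $\|f^{p/2}\|_{L^2}^2\le C\,\|f^{p/2}\|_{L^1}\,\|\nabla(f^{p/2})\|_{L^2}$, and using $\|f\|_{L^1}=M$, one gets after elementary algebra
\[
\|\nabla(f^{p/2})\|_{L^2}^2\ge a_0\,\bigl(\|f\|_{L^p}^p\bigr)^{1+\sigma},\qquad\sigma:=\frac1{p-1}>0,
\]
for some $a_0=a_0(p,M)>0$. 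Writing $y(t):=\|f(t)\|_{L^p}^p$, which is strictly positive by mass conservation, Step 1 yields $y'\le-a\,y^{1+\sigma}+h\,y$. Setting $z:=e^{-\int_0^t h}\,y$ one has $z'\le-a\,e^{\sigma\int_0^t h}\,z^{1+\sigma}\le-a\,z^{1+\sigma}$, hence $(z^{-\sigma})'\ge\sigma a$; integrating over $[t_0,t]\subset(0,T)$ and letting $t_0\downarrow0$ (using only $z(t_0)^{-\sigma}>0$) gives $z(t)\le(\sigma a\,t)^{-1/\sigma}$, whence
\[
t^{p-1}\,\|f(t)\|_{L^p}^p=t^{1/\sigma}\,y(t)\le e^{\|h\|_{L^1(0,T)}}\,(\sigma a)^{-1/\sigma}=:K .
\]

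\emph{Step 3 (the case $1<p<2$).} Now I would interpolate $L^p$ between $L^1$ and $L^2$: with $\theta=\frac{2(p-1)}{p}$ one has $\|f\|_{L^p}\le\|f\|_{L^1}^{1-\theta}\|f\|_{L^2}^{\theta}$, i.e.\ $\|f\|_{L^p}^p\le M^{2-p}\,\|f\|_{L^2}^{2(p-1)}$, so $t^{p-1}\|f(t)\|_{L^p}^p\le M^{2-p}\bigl(t\,\|f(t)\|_{L^2}^2\bigr)^{p-1}$, and one concludes by invoking Step 2 with $p=2$.

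The only genuinely delicate point is the bookkeeping in Step 2: the lower bound on the dissipation must have the exact homogeneity $1+\frac1{p-1}$ so that the Bernoulli inequality $y'\le-a\,y^{1+\sigma}+h\,y$ integrates precisely to the scaling rate $t^{-(p-1)}$, and it is exactly the interpolation of $L^{p/2}$ between $L^1$ and $L^p$ that produces this exponent; one should also check that the argument degenerates gracefully at $p=2$, where $\theta=0$ and the Nash inequality is applied directly. The remaining points---justifying the differentiation of $\|f(t)\|_{L^p}^p$ and the integrations by parts---are routine given the classical regularity of Lemma~\ref{lem:fsmooth}.
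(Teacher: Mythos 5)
Your proof is correct and follows the same broad strategy as the paper's (an $L^p$ energy identity combined with two-dimensional functional inequalities, giving a Bernoulli-type differential inequality whose homogeneity is tuned to the self-similar rate $t^{-(p-1)}$), but several of the details are genuinely different and, in your version, cleaner. The paper eliminates $\Delta u$ by substituting the second equation $\eps\partial_t u = \Delta u + f - \alpha u$, which creates the extra cubic term $(p-1)\int f^{p+1}$; this is then absorbed by splitting $f$ at a level $A$ and exploiting the smallness of $\HH^+(f)/\log A$, at the cost of a non-homogeneous forcing in the resulting ODE, $X' \le -C_0\, X^{p/(p-1)} + C_1 H X + C_2(1+H)$, which the paper integrates by citing the Calvez--Corrias argument. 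You instead keep $\Delta u$, which is controlled by \eqref{eq:DeltauL2}, estimate $(p-1)\bigl|\int f^{p}\Delta u\bigr|$ in one step by Cauchy--Schwarz, Gagliardo--Nirenberg and Young, and obtain the genuinely homogeneous inequality $y' \le -a\, y^{1+\sigma} + h\, y$ with $h \in L^1(0,T)$, which you then integrate explicitly through the change of unknown $z = e^{-\int_0^{\cdot} h}\, y$ --- a nice, self-contained conclusion. Your lower bound on the dissipation (Nash plus interpolation of $L^{p/2}$ between $L^1$ and $L^p$) is restricted to $p\ge 2$, whereas the paper's choice (interpolation of $L^p$ between $L^1$ and $L^{p+1}$ together with \eqref{eq:GNp+1}) produces the same exponent $p/(p-1)$ for every $p>1$ in a single stroke; but your short interpolation in Step 3, reducing $1<p<2$ to the case $p=2$, fills that gap correctly. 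Both routes yield precisely $t^{p-1}\|f(t)\|_{L^p}^p\le K$.
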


\noindent
{\sl Proof of Lemma~\ref{lem:LptC}.} Recall that we already know that $\| f \|_{L^p} \in C^1(0,T)$ for any $p>1$ and $\|f \|_{L^p} \in L^\infty(t_0,T)$ for any $0 < t_0 < T$ and any $p \in [1,\infty]$. For $p>1$, we have
\bean
\frac{d}{dt}\, \int f^p 
&=& - 4(1-1/p)\int |\nabla (f^{p/2}) |^2 +(p-1) \int f^{p+1} - (p-1)\int  (\partial_t u + \alpha u) f^p \\
&=:& T_1 + T_2 + T_3.
\eean
Using the splitting $f = \min(f,A) + (f-A)_+$, for some $A>0$, and denoting $h(u) := \partial_t u + \alpha u \in L^2(0,T; L^2(\R^2))$, we have
$$
\bal
|T_3| &\leq C \int |h(u)| \min(f,A)^p + C  \int |h(u)| (f-A)_+^p =: T_{31}+ T_{32}.
\eal
$$
For the term $T_{31}$, we have
$$
|T_{31}| \leq C A^{p-1/2} \int |h(u)| f^{1/2} \leq C A^{p-1/2} ( \| h(u)\|_{L^2}^2 + M ).
$$
For $T_{32}$, using Gagliardo-Niremberg-Sobolev inequality $\| g \|_{L^4(\R^2)}^4 \leq C \| g\|_{L^2(\R^2)}^2 \| \nabla g \|_{L^2(\R^2)}^2$ with $g = (f-A)_+^{p/2}$, we have
$$
\bal
|T_{32}| 
&\leq C \int |h(u)| (f-A)_+^p \\
&\leq C \| h(c) \|_{L^2} \left(\int (f-A)_+^{2p}\right)^{1/2} \\
&\leq C \| h(c) \|_{L^2} \left(\int (f-A)_+^{p}\right)^{1/2} \left(\int | \nabla(f-A)_+^{p/2}|^2 \right)^{1/2} \\
&\leq C_\delta \| h(u)\|_{L^2}^2 \int (f-A)_+^{p} + \delta \int | \nabla (f^{p/2})|^2 \, \indiq_{f\geq A},
\eal
$$
for any $\delta >0$.

For the second term $T_2$, we have
$$
|T_2| \leq C \int \min(f,A)^{p+1} + C \int (f-A)_+^{p+1}.
$$
At the right-hand side, the first term is easily bounded by $C A^p M$, and using \eqref{eq:GNp+1}, the second one is bounded as follows
$$
\int (f-A)_+^{p+1} \leq C \left(\int (f-A)_+\right) \left( \int|\nabla (f-A)_+^{p/2}|^2 \right) \leq C \,\frac{\HH^+(f)}{\log A} \, \left( \int |\nabla (f^{p/2})|^2 \, \indiq_{f\geq A} \right). 
$$
Gathering all the previous estimates, choosing $\delta >0$ small enough and $A$ large enough, we get
\beqn\label{eq:dtf^p}
\frac{d}{dt}\, \int f^p \leq - C_0 \int |\nabla (f^{p/2})|^2 + C_1 \| h(u) \|_{L^2}^2 \int f^p + C_1(M + \|h(u)\|_{L^2}^2).
\eeqn
Thanks to the following H\"older and Gagliardo-Niremberg-Sobolev inequalities
$$
\left( \| f\|_{L^p}^p \right)^{p/(p-1)}\leq C  \| f\|_{L^1}^{1/(p-1)} \| f \|_{L^{p+1}}^{p+1}
\quad\text{and}\quad
\| f \|_{L^{p+1}}^{p+1} \leq C \|f\|_{L^1} \| \nabla (f^{p/2})\|_{L^2}^2,
$$
we obtain from \eqref{eq:dtf^p} the differential inequality
$$
\frac{d}{dt}\, X(t) \leq - C_0 X(t)^{\frac{p}{p-1}} + C_1 H(t) X(t) + C_2(1 + H(t)),
\qquad
t\in (0,T),
$$
where we denote $X(t) := \| f(t) \|_{L^p}^p$ and $H(t) := \| h(u)\|_{L^2}^2 (t) \in L^1(0,T)$.
By standard arguments (see e.g.\ \cite[Proof of Theorem 5.1]{MR2433703}) we conclude to \eqref{eq:LptC}. 
\qed

\medskip

We (crucially) improve the preceding estimate by showing


\begin{lem}\label{lem:L43to0}  For any $q \in (1,\infty)$, any weak solution $(f,u)$ to the Keller-Segel equation satisfies:  
\beqn\label{eq:L43to0}
t^{1-\frac1q} \|f (t,.) \|_{L^{q}} \to 0 \quad\hbox{as}\ t \to 0.
\eeqn
\end{lem}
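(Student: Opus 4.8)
The plan is to rescale the estimate: writing $q:=2p/(p+1)$, the exponent $\tfrac{p-1}{2p}$ in \eqref{eq:L43to0} equals $1-\tfrac1q$, and $q$ runs over $[4/3,2)$ as $p$ runs over $[2,\infty)$, so it suffices to show that $t^{1-1/q}\|f(t)\|_{L^q}\to0$ as $t\to0^+$ for every $q\in[4/3,2)$. Since by Lemma~\ref{lem:fsmooth} the solution is classical on $(0,\infty)$, I would start from the Duhamel formula on $[t_0,t]$ and let $t_0\to0^+$: because $f(t_0)\to f_0$ strongly in $L^1(\R^2)$ (see \eqref{eq:wcont}) and $e^{(t-t_0)\Delta}$ smooths, the free term converges to $e^{t\Delta}f_0$ in $L^q$, giving the mild identity
\[
f(t)=e^{t\Delta}f_0-\int_0^t\nabla\cdot e^{(t-s)\Delta}\bigl(f(s)\,\nabla u(s)\bigr)\,ds\qquad\text{in }L^q(\R^2).
\]
Along the way I record three quantities that vanish as $t\to0^+$: $\eta(t):=\sup_{0<s\le t}s^{1-1/q}\|e^{s\Delta}f_0\|_{L^q}$, which tends to $0$ because $f_0\in L^1$ and $L^1\cap L^q$ is dense in $L^1$ (split $f_0=g+h$ with $\|g\|_{L^q}<\infty$, $\|h\|_{L^1}$ small, and use $\|e^{s\Delta}h\|_{L^q}\le c\,s^{-(1-1/q)}\|h\|_{L^1}$); $\omega(t):=\int_0^t I(f(s))\,ds$, which tends to $0$ by Proposition~\ref{prop:Fisher}; and, for a suitable exponent $b>2$, $\bar\nu_b(t):=\sup_{0<s\le t}s^{1/2-1/b}\|\gamma^\alpha_{s/\eps}\ast\nabla u_0\|_{L^b}$, which tends to $0$ by the same density argument applied to $\nabla u_0\in L^2(\R^2)$.

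Next I would estimate the Duhamel integral. Splitting $\nabla u=\nabla u^{\rm lin}+\nabla u^{\rm nl}$ with $u^{\rm lin}(s):=\gamma^\alpha_{s/\eps}\ast u_0$, Young's inequality and the heat-kernel bounds give
\[
\|\nabla u^{\rm lin}(s)\|_{L^b}\le\bar\nu_b(t)\,s^{-(1/2-1/b)}\ (s\le t),\qquad
\|\nabla u^{\rm nl}(s)\|_{L^b}\le C_\eps\!\int_0^s(s-s')^{-3/2+1/n}\|f(s')\|_{L^c}\,ds',
\]
where $1+1/b=1/n+1/c$. The point is to choose the exponents with $a>q$, $c\in(1,2)$, $1/a+1/c>1$ (and then $b,n,r$ accordingly; this is possible for every $q\in[4/3,2)$, the choice $a=2$, $c=4/3$, $b=3$, $n=12/7$, $r=6/5$ being admissible already at the endpoint $q=4/3$). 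Using $\|\nabla e^{\tau\Delta}F\|_{L^q}\le C\tau^{-1/2-1/r+1/q}\|F\|_{L^r}$ and $\|f\nabla u\|_{L^r}\le\|f\|_{L^a}\|\nabla u\|_{L^b}$ with $\tfrac1r=\tfrac1a+\tfrac1b$, $1<r\le q$, I would bound the factor $\|f(s)\|_{L^a}$ only by the a~posteriori \emph{bound} $\|f(s)\|_{L^a}\le K_a\,s^{-(1-1/a)}$ of Lemma~\ref{lem:LptC}, while for the factor $\|f(s')\|_{L^c}$ appearing in $\nabla u^{\rm nl}$ I would use the Fisher-information control $\|f(s')\|_{L^c}^{c'}\le C\,I(f(s'))$ provided by \eqref{eq:LpbdFisher}, so that $\|f\|_{L^{c'}(0,t;L^c)}\le(C\omega(t))^{1/c'}$. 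A Fubini-then-H\"older-in-time computation — in which the condition $a>q$ is exactly what makes the relevant $\beta$-integrals converge and all powers of $t$ collapse to $t^{-(1-1/q)}$, cancelling the prefactor — then yields
\[
t^{1-1/q}\|f(t)\|_{L^q}\le\eta(t)+C\,K_a\,\bar\nu_b(t)+C\,K_a\,\omega(t)^{1/c'} .
\]
Letting $t\to0^+$ gives \eqref{eq:L43to0}.

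The crux — and the reason the scheme is delicate — is that the nonlinear term is genuinely borderline: if one controlled \emph{both} occurrences of $f$ by the scale-invariant quantity $s^{1-1/a}\|f(s)\|_{L^a}$, the time integrals would land exactly on the non-integrable endpoint and one would merely recover the boundedness already known from Lemma~\ref{lem:LptC}. The gain comes from pairing one factor handled by a mere \emph{bound} at the subcritical exponent $a>q$ with a second factor (the $L^{c'}(0,t;L^c)$ norm of $f$, or the contribution of $\nabla u_0$) that is subcritical and hence vanishes as the time interval shrinks; no continuity/bootstrap argument and no smallness of $M$ or $\eps$ is needed. The remaining points — the justification of the mild formulation up to $t=0$ and the routine bookkeeping showing the exponent constraints are simultaneously satisfiable for every $q\in[4/3,2)$ — I would treat quickly.
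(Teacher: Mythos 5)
Your proof is correct, but it takes a genuinely different route from the paper's. The paper derives \eqref{eq:L43to0} directly from the critical bound \eqref{eq:LptC} and the uniform entropy bound on $\HH^+(f(t))$ by a one-line H\"older interpolation against the logarithmic weight $\widetilde{\log}_+ f := 2 + \log_+ f$: one writes $\|f\|_{L^{2p/(p+1)}} \le C(M,\HH^+(f)) \bigl(\int f^p(\widetilde{\log}_+ f)^{-p}\bigr)^{1/(2p)}$, splits the last integral on $\{f\le R\}\cup\{f>R\}$, controls the large-$f$ part with Lemma~\ref{lem:LptC} and the small-$f$ part by the mass, and chooses $R=t^{-1}$, so that the logarithmic weight supplies the $o(1)$ factor $\langle\log t^{-1}\rangle^{-2}$. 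That argument uses nothing beyond Lemma~\ref{lem:LptC} and the entropy bound: no Duhamel formula, no heat-kernel estimates, no Fisher information, and no mild formulation down to $t=0$. Your Duhamel-based scheme is also sound: the reformulation $t^{1-1/q}\|f(t)\|_{L^q}\to0$ for $q\in[4/3,2)$ is correct, the exponent constraints you list ($a>q$, $c\in(1,2)$, $b>2$, $1/a+1/b=1/r$, $2q/(q+2)<r\le q$) are simultaneously satisfiable for every such $q$ (your sample choice $a=2$, $b=3$, $c=4/3$, $n=12/7$, $r=6/5$ at $q=4/3$ works), the Fubini-then-H\"older bookkeeping does collapse to $t^{-(1-1/q)}$, and the Beta integral near $s\to t$ converges precisely because $a>q$, as you assert. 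Your diagnosis of the borderline nature of the problem --- if both factors of $f$ are controlled by \eqref{eq:LptC} one only recovers boundedness, so the subcriticality must be spent on exactly one factor while the other (the $L^{c'}_t L^c_x$ norm of $f$ via the Fisher information, or the $\nabla u_0$ contribution) supplies the $o(1)$ --- is accurate, and it is the same mechanism that makes the uniqueness proof of Section~\ref{sec:uniqueness} close. The trade-off is that your route is considerably longer and requires justifying the mild identity from $t=0^+$ (hence the a posteriori regularity of Section~\ref{ssec:aposteriori}), whereas the paper's interpolation proof is short, self-contained and purely entropic; on the other hand your argument is conceptually aligned with the Kato-type estimates used immediately afterwards and makes explicit which vanishing quantities ($\eta$, $\bar\nu_b$, $\omega$) produce the smallness.
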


\noindent
{\sl Proof of Lemma~\ref{lem:L43to0}.} We prove \eqref{eq:L43to0} from \eqref{eq:LptC} and an interpolation argument.  
On the one hand, denoting $\widetilde \log_+ f := 2 + \log_+ f$, we use the Cauchy-Schwarz inequality in order to get 
\bean
\int f^{q} 
&\le& \Bigl( \int f  \, \widetilde \log_+ f \Bigr)^{1/2} \, \Bigl( \int f^{2q-1} \, (\widetilde \log_+ f)^{-1} \Bigr)^{1/2},
\eean
or in other words
\beqn\label{eq:L43bdd}
\|f \|_{L^{q}} \le C (M,\HH_+(f)) \, \Bigl( \int f^{2q-1} \, (\widetilde \log_+ f)^{-1} \Bigr)^{1/(2q)}.
\eeqn
On the other hand, we observe that 
\bear\nonumber
t^{2q-2} \int f^{2q-1} \, (\widetilde \log_+ f)^{-1} 
&\le&
t^{2q-2} \,  {R^{2q-1}  \over \widetilde \log_+ R}
\int_{f \le R} f + {t^{2q-2}  \over \widetilde \log_+ R } \int_{f \ge R} f^{2q-1} \quad \forall \, R >0
\\ \nonumber
&\le&
t^{2q-2}\,  {M  R^{2q-1}  \over \widetilde \log_+ R}
 + { K \over \widetilde \log_+ R } ,
\eear
for any $R > 0$, where we have used that $s \mapsto s^{2q-1} (\widetilde \log_+ s)^{-1}$ is an increasing function in the first line, the mass conservation of the solution of the Keller-Segel equation and the estimate \eqref{eq:LptC} in the second  line. Choosing  $R := t^{-1}$, we deduce
\beqn \label{eq:L2*to0}
t^{2q-2} \int f^{2q-1} \, (\widetilde \log_+ f)^{-1} 
\le{M +K  \over \widetilde \log_+ t}, \quad  \forall \, t \le 1.
\eeqn
We conclude to \eqref{eq:L43to0} by gathering 
\eqref{eq:L43bdd} and \eqref{eq:L2*to0}.  
\qed

\medskip

We are now able to prove the uniqueness of solutions.

\medskip\noindent
{\sl Proof of the uniqueness part in Theorem~\ref{theo:uniq}.} 
We consider two weak solutions $(f_1,u_1)$ and $(f_2,u_2)$ to the Keller-Segel equation \eqref{eq:KS} that we write in the mild form
$$
\bal
f_i (t) 
&= e^{t \Delta} f_i(0) - \int_0^t \nabla e^{(t-s)\Delta} (f_i(s) \nabla u_i(s))\, ds
\eal
$$
and
$$
u_i(s) = e^{-\frac{\alpha}{\eps}s} e^{\frac{s}{\eps}\Delta} u_i(0) 
+ \frac{1}{\eps}\int_0^{s} e^{-\frac{\alpha}{\eps}(s-\sigma)} e^{\frac{(s-\sigma)}{\eps}\Delta} f_i(\sigma)\, d\sigma,
$$
from which we also obtain
$$
\nabla u_i(s) = e^{-\frac{\alpha}{\eps}s} e^{\frac{s}{\eps}\Delta} (\nabla u_i(0)) 
+ \frac{1}{\eps}\int_0^{s} e^{-\frac{\alpha}{\eps}(s-\sigma)} (\nabla e^{\frac{(s-\sigma)}{\eps}\Delta}) f_i(\sigma)\, d\sigma.
$$
When we assume $f_1(0) = f_2(0)$ and $u_1(0)=u_2(0)=u_0$, the difference $F := f_2 - f_1$ satisfies
\beqn\label{eq:F(t)}
\bal
F(t) &= 
- \int_0^t \nabla e^{(t-s)\Delta} \left\{ F(s) \left[e^{-\frac{\alpha}{\eps}s} e^{\frac{s}{\eps}\Delta} (\nabla u_{0})\right] \right\}\, ds \\
&\quad
- \int_0^t \nabla e^{(t-s)\Delta} \left\{ F(s) \left[\frac{1}{\eps}\int_0^s e^{-\frac{\alpha}{\eps}(s-\sigma)} \nabla e^{\frac{(s-\sigma)}{\eps}\Delta}  f_2(\sigma)\, d\sigma\right] \right\}\, ds \\
&\quad
- \int_0^t \nabla e^{(t-s)\Delta} \left\{ f_1(s) \left[\frac{1}{\eps}\int_0^s e^{-\frac{\alpha}{\eps}(s-\sigma)} \nabla e^{\frac{(s-\sigma)}{\eps}\Delta}  F(\sigma)\, d\sigma\right] \right\}\, ds \\
&=: I_1(t) + I_2(t) + I_3(t).
\eal
\eeqn
For any $t > 0$, we define  
$$
Z_p^i(t) := \sup_{0 < s \le t} s^{\frac12 - \frac1{2p}} \, \|  f_i(s) \|_{L^{\frac{2p}{p+1}}}, 
\quad
 \delta (t) := \sup_{0 < s \le t} s^{\frac14} \, \|  F(s) \|_{L^{4/3}}.
$$
We recall the explicit formula for the heat semigroup
$$
e^{t\Delta} g = \gamma(t,\cdot) *_{x} g,
\qquad
\gamma(t,x) :=  \frac{1}{4\pi t} \exp\left(- \frac{|x|^2}{4t} \right),
$$
and the following well-known inequalities that will be useful in the sequel
\beqn\label{eq:young}
\| K * g \|_{L^r} \leq \| K \|_{L^q} \|g \|_{L^p}, \quad \frac{1}{p}+\frac{1}{q}=\frac{1}{r} + 1, \quad 1\leq p,q,r\leq \infty,
\eeqn
and
$$
\| \gamma(t,\cdot) \|_{L^q(\R^2)} \leq C_q \, t^{\frac1q - 1}, 
\quad 
\| \nabla \gamma(t,\cdot) \|_{L^q(\R^2)} \leq C_q \, t^{\frac1q - \frac32}.
$$
We fix $p>2$ and we compute the quantity $t^{\frac14}\| \cdot \|_{L^{4/3}}$ for each term of \eqref{eq:F(t)}.

For the second term, we compute 
\beqn\label{eq:I2-0}
\bal
t^{\frac14} \| I_2(t) \|_{L^{4/3}} 
&\leq C(\alpha,\eps) \, t^{\frac14}\int_0^t \left\| \nabla e^{(t-s)\Delta} \left\{ F(s) \int_0^s  \nabla e^{\frac{(s-\sigma)}{\eps}\Delta}  f_2(\sigma)\, d\sigma \right\} \right\|_{L^{4/3}} \, ds\\
&\leq C \,t^{\frac14} \int_0^t  \|\nabla \gamma (t-s)\|_{L^{4/3}} \, \left\|F(s) \int_0^s  \nabla e^{\frac{(s-\sigma)}{\eps}\Delta}  f_2(\sigma)\, d\sigma \right\|_{L^{1}} \, ds  \\
&\leq C \, t^{\frac14} \int_0^t (t-s)^{-\frac34} \, \| F(s)\|_{L^{4/3}}  \, \int_0^s  \|\nabla e^{\frac{(s-\sigma)}{\eps}\Delta}  f_2(\sigma)\|_{L^{4}}\, d\sigma \, ds,
\eal
\eeqn
where we have used Young's inequality for convolution \eqref{eq:young} in the second line and H\"older's inequality in the third line.  
Now we can estimate the integral over $d\sigma$ using again Young's inequality \eqref{eq:young} with $1/4 + 1 = 1/a + (p+1)/(2p)$, i.e.\ $ 1/a = 3/4 - 1/(2p)$, by
$$
\bal
\int_0^s \left\| \nabla e^{\frac{(s-\sigma)}{\eps}\Delta}  f_2(\sigma) \right\|_{L^{4}}  \, d\sigma
&\leq \int_0^s  \| \nabla \gamma (\tfrac{t-s}{\eps}) \|_{L^a}   \, 
\| f_2(\sigma)\|_{L^{\frac{2p}{p+1}}} \, d\sigma \\
&\leq C \int_0^s (s-\sigma)^{\frac34-\frac{1}{2p} - \frac32} \, 
\| f_2(\sigma)\|_{L^{\frac{2p}{p+1}}} \, d\sigma \\
&\leq C \, Z^2_p(s) \int_0^s (s-\sigma)^{-\frac34-\frac{1}{2p}} \, \sigma^{-\frac12+\frac{1}{2p}} \, d\sigma \\
&\leq C \, Z^2_p(s) \, s^{-\frac14},
\eal
$$
since the last integral is bounded thanks to $-\frac34 - \frac{1}{2p} > -1$ from $p>2$.

Gathering that last estimate with \eqref{eq:I2-0}, it follows 
\beqn\label{eq:I2}
\bal
t^{\frac14} \| I_2(t) \|_{L^{4/3}}  
&\leq  C \, Z^2_p(t) \,  \delta (t)  \int_0^t   (t-s)^{-\frac34} \,t^{\frac14}\, s^{-\frac12} \, ds \\
&\leq C \, Z^2_p(t) \,  \delta (t) .
\eal
\eeqn

For the term $I_3$, we have
\beqn\label{eq:I3-0}
\bal
t^{\frac14} \| I_3(t) \|_{L^{4/3}} 
&\leq C \, t^{\frac14}\int_0^t \left\| \nabla e^{(t-s)\Delta} \left\{ f_1(s) \int_0^s  \nabla e^{\frac{(s-\sigma)}{\eps}\Delta}  F(\sigma)\, d\sigma \right\} \right\|_{L^{4/3}} \, ds\\
&\leq C \,t^{\frac14} \int_0^t  \| \nabla \gamma (t-s) \|_{L^{4/3}} \, \left\|f_1(s) \int_0^s  \nabla e^{\frac{(s-\sigma)}{\eps}\Delta}  F(\sigma)\, d\sigma \right\|_{L^{1}} \, ds\\
&\leq C \, t^{\frac14} \int_0^t (t-s)^{-\frac34} \, \| f_1(s)\|_{L^{\frac{2p}{p+1}}}  \, \int_0^s  \|\nabla e^{\frac{(s-\sigma)}{\eps}\Delta}  F(\sigma)\|_{L^{\frac{2p}{p-1}}}\, d\sigma \, ds. 
\eal
\eeqn
We compute the integral over $d\sigma$ in the following way
$$
\bal
\int_0^s  \|\nabla e^{\frac{(s-\sigma)}{\eps}\Delta}  F(\sigma)\|_{L^{\frac{2p}{p-1}}}\, d\sigma 
&\leq  \int_0^s  \|\nabla \gamma ( \tfrac{s-\sigma}{\eps})\|_{L^{\frac{4p}{3p-2}}}  \|F(\sigma)\|_{L^{4/3}} \, d\sigma \\
&\leq C \,  \delta (s)  \int_0^s  (s-\sigma)^{-\frac34 - \frac{1}{2p}} \, \sigma^{-\frac14} \, d\sigma  \\
&\leq C \,  \delta (s)  \, s^{-\frac{1}{2p}},
\eal
$$
since the last integral is bounded because $p>2$. 
Putting together this estimate with \eqref{eq:I3-0}, we obtain
\beqn\label{eq:I3}
\bal
t^{\frac14} \| I_3(t) \|_{L^{4/3}} 
&\leq C \, Z^1_p(t) \,  \delta (t)  \int_0^t (t-s)^{-\frac34}\, t^{\frac14}\, s^{-\frac12}\\
&\leq C \, Z^1_p(t) \,  \delta (t) .
\eal
\eeqn

For the term $I_1$, we compute
\beqn\label{eq:I1-0}
\bal
t^{\frac14} \| I_1(t) \|_{L^{4/3}} 
&\leq C \, t^{\frac14} \int_0^t \left\| \nabla e^{(t-s)\Delta} \left\{ F(s)  e^{\frac{s}{\eps}\Delta}  \nabla u_{0} \right\} \right\|_{L^{4/3}} \, ds\\
&\leq C \,t^{\frac14} \int_0^t  \| \nabla \gamma (t-s) \|_{L^{4/3}} \, \left\|F(s) e^{\frac{s}{\eps}\Delta}  \nabla u_{0} \right\|_{L^{1}} \, ds\\
&\leq C \, t^{\frac14} \int_0^t (t-s)^{-\frac34} \, \| F(s)\|_{L^{4/3}}  \, \left\|e^{\frac{s}{\eps}\Delta}  \nabla u_{0} \right\|_{L^{4}} \, ds, 
\eal
\eeqn
where we have used Young's and H\"older's inequalities. Let $K>0$ to be chosen later, we estimate
\beqn\label{eq:nablau0}
\|e^{\frac{s}{\eps}\Delta}  \nabla u_{0} \|_{L^{4}}
\leq \|e^{\frac{s}{\eps}\Delta}  \nabla u_{0} \,\indiq_{\{|\nabla u_0| \leq K\}} \|_{L^{4}}
+\|e^{\frac{s}{\eps}\Delta}  \nabla u_{0} \,\indiq_{\{|\nabla u_0| \geq K\}} \|_{L^{4}}. 
\eeqn
Using Young's inequality, we have
$$
\|e^{\frac{s}{\eps}\Delta}  \nabla u_{0} \,\indiq_{\{|\nabla u_0| \leq K\}} \|_{L^{4}}
\leq  \| \gamma ( \tfrac{s}{\eps}) \|_{L^{1}}  \|\nabla u_{0} \,\indiq_{\{|\nabla u_0| \leq K\}} \|_{L^{4}} 
\leq C K^{\frac12}\, \| \nabla u_0\|_{L^2}^{1/2}.
$$
Using Young's inequality again for the second term in \eqref{eq:nablau0}, we have 
$$
\|e^{\frac{s}{\eps}\Delta}  \nabla u_{0} \,\indiq_{\{|\nabla u_0| \geq K\}} \|_{L^{4}}
\leq  \| \gamma ( \tfrac{s}{\eps}) \|_{L^{4/3}}   \|\nabla u_{0} \,\indiq_{\{|\nabla u_0| \geq K\}} \|_{L^{2}}
\leq C s^{-\frac14} \, \varphi(K),
$$
where
$$
\varphi(K) := \|\nabla u_{0} \,\indiq_{\{|\nabla u_0| \geq K\}} \|_{L^{2}} \to 0 
\quad\text{as}\quad
K \to +\infty,
$$
by the dominated convergence theorem. Putting together that last estimates in \eqref{eq:nablau0} and choosing $K = s^{-\frac14}$, it follows
$$
\|e^{\frac{s}{\eps}\Delta}  \nabla u_{0} \|_{L^{4}} \leq C\, s^{-\frac14}\, \alpha(s)
\quad\text{with}\quad
\alpha(s) := s^{\frac18} + \varphi(s^{-\frac14}) \xrightarrow[s\to 0]{} 0.
$$
Coming back to \eqref{eq:I1-0}, we obtain
\beqn\label{eq:I1}
\bal
t^{\frac14} \| I_1(t) \|_{L^{4/3}} 
&\leq C \, t^{\frac14} \int_0^t (t-s)^{-\frac34} \, \| F(s)\|_{L^{4/3}}  \, s^{-\frac14} \, \alpha(s) ds \\
&\leq C \, \left( \sup_{0<s\leq t} \alpha(s)  \right)  \delta (t)  \int_0^t t^{\frac14} (t-s)^{-\frac34} \, s^{-\frac12}\, ds \\
&\leq C \, \left( \sup_{0<s\leq t} \alpha(s)  \right)  \delta (t) .
\eal
\eeqn

Gathering \eqref{eq:I2}, \eqref{eq:I3} and \eqref{eq:I1} and using Lemma~\ref{lem:L43to0}, we conclude to
$$
 \delta (t)  \leq C \left[\sup_{0<s\leq t} \alpha(s) + Z_p^1(t) + Z_p^2(t)\right]  \delta (t)  \leq \frac12\,  \delta (t) ,
$$
for $t\in (0,T)$, $T>0$ small enough. That in turn implies $ \delta (t)  \equiv 0$ on $[0,T)$. We may then repeat the argument for later times and conclude to the uniqueness of the solution. 
\qed

\medskip

\section{Self-similar solutions and linear stability} 
\label{sec:autosimilaire}

\subsection{Convergence of the stationary solutions}

First, for a given mass $M \in (0,8\pi)$ and a given parameter $\eps \in (0,1/2)$, we consider the self-similar profile $(G_\eps,V_\eps)$ which is the unique solution of the system of elliptic equations \eqref{eq:KSprofileMass-intro}-\eqref{eq:KSprofile-intro}. We also consider the unique positive solution $(G,V)$ to 
the system of equations corresponding to the limit case $\eps = 0$  
\bear \label{eq:KSprofile0}
&& \Delta G - \nabla (G \, \nabla V- {1 \over 2} \, x \, G) = 0 \quad \hbox{in} \quad   \R^2,  \quad \int_{\R^2} G \, dx = M, 
\\ \nonumber
&& \Delta V + G = 0   \quad \hbox{in} \quad  \R^2.
\eear
It is worth emphasizing that $(G,V)$ is the unique self-similar profile associated to the parabolic-elliptic Keller-Segel equation, see \cite{CamposDolbeault2012,EM}. 

\begin{lem}\label{lem:LinStabBd} There exists a  constant $C$ such that for any $\eps \in (0,1/4]$
\beqn\label{eq:Gepsbound}
 0 \le G_\eps(x) \le C \, e^{-|x|^2/4},
\eeqn
\beqn\label{eq:borneVeps}
 \sup_{x \in \R^2} (\frac{1}{|x|} + \langle x \rangle) \, |\nabla V_\eps(x)| \le C,
\eeqn
and
\beqn\label{eq:borneDeltaVeps}
 \sup_{x \in \R^2}  |\Delta V_\eps(x)| \le C.
\eeqn

\end{lem}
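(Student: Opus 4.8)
The plan is to exploit radial symmetry to integrate the stationary system \eqref{eq:KSprofile-intro} explicitly, reduce all three bounds to a single uniform $L^\infty$ estimate on $G_\eps$, and then obtain that estimate by combining the subcritical–mass free energy machinery of Section~\ref{sec:aposteriori} with an elementary ODE lower bound.

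First I would work in radial variables $r=|x|$. Integrating the first equation of \eqref{eq:KSprofile-intro} once (using decay at infinity) gives $\nabla G_\eps=G_\eps\nabla V_\eps-\tfrac12 x\,G_\eps$, hence $\nabla\log G_\eps=\nabla V_\eps-\tfrac12 x$ and
$$
G_\eps(x)=\kappa_\eps\, e^{V_\eps(x)-|x|^2/4},\qquad \kappa_\eps>0,
$$
so in particular $G_\eps>0$. The second equation of \eqref{eq:KSprofile-intro}, in the form $(rV_\eps')'+\tfrac\eps2 r^2V_\eps'+rG_\eps=0$, integrates with the integrating factor $e^{\eps r^2/4}$ to
$$
\nabla V_\eps(x)=-\frac{x}{|x|^2}\,e^{-\eps|x|^2/4}\int_0^{|x|}s\,e^{\eps s^2/4}\,G_\eps(s)\,ds .
$$
Bounding the exponential factors by $1$ and using $\int_{\R^2}G_\eps=M$ gives $V_\eps'\le0$ (so $V_\eps$, and therefore $G_\eps$, are radially nonincreasing, whence $\|G_\eps\|_{L^\infty}=G_\eps(0)$) and $|\nabla V_\eps(x)|\le M/(2\pi|x|)$; estimating instead $\int_0^{|x|}sG_\eps(s)\,ds\le\tfrac12|x|^2\|G_\eps\|_{L^\infty}$ gives $|\nabla V_\eps(x)|\le\tfrac12|x|\,\|G_\eps\|_{L^\infty}$. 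Combining these two bounds yields \eqref{eq:borneVeps}; the monotonicity of $V_\eps$ gives $G_\eps(x)\le\kappa_\eps e^{V_\eps(0)}e^{-|x|^2/4}=\|G_\eps\|_{L^\infty}e^{-|x|^2/4}$, i.e.\ \eqref{eq:Gepsbound}; and rewriting $\Delta V_\eps=-G_\eps-\tfrac\eps2 x\cdot\nabla V_\eps$ with $|\tfrac\eps2 x\cdot\nabla V_\eps|\le\tfrac{\eps M}{4\pi}\le\tfrac{M}{16\pi}$ for $\eps\le1/4$ gives \eqref{eq:borneDeltaVeps}. Thus all three estimates follow once we prove
$$
\|G_\eps\|_{L^\infty}\le C(M)\qquad\text{uniformly in }\eps\in(0,1/4].
$$

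This uniform $L^\infty$ bound is the heart of the matter, and I would obtain it in two steps. Step one is a uniform entropy bound $\HH^+(G_\eps)\le C(M)$: here one uses that $(G_\eps,V_\eps)$ is the steady state of the rescaled system \eqref{eq:KSresc}--\eqref{eq:KSresc2eme} and, for $\eps$ small, the minimiser of the associated free energy, so that its free energy is bounded above by the free energy of a fixed admissible test pair, uniformly in $\eps\le1/4$ (the $\eps$-dependent terms being harmless on a fixed smooth rapidly decaying pair), while the logarithmic Hardy--Littlewood--Sobolev inequality \eqref{eq:logHLSineqBessel} together with \eqref{eq:H+<H} bounds it from below by $c(M)\,\HH^+(G_\eps)-C(M)$ --- and this is precisely where the subcritical mass $M<8\pi$ enters. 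Step two turns the entropy bound into an $L^\infty$ bound via the ODE. Writing $K:=\|G_\eps\|_{L^\infty}=G_\eps(0)$, the formula for $V_\eps'$ gives $V_\eps(0)-V_\eps(r)=\int_0^r(-V_\eps')\le\tfrac14 Kr^2$, hence
$$
G_\eps(r)=K\,e^{-(V_\eps(0)-V_\eps(r))}e^{-r^2/4}\ge K\,e^{-(K+1)r^2/4}.
$$
Since $t\mapsto t(\log t)_+$ is nondecreasing, this lower bound and the substitution $u=(K+1)r^2/4$ give
$$
\HH^+(G_\eps)\ge 2\pi\!\!\int_0^{2\sqrt{\log K/(K+1)}}\!\!\! r\,K e^{-(K+1)r^2/4}\Bigl(\log K-\tfrac{(K+1)r^2}{4}\Bigr)dr=\frac{4\pi K}{K+1}\bigl(\log K-1+\tfrac1K\bigr),
$$
which is $\gtrsim\log K$ for $K$ large. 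Combined with $\HH^+(G_\eps)\le C(M)$ this forces $K\le e^{C(M)}$, completing the proof.

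The main obstacle is Step one: the uniform-in-$\eps$ upper bound on the free energy of the profile. One must either identify $(G_\eps,V_\eps)$ as the minimiser of the rescaled free energy, or control its free energy directly (for instance from the free-energy monotonicity along the rescaled flow and the convergence results of \cite{CEM}), and then check that the comparison with a fixed test pair is uniform on $\eps\in(0,1/4]$. Everything else --- the two ODE integrations, the two bounds on $\nabla V_\eps$, and the calculus estimate turning entropy into $L^\infty$ --- is elementary. An alternative route for the uniform bound, avoiding the variational characterisation, is a compactness argument: if $\eps_n\to0$ with $\|G_{\eps_n}\|_{L^\infty}\to\infty$, the uniform Gaussian tail and the bound $|\nabla V_{\eps_n}(x)|\le M/(2\pi|x|)$ provide enough compactness to pass to the limit in the ODEs and recover the parabolic--elliptic self-similar profile $(G,V)$ of \eqref{eq:KSprofile0}, which is bounded by \cite{CamposDolbeault2012,EM}, the subcritical mass condition ruling out concentration of mass at the origin --- a contradiction.
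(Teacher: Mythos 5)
Your reduction of all three bounds to a single uniform estimate $\|G_\eps\|_{L^\infty}\le C(M)$ is sound, and the two ODE integrations, the two complementary bounds on $\nabla V_\eps$, and the entropy-to-$L^\infty$ calculation in your ``Step two'' are all correct --- in fact more explicit than the paper's Gronwall-type argument on $w:=(rV_\eps')^2$. The genuine gap is exactly the one you flag yourself: your ``Step one'', the uniform-in-$\eps$ bound on $\|G_\eps\|_{L^\infty}$ (equivalently the uniform Gaussian estimate \eqref{eq:Gepsbound}). The paper does not prove this either: it is imported wholesale from \cite{BCD}, specifically from equations (26) and (49) of that reference, together with the parametrization of the profile by $(\eps,b)$ with $b=G_\eps(0)$ and the mass estimate $0\le M(\eps,b)\le 4\pi\min(2,b)$. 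So the paper's Step 1 is a citation, and the remaining two steps of its proof do the same work as your Steps two through four: they derive \eqref{eq:borneVeps} and \eqref{eq:borneDeltaVeps} from \eqref{eq:Gepsbound}.

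None of the three alternatives you sketch for Step one is carried through, and each faces a real obstruction. The variational route needs $(G_\eps,V_\eps)$ to be a minimizer of the rescaled free energy, which is not among the properties recalled from \cite{NSY,BCD,CEM}; moreover the self-similar free energy carries a confinement term not covered by the a priori estimates of Section~\ref{sec:aposteriori}, which are written in the original variables. The compactness route is not automatic either: if $K_n:=G_{\eps_n}(0)\to\infty$, the bound $G_{\eps_n}(x)\le K_n e^{-|x|^2/4}$ is not a uniform tail, and your own lower bound $G_{\eps_n}(r)\ge K_n e^{-(K_n+1)r^2/4}$ shows that at least $4\pi$ units of mass concentrate at the origin in the limit, which does not by itself contradict $M<8\pi$; one would still need the uniform entropy control --- which is again the missing ingredient --- to obtain a contradiction. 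So the proposal as written does not close. The explicit formula for $\nabla V_\eps$ and the entropy-to-$L^\infty$ device are nonetheless valid and could replace the paper's Steps 2--3 once the uniform bound \eqref{eq:Gepsbound} is secured from \cite{BCD} (or by some independent argument for the uniform entropy bound).
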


\noindent
{\sl Proof of Lemma~\ref{lem:LinStabBd}.} We split the proof into three steps.

\medskip
\noindent
{\it Step 1.} The estimate \eqref{eq:Gepsbound} has been proved in \cite{BCD}. More precisely it is a consequence of equations (26) and (49) in \cite{BCD}, and 
$$
G(0) = b, \quad 0 \le M(\eps,b) \le 4 \pi \, \min(2,b).
$$
Here the parametrization of $G$ is made in function of $\eps$ and $b = G(0)$ instead of $\eps$ and $M$ because 
this dependence is more tractable. Observe that the above estimate guarantees that the mass is subcritical, i.e.\ $M(\eps,b) \le 8\pi$. 

\medskip
\noindent
{\it Step 2.} Since $V_\eps$ and $G_\eps$ are radially symmetric functions, the equation on $V_\eps$ writes
\beqn\label{eq:VGrad}
V_\eps'' + \bigl( {1 \over r} + \frac12 \eps \, r \bigr) \, V_\eps' + G_\eps = 0 \quad \forall \, r > 0, 
\eeqn
where we abuse notation in writing $V_\eps(r) = V_\eps(x)$, $G_\eps(r) = G_\eps(|x|)$, $r = |x|$. The function $V_\eps$ is smooth and the equation is 
complemented with the boundary conditions $V_\eps'(0) = V'_\eps(\infty) = 0$. Defining $w := (r V_\eps')^2$, we find
\bean 
{w' \over 2} &=& - \frac12 \eps \, r \, w - G_\eps \, V_\eps' \, r^2 \le C \, \sqrt{w} { r \over \langle r \rangle^3}, 
\quad C := \sup_{r > 0} G_\eps \, \langle r \rangle^3.
\eean 
As a consequence
$$
{d \over dr} \sqrt{w} \le C \,   { r \over \langle r \rangle^3},
$$
and then 
$$
\sqrt{w} \le C \, (1 \wedge r)^2,
$$
from which the inequality $\sup_x [\la x \ra \, |\nabla V_\eps (x)| ]\le C$ of \eqref{eq:borneVeps} follows.

\medskip
\noindent
{\it Step 3.} We rewrite \eqref{eq:VGrad} as 
$$
{1 \over r}   (V_\eps' r)' = w := - G_\eps - \frac12 \eps r V'_\eps \in L^\infty,
$$
which implies 
$$
|V'_\eps(r) r |   = \left| \int_0^r s w(s) \, ds \right| \le C \, r^2.
$$
This completes the estimate \eqref{eq:borneVeps}. Coming back to \eqref{eq:VGrad}, we also obtain
$$
|V''_\eps(r)|   \le C,
$$
which gives \eqref{eq:borneDeltaVeps} and ends the proof.
\qed

\begin{cor}\label{cor:cvgceGVeps} 
As $\eps \to 0$, there  hold
$$
G_\eps \to G \,\, \hbox{ in } \,\, W^{2,p} \quad  \forall\, p \in (1,\infty),
$$
and 
$$
\nabla V_\eps \to \nabla V  \,\, \hbox{ in } \,\,  L^{\infty}_1 , \quad
 (\Delta V_\eps)_{\eps > 0} \,\,\hbox{uniformly bounded in } L^\infty \, \, \hbox{and } \Delta V_\eps \to \Delta V  \,\, \hbox{ a.e}.   . 
$$
\end{cor}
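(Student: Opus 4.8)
The plan is to run a compactness argument. I would first collect uniform-in-$\eps$ bounds, then extract a subsequence $\eps_n\to 0$ along which $(G_{\eps_n},V_{\eps_n})$ converges, pass to the limit in the elliptic systems \eqref{eq:KSprofile-intro}, and identify the limit with the unique parabolic-elliptic self-similar profile $(G,V)$ solving \eqref{eq:KSprofile0}; since the limit does not depend on the chosen subsequence, the whole family converges.

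\emph{Uniform bounds.} By \eqref{eq:Gepsbound}, $(G_\eps)$ is bounded in $L^p_k(\R^2)$ for all $p\in[1,\infty]$, $k\ge 0$; by \eqref{eq:borneVeps}--\eqref{eq:borneDeltaVeps}, $(\nabla V_\eps)$ is bounded in $L^\infty_1(\R^2)$ and $(\Delta V_\eps)$ in $L^\infty(\R^2)$, so $(V_\eps)$ is bounded in $W^{2,p}_{loc}(\R^2)$ for every $p<\infty$. Exploiting radial symmetry, the first equation of \eqref{eq:KSprofile-intro} integrates once to $G_\eps'=G_\eps\,(V_\eps'-\tfrac r2)$; together with \eqref{eq:Gepsbound}, \eqref{eq:borneVeps} and the bound $|V_\eps''|\le C$ (which follows from \eqref{eq:borneVeps} and \eqref{eq:borneDeltaVeps} via $\Delta V_\eps=V_\eps''+V_\eps'/r$), this gives uniform Gaussian pointwise bounds $|G_\eps|+|\nabla G_\eps|+|D^2 G_\eps|\le C\,e^{-|x|^2/8}$; in particular $(G_\eps)$ is bounded in $W^{2,p}(\R^2)$ for all $p\in(1,\infty)$.

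\emph{Compactness and the limit equation.} Along $\eps_n\to 0$, Rellich's theorem and Morrey's embedding give $G_{\eps_n}\to\bar G$ in $C^1_{loc}(\R^2)$ and weakly in $W^{2,p}_{loc}$; since $(\nabla V_{\eps_n})$ is bounded in $W^{1,p}_{loc}$ it converges to some curl-free field $\bar W$ in $C^0_{loc}(\R^2)$; and $(\Delta V_{\eps_n})$ converges weak-$\ast$ in $L^\infty$. The crucial observation for the second equation is that, by \eqref{eq:borneVeps}, $|x\cdot\nabla V_\eps(x)|=|x|\,|\nabla V_\eps(x)|\le C$ uniformly, so the singular term $\tfrac{\eps_n}{2}x\cdot\nabla V_{\eps_n}$ vanishes uniformly; hence $\Delta V_{\eps_n}=-G_{\eps_n}-\tfrac{\eps_n}{2}x\cdot\nabla V_{\eps_n}\to-\bar G$ locally uniformly, which forces $\bar W=\nabla\bar V$ for some potential $\bar V$ with $\Delta\bar V=-\bar G$. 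Passing to the limit in the distributional form of the first equation shows that $(\bar G,\bar V)$ solves \eqref{eq:KSprofile0}; moreover $\bar G\ge 0$, $\langle x\rangle|\bar W|\le C$, and $\int_{\R^2}\bar G=\lim_n\int_{\R^2}G_{\eps_n}=M$ by dominated convergence via \eqref{eq:Gepsbound}. By uniqueness of the parabolic-elliptic self-similar profile of mass $M$ (\cite{CamposDolbeault2012,EM}) one gets $\bar G=G$ and $\bar W=\nabla V$, and the limit being subsequence-independent yields convergence of the whole family.

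\emph{Upgrading the modes of convergence.} For $G_\eps\to G$ in $W^{2,p}(\R^2)$ I would combine the local convergence with the uniform Gaussian decay of $G_\eps-G$ and its derivatives up to order two (the once-integrated radial identity applies verbatim to $G$, together with $V_\eps''=\Delta V_\eps-V_\eps'/r\to V''$), concluding $L^p(\R^2)$-convergence of $\partial^\alpha(G_\eps-G)$ for $|\alpha|\le 2$ by dominated convergence; alternatively one writes $\Delta(G_\eps-G)=\diver\bigl((G_\eps-G)\nabla V_\eps+G(\nabla V_\eps-\nabla V)-\tfrac12 x(G_\eps-G)\bigr)$, checks that the bracketed field tends to $0$ in $L^p(\R^2)$, and invokes $L^p$ elliptic regularity. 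The bound \eqref{eq:borneDeltaVeps} gives $(\Delta V_\eps)$ bounded in $L^\infty$, and $\Delta V_\eps=-G_\eps-\tfrac\eps2 x\cdot\nabla V_\eps\to\Delta V$ everywhere; the convergence of $\nabla V_\eps$ to $\nabla V$ then follows from the explicit radial representation $r\,V_\eps'(r)=-e^{-\eps r^2/4}\int_0^r e^{\eps s^2/4}\,s\,G_\eps(s)\,ds$, the uniform bound \eqref{eq:borneVeps} and dominated convergence. The main obstacle I anticipate is the passage to the limit in the $v$-equation: $\tfrac\eps2 x\cdot\nabla V_\eps$ is the product of a vanishing factor with the unbounded weight $x$, and it is the quantitative estimate \eqref{eq:borneVeps} — not merely $\nabla V_\eps\in L^\infty$ — that makes this term harmless; a secondary technical point is turning the local $W^{2,p}$ bounds on $G_\eps$ into global convergence, which is exactly why the pointwise Gaussian control of $\nabla G_\eps$ and $D^2 G_\eps$ coming from the radial ODE is needed.
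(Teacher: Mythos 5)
Your overall strategy — uniform bounds, compactness along a subsequence, passage to the limit in the elliptic system, then identification via uniqueness of the parabolic-elliptic profile — is the same as the paper's. The interesting difference is how you obtain the uniform $W^{2,p}$ bound on $G_\eps$. The paper rewrites the first equation as $LG_\eps=\nabla G_\eps\cdot\nabla V_\eps+G_\eps\Delta V_\eps$ with $L=\Delta-\tfrac12\nabla\cdot(x\,\cdot)$ and invokes elliptic regularity for this drift operator (which implicitly requires a small bootstrap, since the right-hand side already contains $\nabla G_\eps$). You instead exploit the radial structure: the once-integrated identity $G_\eps'=G_\eps(V_\eps'-r/2)$ combined with \eqref{eq:Gepsbound}, \eqref{eq:borneVeps} and the $L^\infty$ bound on $V_\eps''$ gives uniform pointwise Gaussian control of $G_\eps$, $\nabla G_\eps$, $D^2G_\eps$. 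This is more elementary and more explicit, and it also makes the upgrade from local to global $W^{2,p}$ convergence transparent via dominated convergence; the paper's own proof leaves that upgrade to the reader. Your treatment of the $\eps\,x\cdot\nabla V_\eps$ term and your use of the explicit radial representation $rV_\eps'(r)=-e^{-\eps r^2/4}\int_0^r e^{\eps s^2/4}\,s\,G_\eps(s)\,ds$ are also welcome additions that the paper's very terse argument does not supply.

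One point deserves closer scrutiny, though: your claim that dominated convergence applied to that radial representation yields $\nabla V_\eps\to\nabla V$ in $L^\infty_1$. Writing $g_\eps(r)=rV_\eps'(r)$ and $g(r)=rV'(r)=-\int_0^r sG(s)\,ds$, one has
\[
g_\eps(r)-g(r)=\int_0^r s\,(G-G_\eps)\,ds+\int_0^r s\,G_\eps(s)\bigl(1-e^{\eps(s^2-r^2)/4}\bigr)\,ds .
\]
The first integral is controlled uniformly in $r$ by the $L^1$ convergence $G_\eps\to G$ (which your Gaussian bounds do give). But the second integral is not small uniformly in $r$: choosing $r\sim 2/\sqrt\eps$, the factor $1-e^{\eps(s^2-r^2)/4}$ is bounded away from $0$ for every fixed $s$ as $\eps\to0$, so this term stays of order $\int_0^\infty sG\,ds=M/(2\pi)$. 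This reflects the fact that $r V_\eps'(r)\to0$ as $r\to\infty$ for each fixed $\eps>0$ (because $\int_0^\infty sG_\eps e^{\eps s^2/4}\,ds<\infty$ for $\eps<1$), while $rV'(r)\to -M/(2\pi)$; the weighted sup over $r$ therefore cannot vanish. So "dominated convergence" alone does not close this step, and what one can actually extract from your representation is pointwise (even locally uniform) convergence together with the uniform $L^\infty_1$ bound \eqref{eq:borneVeps} — i.e.\ the same mode of convergence you (correctly) claim for $\Delta V_\eps$. You should be aware that the paper's own one-line proof does not address this either, so your write-up is at least as careful as the source; but if you want to assert genuine $L^\infty_1$ norm convergence you need an additional argument, and I do not see one.
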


\noindent{\sl Proof of Corollary~\ref{cor:cvgceGVeps}.} 
Coming back to \eqref{eq:KSprofile-intro} and using Lemma~\ref{lem:LinStabBd}, for any $p \in (1,\infty)$, we have
$$
L G_\eps = \nabla G_\eps \cdot \nabla V_\eps + G_\eps \Delta V_\eps \in L^p,
$$
where $L$ denotes the operator $L G_\eps := \Delta G_\eps - \nabla \cdot ( \frac12 x G_\eps)$.
By elliptic regularity we obtain that $G_\eps$ is uniformly bounded (with respect to $\eps \in (0,1/2)$) in $W^{2,p}$.
Thanks to previous estimates and Lemma~\ref{lem:LinStabBd}, there exists $(\bar G,\bar V)$ and
a subsequence (still denoted as $(G_{\eps}, V_{\eps}))$ such that $G_\eps \to \bar G$, $V_\eps \to \bar V$.  We may pass to the limit (in the weak sense) in the system of equations, and we find
$$
\bar V'' +  {1 \over r} \bar V'  + \bar G = 0, \quad \bar V'(0) = \bar V'(\infty) = 0.
$$
We conclude that $(\bar G,\bar V)$ is a solution to the stationary equation \eqref{eq:KSprofile0}, so that $(\bar G , \bar V) = (G,V)$. 
\qed

\medskip
 
\subsection{Splitting structure for the linearized operator}

The evolution equation in self-similar variables writes (see \eqref{eq:KSresc} and \eqref{eq:KSresc2eme})
\beqn \label{eq:KSinSSV}
\left\{
\bal
\partial_t g  &=  \Delta g + \nabla ({1 \over 2} \, x \, g - g  \, \nabla v)  ,\\ 
\partial_t v  &=    {1 \over \eps} ( \Delta v + g) + {1 \over 2} \, x \cdot \nabla v ,
\eal
\right.
\eeqn
and the associated linearized equation around the self-similar profile $(G_\eps, V_\eps)$ is given by
\beqn \label{eq:OpeLambdaeps}
\left\{
\bal
\partial_t f 
&= \Lambda_{1,\eps} (f,u) := \Delta f + \nabla ({1 \over 2} \, x \, f - f  \, \nabla V_\eps - G_\eps \, \nabla u)  , \\
\partial_t u 
&= \Lambda_{2,\eps} (f,u) :=  {1 \over \eps} ( \Delta u+f ) + {1 \over 2} \, x \cdot \nabla u ,
\eal
\right.
\eeqn
which we also denote $\partial_t (f,u) = \Lambda_\eps (f,u) = \big(\Lambda_{1,\eps} (f,u), \Lambda_{2,\eps} (f,u)   \big)$. From now on, we restrict ourselves to a radially symmetric setting.

We introduce some classical notation of operator theory. 
Let $X$ be a Banach space and consider a linear operator $\Lambda:X \to X$. We denote by $S_\Lambda(t) = e^{t \Lambda}$ the semigroup of operators generated by $\Lambda$, by $\Sigma(\Lambda)$ its spectrum and by $\Sigma_d(\Lambda)$ its discrete spectrum. Moreover, for two Banach spaces $X,Y$ we denote by $\BBB(X,Y)$ the space of bounded linear operators from $X$ to $Y$ and by $\| \cdot \|_{\BBB(X,Y)}$ its norm, with the usual shorthand $\BBB(X) = \BBB(X,X)$. We also define the subset $ \CC_a \subset \C$ for any $a \in \R$ by
\beqn\label{eq:Delta_a}
\CC_a := \{ z \in \C \; \mid \;  \Re e z > a  \} .
\eeqn

\smallskip

Let us denote by $L^2_{rad}$ the $L^2$ space of radially symmetric functions and by $L^2_{k,j}$, $j < k$, the following space
$$
L^2_{k,j} := \left\{ g \in L^2_k  \; \mid \;  \int x^{\alpha} g = 0, \; \forall\, \alpha \in \N^2, \, |\alpha| \le j  \right\}. 
$$ 
We fix $k>7$ and we introduce the Hilbert space 
\beqn\label{eq:X}
X := X_1 \times X_2  , \quad X_1 :=  L^2_{rad}  \cap L^2_{k,0} \subset L^2_{k,1} ,
\quad X_2 = L^2_{rad},
\eeqn
associated to the norm 
\bear\label{eq:normeX}
\| (f,u)  \|_X^2 :=  \| f \|_{L^2_{k}}^2  + \| u  \|_{L^2}^2.
\eear

We now state a property of the spectrum of $\Lambda_\eps$ in $X$ that is the main result of this subsection.
\begin{prop}\label{prop:spectre}
Fix some $a^* >-1/2$.
There exist $\eps^*, r^* > 0$ such that in $X$
$$
\forall \, \eps \in (0,\eps^*) \qquad\Sigma(\Lambda_\eps) \cap \CC_{a^*} \subset \Sigma_d(\Lambda_\eps) \cap B(0,r^*).
$$

\end{prop}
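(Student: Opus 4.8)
The plan is to prove Proposition~\ref{prop:spectre} by a perturbative argument that combines (i) the Weyl-type splitting $\Lambda_\eps = \AA_\eps + \BB_\eps$ into a part $\BB_\eps$ with good dissipativity (spectral bound $<-1/3$ away from the origin) and a compact-type remainder $\AA_\eps$, and (ii) continuity of the resolvent in $\eps$ down to the limit parabolic-elliptic operator $\Lambda_0$, whose spectrum in $\Delta_{-1/3}$ is already known from \cite{EM,CamposDolbeault2012} to be reduced to a finite discrete set inside a ball $B(0,r_0)$. Concretely, I would first write, for the $f$-component, a decomposition $\Delta f + \nabla(\tfrac12 x f) = \AA^{(1)}_Rf + \BB^{(1)}_Rf$ where $\AA^{(1)}_R$ is a bounded multiplication/truncation operator supported in $|x|\le R$ (as in the hypocoercivity setup of \cite{GMM,EM}) and $\BB^{(1)}_R$ is dissipative in $L^2_k$ with spectral bound $\le -1/2 + \kappa(R)$, $\kappa(R)\to0$; the cross terms $\nabla\cdot(f\nabla V_\eps)$ and $\nabla\cdot(G_\eps\nabla u)$, together with the $v$-equation, are treated as the remaining pieces of $\AA_\eps$, using the uniform bounds on $G_\eps,\nabla V_\eps,\Delta V_\eps$ from Lemma~\ref{lem:LinStabBd}. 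The $v$-equation contributes the operator $u\mapsto \tfrac1\eps(\Delta u+f)+\tfrac12 x\cdot\nabla u$, whose $\tfrac1\eps\Delta$ part is strongly dissipative in $X_2=L^2_{rad}$, so it causes no spectral obstruction on the right of $-1/3$; the coupling term $\tfrac1\eps f$ is the one requiring care, and it is absorbed either into $\AA_\eps$ (bounded from $X_1$ to $X_2$, hence $\Lambda_\eps$-compact in the relevant sense) or handled by noting that on $\Delta_{-1/3}$ the resolvent $(z-\tfrac1\eps\Delta-\tfrac12 x\cdot\nabla)^{-1}$ gains smallness.

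**Key steps, in order.** First, establish the splitting $\Lambda_\eps=\AA_\eps+\BB_\eps$ with $\BB_\eps$ generating a semigroup satisfying $\|S_{\BB_\eps}(t)\|_{\BBB(X)}\le C e^{-t/3}$ uniformly in $\eps\in(0,\eps^*)$, and $\AA_\eps\in\BBB(X)$ (indeed $\AA_\eps$ maps into a space compactly embedded in $X$, using the $k>7$ weight and the Gaussian decay \eqref{eq:Gepsbound}). This uses only Lemma~\ref{lem:LinStabBd} and Corollary~\ref{cor:cvgceGVeps}. Second, deduce from the standard spectral-mapping/enlargement argument (e.g.\ the abstract results in \cite{GMM} in the spirit of \cite{EM}) that $\Sigma(\Lambda_\eps)\cap\Delta_{-1/3}$ consists of finitely many eigenvalues of finite multiplicity, for each fixed $\eps$. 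Third, prove uniform localization: there is $r^*$ with $\Sigma(\Lambda_\eps)\cap\Delta_{-1/3}\subset B(0,r^*)$ for all small $\eps$. For this I would argue by contradiction — if $z_\eps\in\Sigma(\Lambda_\eps)$ with $\Re e\,z_\eps>-1/3$ and $|z_\eps|\to\infty$ (or $\to$ some $|z_0|>r_0$ large), then using the identity $(z-\Lambda_\eps)=(z-\BB_\eps)\big(I-(z-\BB_\eps)^{-1}\AA_\eps\big)$ and the bound $\|(z-\BB_\eps)^{-1}\|_{\BBB(X)}\le C/(\Re e\, z+1/3)$ together with $\|(z-\BB_\eps)^{-1}\AA_\eps\|_{\BBB(X)}\to 0$ as $|z|\to\infty$ (since $\AA_\eps$ is bounded and $\BB_\eps$ is sectorial), one gets invertibility for $|z|$ large, uniformly in $\eps$ — a contradiction. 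Fourth, upgrade to $\eps$-uniform control near the origin: by Corollary~\ref{cor:cvgceGVeps} the operators $\AA_\eps\to\AA_0$ in $\BBB(X)$ (and the resolvents of $\BB_\eps$ converge appropriately on $\Delta_{-1/3}$), so $(z-\Lambda_\eps)^{-1}\to(z-\Lambda_0)^{-1}$ uniformly on compact subsets of $\Delta_{-1/3}\setminus\Sigma(\Lambda_0)$; since $\Sigma(\Lambda_0)\cap\Delta_{-1/3}\subset B(0,r_0)$ is known, a Rouché/continuity argument on circles $\partial B(0,r^*)$ with $r^*>r_0$ gives, for $\eps$ small enough, $\Sigma(\Lambda_\eps)\cap\Delta_{-1/3}\subset B(0,r^*)$, which is exactly the claim.

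**Main obstacle.** The delicate point is the $\eps$-uniformity, and in particular making sense of the singular $\tfrac1\eps$ terms in the $v$-equation uniformly as $\eps\to0$: naively, $\Lambda_\eps$ is \emph{not} a bounded perturbation of $\Lambda_0$ since $\Lambda_0$ is the parabolic-elliptic operator in which $v$ is slaved to $f$ via $v=-\Delta^{-1}f$, whereas for $\eps>0$ the $v$-dynamics is genuinely parabolic with a fast $\tfrac1\eps$ time-scale. The resolution is that on the half-plane $\Delta_{-1/3}$ one has, for the $v$-block, $u=(\eps z-\Delta-\tfrac{\eps}{2}x\cdot\nabla)^{-1}f = -\Delta^{-1}f + O(\eps)$ in the relevant norms (a quantitative version of Corollary~\ref{cor:cvgceGVeps}'s philosophy), so that $(z-\Lambda_\eps)^{-1}$ does converge to $(z-\Lambda_0)^{-1}$ on $\Delta_{-1/3}$ away from $\Sigma(\Lambda_0)$; proving this convergence with the right uniformity — in particular that the $O(\eps)$ error is uniform for $z$ in compact subsets of $\Delta_{-1/3}$ and that the $\BB_\eps$-semigroup decay rate $-1/3$ is attained uniformly despite the $\eps$-dependent drift $\tfrac\eps2 x\cdot\nabla v$ — is the technical heart of the argument. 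Everything else (the Weyl splitting, finiteness of the discrete spectrum, boundedness of $\AA_\eps$) is routine given Lemma~\ref{lem:LinStabBd}, Corollary~\ref{cor:cvgceGVeps}, and the abstract spectral machinery already used for the parabolic-elliptic case in \cite{EM}.
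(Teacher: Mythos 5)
Your overall strategy — Weyl-type splitting $\Lambda_\eps = \AA + \BB_\eps$ with $\BB_\eps$ uniformly hypo-dissipative and $\AA$ producing a compact/regularizing remainder, followed by the abstract enlargement machinery — is exactly the route the paper takes. However, there is one genuine gap in the way you propose to build the splitting, and one redundant step.

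\textbf{The gap: where the $\tfrac1\eps f$ coupling goes.} You suggest absorbing the coupling term $\tfrac1\eps f$ (from the $u$-equation) into $\AA_\eps$, reasoning that it is bounded $X_1 \to X_2$ and hence $\Lambda_\eps$-compact. This is true for each fixed $\eps$, but $\|\tfrac1\eps f\|_{X_2}$ carries an explicit factor $1/\eps$, so $\|\AA_\eps\|_{\BBB(X)}$ blows up as $\eps\to 0$. The abstract enlargement/factorization theorems (\cite[Theorem 2.1 and 3.1]{MS*}, \cite[Lemma 2.17]{GMM}) produce a localization radius $r^*$ and the discreteness conclusion with constants that depend on $\|\AA\|_{\BBB(X)}$ and on the hypo-dissipativity constants of $\BB_\eps$; with $\|\AA_\eps\|\sim 1/\eps$, your Step 3 contradiction argument would only yield $r^* = r^*(\eps)\to\infty$, not a uniform $r^*$. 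So the claim as stated (uniform $\eps^*, r^*$) would not follow from this splitting. The paper resolves this by keeping the \emph{entire} $u$-equation, including the singular $\tfrac1\eps(\Delta u + f)$, inside $\BB_\eps$, and choosing $\AA$ to be the $\eps$-independent truncation $N\chi_R[\cdot]$ acting on $f$ alone (see \eqref{eq:def-A}). The singular coupling is then neutralized inside $\BB_\eps$ by the observation that $\Delta u + f = \Delta(u - \kappa_f)$ with $\kappa_f := \kappa * f$, and by estimating in the equivalent norm $\|(f,u)\|_{X_*}^2 := \|f\|_{L^2_k}^2 + \eta\|u-\kappa_f\|_{L^2}^2$; the term $\tfrac1\eps\int(u-\kappa_f)\Delta(u-\kappa_f) = -\tfrac1\eps\|\nabla(u-\kappa_f)\|_{L^2}^2$ has a favorable sign and large coefficient $1/\eps$, so it can absorb all the error terms, giving hypo-dissipativity of $\BB_\eps$ with rate any $a>-1/2$ uniformly in $\eps$ small (Lemma~\ref{lem:A&B}). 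Your alternative suggestion (exploiting smallness of the $u$-resolvent on $\Delta_{-1/3}$) points in the right direction but is not made quantitative, and in the paper that mechanism is used later (Lemmas~\ref{lem:d-1}--\ref{lem:cd-1}, Proposition~\ref{lem:RepsInHH}) to prove the \emph{stronger} Theorem~\ref{theo:S_Lambda}, not for this Proposition.

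\textbf{The redundant step.} Your Step 4 (convergence of $\RR_{\Lambda_\eps}$ to $\RR_{\Lambda_0}$ via Corollary~\ref{cor:cvgceGVeps} plus Rouch\'e) is not needed for Proposition~\ref{prop:spectre}, which asserts only localization and discreteness, not emptiness. That convergence-to-the-parabolic-elliptic-operator argument is precisely the content of the subsequent Proposition~\ref{lem:RepsInHH} and Theorem~\ref{theo:S_Lambda}. For the present statement, Steps 1--3 (with the corrected splitting and the abstract machinery from MS*) are already sufficient, and the paper does not invoke anything about $\Lambda_0$ here. To get discreteness, the paper also needs a compact factor: it takes $\overline Y = Y\cap (L^2_{k+1}\times L^2_{k+1})$, compactly embedded in $X$, and the regularization estimate $\|S_{\BB_\eps}(t)\|_{\BBB(X,Y)}\lesssim t^{-1/2}e^{at}$ from Lemma~\ref{lem:T}, together with the interpolation spaces $X^\eta$ and $\dom(\Lambda_\eps^\eta)$. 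Your phrase ``the standard spectral-mapping/enlargement argument'' covers this, but the explicit construction of the compactly embedded target space and the $t^{-1/2}$ smoothing bound are the pieces you would need to actually carry it out.
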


We define the bounded operator $\AA = (\AA_1,\AA_2) : X \to X$ by 
\beqn\label{eq:def-A}
\AA_1(f,u) := N { \chi_R [f]} :=  N  { (\chi_R f - \chi_1 \langle \chi_R f \rangle)}, \quad \AA_2(f,u) := 0, 
\eeqn
for some constants $N,R >0$ to be chosen later and a smooth non-negative radially symmetric cut-off function $\chi_R(x) := \chi(x/R)$ with $\chi \equiv 1$ on $B_{1/2}$, $\Supp \chi \subset B_2$ and $\langle \chi_1 \rangle = 1$.
We can split the operator $\Lambda_\eps = \AA + \BB_\eps$ and we shall investigate some properties of $\AA$ and $\BB_\eps$ in the next lemmas before proving 
Proposition~\ref{prop:spectre}.

\begin{lem}\label{lem:A&B} In the above splitting, we may choose $N_*$ and $R_*$ large enough in such a way that 
for any $N \ge N_*$, $R \ge R_*$, the operator $\BB_\eps$ is $a$-hypo-dissipative in $X$ for any $a \in (-1/2,0)$, in the sense that 
$$
\| S_{\BB_\eps} (t) \|_{\BBB(X)} \le C_a \, e^{at}, \quad \forall \, t \ge 0,
$$
for some constant $C_a > 0$. 
\end{lem}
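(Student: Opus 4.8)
\textbf{Plan for the proof of Lemma~\ref{lem:A&B}.} The goal is to show that the remainder operator $\BB_\eps = \Lambda_\eps - \AA$ generates a semigroup with $\|S_{\BB_\eps}(t)\|_{\BBB(X)} \le C_a e^{at}$ for $a \in (-1/2,0)$, provided $N$ and $R$ are chosen large. The strategy is the standard confinement-plus-dissipation argument: write $\BB_\eps = (\BB_{1,\eps}, \BB_{2,\eps})$ with
$$
\BB_{1,\eps}(f,u) = \Delta f + \nabla\Bigl(\tfrac12 x f - f\nabla V_\eps - G_\eps \nabla u\Bigr) - N\chi_R[f], \qquad \BB_{2,\eps}(f,u) = \tfrac1\eps(\Delta u + f) + \tfrac12 x\cdot\nabla u,
$$
and estimate $\tfrac{d}{dt}\|(f,u)\|_X^2 = 2\la \BB_{1,\eps}(f,u), f\ra_{L^2_k} + 2\la \BB_{2,\eps}(f,u), u\ra_{L^2}$ from above by $2a\|(f,u)\|_X^2$. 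The first step is to compute the $L^2_k$ energy estimate for the $f$-component. Integrating against $f\la x\ra^{2k}$, the diffusion term $\Delta f$ produces $-\|\nabla f\|_{L^2_k}^2$ plus lower-order terms coming from derivatives hitting the weight; the drift term $\nabla(\tfrac12 xf)$ produces, after integration by parts, a term of the form $(1 - \tfrac k2 + o(1))\|f\|_{L^2_k}^2$ modulo gradient terms controlled by $-\|\nabla f\|_{L^2_k}^2$ — this is where the weight exponent $k>7$ and the large-$|x|$ region give a coefficient strictly less than $-1/2$ once $R$ is taken large (so that $\chi_R$ does not interfere there). The transport term $-\nabla\cdot(f\nabla V_\eps)$ is controlled using the bounds $|\nabla V_\eps| \le C/\la x\ra$ and $|\Delta V_\eps|\le C$ from Lemma~\ref{lem:LinStabBd}, which are uniform in $\eps$; this contributes at most $C\|f\|_{L^2_k}^2$ but \emph{localized} near the origin (because of the $\la x\ra^{-1}$ decay it is small at infinity), hence absorbable by $-N\chi_R[f]$ for $N$ large. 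The coupling term $-\nabla\cdot(G_\eps \nabla u)$ is handled by Cauchy--Schwarz using the Gaussian bound $0\le G_\eps \le Ce^{-|x|^2/4}$: it is bounded by $\eta\|\nabla f\|_{L^2_k}^2 + C_\eta\|\nabla u\|_{L^2}^2$ (or $C_\eta\|u\|_{L^2}^2$ after using the $u$-equation dissipation), since the Gaussian weight beats any polynomial $\la x\ra^{2k}$.

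The second step is the $L^2$ estimate for the $u$-component. Testing $\BB_{2,\eps}$ against $u$, the term $\tfrac1\eps\Delta u$ gives $-\tfrac1\eps\|\nabla u\|_{L^2}^2$, a \emph{large} good term; the transport $\tfrac12 x\cdot\nabla u$ gives $-\tfrac12\|u\|_{L^2}^2$ after integration by parts (in dimension $2$, $\int \tfrac12 x\cdot\nabla u\, u = -\tfrac12\int u^2$); and the coupling $\tfrac1\eps\int f u$ is estimated by $\tfrac1{2\eps}\|\nabla u\|_{L^2}^2$-type terms against $f$, but more carefully by $\delta\|f\|_{L^2_k}^2 + C_\delta \eps^{-?}\|u\|_{...}$. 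The crucial observation, which is the heart of the $\eps$-smallness, is that the $\tfrac1\eps$ in front of the $u$-dissipation lets us absorb the coupling-generated terms: writing $\tfrac1\eps\int fu \le \tfrac12\|f\|_{L^2}^2 + \tfrac1{2\eps^2}\|u\|_{L^2}^2$ is \emph{too lossy}, so instead one uses $\tfrac1\eps\int fu \le \eta\|f\|_{L^2_k}^2 + \tfrac{C}{\eps^2\eta}\|u\|_{L^2_{-k}}^2$ combined with the fact that $-\tfrac1\eps\|\nabla u\|_{L^2}^2$ together with Poincaré-in-a-weight and the $-\tfrac12\|u\|_{L^2}^2$ term dominates any $O(1)$-in-$\eps$ multiple of $\|u\|_{L^2}^2$ once $\eps$ is small enough; alternatively one rescales $u \mapsto \eps^{1/2} u$ or uses a modified norm $\|f\|_{L^2_k}^2 + \lambda\|u\|_{L^2}^2$ with $\lambda = \lambda(\eps)$ tuned to balance the cross terms. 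I would present it with such a weighted equivalent norm on $X$, since that makes the cancellation transparent and still yields a norm equivalent (non-uniformly in $\eps$, which is allowed since $\eps^*$ is fixed first) to $\|\cdot\|_X$.

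Combining the two estimates, choosing first $R = R_*$ large (to push the drift coefficient below $-1/2$ outside a ball and to make the $\nabla V_\eps$-perturbation small at infinity), then $N = N_*$ large (to kill the remaining localized positive contributions via $-N\chi_R[f]$, using that $\chi_R[f]$ tested against $f\la x\ra^{2k}$ controls $\|f\|_{L^2(B_{R/2})}^2$ up to the harmless mass-correction term $\chi_1\la\chi_R f\ra$, whose contribution is $O(\|f\|_{L^2_k}\cdot|\la\chi_R f\ra|) \le \eta\|f\|_{L^2_k}^2$ by Cauchy--Schwarz on the compact support), and finally $\eps^*$ small, we obtain $\tfrac{d}{dt}\|(f,u)\|_{X,\lambda}^2 \le 2a\|(f,u)\|_{X,\lambda}^2$ for the chosen equivalent norm, which gives the claimed hypo-dissipativity with $C_a$ absorbing the norm-equivalence constant. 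One should also note that the case $a$ close to $-1/2$ requires taking the weight constants exactly at their extremal values, which is why the growth of $G_\eps$, $\nabla V_\eps$ at infinity (Gaussian and $\la x\ra^{-1}$) is essential — any slower decay would spoil the $-1/2$ threshold.

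\textbf{Main obstacle.} The delicate point is the coupling term $\tfrac1\eps\int fu$ in the $u$-estimate: a crude bound costs a factor $\eps^{-2}$ on $\|u\|_{L^2}^2$ which cannot be absorbed by $-\tfrac12\|u\|_{L^2}^2$. The resolution is to exploit the \emph{large} dissipation $-\tfrac1\eps\|\nabla u\|_{L^2}^2$ (not just the $O(1)$ transport term) — morally $u$ is slaved to $f$ as $\eps\to0$ and $\|u + \Delta^{-1}f\|$ is exponentially small — either via a weighted equivalent norm with $\eps$-dependent weight $\lambda(\eps)$ on the $u$-component, or by first establishing a resolvent-type bound $\|u\|_{L^2} \lesssim \eps\|f\|_{L^2_{-k}} + (\text{small})$ and feeding it back. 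Getting the bookkeeping right so that the final coefficient is $\le 2a$ for every $a\in(-1/2,0)$ (and not merely for $a$ bounded away from $-1/2$) is the part that needs care.
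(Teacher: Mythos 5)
Your plan identifies the right difficulty --- the $\frac1\eps\int fu$ coupling in the $u$-equation --- but the resolution you sketch does not close, and you miss the structural idea the paper actually uses. Testing $\partial_t u = \frac1\eps(\Delta u + f) + \frac12 x\cdot\nabla u$ against $\lambda u$ with a fixed $\eps$-dependent weight $\lambda(\eps)$ forces you to absorb $\frac{\lambda}{\eps}\int fu$ by the only two good $u$-terms available, namely $-\frac{\lambda}{\eps}\|\nabla u\|_{L^2}^2$ and $-\frac{\lambda}{2}\|u\|_{L^2}^2$; tracing the constants (you need $\lambda/\eps$ at least $O(1)$ to kill the $\|\nabla u\|_{L^2}^2$ produced by $-\nabla\cdot(G_\eps\nabla u)$ in the $f$-estimate, and at the same time $\lambda/\eps^2$ at most $O(1)$ to keep the Cauchy--Schwarz loss on $u$ below the $-\frac12$ transport gain) gives incompatible requirements as $a\to -\frac12$, because the $-\frac12\|u\|_{L^2}^2$ coefficient is exactly the threshold and there is no slack. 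You in fact remark that $\|u - (-\Delta^{-1}f)\|$ is the small quantity, which is the correct observation, but you do not build it into the norm.

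The paper's key move is precisely that: define $\kappa_f := \kappa * f$ (so $-\Delta\kappa_f = f$) and work with the modified norm $\|(f,u)\|_{X_*}^2 := \|f\|_{L^2_k}^2 + \eta\|u-\kappa_f\|_{L^2}^2$, which is equivalent to $\|\cdot\|_X$ by Lemma~\ref{lem:kappa*f} \emph{because} $f\in L^2_{k,0}$ (the zero-mean constraint is what makes $\kappa_f\in L^2$). Then $\frac1\eps(\Delta u + f) = \frac1\eps\Delta(u-\kappa_f)$, so the dangerous $\frac1\eps f$ coupling disappears entirely from the $u$-energy: one gets $-\frac1\eps\|\nabla(u-\kappa_f)\|_{L^2}^2 - \frac12\|u-\kappa_f\|_{L^2}^2$ plus remainders of the form $\int(u-\kappa_f)\{\frac12 x\cdot\nabla\kappa_f - \partial_t\kappa_f\}$, all of which are $O(\|f\|_{L^2_\ell}^2)$ by the potential estimates of Appendix~\ref{sec:A} after substituting the $f$-equation for $\partial_t\kappa_f$. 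The $\eps$-smallness is then only needed to ensure $\frac1\eps - C(\delta) > 0$ so that the large diffusion beats the fixed-size remainders, not to balance a singular cross term. This change of variable is the one genuinely new idea in the proof, and without it the energy estimate does not reach the threshold $-\frac12$. The rest of your outline (asymptotics of the drift weight $\varphi$, Gaussian decay of $G_\eps$ beating the polynomial weight, $-N\chi_R$ absorbing the compactly supported leftovers, and the mass-correction term $\chi_1\la\chi_R f\ra$ being controlled by Cauchy--Schwarz) matches the paper, modulo the minor point that the $-\frac12$ barrier actually comes from the $u$-transport term and not from the $f$-drift, whose asymptotic coefficient $-\frac12(k-1)$ is already far below $-\frac12$ for $k>7$.
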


\begin{proof}[Proof of Lemma \ref{lem:A&B}]
First of all, thanks to Lemma \ref{lem:kappa*f} and using the notation of Appendix~\ref{sec:A}, we see that in $X$ the norm of $L^2_k \times L^2$ is equivalent to the norm defined by 
\beqn\label{eq:normeX*}
\| (f,u) \|_{X_*}^2 := \| f \|_{L^2_k}^2 +  \eta \,  \|  u - \kappa_f \|_{L^2}^2,
\eeqn
for any fixed $\eta > 0$. 
We also observe that, thanks to Lemma~\ref{lem:K*f}, we have
$$
\| \nabla \kappa_f \|_{L^2} = \| \KK*f \|_{L^2} \le C \| f \|_{L^2_\ell}, \quad \forall\, \ell >2,
$$
and
$$
\| \nabla \kappa_f \|_{L^2_1} = \| \KK*f \|_{L^2_1} \le C \| f \|_{L^2_\ell}, \quad \forall\, \ell >3,
$$
thus we fix some $\ell \in (3,k)$ from now on.


We consider the equation 
\beqn\label{eq:dfu=B}
\partial_t (f,u) = \BB_\eps (f,u) = \Lambda_\eps(f,u) - \AA(f,u)
\eeqn
and split the proof into three steps.

\medskip
\noindent
{\it Step 1.  }
We write the equation satisfied by $f$ as 
\bear \nonumber
&& \partial_t f = \Delta f + \nabla ({1 \over 2} \, x \, f - f  \, \nabla V_\eps - G_\eps \, \nabla u )  
 - N { \chi_R [f]}.
\eear
Using that $\langle f \rangle = 0$ and the notation $\chi_R^c = 1 - \chi_R$, we compute
\beqn\label{eq:dissip-f}
\bal
\frac12 \frac{d}{dt} \int f^2 \la x \ra^{2k}
& = \int \Delta f \, f \la x \ra^{2k} + \frac12 \int \nabla \cdot(xf) \, f \la x \ra^{2k}
- \int \nabla\cdot (f \nabla V_\eps) \, f \la x \ra^{2k}  \\
&\quad - \int \nabla\cdot (G_\eps \nabla u ) \, f \la x \ra^{2k}
 -  \int N{ \chi_R [f]} f \la x \ra^{2k}\\
&= - \int |\nabla f|^2 \la x \ra^{2k} 
+ \int \{ \varphi(x) - N \chi_R(x) \} \, f^2 \la x \ra^{2k} 
\\
&\quad- \int \nabla\cdot (G_\eps \nabla u) \, f \la x \ra^{2k} -  N  \int  f \, \chi_1 \, \langle x \rangle^{2k} dx \, \langle \chi_R^c f \rangle  ,
 \eal
\eeqn
using that $\langle f \rangle = 0$ in order to replace $\chi_R$ with $\chi_R^c$ in last line, and where
$$
\bal
\varphi(x) 
&= \left(\frac12 \Delta \la x \ra^{2k} - \frac12 x \cdot \nabla \la x \ra^{2k} 
+\frac14 \nabla \cdot (x \la x \ra^{2k}) - \frac12 \nabla \cdot (\nabla V_\eps \la x \ra^{2k})
 + \nabla V_\eps \cdot \nabla \la x \ra^{2k} \right) \la x \ra^{-2k}\\
&= -\frac12 (k-1) + k(2k+1/2) \la x \ra^{-2} - k(2k-2)\la x \ra^{-4} -\frac12 \Delta V_\eps + k (\nabla V_\eps \cdot x) \la x \ra^{-2}.
\eal
$$
We observe that, thanks to Lemma \ref{lem:LinStabBd}, we have $(\nabla V_\eps \cdot x) \la x \ra^{-2} \to 0$ as $|x| \to \infty$. From \eqref{eq:KSprofile-intro}, we also have that
$$
-\frac12 \Delta V_\eps = \frac12 G_\eps + \frac\eps4 \, x \cdot \nabla V_\eps
$$
with $G_\eps \to 0$ as $|x|\to \infty$ from \eqref{eq:Gepsbound} and $|x\cdot \nabla V_\eps| \le C_{V_\eps}$ from \eqref{eq:borneVeps}. All together, it follows 
\beqn\label{eq:assymptotique}
\limsup_{|x| \to \infty }\varphi(x) \le  -\frac12 (k-1 - \eps \, C ),
\eeqn
where $C>0$ is the constant exhibited in \eqref{eq:borneVeps}.

For the third term in \eqref{eq:dissip-f}, for any $\delta>0$, thanks to H\"older's inequality and using that $G_\eps(x)  \le C \la x \ra^{-\alpha}$ from \eqref{eq:Gepsbound}, we get
$$
\bal 
 -  \int \nabla\cdot(  G_\eps  \nabla u )  \, f \, \la x \ra^{2k} 
&=   \int \nabla f  \cdot  \nabla u  \,  G_\eps  \, \la x \ra^{2k} 
+ \int G_\eps \nabla u  \cdot \nabla(\la x \ra^{2k}) \, f
\\
&\le  \delta \| \nabla f \|_{L^2}^2  
+ C(\delta) \| \nabla u \|_{L^2}^2 
+ C \| f \|_{L^2}^2 \\
&\le  \delta \| \nabla f \|_{L^2}^2  
+ C(\delta) \| \nabla (u - \kappa_f) \|_{L^2}^2 
+ C(\delta) \| \nabla \kappa_f \|_{L^2}^2
+ C \| f \|_{L^2}^2 \\
&\le  \delta \| \nabla f \|_{L^2}^2  
+ C(\delta) \| \nabla (u - \kappa_f) \|_{L^2}^2 
+ C(\delta) \| f \|_{L^2_{\ell}}^2
+ C \| f \|_{L^2}^2,
\eal
$$
where we recall that we have fixed some $\ell \in (3,k)$. For the fourth term in \eqref{eq:dissip-f}, we have 
$$
\bal 
 - N  \int  f \, \chi_1 \, \langle x \rangle^{2k} dx \, \langle \chi_R^c f \rangle  
&\le N \, \|\chi_1 \|_{L^2_{2k-\ell}} \, \|\chi_R^c \, \langle x \rangle^{-\ell} \|_{L^2} \, \|f \|_{L^2_\ell}^2  
\\
&\le N \, R^{ 1 - \ell} \, C_\ell \, \| f \|_{L^2_\ell}^2 .  
\eal
$$
We conclude this step by gathering the previous estimates to obtain
\beqn\label{eq:step1}
\bal
\frac12 \frac{d}{dt} \| f \|_{L^2_k}^2 
&\le
- (1 - \delta)\| \nabla f \|_{L^2_k}^2 + C(\delta) \| \nabla( u -\kappa_{ f}) \|_{L^2}^2
\\
&\quad + \int \left\{ \varphi(x) + (C(\delta)  + C \, N \, R^{ 1 - \ell}  ) \la x \ra^{2(\ell - k)} - N   \chi_R(x) \right\} f^2 \la x \ra^{2k} .  
\eal
\eeqn

\medskip
\noindent
{\it Step 2.}
From the second equation in \eqref{eq:OpeLambdaeps}, we have
$$
\bal
\frac12{d \over dt} \int (u-\kappa_{ f})^2 
= \int  (u-\kappa_{ f}) \, \left\{ {1 \over \eps} (\Delta u +  f) + \frac12 x \cdot \nabla (u-\kappa_{ f}) + \frac12 x \cdot \nabla \kappa_{ f} - \partial_t \kappa_{ f} \right\}
\\
= - {1 \over \eps} \int  |\nabla (u-\kappa_{ f}) |^2 
-\frac12 \int  (u-\kappa_{ f})^2 
+  \int  (u-\kappa_{ f})  \left\{ \frac12 x \cdot \nabla \kappa_{ f} - \partial_t \kappa_{ f} \right\} ,
\eal
$$
and we shall estimate the last integral. 
Since $\partial_t \kappa_{f} = \kappa * \partial_t f$,
we may write
$$
\bal
&\int  (u-\kappa_{f})  \left\{ \frac12 x \cdot \nabla \kappa_{ f} - \partial_t \kappa_{ f} \right\}
 = \frac12 \int  (u-\kappa_{ f})  \bigl\{ x \cdot \nabla \kappa_{ f} \}\\
&\qquad
- \int  (u-\kappa_{ f})  \kappa * \left\{ \Delta f + \frac12 \nabla(x  f) - \nabla( f \nabla V_\eps) - \nabla(G_\eps \nabla u) - N { \chi_R [f]} \right\} ,
\eal
$$
and we estimate each of these terms separately. First, the first and third terms together gives 
$$
I_1 := \frac12 \int  (u-\kappa_{ f})  \bigl\{ x \cdot \nabla \kappa_{ f} -   \kappa * \nabla(x  f) \} = 0,
$$
because
$$
\bal
x \cdot \nabla \kappa_{ f} -   \kappa * \nabla(x  f) 
&=  x \cdot \KK*f  -   \KK * \nabla(x  f)
\\
&=-{1 \over 2\pi}\int {(x-y) \over |x-y|^2} \bigl\{ x \, f(y) - y \, f(y) \bigr\} \, dy = -{1 \over 2\pi}\int   f(y)   \, dy = 0. 
\eal
$$
%
Next, we have
$$
\bal
I_2 
&:= -\int  (u-\kappa_{ f})  \kappa * \{ \Delta f \} 
 = \int (u-\kappa_{ f}) f \\
&\le \delta \,\| (u-\kappa_{ f}) \|_{L^2}^2 + C(\delta) \, \| f \|_{L^2}^2.
\eal
$$
Furthermore, since $f \nabla V_\eps \in L^2_{k,0}$, we can apply Lemma~\ref{lem:K*f} and use that $\nabla V_\eps \in L^\infty$ to obtain
$$ 
\bal
I_3 &:=  \int  (u-\kappa_{ f})  \kappa *  \nabla( f \nabla V_\eps) 
=  \int  (u-\kappa_{ f}) \, \KK_i * ( f \partial_i V_\eps)  \\
&\leq 
\delta \| (u-\kappa_{ f}) \|_{L^2}^2
+ C(\delta) \|   f \nabla V_\eps \|_{L^2_{\ell}}^2   \\
&\leq 
\delta \| (u-\kappa_{ f}) \|_{L^2}^2
+ C(\delta) \|  f \|_{L^2_{\ell}}^2.
\eal
$$
For the next term, using Lemma~\ref{lem:K*f}, since $G_\eps \nabla u \in L^2_{k,0}$, and the bound $G_\eps \in L^\infty_{k}$, we have
$$
\bal
I_4 &:=  \int  (u-\kappa_{ f})  \kappa *   \nabla(G_\eps \nabla u)
=\int    (u-\kappa_{ f}) \, \KK_i *  (  G_\eps \, \partial_i u ) \\
&\leq 
\delta \| (u-\kappa_{ f}) \|_{L^2}^2 + C(\delta) \| \KK *(G_\eps \nabla u) \|_{L^2}^2\\
&\leq 
\delta \| (u-\kappa_{ f}) \|_{L^2}^2 + C(\delta) \| G_\eps \nabla u \|_{L^2_k}^2\\
&\leq 
\delta \| (u-\kappa_{ f}) \|_{L^2}^2 + C(\delta) \|  \nabla u \|_{L^2}^2\\
&\leq 
\delta \| (u-\kappa_{ f}) \|_{L^2}^2  + C(\delta) \|  \nabla (u-\kappa_f) \|_{L^2}^2 + C(\delta) \|  \nabla \kappa_f \|_{L^2}^2\\
&\leq 
\delta \| (u-\kappa_{ f}) \|_{L^2}^2  + C(\delta) \|  \nabla (u-\kappa_f) \|_{L^2}^2 + C(\delta) \|  f \|_{L^2_{\ell}}^2.
\eal
$$
For the last term and thanks to Lemma~\ref{lem:kappa*f}, we finally have
$$
\bal
I_5 &:= N \int (u - \kappa_f) \kappa * \{ \chi_R [f]  \} \le C \,  \| u - \kappa_f \|_{L^2} \,  N \, \|  \chi_R [f] \|_{L^2_{\ell,1}} 
\\
&\le  \delta  \| u - \kappa_f \|_{L^2}^2 + C(\delta) N^2 \| f \|_{L^2_\ell}^2.
\eal
$$

Putting together all the estimates of this step, we deduce
\beqn\label{eq:step2}
\bal
\frac12\frac{d}{dt} \| u - \kappa_{f} \|_{L^2}^2
& \le - \Bigl(\frac1\eps - C(\delta) \Bigr) \| \nabla(u-\kappa_{ f} ) \|_{L^2}^2
-  \Bigl( \frac12 - \delta \Bigr) \| u - \kappa_{f} \|_{L^2}^2 
+C(\delta) N^2 \| f \|_{L^2_{\ell}}^2.
\eal
\eeqn

\medskip
\noindent
{\it Step 3. Conclusion.}
Gathering \eqref{eq:step1} and \eqref{eq:step2}, we obtain
\beqn\label{eq:Norm0*}
\bal
\frac12 \frac{d}{dt} \| (f,u) \|_{X_*}^2
&\le \int \left\{ \varphi(x) +  \Bigl[  (1+ \eta N^2) C(\delta)  +  {CN \over R^{\ell-1}} \Bigr] \la x \ra^{2(\ell-k)}  - N { \chi_R}(x) \right\} |f|^2  \la x \ra^{2k}    \\
&
- \eta \Bigl( \frac12 - \delta \Bigr) \| u - \kappa_f \|_{L^2}^2 
- (1-\delta) \| \nabla f \|_{L^2_k}^2 - \eta  \Bigl( \frac{1}{\eps}  - C(\delta) \Bigr) \| \nabla (u-\kappa_f)\|_{L^2}^2 .
\eal   
\eeqn
Taking then first $\delta \in (0,1)$ small enough and next $\eps \in (0,1) $ small enough, it follows that for $\eta = N^{-3}$ and $R = N$ 
$$
\bal
\frac12 \frac{d}{dt} \| (f,u) \|_{X_*}^2
&\le \int \left\{ \bar \varphi_N(x) - N {\chi_R}(x) \right\} |f|^2  \la x \ra^{2k} 
+ a \| \nabla f \|_{L^2}^2\\
&\quad
+  a \,  \eta \,  \| u - \kappa_{ f} \|_{L^2}^2 
+  a \,   \eta \,  \| \nabla(u - \kappa_{ f}) \|_{L^2}^2,       
\eal   
$$
for any $a > -1/2$, where $\bar \varphi_N(x) = \varphi(x) + C  (1+ N^{-1} +N^{2-\ell} )  \la x \ra^{2(\ell - k)}$ has the same asymptotic behaviour as $\varphi(x)$ when $|x| \to \infty$ and $\bar\varphi_N$ decreases as $N$ increases. 
We can choose $N$ large enough such that
$$
\bar \varphi_N(x) - N {\chi_R}(x) \le a, \quad \forall\, x\in \R^2,
$$
which yields that $\BB_\eps$ is $a$-hypo-dissipative for any $a > -1/2$.
%
%
\end{proof}

We introduce the space
\beqn\label{eq:Y}
Y := Y_1 \times Y_2, \quad Y_1 := H^1_{k} \cap L^2_{k,0} \cap L^2_{rad}, \quad Y_2 := H^1 \cap L^2_{rad},
\eeqn
endowed with the norm
\beqn\label{eq:normeY}
 \| (f,u) \|_Y^2 := \| (f,u) \|_X^2 + \| \nabla f \|_{L^2_k}^2 + \| \nabla u \|_{L^2}^2.
\eeqn

For the operator $\AA$ defined in \eqref{eq:def-A}, the following result holds true.

\begin{lem}\label{lem:Abounded}
There hold $\AA \in \BBB(X)$ and $\AA \in \BBB(Y)$.
\end{lem}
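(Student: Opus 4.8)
The plan is to use that $\AA$ acts only on the first coordinate, depends only on $f$, and is built from the compactly supported smooth multipliers $\chi_R$ and $\chi_1$: concretely $\AA(f,u) = (N\,\chi_R[f],\,0)$ with $\chi_R[f] = \chi_R f - \chi_1\,\langle\chi_R f\rangle$. The key observation to exploit is that on the (compact) supports of $\chi_R$, $\chi_1$, $\nabla\chi_R$ and $\nabla\chi_1$ the weight $\la x\ra^k$ is bounded by a constant depending only on $R$, so every weighted $L^2_k$ norm collapses to an unweighted $L^2$ norm up to such a constant.

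First I would check boundedness on $X$. Assuming (as one may) that $0\le\chi\le1$, one has $\|\chi_R f\|_{L^2_k}\le\|f\|_{L^2_k}$, and by Cauchy--Schwarz $|\langle\chi_R f\rangle|\le\|\chi_R\la x\ra^{-k}\|_{L^2}\,\|f\|_{L^2_k}\le C_R\,\|f\|_{L^2_k}$, whence $\|\chi_1\langle\chi_R f\rangle\|_{L^2_k}=|\langle\chi_R f\rangle|\,\|\chi_1\|_{L^2_k}\le C_R\,\|f\|_{L^2_k}$. Adding these gives $\|\AA_1(f,u)\|_{L^2_k}\le C(N,R)\,\|f\|_{L^2_k}$, and since $\AA_2=0$ we get $\|\AA(f,u)\|_X\le C(N,R)\,\|(f,u)\|_X$. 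I would also record that $\langle\chi_R[f]\rangle=\langle\chi_R f\rangle-\langle\chi_1\rangle\langle\chi_R f\rangle=0$ because $\langle\chi_1\rangle=1$, and that radial symmetry is preserved, so $\AA$ genuinely maps $X$ into $X_1\times X_2$.

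Next I would upgrade to $Y$: for $(f,u)\in Y$ it only remains to bound $\|\nabla\AA_1(f,u)\|_{L^2_k}$ (the $\nabla\AA_2$ contribution vanishes). By the Leibniz rule, $\nabla\chi_R[f]=\chi_R\nabla f+(\nabla\chi_R)\,f-(\nabla\chi_1)\,\langle\chi_R f\rangle$, and I would estimate the three terms exactly as above, using $\|\chi_R\nabla f\|_{L^2_k}\le\|\nabla f\|_{L^2_k}$, $\|(\nabla\chi_R)\,f\|_{L^2_k}\le\|\nabla\chi_R\|_{L^\infty}\,\|f\|_{L^2_k}$, and $\|(\nabla\chi_1)\,\langle\chi_R f\rangle\|_{L^2_k}\le C_R\,\|\nabla\chi_1\|_{L^2_k}\,\|f\|_{L^2_k}$. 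This yields $\|\nabla\AA_1(f,u)\|_{L^2_k}\le C(N,R)\,(\|f\|_{L^2_k}+\|\nabla f\|_{L^2_k})\le C(N,R)\,\|(f,u)\|_Y$. Since $f\in H^1_k$ and $\chi_R$ is compactly supported, $\chi_R[f]\in H^1_k$, and by the above the image lies in $Y_1\times Y_2$, so $\AA\in\BBB(Y)$.

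This lemma is elementary and I do not expect any genuine obstacle; the only point requiring a little care is the bookkeeping of the cut-off functions so that the polynomial weights contribute only $R$-dependent constants, together with the trivial remark that $\AA$ depends on $f$ alone, so that nothing needs to be estimated in the $u$-variable.
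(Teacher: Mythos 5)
Your proof is correct, and since the paper's own proof is omitted as ``straightforward,'' your argument is exactly the expected one: the operator touches only the $f$-component, the cut-offs $\chi_R,\chi_1$ are smooth with compact support so the polynomial weights are harmless, and the Leibniz rule handles the derivative estimate for $Y$. The small checks you record — that $\langle\chi_R[f]\rangle=0$ and that radial symmetry is preserved, so $\AA$ really lands in $X$ (resp.\ $Y$) and not just in $L^2_k\times L^2$ — are the right details to keep.
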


\begin{proof}[Proof of Lemma \ref{lem:Abounded}]
The proof is straightforward so we omit it.
\end{proof}

\begin{lem}\label{lem:T} 
We can choose $N$ and $R$ large enough such that $\BB_\eps$ is $a$-hypo-dissipative in $Y$ for any $a\in(-1/2,0)$, i.e.
$$
\| S_{\BB_\eps}(t) \|_{\BBB(Y)} \le C  e^{at}, \quad \forall \, t \ge 0.
$$
Moreover, we also have
$$
\| S_{\BB_\eps}(t) \|_{\BBB(X,Y)} \le C \, t^{-1/2} \, e^{at} , \quad \forall \, t \ge 0.
$$

\end{lem}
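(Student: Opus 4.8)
\noindent\emph{Proof plan for Lemma~\ref{lem:T}.} The strategy is to run a first-order ($H^1$-level) version of the weighted energy estimates of Lemma~\ref{lem:A&B} and then combine it with the $X$-estimate already established there: first to get hypo-dissipativity in $Y$, then, by a time-weighted argument, the regularization bound in $\BBB(X,Y)$. It is convenient to replace $\|\cdot\|_Y$ by the equivalent norm
\[
\|(f,u)\|_{Y_*}^2 := \|f\|_{L^2_k}^2 + \|\nabla f\|_{L^2_k}^2 + \eta\big(\|u-\kappa_f\|_{L^2}^2 + \|\nabla(u-\kappa_f)\|_{L^2}^2\big),
\]
the equivalence following from Lemma~\ref{lem:kappa*f}, the bounds on $\KK*f$ of Lemma~\ref{lem:K*f}, and the Calder\'on--Zygmund estimate $\|\nabla^2\kappa_f\|_{L^2}\le C\|f\|_{L^2}$.

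\emph{Step 1: the first-order energy estimate.} Differentiating the $f$-equation in \eqref{eq:dfu=B} by $\partial_j$, testing against $\partial_j f\,\langle x\rangle^{2k}$ and summing over $j$ produces, exactly as in Step~1 of Lemma~\ref{lem:A&B}: a dissipation term $-(1-\delta)\|\nabla^2 f\|_{L^2_k}^2$; a weight term $\int\big(\tilde\varphi(x)-N\chi_R(x)\big)|\nabla f|^2\langle x\rangle^{2k}$, where $\tilde\varphi$ differs from $\varphi$ only by the lower-order shift coming from commuting $\partial_j$ with the drift $\tfrac12\nabla\cdot(xf)$, so that $\limsup_{|x|\to\infty}\tilde\varphi(x)\le-\tfrac12(k-2-\eps C)$; and remainders controlled by $\|f\|_{L^2_k}^2+\|\nabla f\|_{L^2_k}^2+\|\nabla(u-\kappa_f)\|_{L^2}^2$. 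The genuinely new contribution is $\partial_j\nabla\cdot(G_\eps\nabla u)$: after one integration by parts it generates a term $\int G_\eps\,\nabla\partial_j u\cdot\nabla\partial_j f\,\langle x\rangle^{2k}$, bounded — using the Gaussian decay \eqref{eq:Gepsbound} of $G_\eps$ and the Calder\'on--Zygmund bound — by $\delta\|\nabla^2 f\|_{L^2_k}^2+C(\delta)\big(\|\nabla^2(u-\kappa_f)\|_{L^2}^2+\|f\|_{L^2}^2\big)$, together with lower-order terms handled via $\nabla G_\eps,\nabla V_\eps,\Delta V_\eps\in L^\infty$ (Lemma~\ref{lem:LinStabBd}, Corollary~\ref{cor:cvgceGVeps}). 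Simultaneously, differentiating $\partial_t(u-\kappa_f)=\tfrac1\eps\Delta(u-\kappa_f)+\tfrac12 x\cdot\nabla u-\partial_t\kappa_f$ by $\partial_j$ and testing against $\partial_j(u-\kappa_f)$ gives the dissipation $-\tfrac1\eps\|\nabla^2(u-\kappa_f)\|_{L^2}^2$, the drift contribution $-\tfrac12\|\nabla(u-\kappa_f)\|_{L^2}^2$, and source terms estimated as in Step~2 of Lemma~\ref{lem:A&B} with one extra derivative falling on $\kappa_f=\kappa*f$ (invoking again $\|\nabla^2\kappa_f\|_{L^2}\le C\|f\|_{L^2}$ and Lemma~\ref{lem:K*f}). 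Adding the two estimates with weight $\eta=N^{-3}$ and repeating the choices of Lemma~\ref{lem:A&B} — $\delta$ small, then $\eps$ small (so $C(\delta)\ll1/\eps$), then $N=R$ large — yields, for every $a\in(-1/2,0)$,
\[
\tfrac{d}{dt}\|(f,u)\|_{Y_*}^2 \le 2a\,\|(f,u)\|_{Y_*}^2 - c\,\|\nabla^2 f\|_{L^2_k}^2 - c\,\eta\|\nabla^2(u-\kappa_f)\|_{L^2}^2 + C\,\|(f,u)\|_{X}^2 .
\]

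\emph{Step 2: conclusion.} For the $\BBB(Y)$ bound, fix $a\in(-1/2,0)$, pick $a_0\in(-1/2,a)$, run Step~1 with $a_0$ and feed in $\|(f,u)(t)\|_X^2\le C_{a_0}e^{2a_0t}\|(f_0,u_0)\|_X^2$ from Lemma~\ref{lem:A&B}; Gr\"onwall then gives $\|(f,u)(t)\|_{Y}^2\le C(1+t)e^{2a_0t}\|(f_0,u_0)\|_{Y}^2\le C_a e^{2at}\|(f_0,u_0)\|_Y^2$. For the regularization bound, recall from Step~3 of Lemma~\ref{lem:A&B} (before discarding the $H^1$ dissipation) that $\tfrac{d}{dt}\|(f,u)\|_{X_*}^2\le 2a\|(f,u)\|_{X_*}^2-c\big(\|\nabla f\|_{L^2_k}^2+\eta\|\nabla(u-\kappa_f)\|_{L^2}^2\big)$; shrinking $c$ so that $2a+c\le0$, this gives $\tfrac{d}{dt}\|(f,u)\|_{X_*}^2\le-c\,\|(f,u)\|_{Y_*}^2$, whence $\int_0^t\|(f,u)(s)\|_{Y_*}^2\,ds\le c^{-1}\|(f_0,u_0)\|_{X}^2$. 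On the other hand, by Step~1 and $a<0$, $\tfrac{d}{dt}\big(t\|(f,u)\|_{Y_*}^2\big)\le\|(f,u)\|_{Y_*}^2+Ct\|(f,u)\|_X^2\le\|(f,u)\|_{Y_*}^2+Ct\|(f_0,u_0)\|_X^2$; integrating and using the previous bound, $t\|(f,u)(t)\|_{Y_*}^2\le C(1+t^2)\|(f_0,u_0)\|_X^2$, i.e. $\|S_{\BB_\eps}(t)\|_{\BBB(X,Y)}\le C\,t^{-1/2}$ for $t\in(0,1]$. For $t\ge1$ one composes $S_{\BB_\eps}(t)=S_{\BB_\eps}(t-1)S_{\BB_\eps}(1)$ and uses the $\BBB(Y)$ bound with a slightly smaller rate $a_1<a$ to get $\|S_{\BB_\eps}(t)\|_{\BBB(X,Y)}\le C e^{a_1(t-1)}\le C'\,t^{-1/2}e^{at}$ (since $t^{1/2}e^{(a_1-a)t}$ is bounded on $[1,\infty)$); combining the two regimes, and since $a$ was arbitrary in $(-1/2,0)$, completes the proof.

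\emph{Main obstacle.} The one delicate point is the cross term $\partial_j\nabla\cdot(G_\eps\nabla u)$ in the weighted $H^1$ estimate for $f$: it involves $\nabla^2 u$, which has no analogue at the $L^2$ level, and must be absorbed into the parabolic dissipation $\tfrac1\eps\|\nabla^2(u-\kappa_f)\|_{L^2}^2$ — this is what again forces the smallness of $\eps$ — while keeping track that the auxiliary constants $C(\delta)$ do not interfere with the hierarchy of choices ``$\delta$ small, $\eps$ small, $N=R$ large'' inherited from Lemma~\ref{lem:A&B}. Everything else is a routine, if lengthy, repetition of the computations of Lemma~\ref{lem:A&B} one derivative higher.
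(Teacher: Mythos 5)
Your overall architecture matches the paper — an $H^1$-level weighted energy estimate on top of Lemma~\ref{lem:A&B}'s $X$-level estimate, then a conclusion — but your choice of equivalent $Y_*$ norm introduces a genuine gap. You set the coefficient of $\|\nabla f\|_{L^2_k}^2$ to $1$ and keep only $\eta=N^{-3}$ in front of the $u$-block. Now trace the dangerous cross-term you yourself identified: testing $\partial_j\nabla\cdot(G_\eps\nabla u)$ against $\partial_j f\,\la x\ra^{2k}$ produces, after Young, a term $C(\delta)\|\nabla^2(u-\kappa_f)\|_{L^2}^2$ with coefficient $1$ (the coefficient of the $\nabla f$ block). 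This must be absorbed by the only available dissipation of $\nabla^2(u-\kappa_f)$, which comes from the $\nabla(u-\kappa_f)$ block and therefore carries the prefactor $\eta/\eps = N^{-3}/\eps$. Absorption thus requires $C(\delta)\lesssim N^{-3}/\eps$, i.e.\ $\eps\lesssim N^{-3}/C(\delta)$, which constrains $\eps$ \emph{after} $N$. Yet you explicitly announce the order ``$\delta$ small, then $\eps$ small, then $N=R$ large'' inherited from Lemma~\ref{lem:A&B}: for $\eps$ fixed and $N\to\infty$ the constraint $\eps\le N^{-3}/C(\delta)$ is eventually violated, and your claimed differential inequality does not follow. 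The paper's definition \eqref{eq:normeY*} resolves this by putting $\eta_1=N^{-3}$ (equal to $\eta$) in front of \emph{both} $\|\nabla f\|_{L^2_k}^2$ and $\|\nabla(u-\kappa_f)\|_{L^2}^2$: multiplying the $\nabla f$-estimate by $\eta_1$ turns the cross-term into $\eta_1 C(\delta)\|\nabla^2(u-\kappa_f)\|_{L^2}^2$, and the absorption condition becomes $C(\delta)\le 1/\eps$, independent of $N$, so the hierarchy ``$\delta$, $\eps$, $N$'' survives. (The same scaling also tames the $O(N^2)$ terms coming from the cut-off operator $\AA$; you partially sidestep those by allowing the error constant $C$ in $C\|(f,u)\|_X^2$ to depend on $N$, which is fine, but the $\nabla^2(u-\kappa_f)$ cross-term is \emph{not} controllable by the $X$-norm and cannot be swept into that error.)

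Your Step~2, once the correct differential inequality is available, is valid and is in fact a genuinely different route for the $\BBB(X,Y)$ bound. The paper applies the interpolation inequality $\|g\|_{H^1_k}^2\le C\|g\|_{H^2_k}\|g\|_{L^2_k}$ to \eqref{eq:dissipY}, obtaining the Bernoulli-type ODE $\tfrac12\tfrac{d}{dt}\|(f,u)\|_{Y_*}^2\le a\|(f,u)\|_{Y_*}^2 - K\|(f,u)\|_{Y_*}^4\|(f,u)\|_{X_*}^{-2}$ and solving it. Your argument instead integrates the $X$-level dissipation to get $\int_0^t\|(f,u)(s)\|_{Y_*}^2\,ds\lesssim\|(f_0,u_0)\|_X^2$ and then time-weights $t\|(f,u)(t)\|_{Y_*}^2$, finishing with semigroup composition for $t\ge1$. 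This is the classical parabolic-smoothing scheme and works just as well; it avoids the nonlinear ODE but needs the $t\ge1$ splitting, whereas the paper's Bernoulli argument gives the full estimate in one stroke. Either is acceptable once Step~1 is repaired as above.
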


\begin{proof}[Proof of Lemma \ref{lem:T}] 
We introduce the following norm
\beqn\label{eq:normeY*}
 \| (f,u) \|_{Y_*}^2 := \| (f,u) \|_{X_*}^2 + \eta_1 \| \nabla f \|_{L^2_k}^2 + \eta_1 \| \nabla (u - \kappa_f) \|_{L^2}^2,
\eeqn
which is equivalent to \eqref{eq:normeY} for any $\eta_1 >0$ thanks to Lemma \ref{lem:K*f}.
We recall that, as in Lemma~\ref{lem:A&B}, we have
$$
\| \nabla u \|_{L^2} \le \| \nabla (u-\kappa_f) \|_{L^2} + \| \nabla \kappa_f \|_{L^2} \le\| \nabla (u-\kappa_f) \|_{L^2} + C \| f \|_{L^2_\ell},   
$$
for the same fixed $\ell \in(3, k)$, and moreover we observe that
$$
\| \nabla^2 u \|_{L^2} \le \| \nabla^2 (u-\kappa_f) \|_{L^2} +  \| \nabla^2 \kappa_f \|_{L^2}
\le \| \nabla^2 (u-\kappa_f) \|_{L^2} +  \|  f \|_{L^2}.
$$
We consider now the equation \eqref{eq:dfu=B} and we split the proof of the announced results into three steps.

\medskip
\noindent
{\it Step 1. $L^2$ differential inequality.}
For $i=1,2$, $\partial_i u$ verifies 
$$
\partial_t (\partial_i u) = \frac1\eps \Delta (\partial_i u) + \frac1\eps \partial_i f + \frac12 x \cdot \nabla(\partial_i u) + \frac12 \partial_i u  .
$$
We have then
$$
\bal
\frac12 \frac{d}{dt} \| \partial_i (u-\kappa_f) \|_{L^2}^2 
&= \int \partial_i (u-\kappa_f) \partial_t \{ \partial_i u - \partial_i \kappa_f \}\\
&= \frac1\eps \int \partial_i (u-\kappa_f) \Delta\{ \partial_i (u- \kappa_f) \} 
+ \frac12 \int \partial_i (u-\kappa_f)\partial_i u \\
&\quad
+\frac12 \int \partial_i (u-\kappa_f) x \cdot \nabla \{ \partial_i(u-\kappa_f) \} 
+ \frac12\int \partial_i (u-\kappa_f) x \cdot \nabla \{ \partial_i \kappa_f \} \\
&\quad
 - \int \partial_i (u-\kappa_f) \KK_i * (\partial_t f)\\
&=: T_1 + T_2 + T_3 + T_4 + T_5. 
\eal
$$
For the first term, we easily have
$$
T_1 = - \frac1\eps \| \nabla\{ \partial_i (u - \kappa_f)\} \|_{L^2}^2.
$$
For the second term, we have
$$
T_2 \le C \| \nabla (u - \kappa_f) \|_{L^2}^2 + C \| \nabla u \|_{L^2}^2
\le C \| \nabla (u - \kappa_f) \|_{L^2}^2 + C \| f \|_{L^2_{\ell}}^2.
$$
We also easily see that
$$
T_3 = - \frac12 \| \partial_i (u - \kappa_f) \|_{L^2}^2 \le 0,
$$
and, for the fourth term, that
$$
T_4 \le C \| \nabla (u - \kappa_f) \|_{L^2}^2 + C \| D^2 \kappa_f \|_{L^2_1}^2
 \le C \| \nabla (u - \kappa_f) \|_{L^2}^2 + C \| f \|_{L^2_1}^2.
$$
For the last term $T_5$, we use the equation satisfied by $f$ and we get
$$
\bal
T_5 &= -  \int \partial_i (u-\kappa_f) \KK_i * \left\{ \Delta f + \frac12 \nabla (xf) - \nabla(f \nabla V_\eps) - \nabla (G_\eps \nabla u) -N \chi_R[f]   \right\} \\
&=: T_{51} + T_{52} + T_{53} + T_{54} + T_{55}.
\eal
$$
We estimate each term separately. We have
$$
T_{51} \le C \| \nabla (u-\kappa_f) \|_{L^2}^2 + C \| \nabla f \|_{L^2}^2
$$  
and
$$
T_{52} \le C \| \nabla (u-\kappa_f) \|_{L^2}^2 + C \| \nabla^2 \kappa * (xf) \|_{L^2}^2
\le C \| \nabla (u-\kappa_f) \|_{L^2}^2 + C \| f \|_{L^2_1}^2.
$$
Using that $\nabla V_\eps \in L^\infty$, we have
$$
\bal
T_{53} 
&\le C \| \nabla (u-\kappa_f) \|_{L^2}^2 + C\| \nabla^2 \kappa * (f \nabla V_\eps) \|_{L^2}^2 \\
&\le C \| \nabla (u-\kappa_f) \|_{L^2}^2 + C\| f \nabla V_\eps \|_{L^2}^2\\
&\le C \| \nabla (u-\kappa_f) \|_{L^2}^2 + C\| f  \|_{L^2}^2,
\eal
$$
and arguing as above with $G_\eps \in L^\infty$, we also obtain
$$
\bal
T_{54} 
&\le C \| \nabla (u-\kappa_f) \|_{L^2}^2 + C\| \nabla^2 \kappa * ( G_\eps \nabla u) \|_{L^2}^2 \\
&\le C \| \nabla (u-\kappa_f) \|_{L^2}^2 + C\| G_\eps \nabla u \|_{L^2}^2\\
&\le C \| \nabla (u-\kappa_f) \|_{L^2}^2 + C\| \nabla u  \|_{L^2}^2 \\
&\le C \| \nabla (u-\kappa_f) \|_{L^2}^2 + C\|  f  \|_{L^2_{\ell}}^2.
\eal
$$
For the last term, using Lemma \ref{lem:K*f}, we have
$$
\bal
T_{55} 
&\le C \| \nabla (u-\kappa_f) \|_{L^2}^2+ C N^2 \| \KK* \chi_R[f] \|_{L^2}^2 \\
&\le C \| \nabla (u-\kappa_f) \|_{L^2}^2+ C N^2 \| f \|_{L^2_\ell}^2.
\eal
$$
Gathering these previous estimates, we finally obtain
\beqn\label{eq:Ystep1}
\bal
\frac12 \frac{d}{dt}  \| \nabla (u - \kappa_{ f}) \|_{L^2}^2 
& \le  - \frac{1}{\eps}  \| \nabla^2 (u - \kappa_{f}) \|_{L^2}^2 
+ C(1+N^2) \|  f \|_{L^2_{\ell}}^2 \\
&\quad
+ C \| \nabla f \|_{L^2}^2
+ C \| \nabla (u - \kappa_f) \|_{L^2}^2.
\eal
\eeqn

\medskip
\noindent
{\it Step 2. $H^1$ differential inequality.}
We write the equation satisfied by $\partial_i f$ which is nothing but
$$
\bal
\partial_t (\partial_i f) 
&= \BB_{\eps,1} (\partial_i f, \partial_i u) - \frac12 \partial_i f 
- \nabla(f \nabla(\partial_i V_\eps))
- \nabla(\partial_j G_\eps \nabla u)
- N (\partial_i { \chi_R})f
 + N \la \chi_R f \ra  \partial_i \chi_1,
\eal
$$
and then we can write
$$
\bal
\frac12 \frac{d}{dt} \| \partial_i f \|_{L^2_k}^2 
&= \int \BB_{\eps,1} (\partial_i f, \partial_i u) \,  \partial_i f \, \la x \ra^{2k}
- \frac12 \| \partial_i f \|_{L^2_k}^2
-\int \nabla(f \nabla(\partial_i V_\eps)) \,  \partial_i f \, \la x \ra^{2k}\\
&\quad
-\int \nabla(\partial_j G_\eps \nabla u) \,  \partial_i f \, \la x \ra^{2k} 
- N \int (\partial_i { \chi_R}) f \partial_i f \la x \ra^{2k}
+  N \la \chi_R f \ra  \int  (\partial_i \chi_1)  \, \partial_i f \, \la x \ra^{2k}\\
&=: A_1 + A_2 + A_3 + A_4 + A_5 + A_6.
\eal
$$
Arguing as in the first step of Lemma~\ref{lem:A&B}, for any $\delta>0$, we have
$$
\bal
A_1 
&\le
- (1 - \delta)\| \nabla (\partial_i f) \|_{L^2_k}^2 + C(\delta) \| \nabla^2( u -\kappa_{ f}) \|_{L^2}^2
\\
&\quad + \int \left\{ \varphi(x) + [ C(\delta)  + C \, N \, R^{ 1 - \ell}   ] \la x \ra^{2(\ell - k)} - N   \chi_R(x) \right\} |\partial_i f|^2 \la x \ra^{2k} .  
\eal
$$
We next compute
$$
\bal
A_3 
&:=  \int  f  \nabla( \partial_i V_\eps) \cdot \nabla (\partial_{i} f ) \la x \ra^{2k}
+ \int f \nabla( \partial_i V_\eps) \cdot \nabla \la x \ra^{2k} \, \partial_i f  \\
&\quad
\le \eps C(\delta) \| f \|_{L^2_{k}}^2 + \delta \| \nabla f \|_{L^2_{k}}^2
+ \delta \| \nabla^2 f \|_{L^2_{k}}^2,
\eal
$$
using that $\Delta V_\eps = -G_\eps  - (\eps/2) x \cdot \nabla V_\eps$ and Lemma~\ref{lem:LinStabBd}. We also have
$$
\bal
A_4
&=  \int  \partial_i G_\eps  \nabla u \cdot \nabla(\partial_i f) \la x \ra^{2k}
+ \int  \partial_i G_\eps  \nabla u \cdot \nabla \la x \ra^{2k} \, \partial_j f  \\
&\le  C(\delta) \| \nabla u \|_{L^2}^2 + \delta \| \nabla f \|_{L^2_{k-1/2}}^2
+ \delta \| \nabla^2 f \|_{L^2_k}^2 \\
&\le  C(\delta) \| \nabla (u-\kappa_f) \|_{L^2}^2 + C(\delta) \| f \|_{L^2_{\ell}}^2 + \delta \| \nabla f \|_{L^2_{k-1/2}}^2
+ \delta \| \nabla^2 f \|_{L^2_k}^2,
\eal
$$
and we easily get
$$
A_5 \le N\frac{C}{R} \int {\mathbf 1}_{R/2 \le |x| \le 2R} \, f^2 \la x \ra^{2k} 
+ N\frac{C}{R} \int {\mathbf 1}_{R/2 \le |x| \le 2R} \, |\partial_i f|^2 \la x \ra^{2k}.
$$
For the last term, we have
$$
\bal
A_6 
&\le N \la \chi_R f \ra \int (\partial_i \chi_1) \, \partial_i f \, \la x \ra^{2k} \\
&\le C N \, \| f \|_{L^2} \, \| \partial_i f \|_{L^2_k}
\leq C(\delta) N^2 \, \| f \|_{L^2}^2 + \delta \| \partial_i f \|_{L^2_k}^2.
\eal
$$

Finally, putting together all the above estimates, we obtain
\beqn\label{eq:Ystep2}
\bal
\frac12 \frac{d}{dt} \| \nabla f \|_{L^2_k}^2 
&\le - (1 - \delta )\| \nabla^2 f \|_{L^2_k}^2 
+ C(\delta) \| \nabla(u - \kappa_f) \|_{L^2}^2
+ C(\delta) \| \nabla^2(u - \kappa_f) \|_{L^2}^2 \\
&
+ \int \psi^0(x) \, |f|^2 \la x \ra^{2k}
+ \int \psi^1(x) \, |\nabla f|^2 \la x \ra^{2k} ,
\eal
\eeqn
where
$$
\psi^0(x) := \eps C(\delta) + N\frac{C}{R} {\mathbf 1}_{R/2 \le |x| \le 2R} 
+ C(\delta)\la x \ra^{2(\ell-k)}
+ C(\delta)N^2 \la x \ra^{-2k}
$$
and
$$
\psi^1(x) := \varphi(x)- \frac12 + \delta + N\frac{C}{R} {\mathbf 1}_{R/2 \le |x| \le 2R}  + [C(\delta) + CNR^{1-\ell}]\la x \ra^{2(\ell-k)} -  N { \chi_R}.
$$

\medskip
\noindent
{\it Step 3. Conclusion.}
We gather estimates \eqref{eq:Ystep1}, \eqref{eq:Ystep2} and \eqref{eq:Norm0*}, and we get
\beqn\label{eq:Norm1*}
\bal
&\frac12\frac{d}{dt} \| (f,u) \|^2_{Y_*} 
 \leq  
\int \Big\{ \varphi(x) + \eta_1 \psi^0(x)  - N { \chi_R}\\
&\qquad
+ [\eta_1 C(\delta) + \eta_1 C(1+N^2) + C(\delta)(1+\eta N^2) + C\frac{N}{R^{\ell-1}} ]\la x \ra^{2(\ell - k)}      \Big\} f^2 \la x \ra^{2k} \\
&\qquad
+\eta_1 \int \psi^1(x) \, | \nabla f |^2 \la x \ra^{2k} 
-(1-\delta)\| \nabla f \|_{L^2_k}^2    \\
&\qquad
- \eta\left( \frac12 - \delta   \right) \| u - \kappa_f \|_{L^2}^2 
-\eta_1(1-\delta) \| \nabla^2 f \|_{L^2_k}^2 \\
&\qquad
- \eta\left( \frac1\eps - C(\delta) - \frac{\eta_1}{\eta} C(\delta)   \right) \| \nabla (u - \kappa_{f}) \|_{L^2}^2
- \eta_1\left( \frac1\eps - C(\delta)   \right) \| \nabla^2 (u - \kappa_{f}) \|_{L^2}^2.
\eal
\eeqn
Now we conclude as in Step 3 of the proof of Lemma~\ref{lem:A&B}. We choose first $\delta \in(0,1)$ small enough, next $\eps \in (0,1)$ small enough and then   $\eta_1 = \eta = N^{-3}$ and $R = N$ in the above inequality. For any $a>-1/2$, we obtain
\beqn\label{eq:Norm1*bis}
\bal
\frac12\frac{d}{dt} \| (f,u) \|^2_{Y_*} 
& \leq  
\int \{ \varphi^1_N(x) - N { \chi_R} \} f^2 \la x \ra^{2k} 
+\eta_1 \int \left\{ \varphi^2_N(x) - N { \chi_R}    \right\} | \nabla f |^2 \la x \ra^{2k} \\
&\quad
+a\| \nabla f \|_{L^2_k}^2    
+ a \eta \| u - \kappa_f \|_{L^2}^2 
+ a \eta_1 \| \nabla^2 f \|_{L^2_k}^2 \\
&\quad
+a \eta_1 \| \nabla (u - \kappa_{f}) \|_{L^2}^2
+a \eta_1 \| \nabla^2 (u - \kappa_{f}) \|_{L^2}^2,
\eal
\eeqn
where 
$$
\varphi^1_N(x) := \varphi(x) + CN^{-3} + CN^{-1} {\mathbf 1}_{R/2 \le |x| \le 2R}  + (C + CN^{-1} + CN^{-3} + C N^{2-\ell})\la x \ra^{2(\ell-k)} + C N^{-1} \la x \ra^{-2k}
$$ 
and 
$$
\varphi^2_N(x) := \varphi(x) + C {\mathbf 1}_{R/2 \le |x| \le 2R}  + (C + C N^{2-\ell})\la x \ra^{2(\ell-k)} + C \la x \ra^{-2k}
$$
have the same asymptotic behaviour as $\varphi (x)$ when $|x|\to \infty$ and are decreasing as a function of $N$. Picking $N$ large enough such that
$$
\varphi^i_N(x) - N \chi_R(x) \le a, \quad \forall\, x \in \R^2,
$$
we deduce from \eqref{eq:Norm1*bis} that, for some constant $K>0$,
\beqn\label{eq:dissipY}
\frac12\frac{d}{dt} \| (f,u) \|^2_{Y_*} \le a \| (f,u) \|^2_{Y_*} - K \big( \| \nabla^2 f \|_{L^2_k}^2 + \| \nabla^2 (u-\kappa_f) \|_{L^2}^2  \big).
\eeqn
We first conclude that $\BB_\eps$ is $a$-hypo-dissipative in $Y$.
Moreover, using the interpolation inequality
$$
\| g \|_{H^1_k}^2 \le C  \| g \|_{H^2_k} \| g \|_{L^2_k},
$$
it follows from \eqref{eq:dissipY} that
$$
\frac12\frac{d}{dt} \| (f,u) \|^2_{Y_*} \le a \| (f,u) \|^2_{Y_*} - K \| (f,u) \|_{Y_*}^4 \, \| (f,u) \|_{X_*}^{-2}.
$$
By standard arguments, we get the estimate
$$
\| S_{\BB_\eps}(t) (f,u) \|_{Y} \le C\, t^{-1/2} \, e^{at} \| (f,u) \|_{X},
$$
concluding the proof.
\end{proof}

%
%

\medskip
\noindent
{\sl Proof of Proposition~\ref{prop:spectre}.} 
The domain $\dom(\Lambda_\eps)$ of the operator $\Lambda_\eps : \dom(\Lambda_\eps) \subset X \to X$ is given by 
$$
\dom(\Lambda_\eps) = H^2_k \cap L^2_{k,0} \cap L^2_{rad} \times H^2 \cap L^2_{rad}
$$ 
and we recall that $X = L^2_{k,0} \cap L^2_{rad} \times L^2_{rad}$ and 
$Y = H^1_k \cap L^2_{k,0} \cap L^2_{rad} \times H^1 \cap L^2_{rad}$ (see equations \eqref{eq:X} and \eqref{eq:Y}). We define a family of interpolation spaces
$$
X^\eta := H^{2\eta}_k \cap L^2_{k,0} \cap L^2_{rad} \times H^{2\eta} \cap L^2_{rad}, \quad \eta \in [0,1],
$$
so that $X^0 = X$, $X^1 = \dom(\Lambda_\eps)$ and $X^{1/2} = Y$. Thanks to classical interpolation results we have $Y = X^{1/2} \subset \dom( \Lambda_\eps^\eta)$ for any $\eta \in [0,1/2)$, see \cite{komatsu,LP,MS*}. Now we fix some $\eta \in (0,1/2)$ and we have $Y \subset \dom(\Lambda_\eps^\eta) \subset X$.

Recalling the results from Lemma \ref{lem:A&B}, Lemma \ref{lem:Abounded}, Lemma \ref{lem:T} and \eqref{eq:def-A} we have, for any $a > -1/2$,
$$
\bal
& S_{\BB_\eps}(t) : X \to X, \quad\text{with}\quad \| S_{\BB_\eps}(t) \|_{\BBB(X)} \le C \, e^{at},\\
& S_{\BB_\eps}(t) : Y \to Y, \quad\text{with}\quad \| S_{\BB_\eps}(t) \|_{\BBB(Y)} \le C \, e^{at},\\
& S_{\BB_\eps}(t) : X \to Y, \quad\text{with}\quad \| S_{\BB_\eps}(t) \|_{\BBB(X,Y)} \le C  \, t^{-1/2} \, e^{at},
\eal
$$
moreover $\AA \in \BBB(X) \cap \BBB(Y)$ and
$$
\bal
\AA S_{\BB_\eps}(t) : X \to  Y, \quad\text{with}\quad \| \AA S_{\BB_\eps}(t) \|_{\BBB(X, Y)} \le C  \, t^{-1/2} \, e^{at}.
\eal
$$
First,  from the previous estimates, we immediately obtain, for any $a > -1/2$, 
\beqn\label{eq:h1}
\forall\, \ell\ge 0, \quad
\| S_{\BB_\eps} * (\AA S_{\BB_\eps})^{(*\ell)} (t) \|_{\BBB(X)} \le C \, e^{a t}. 
\eeqn
Moreover, from \cite[Lemma 2.17]{GMM} there exists $n \in \N$ such that
$$
\| (\AA S_{\BB_\eps})^{*(n)} (t) \|_{\BBB(X,Y)} \le C \, e^{a t},
$$
which  together with the fact $S_{\BB_\eps}(t) : \dom( \Lambda_\eps^\eta) \to \dom( \Lambda_\eps^\eta)$ with $\| S_{\BB_\eps}(t) \|_{\BBB(\dom( \Lambda_\eps^\eta))} \le C \, e^{at}$ (by interpolation of the same results in $X$ and $Y$), yield
\beqn\label{eq:h2}
\| S_{\BB_\eps} * (\AA S_{\BB_\eps})^{(*n)} (t) \|_{\BBB(X, \dom(\Lambda_\eps^\eta))} \le C \, e^{a t}.
\eeqn
Recall that we have fixed some $a^*>-1/2$.
Gathering that last estimate  with \eqref{eq:h1}, we can apply \cite[Theorem 2.1]{MS*} which yields, for some $r^*>0$,
$$
\Sigma(\Lambda_\eps) \cap \CC_{a^*} \subset B(0,r^*) \quad\text{on}\quad X.
$$
From the previous estimates together with the fact that $\AA \in \BBB(X,L^2_{k+1} \times L^2_{k+1})$, we also obtain
\beqn\label{eq:h3}
\int_0^\infty \| (\AA S_{\BB_\eps})^{*(n+1)} (t) \|_{\BBB(X, \overline Y)} \, e^{-a t}\, dt \le C,
\eeqn
where $\overline Y :=  Y \, \cap \, (L^2_{k+1} \times L^2_{k+1}) \subset X$ with compact embedding. Hence, thanks to 
\eqref{eq:h1}-\eqref{eq:h2}-\eqref{eq:h3}, we are able to apply \cite[Theorem 3.1]{MS*} that implies
$$
\Sigma(\Lambda_\eps) \cap \CC_{a^*} \subset \Sigma_d(\Lambda_\eps) \quad\text{on}\quad X ,
$$
and that concludes the proof. \qed

\medskip 
 
\subsection{Localization of the spectrum for the linearized operator in a radially symmetric setting} 
\label{subsec:localizationSigma}
We recall that we consider a radially symmetric setting and we have already defined the space $X$ in \eqref{eq:X}. 
We establish in this subsection the following localization of the spectrum of $\Lambda_\eps$.

\begin{theo}\label{theo:S_Lambda}
Fix some $a^*>-1/2$.
There exists $\eps^*> 0$ such that in $X$ there holds 
$$
\Sigma(\Lambda_\eps) \cap \CC_{a^*} = \emptyset \quad\hbox{for any}\quad \eps \in (0,\eps^*).
$$
As a consequence, for any $ a > a^*$, there exists $C_a \ge 1$ such that 
$$
\| S_{\Lambda_\eps} (t) \|_{\BBB(X)}  \le C_a \, e^{at} \quad \forall \, t \ge 0, \,\, \forall \, \eps \in (0,\eps^*).
$$

\end{theo}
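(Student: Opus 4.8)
The plan is to reduce, via Proposition~\ref{prop:spectre}, to showing that $\Lambda_\eps$ has \emph{no} eigenvalue in $\Delta_{-1/3}$ when $\eps$ is small, and to obtain this by a compactness/contradiction argument that degenerates $\Lambda_\eps$ onto the linearized operator of the parabolic-elliptic Keller--Segel equation, whose spectral gap in the radial zero-mass setting is strictly larger than $1/3$, as established in \cite{EM} (see also \cite{CamposDolbeault2012}). So suppose, for contradiction, that there are $\eps_n \downarrow 0$, $\xi_n \in \Delta_{-1/3}$ and $0\neq(f_n,u_n)\in\dom(\Lambda_{\eps_n})$ with $\Lambda_{\eps_n}(f_n,u_n)=\xi_n(f_n,u_n)$; by Proposition~\ref{prop:spectre} each $\xi_n\in B(0,r^*)$, so after extraction $\xi_n\to\xi_\infty$ with $\Re e\,\xi_\infty\ge -1/3$.

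First I would normalize $\|f_n\|_{L^2_k}=1$ and produce a uniform-in-$n$ compactness bound. Fixing $a\in(-1/2,-1/3)$, the relation $(f_n,u_n)=(\xi_n-\BB_{\eps_n})^{-1}\AA(f_n,u_n)$ together with the $a$-hypo-dissipativity of $\BB_\eps$ in $X$ and the smoothing estimate $\|S_{\BB_\eps}(t)\|_{\BBB(X,Y)}\le C\,t^{-1/2}e^{at}$ from Lemmas~\ref{lem:A&B}--\ref{lem:T} (whose constants are uniform for $\eps$ small, the singular $\frac1\eps$ terms only helping) gives a uniform bound of $(f_n,u_n)$ in $Y$, since $\AA(f_n,u_n)=(N\chi_R[f_n],0)$ depends only on $f_n$. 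Then $\chi_R[f_n]\in H^1$ is compactly supported, so $\AA(f_n,u_n)$ is uniformly bounded in every polynomially weighted space $L^2_{m,0}\times L^2_{rad}$; rerunning Lemmas~\ref{lem:A&B}--\ref{lem:T} with $k$ replaced by $m$ and bootstrapping the eigenvalue relation (as in the proof of Proposition~\ref{prop:spectre}) yields a uniform bound of $f_n$ in $H^1_{k+1}$. In particular $(f_n)$ is relatively compact in $X_1$, and along a further subsequence $f_n\to f_\infty$ strongly in $X_1$ with $\|f_\infty\|_{L^2_k}=1$, so $f_\infty\neq 0$.

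Next I would pass to the limit in the eigenvalue system. For the second equation, using $\Delta\kappa_{f_n}+f_n=0$ the relation $\Lambda_{2,\eps_n}(f_n,u_n)=\xi_n u_n$ becomes $\Delta(u_n-\kappa_{f_n})=\eps_n\bigl(\xi_n u_n-\frac12 x\cdot\nabla u_n\bigr)$, whose right-hand side is $O(\eps_n)$ in $L^2_{loc}$ (the quantities $u_n-\kappa_{f_n}$ and $\nabla u_n=\nabla(u_n-\kappa_{f_n})+\nabla\kappa_{f_n}$ being uniformly bounded in $L^2$ by the $Y$-bound and the kernel estimates); since moreover $u_n-\kappa_{f_n}$ is uniformly bounded in $H^2\cap L^2$, this forces $u_n-\kappa_{f_n}\to 0$ in $H^1_{loc}$, hence $u_n\to\kappa_{f_\infty}$ in $L^2_{loc}$. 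Letting $n\to\infty$ in the first equation and invoking Corollary~\ref{cor:cvgceGVeps} ($G_{\eps_n}\to G$ in $W^{2,p}$, $\nabla V_{\eps_n}\to\nabla V$ in $L^\infty_1$, $\Delta V_{\eps_n}\to\Delta V$ boundedly) together with the strong convergence of $f_n$, one obtains that $f_\infty\in X_1$ is a nonzero radial zero-mass eigenfunction, with eigenvalue $\xi_\infty\in\overline{\Delta_{-1/3}}$, of the linearized parabolic-elliptic operator $f\mapsto\Delta f+\nabla\cdot\bigl(\frac12 x f-f\nabla V-G\nabla\kappa_f\bigr)$. This contradicts the spectral gap of that operator on $X_1$ \cite{EM}, which proves $\Sigma(\Lambda_\eps)\cap\Delta_{-1/3}=\emptyset$ for $\eps\in(0,\eps^*)$.

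Finally, the semigroup bound follows from the factorization $\Lambda_\eps=\AA+\BB_\eps$ with $\AA\in\BBB(X)$, the decay and smoothing estimates for $S_{\BB_\eps}$ and for the iterated convolutions $\AA S_{\BB_\eps}$ already recorded in the proof of Proposition~\ref{prop:spectre} (in particular \eqref{eq:h1}--\eqref{eq:h2}), and the just-proved absence of spectrum of $\Lambda_\eps$ in $\Delta_{-1/3}$: the spectral-mapping/enlargement results of \cite{MS*} (see also \cite{GMM}) then give $\|S_{\Lambda_\eps}(t)\|_{\BBB(X)}\le C\,e^{-t/3}$ for all $t\ge 0$, with $C$ uniform in $\eps\in(0,\eps^*)$. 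I expect the main obstacle to be making the compactness step genuinely uniform in $\eps$: propagating the hypo-dissipativity estimates to higher polynomial weights with $\eps$-independent constants, and ruling out any loss of mass of $f_n$ at spatial infinity, so that the limiting eigenfunction is truly nonzero and solves the correct parabolic-elliptic equation.
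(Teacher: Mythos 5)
Your route to the spectral vacancy is genuinely different from the paper's. The paper's proof of Theorem~\ref{theo:S_Lambda} combines Proposition~\ref{prop:spectre} with Proposition~\ref{lem:RepsInHH}, and the latter is established \emph{constructively}: writing $\RR_{\Lambda_\eps}$ via the Schur complement $s_\eps(z)=A(z)-B(C+\eps D(z))^{-1}$, showing (Lemmas~\ref{lem:tildes}--\ref{lem:cd-1}) that $s_\eps(z)-\Omega(z)$ is small in $\BBB(Y_1,X_1)$ uniformly on $\OO_\rho$ as $\eps\to0$, and then building an explicit, uniformly bounded inverse for $s_\eps(z)$ out of $\RR_\Omega(z)$ via the factorization/Dyson argument of \cite[Lemma 2.16]{Tristani}. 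This gives, in one stroke, both $\Sigma(\Lambda_\eps)\cap\OO_\rho=\emptyset$ and an $\eps$-uniform bound on $\RR_{\Lambda_\eps}$ over $\OO_\rho$. Your argument instead shows the absence of spectrum by contradiction via compactness, degenerating a putative sequence of eigenpairs onto an eigenpair of $\Omega$. This is a legitimate alternative and conceptually illuminating (it makes the ``$\Lambda_\eps$ inherits the gap of $\Omega$'' heuristic literal), but as written it has two gaps.

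First, in the limit passage for $u_n$: to conclude $u_n-\kappa_{f_n}\to0$ strongly in $H^1_{loc}$ you invoke a uniform $H^2\cap L^2$ bound for $u_n-\kappa_{f_n}$ and smallness of $\Delta(u_n-\kappa_{f_n})=\eps_n(\xi_n u_n-\tfrac12x\cdot\nabla u_n)$, but the $Y$-bound only gives $\nabla u_n\in L^2$, not $\nabla u_n\in L^2_1$, so the term $x\cdot\nabla u_n$ is not controlled globally in $L^2$ and the $\eps_n$-factor does not help unless you first propagate a weighted bound on $\nabla(u_n-\kappa_{f_n})$. This requires an extra step (an $\eps$-uniform estimate of $\nabla u$ in a weighted space, or an explicit use of the $d^{-1}$ estimates of Lemmas~\ref{lem:d-1}--\ref{lem:cd-1} as the paper does) that is not supplied.

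Second, and more structurally, your contradiction argument rules out eigenvalues in $\Delta_{-1/3}$ but does not by itself give an $\eps$-uniform bound on $\RR_{\Lambda_\eps}$ on $\OO_{r^*}$, which is what is actually fed into the spectral-mapping machinery of \cite{MS*,GMM} to obtain the $\eps$-uniform constant in $\|S_{\Lambda_\eps}(t)\|_{\BBB(X)}\le Ce^{-t/3}$. The paper's Proposition~\ref{lem:RepsInHH} delivers exactly this uniform resolvent bound. To close your version, you would need to run the contradiction not only for eigenvalues but for almost-singular sequences, i.e.\ assume $\|\RR_{\Lambda_{\eps_n}}(\xi_n)\|_{\BBB(X)}\to\infty$, normalize the near-kernel vectors, and repeat the compactness analysis; this is doable but is an additional step that must be carried out explicitly.
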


The difficulty is that $\Lambda_\eps$ is not a perturbation of some fixed operator $\Lambda$ and we cannot
apply directly the perturbation theory developed in \cite{MMcmp,Tristani}. However, we are able to identify the limit of $\RR_{\Lambda_\eps}$ as $\eps \to 0$ 
which is enough to conclude.

\smallskip
 
We introduce the notations 
\bean
&&A f := \Delta f + \nabla ({1 \over 2} \, x \, f - f  \, \nabla V_\eps),
\quad B u := - \nabla ( G_\eps \, \nabla u)
\\
&& C  u := \Delta u, \quad  D u :=  {1 \over 2} \, x \cdot \nabla u, 
\eean
so that the linearized equation writes 
\bear \label{eq:ode:eps}
&& \partial_t f  = A f + B u  ,  \qquad  \partial_t u    =   {1 \over \eps} ( C u + f) + Du.
\eear
The important point is that at a very formal level, the limit system (as $\eps \to 0$) is the 
linearized parabolic-elliptic system
\beqn \label{eq:ode:infty}
  \partial_t f  = A_0 f + B_0 u  ,  \qquad  Cu = - f,
\eeqn
where
$$
A_0 f := \Delta f + \nabla ({1 \over 2} \, x \, f - f  \, \nabla V), \quad
B_0 u := - \nabla ( G \, \nabla u),
$$
with $G$ and $V$ defined in \eqref{eq:KSprofile0}, which simplifies into a single equation 
\beqn \label{eq:1ode}
  \partial_t f  = (A_0  +  B_0(-C)^{-1}) f =: \Omega f .
\eeqn

Observe that the last equation is nothing but the linearized equation associated to parabolic-elliptic Keller-Segel equation 
which has been studied in \cite{MR2996772,CamposDolbeault2012,EM} and for which it has been proved therein that the associated
semigroup is exponentially stable in several weighted Lebesgue spaces. In the sequel we explain why the linearized parabolic-parabolic
system inherits that exponential stability  at least for $\eps > 0$ small enough.

We recall the following result which is an immediate consequence of \cite[Section 6.1]{CamposDolbeault2012} 
and \cite[Theorem 4.3]{EM}.

\begin{theo}\label{Theorem:theoSOmega}
There exists a constant $C$ such that  
$$
\forall\, t\ge 0, \;
\forall \, h \in L^2_{k,0}, \quad \| e^{\Omega t} h \|_{L^2_k} \le C \, e^{-t/2} \, \|  h \|_{L^2_k}. 
$$
As a consequence, there holds
$$
\RR_\Omega \in \HH(\CC_{-1/2};\BBB(X_1)) \quad \hbox{and then}\quad \Sigma(\Omega) \cap \CC_{-1/2} = \emptyset \ \hbox{in} \ X_1,
$$
where $X_1 = L^2_{rad} \cap L^2_{k,0}$ is defined in \eqref{eq:X}.
\end{theo}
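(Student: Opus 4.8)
The statement only recalls \cite[Section~6.1]{CamposDolbeault2012} and \cite[Theorem~4.3]{EM}, so the plan is to explain the reduction in three steps rather than to reproduce the analysis. Recall that $\Omega = A_0 + B_0(-C)^{-1}$ is the linearization, around the parabolic-elliptic self-similar profile $(G,V)$ of \eqref{eq:KSprofile0}, of the rescaled parabolic-elliptic Keller-Segel flow: on radially symmetric $f$ with $\langle f \rangle = 0$ one has $\Omega f = \Delta f + \nabla\cdot\big(\tfrac12\,x\,f - f\,\nabla V - G\,\nabla(\kappa_0 * f)\big)$. The first step is to invoke the spectral gap of \cite[Section~6.1]{CamposDolbeault2012}: in the Hilbert space $E := L^2(G^{-1}\,dx)$, whose scalar product is the one carried by the second variation of the free energy at $(G,V)$, the operator $\Omega$ is (conjugate to) a self-adjoint operator and its quadratic form --- a linearized logarithmic Hardy-Littlewood-Sobolev functional --- is bounded below, via a Hardy-Poincar\'e-type inequality, by the square of the $E$-norm on the subspace of radially symmetric zero-mass perturbations. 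This yields simultaneously $\Sigma(\Omega) \subset \{ \Re e\, z \le -1 \}$ and $\| e^{\Omega t} h \|_{E} \le e^{-t} \| h \|_{E}$ on that subspace.

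The second step transfers this estimate from the Gaussian-weighted space $E$ to the polynomially weighted space $L^2_k$, which is precisely what is carried out in \cite[Theorem~4.3]{EM}. Split $\Omega = \AA + \BB$ with $\AA f := N\,\chi_R[f]$ a bounded, compactly supported truncation of the type of \eqref{eq:def-A} and $\BB := \Omega - \AA$. A weighted $L^2_k$ energy estimate parallel to Step~1 of the proof of Lemma~\ref{lem:A&B} --- where the confining drift $\tfrac12\,x\cdot\nabla$ produces the coefficient $-\tfrac12(k-1)$ at infinity (compare \eqref{eq:assymptotique} with $\eps = 0$) and $\nabla V$, $G$ are bounded thanks to Corollary~\ref{cor:cvgceGVeps} --- shows that, for $N$ and $R$ large enough, $\BB$ is $a$-hypo-dissipative in $L^2_k$ for every $a > -\tfrac12(k-1)$, hence for some $a < -1$ since $k > 7$; moreover, $\AA$ being compactly supported it maps $L^2_k$ boundedly into $E$, so that $\AA S_{\BB}(t)$ and its convolution iterates $(\AA S_{\BB})^{(*n)}$ send $L^2_k$ continuously into $E$ with exponentially decaying operator norm. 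Feeding these ingredients into the enlargement-and-factorization theorems of \cite{GMM,MS*} propagates both the spectral localization and the decay from $E$ to $L^2_k$, giving $\Sigma(\Omega) \cap \Delta_{-1} = \emptyset$ in $X_1$ and $\| e^{\Omega t} h \|_{L^2_k} \le C\, e^{-t} \| h \|_{L^2_k}$ for all $h \in L^2_{k,0}$, i.e.\ the first displayed assertion.

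The last step is routine. Since $e^{\Omega t}$ is a $C_0$-semigroup on $X_1$ with growth bound at most $-1$, for every $z \in \Delta_{-1}$ the Bochner integral $\RR_\Omega(z) := \int_0^\infty e^{-zt}\, e^{\Omega t}\, dt$ converges in $\BBB(X_1)$, defines a holomorphic $\BBB(X_1)$-valued map on $\Delta_{-1}$ which equals the resolvent of $\Omega$, and satisfies $\| \RR_\Omega(z) \|_{\BBB(X_1)} \le C\,(1 + \Re e\, z)^{-1}$; in particular $\RR_\Omega \in \HH(\Delta_{-1}; \BBB(X_1))$ and $\Sigma(\Omega) \cap \Delta_{-1} = \emptyset$. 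The one genuinely nontrivial ingredient is the spectral gap of Step~1 --- the Hardy-Poincar\'e/linearized-free-energy estimate of \cite{CamposDolbeault2012} --- as everything downstream is the by-now-standard enlargement machinery, already implemented for this operator in \cite{EM}.
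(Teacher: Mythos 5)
Your proposal is correct and follows essentially the same route as the paper: the paper gives no independent proof of this statement but simply quotes it as an immediate consequence of \cite[Section 6.1]{CamposDolbeault2012} (spectral gap for the linearized parabolic-elliptic operator in the natural Gaussian-type weighted space, for radial zero-mass perturbations) and \cite[Theorem 4.3]{EM} (transfer of the decay to the polynomially weighted space $L^2_k$ via the enlargement/factorization machinery), which is exactly the two-step reduction you outline, followed by the standard passage from the semigroup bound to holomorphy of the resolvent on $\Delta_{-1}$.
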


\begin{rem*}
The optimal decay rate $-1$ stated in \cite{CamposDolbeault2012,EM} becomes $-1/2$ here, because of the different normalization choice in the definition of the rescaled functions $g$ and $v$ in \eqref{eq:KSresc} and \eqref{eq:KSresc2eme}.
\end{rem*}

%
%
%
%
\smallskip
In order to formalize the link between the linearized parabolic-parabolic equation and the linearized parabolic-elliptic equation, we write the linearized parabolic-parabolic system \eqref{eq:ode:eps} into the matrix form
$$
{d \over dt} \left( \begin{array}{c} f  \\  u \end{array} \right) = \Lambda_\eps \left( \begin{array}{c} f  \\  u \end{array} \right),
\quad 
\Lambda_\eps := \left( \begin{array}{cc} A & B  \\  \eps^{-1}I  & \eps^{-1} C + D\end{array} \right).
$$
For the analysis of the spectrum of $\Lambda_\eps$, for any $z \in \C$, we have
$$
\Lambda_\eps - z = \left( \begin{array}{cc} a & b  \\  c  & d \end{array} \right),
$$
with 
$$
a :=A(z) = A-z, \quad b := B, \quad  c:= \eps^{-1}I, \quad d := \eps^{-1}C + D(z), \quad D(z) = D-z.
$$
One can readily verify that for $z \in \C$ such that $D-z$ and its Schur's complement 
$$
s_\eps = s_\eps(z) := a -bd^{-1}c =  A(z) - B(C+\eps D(z))^{-1} 
$$
are invertible, the resolvent of $\Lambda_\eps$ is given by  
\bean 
\RR_{\Lambda_\eps}(z) 
=  (\Lambda_\eps - z)^{-1} 
= \left( \begin{array}{cc} s_\eps^{-1} & -s_\eps^{-1} b d^{-1}  \\   -d^{-1} c s_\eps^{-1}  &  d^{-1} + d^{-1} c s_\eps^{-1} b d^{-1}  \end{array} \right)
=: \left( \begin{array}{cc} \RR^{\Lambda_\eps}_{11}(z) & \RR^{\Lambda_\eps}_{12}(z)  \\   \RR^{\Lambda_\eps}_{21}(z)  &  \RR^{\Lambda_\eps}_{22}(z)  \end{array} \right). 
\eean
Then, at least formally, we see that  
\beqn\label{eq:RLepstoU0}
\RR_{\Lambda_\eps}(z) 
\mathop{\longrightarrow}_{\eps\to0} \left( \begin{array}{cc} \RR_\Omega(z) & 0  \\  -C^{-1} \RR_\Omega(z)  & 0  \end{array} \right).
\eeqn
Indeed, on the one hand, we have  
  $$
\bal
\RR^{\Lambda_\eps}_{11} &= s_\eps^{-1} 
= \{ A(z) - B(\eps^{-1} C + D(z))^{-1} \eps^{-1} I \}^{-1}\\
&= \{ A- z - B( C + \eps D-\eps z)^{-1} \}^{-1} \xrightarrow[\eps\to0]{} (A_0-B_0 C^{-1}-z)^{-1}
= \RR_{\Omega}(z).
\eal
$$
and 
$$
\bal
\RR^{\Lambda_\eps}_{21}(z) &= -d^{-1} c s_\eps^{-1}= -(\eps^{-1} C + D(z))^{-1} \eps^{-1} I\{ A(z) - B( C + \eps D(z))^{-1} \}^{-1} \\
&= -( C + \eps D-\eps z)^{-1}\{ A- z - B( C + \eps(D-z))^{-1} \}^{-1}\\
& \xrightarrow[\eps\to0]{} -C^{-1}(A_0-B_0 C^{-1}-z)^{-1}
= -C^{-1}\RR_{\Omega}(z).
\eal
$$
In the same way, we have 
$$
\bal
\RR^{\Lambda_\eps}_{12}(z)
&= -s_\eps^{-1} b d^{-1} =- \eps \{ A- z - B( C + \eps D-\eps z)^{-1} \}^{-1} B (C+\eps D - \eps z)^{-1} 
\eal
$$
as well as 
$$
\bal
&\RR^{\Lambda_\eps}_{22} (z)
= d^{-1} + d^{-1} c s_\eps^{-1} b d^{-1} 
&= \eps(C+\eps D(z))^{-1} + 
\eps(C+\eps D(z))^{-1}\{ A(z) - B( C + \eps D(z))^{-1} \}^{-1} B (C+\eps D(z))^{-1},
\eal
$$
and then both last terms vanish in the limit $\eps \to 0$.

\medskip

In fact, we will not try to prove that convergence  \eqref{eq:RLepstoU0} rigorously holds, but we will just prove the following result. 
We define, for a given $\rho >0$ and some fixed $a^* >-1/2$, 
$$
\OO_{\rho} := \CC_{a^*} \cap B(0,\rho).
$$

\begin{prop}\label{lem:RepsInHH} For any $\rho > 0$, there exists $\eps^*_{\rho}> 0$ such that in $X$ there holds 
$$	
\RR_{\Lambda_\eps} \in \HH(\OO_{\rho};\BBB(X)) \quad\hbox{for any}\quad \eps \in (0,\eps^*_{\rho}).
$$

\end{prop}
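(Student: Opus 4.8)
The plan is to show that the resolvent family $\RR_{\Lambda_\eps}(z)$ is uniformly (in $\eps$) bounded and holomorphic on the compact set $\overline{\OO_\rho}$, for $\eps$ small enough. The point is that $\OO_\rho$ is contained in $\Delta_{-1/3}$, which by Theorem~\ref{Theorem:theoSOmega} lies in the resolvent set of the limiting operator $\Omega$, so $\RR_\Omega(z)$ is holomorphic and bounded on $\overline{\OO_\rho}$; we want to transfer this to $\Lambda_\eps$. The Schur complement formula already displayed in the excerpt is the main tool: for $z$ such that $d(z) = \eps^{-1}C + D(z)$ and $s_\eps(z) = A - z - B(C + \eps D(z))^{-1}$ are invertible, all four blocks of $\RR_{\Lambda_\eps}(z)$ are given explicitly in terms of $s_\eps(z)^{-1}$, $b$, $c$, $d^{-1}$.

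\textbf{Step 1: Invertibility of $d(z)$.} For $z \in \overline{\OO_\rho}$, write $d(z) = \eps^{-1}(C + \eps D(z)) = \eps^{-1}(C + \eps D - \eps z)$. Since $C = \Delta$ generates a dissipative semigroup on $X_2 = L^2_{rad}$ and $D = \tfrac12 x\cdot\nabla$ is relatively bounded with the correct sign (as already exploited in Lemma~\ref{lem:A&B} and Lemma~\ref{lem:T}), the operator $C + \eps D(z)$ is invertible for $\eps$ small with $\|(C+\eps D(z))^{-1}\|_{\BBB(L^2)} \le C$ uniformly on $\overline{\OO_\rho}$; hence $d(z)^{-1} = \eps (C + \eps D(z))^{-1}$ is $O(\eps)$.

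\textbf{Step 2: Invertibility of the Schur complement $s_\eps(z)$.} Write $s_\eps(z) = (A - z) - B(C + \eps D(z))^{-1}$. In the limit $\eps \to 0$ this is $(A_0 - z) - B_0 C^{-1} = \Omega - z$ — but $A$, $B$, $G_\eps$, $V_\eps$ themselves depend on $\eps$. Here I would use Corollary~\ref{cor:cvgceGVeps}: since $G_\eps \to G$ in $W^{2,p}$ and $\nabla V_\eps \to \nabla V$ in $L^\infty_1$, $\Delta V_\eps \to \Delta V$ boundedly, the operator $A = \Delta + \nabla(\tfrac12 x\cdot - \cdot\nabla V_\eps)$ converges to $A_0$ and $B = -\nabla(G_\eps\nabla\cdot)$ converges to $B_0$ in the appropriate operator-norm sense on the scale $X_1 \to$ (negative-order space), uniformly on $\overline{\OO_\rho}$; similarly $B(C+\eps D(z))^{-1} \to B_0 C^{-1}$. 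Since $\Omega - z$ is boundedly invertible on $X_1$ for all $z \in \overline{\OO_\rho}$ with norm bounded uniformly (compactness of $\overline{\OO_\rho}$ plus holomorphy of $z \mapsto \RR_\Omega(z)$), a Neumann-series / continuity argument gives that $s_\eps(z)$ is invertible with $\|s_\eps(z)^{-1}\|_{\BBB(X_1)} \le C$ uniformly in $z \in \overline{\OO_\rho}$ and $\eps \in (0,\eps^*_\rho)$, and moreover $s_\eps(z)^{-1} \to \RR_\Omega(z)$.

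\textbf{Step 3: Assemble the blocks and conclude.} Plugging the uniform bounds from Steps 1--2 into the displayed block formula: $\RR^{\Lambda_\eps}_{11} = s_\eps^{-1}$ is uniformly bounded $X_1 \to X_1$; $\RR^{\Lambda_\eps}_{21} = -d^{-1}cs_\eps^{-1} = -(C+\eps D(z))^{-1} s_\eps^{-1}$ is uniformly bounded $X_1 \to X_2$ (with $C^{-1}$ smoothing); $\RR^{\Lambda_\eps}_{12} = -\eps\, s_\eps^{-1} B (C+\eps D(z))^{-1}$ is $O(\eps)$; and $\RR^{\Lambda_\eps}_{22} = \eps(C+\eps D(z))^{-1}[I + \eps^{-1}\cdot\eps\, s_\eps^{-1} B(C+\eps D(z))^{-1}]$ — careful bookkeeping shows this is bounded $X_2 \to X_2$ (the $\eps^{-1}c$ cancels one $\eps$). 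Thus $\|\RR_{\Lambda_\eps}(z)\|_{\BBB(X)} \le C$ uniformly on $\overline{\OO_\rho}$, which in particular shows $\overline{\OO_\rho} \subset \Sigma(\Lambda_\eps)^c$ and that $z \mapsto \RR_{\Lambda_\eps}(z)$ is holomorphic there (it is a resolvent, hence automatically holomorphic on its domain of definition). This is exactly the statement $\RR_{\Lambda_\eps} \in \HH(\OO_\rho; \BBB(X))$.

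\textbf{The main obstacle} is Step 2: controlling $s_\eps(z)^{-1}$ uniformly, because $\Lambda_\eps$ is not a genuine perturbation of a fixed operator — the coefficients $G_\eps, V_\eps$ of $A$ and $B$ themselves move with $\eps$, so one cannot quote the abstract perturbation theory of \cite{MMcmp,Tristani} directly (as the authors themselves note). One must combine the convergence $G_\eps \to G$, $\nabla V_\eps \to \nabla V$ of Corollary~\ref{cor:cvgceGVeps} with the uniform invertibility of $\Omega - z$ on the compact set $\overline{\OO_\rho}$; the subtlety is to phrase the convergence $A - B(C+\eps D(z))^{-1} \to \Omega - z$ in an operator topology (e.g. $\BBB(X_1, H^{-2}_k)$ composed with the smoothing of $\RR_\Omega$) for which the difference of inverses $s_\eps^{-1} - \RR_\Omega$ is small, uniformly in $z$. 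A secondary technical point is verifying that $(C + \eps D(z))^{-1}$ genuinely enjoys the uniform bound and smoothing claimed in Step 1; this follows from the same weighted-$L^2$ energy estimates already carried out in the proofs of Lemmas~\ref{lem:A&B} and~\ref{lem:T}.
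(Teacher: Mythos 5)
There is a genuine gap, and it is concentrated in Steps~1 and~2.

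\smallskip
\textbf{Step 1 is wrong on the scaling.} You assert $\| (C+\eps D(z))^{-1} \|_{\BBB(L^2)} \le C$ uniformly in $\eps$ and hence $d^{-1}= \eps (C+\eps D(z))^{-1} = O(\eps)$. But $C=\Delta$ is \emph{not} invertible on $L^2(\R^2)$ (its spectrum touches $0$), and the perturbation $\eps D(z)$ vanishes in the limit, so $(C+\eps D(z))^{-1}$ degenerates as $\eps\to 0$. The correct scaling, established in Lemma~\ref{lem:d-1} via the energy identity \eqref{eq:d(z)u=f1}, is $\| d^{-1}(z) \|_{\BBB(X_2)} \le (\tfrac12 + \Re e\, z)^{-1}$, i.e.\ $d^{-1}=O(1)$, and therefore $\|(C+\eps D(z))^{-1}\|_{\BBB(L^2)} = \eps^{-1}\|d^{-1}\| = O(\eps^{-1})$. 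The uniform (in $\eps$) bound on $(C+\eps D(z))^{-1}$ that the paper does obtain (Lemma~\ref{lem:cd-1}, via Appendix~B) is only from the \emph{weighted} space $X_1 = L^2_{rad}\cap L^2_{k,0}$ — where the vanishing-moment condition is crucial — into $X_2$, not from $L^2$ to $L^2$. This propagates into your Step~3: the advertised rate $\RR^{\Lambda_\eps}_{12}=O(\eps)$ is spurious (the paper proves $bd^{-1}=O(\sqrt\eps)$ in Lemma~\ref{lem:bd-1to0}, which needs a separate interpolation argument), and the $22$-block really does require Lemma~\ref{lem:cd-1} rather than the coarse $d^{-1}=O(\eps)$ cancellation you sketch.

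\smallskip
\textbf{Step 2 does not close.} You yourself flag the inversion of $s_\eps(z)$ as the main obstacle, and then only gesture at "a Neumann-series / continuity argument" relying on "the smoothing of $\RR_\Omega$", i.e.\ uniform boundedness of $\RR_\Omega(z):X_1\to Y_1$ on $\overline{\OO_\rho}$. That regularity is not what Theorem~\ref{Theorem:theoSOmega} gives — it provides only $\RR_\Omega\in\HH(\Delta_{-1};\BBB(X_1))$, an $X_1\to X_1$ statement. The paper's proof is designed precisely to avoid assuming smoothing of $\RR_\Omega$: it introduces $\TT_\eps(z):=(-1)^n(s_\eps-\Omega)\RR_\Omega(z)\,(\AA_{11}\RR^{\BB_\eps}_{11}(z))^n$ and writes $s_\eps^{-1}=\UU_\eps(1+\TT_\eps)^{-1}$, in the spirit of \cite[Lemma 2.16]{Tristani}; here the passage from $X_1$ to the stronger space $Y_1$ — needed to exploit the smallness $\|s_\eps-\Omega\|_{\BBB(Y_1,X_1)}=\eta(\eps)+C\eps$ — comes from the explicitly established regularization of $(\AA_{11}\RR^{\BB_\eps}_{11})^n$ (Lemma~\ref{lem:T}, \cite[Lemma 2.17]{GMM}), not from $\RR_\Omega$. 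Without either this factorization or an independent proof that $\RR_\Omega(z):X_1\to Y_1$ is bounded uniformly on $\overline{\OO_\rho}$, your Neumann-series inversion of $s_\eps(z)$ is not justified. You also need the uniform bound on $\tilde s(z)=B(C+\eps D(z))^{-1}(-D(z))C^{-1}$ (Lemma~\ref{lem:tildes}), which is nontrivial and which your sketch absorbs without comment.

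\smallskip
In summary: the overall plan (Schur-complement block estimates, convergence of the profiles via Corollary~\ref{cor:cvgceGVeps}, uniform invertibility of $\Omega-z$) is the same as the paper's, but the execution contains a scaling error in the treatment of $d^{-1}$ and leaves open precisely the step — uniform invertibility of $s_\eps(z)$ — that the paper resolves through the $\AA/\BB_\eps$ factorization machinery.
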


 Before proving Proposition~\ref{lem:RepsInHH}, we establish some estimates on the terms involved in $\RR_{\Lambda_\eps}$. 
 
\begin{lem}\label{lem:tildes} Define 
$$
\tilde s(z) :=   B (C + \eps D(z))^{-1} (- D(z)) \, C^{-1}.
$$ 
For any $\rho > 0$, there exists  $\eps^*_\rho >0$ such that
$$
\sup_{z \in \OO_{\rho}} \sup_{\eps \in (0,\eps^*_\rho)} \| \tilde s (z) \|_{\BBB(X_1)} \le C.
$$

\end{lem}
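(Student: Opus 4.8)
The goal is to show that $\tilde s(z) = B(C+\eps D(z))^{-1}(-D(z))C^{-1}$ is uniformly bounded on $X_1 = L^2_{rad}\cap L^2_{k,0}$ for $z$ in the compact set $\OO_\rho$ and $\eps$ small. I would first unpack the three factors. Reading from the right: $C^{-1} = \Delta^{-1}$ acting on $X_1$ produces, for $h \in L^2_{k,0}$, the potential $-\kappa_0 * h = -\kappa_h$, which by Lemma~\ref{lem:K*f} gains two derivatives with weights (so $\nabla\kappa_h \in L^2_1$, $\nabla^2\kappa_h \in L^2$), since the vanishing-moment condition $h\in L^2_{k,1}\supset L^2_{k,0}$ is exactly what is needed there. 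Then $-D(z) = -(D-z) = -\frac12 x\cdot\nabla + z$ applied to $C^{-1}h$; the term $z C^{-1}h$ is harmless on $\OO_\rho$ since $|z|\le\rho$, and $\frac12 x\cdot\nabla(C^{-1}h) = \frac12 x\cdot(\KK * h)$ which lies in $L^2$ (indeed $L^2_{\ell-1}$ for suitable $\ell$) again by Lemma~\ref{lem:K*f}. So after the first two factors we have an element of, say, $L^2$ (or better).

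The crux is the middle factor $(C+\eps D(z))^{-1} = (\Delta + \eps(D-z))^{-1} = (\Delta + \frac\eps2 x\cdot\nabla - \eps z)^{-1}$ applied to a fixed $L^2$ function, and showing this is bounded \emph{uniformly in $\eps \to 0$} — which is delicate precisely because $C = \Delta$ alone is not boundedly invertible on $L^2(\R^2)$ (zero is in its spectrum). The key observation is that $\eps D(z)$ is a \emph{small-order} perturbation: writing $w = (C+\eps D(z))^{-1}g$, i.e. $\Delta w + \frac\eps2 x\cdot\nabla w - \eps z w = g$, I would test against $w$ to get an energy identity. The operator $L_\eps w := \Delta w + \frac\eps2 x\cdot\nabla w$ is, up to rescaling, the Fokker--Planck/Ornstein--Uhlenbeck-type generator: indeed $\frac\eps2 x\cdot\nabla$ contributes, after integration by parts, $-\frac\eps2\int |w|^2\int (\diver x/2)\cdots$ — more precisely $\int (\frac\eps2 x\cdot\nabla w) w = -\frac\eps2\int w^2$, giving a genuine \emph{coercive} zeroth-order term of size $\eps$. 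Thus $\int (C+\eps D(z))w \cdot w = -\|\nabla w\|_{L^2}^2 - \frac\eps2\|w\|_{L^2}^2 - \eps z\|w\|_{L^2}^2$, and since $\Re e(-\eps z) \ge -\eps\rho$ while $\Re e z > -1/3$ forces nothing bad, one gets $\|\nabla w\|_{L^2}^2 + c\,\eps\|w\|_{L^2}^2 \le \|g\|_{L^2}\|w\|_{L^2}$ for $\eps$ small. This controls $\nabla w$ in $L^2$ uniformly (with a constant independent of $\eps$, after absorbing), which is all we need since the leftmost factor $B = -\nabla(G_\eps\nabla\,\cdot\,)$ only sees $\nabla w$: indeed $Bw = -\nabla(G_\eps \nabla w)$, and using $G_\eps \in L^\infty_k$ and $\nabla G_\eps \in L^\infty$ (from Lemma~\ref{lem:LinStabBd}, Corollary~\ref{cor:cvgceGVeps}) together with $\nabla w \in L^2$ and $\nabla^2 w$ controlled by elliptic regularity for $L_\eps$, one bounds $\|Bw\|_{L^2_k}$ (hence in $X_1$, modulo the moment projection which is bounded) by $C\|g\|_{L^2}$.

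Assembling: $\|\tilde s(z) h\|_{L^2_k} \le C\|(C+\eps D(z))^{-1}(-D(z))C^{-1}h\|_{H^1_{\text{loc}}\cap\dots} \le C\|(-D(z))C^{-1}h\|_{L^2} \le C\|h\|_{L^2_k}$, with all constants uniform over $z\in\OO_\rho$ and $\eps\in(0,\eps^*_\rho)$; the dependence on $z$ is continuous and $\OO_\rho$ is bounded, so the supremum over $z$ is finite, and one then takes $\eps^*_\rho$ small enough to make the perturbative estimates valid. The main obstacle, as indicated, is the uniform-in-$\eps$ invertibility of $C + \eps D(z)$: one must resist the temptation to write $(C+\eps D)^{-1} \approx C^{-1}$ (false, since $C^{-1}$ is unbounded on $L^2$) and instead exploit that the full operator $C + \eps D(z)$, unlike $C$ alone, \emph{is} boundedly invertible thanks to the coercive $-\frac\eps2\|w\|^2$ term produced by the drift — with the caveat that the bound on $\|w\|_{L^2}$ itself degenerates like $\eps^{-1}$, so one must be careful to only ever use the non-degenerate bound on $\nabla w$ (and, via elliptic regularity bootstrapping off $\Delta w = g - \eps D(z)w$, on $\nabla^2 w$ in weighted norms), which is exactly what the structure $B = -\nabla(G_\eps\nabla\,\cdot\,)$ permits.
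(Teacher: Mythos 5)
Your plan founders at its central step: the claimed uniform-in-$\eps$ bound for $w=(C+\eps D(z))^{-1}g$ with $g$ a general $L^2$ function. Testing $\Delta w+\tfrac{\eps}{2}x\cdot\nabla w-\eps z w=g$ against $\bar w$ gives
$\|\nabla w\|_{L^2}^2+\eps\bigl(\tfrac12+\Re e\, z\bigr)\|w\|_{L^2}^2\le \|g\|_{L^2}\,\|w\|_{L^2}$,
and absorbing the right-hand side using only the $O(\eps)$ coercive term yields $\|w\|_{L^2}\lesssim \eps^{-1}\|g\|_{L^2}$ and hence $\|\nabla w\|_{L^2}\lesssim \eps^{-1/2}\|g\|_{L^2}$ — \emph{not} a bound independent of $\eps$. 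So the assertion that you can ``only ever use the non-degenerate bound on $\nabla w$'' is unjustified: the naive energy estimate gives no non-degenerate bound on $\nabla w$ at all (compare Lemma~\ref{lem:bd-1to0}, where exactly this computation is the source of the $\sqrt{\eps}$ factor). A uniform bound requires a cancellation in the pairing $\int g\bar w$, i.e.\ moment conditions on the datum $g$: this is the content of Lemma~\ref{lem:du=f}, which needs $g\in L^2_{k,0}$ with $k>2$ (weighted \emph{and} mean-zero) to replace $\int g\bar w$ by $\int g(\bar w-\la w\ra_1)$ and close via a Poincar\'e-type argument. Your intermediate function $g=(-D(z))C^{-1}h$ is produced only in unweighted $L^2$ and has no reason to be mean-zero, so neither hypothesis is available, and the subsequent elliptic bootstrap for $\nabla^2 w$ (which needs $\eps\|x\cdot\nabla w\|_{L^2}$ under control) inherits the same problem.

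This is precisely why the paper's proof does not treat $h\in X_1$ directly: it first decomposes $h=f_0+\lambda_1F_1+\lambda_2F_2$ with explicit Gaussian profiles $F_1,F_2$ chosen to remove the $r^2$ and $r^4$ moments, so that the main part has enough vanishing moments for Lemmas~\ref{lem:kappa*f} and~\ref{lem:K*f} to give \emph{weighted} decay of $u_0=C^{-1}f_0$ and hence $g_0=D(z)u_0\in L^2_{4,1}$, to which the uniform estimate of Lemma~\ref{lem:du=f} applies; the two explicit pieces are then handled by the radial ODE Lemma~\ref{lem:du=fbis}, whose exponentially growing bounds $e^{(1+\eps|z|)r}$ are acceptable only because the outer factor $B=-\nabla(G_\eps\nabla\,\cdot\,)$ carries the Gaussian decay of $G_\eps$ from \eqref{eq:Gepsbound}. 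Your outline of the outer factors ($C^{-1}$ via Lemma~\ref{lem:K*f}, $B$ via the decay of $G_\eps$) is consistent with the paper, but without the moment-subtraction decomposition and the two appendix lemmas the key middle inversion cannot be made uniform in $\eps$, so the proof as proposed does not go through.
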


\noindent
{\sl Proof of Lemma~\ref{lem:tildes}.} On the one hand, from Lemma~\ref{lem:kappa*f} and Lemma~\ref{lem:K*f} we have 
$$
- D(z) \, C^{-1} : L^2_{k,0} \cap  L^2_{rad} \to L^2_{rad}
$$
is bounded uniformly for $z \in \OO_\rho$. More precisely, for $f \in L^2_{k,0}$ we write 
$$
f = f_0 + f_1 + f_2; \quad f_i = \lambda_i F_i  , \; i=1,2,
$$
where we define the coefficients $\lambda_i \in \R$ by
$$
\lambda_1 =   \int_0^\infty f(r) \, r^2 \, r \, dr, \quad
\lambda_2   =   \int_0^\infty f(r) \, r^4 \, r \, dr,
$$
and the functions $F_i$ by
$$
F_1(r) = \left( -\frac18 r^4 + \frac54 r^2 - \frac32   \right) e^{-r^2/2}, \quad
F_2(r) = \left( \frac{1}{64}r^4 - \frac18 r^2 + \frac18  \right) e^{-r^2/2},
$$
so that it holds
$$
\int_0^\infty F_1 (r) \, (1, r^2, r^4) \, r \, dr = (0,1,0), \quad
\int_0^\infty F_2 (r) \, (1, r^2, r^4) \, r \, dr = (0,0,1),
$$
and hence $f_0 \in L^2_{k,5}$. 

We may then solve the equation 
$$
Cu = \Delta u = f, \quad u \,\, \hbox{radially symmetric},\quad  u'(0) =   u'(\infty) = 0 ,
$$
by writing 
$$
u = u_0 +  u_1 +   u_2, \quad \Delta u_i = f_i, \quad    u_i'(0) =  u_i'(\infty) = 0,
$$
where $u_i$, $i = 1,2$, is defined by the relation 
$$
r u'_i(r) := \int_0^r \sigma f_i(\sigma) \, d\sigma, 
$$ 
so that $\langle r \rangle |u'_i| \le C$, $|u| \le C \log (1 + \langle r \rangle)$, and 
$u_0 \in L^2_{rad} \cap L^2_4$, $\nabla u_0 \in L^2_5$ is the unique solution to the above Poisson equation as given by Lemma \ref{lem:kappa*f} and Lemma \ref{lem:K*f}. 
As a consequence, $g_0 := D(z) u_0  \in L^2_{4,1}$ and $g_i :=  r u'_i + z u_i $ satisfy the estimates $g_i \, e^{-r/2} \in L^\infty$ for $i=1,2$.
Thanks to Lemmas \ref{lem:du=f} and \ref{lem:du=fbis} and using the notation of Appendix~\ref{sec:B},  we have $v_i := L^{-1}_\eps g_i$ which satisfy $\| v_0 \|_{\dot H^1 \cap \dot H^2} \le C \, \| g_0 \|_{L^2_{4,1}}$, while for $i=1,2$, $v_i$ satisfy the estimates 
$$
\| v_i \, e^{-(1+\eps|z|)r} \|_{L^\infty} +  \| v'_i \, e^{-(1+\eps|z|)r} \|_{L^\infty} \le C \| g_i \, e^{-r/2} \|_{L^\infty}.
$$
Finally, we solve the equation $w_i := B v_i$, which means
$$
w_i = G_\eps \Delta v_i + \nabla G_\eps \cdot \nabla v_i 
= G_\eps (v''_i + \frac1r \, v'_i) + r \, G'_\eps v'_i,
$$
and the previous estimates together with the bound \eqref{eq:Gepsbound} yield $w = w_0 + w_1 + w_2 \in L^2_{k,0} \cap L^2_{rad} = X_1$.
%
\qed

 \begin{lem}\label{lem:d-1} With the above notation, for any  $\rho > 0$, there exists $C_{\rho}$ such that  
\beqn\label{eq:Leps1}
\sup_{z \in  \OO_\rho} \| d^{-1}(z) \|_{\BBB(X_2)} \le C_{\rho}  .
\eeqn
\end{lem}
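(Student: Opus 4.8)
\medskip
\noindent\emph{Plan of proof of Lemma~\ref{lem:d-1}.} The plan is to recognise $d(z)$ as the (shifted) resolvent of an explicit $C_0$-semigroup on $X_2 = L^2_{rad}$. Recall that $d(z) = \eps^{-1} C + D(z) = (\eps^{-1}C + D) - z$, where $\eps^{-1}C + D$ acts on $L^2_{rad}$ by $h \mapsto \eps^{-1}\Delta h + \frac12\, x\cdot\nabla h$. I would first exhibit the semigroup $S_\eps(t)$ generated by $\eps^{-1}C + D$ by a change of variables: seeking a solution of $\partial_t u = \eps^{-1}\Delta u + \frac12 x\cdot\nabla u$ in the form $u(t,x) = v(\tau(t), \lambda(t)x)$, with $v$ solving the heat equation $\partial_\tau v = \Delta v$, one is led to $\lambda(t) = e^{t/2}$ and $\tau(t) = \eps^{-1}(e^t - 1)$, whence
$$
( S_\eps(t)\, h )(x) = \bigl( e^{\eps^{-1}(e^t-1)\Delta}\, h \bigr)( e^{t/2} x ).
$$
This formula manifestly preserves radial symmetry, and the change of variable $y = e^{t/2}x$ gives $\| S_\eps(t)\, h \|_{L^2}^2 = e^{-t}\, \| e^{\eps^{-1}(e^t-1)\Delta}\, h \|_{L^2}^2 \le e^{-t}\, \| h \|_{L^2}^2$, so that
$$
\| S_\eps(t) \|_{\BBB(X_2)} \le e^{-t/2} \qquad \forall\, t \ge 0,\ \forall\, \eps > 0 .
$$
In particular $\eps^{-1}C + D$ generates a $C_0$-semigroup on $X_2$, and $S_\eps$ maps $L^2_{rad}$ into itself.

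From the bound $\| S_\eps(t) \|_{\BBB(X_2)} \le e^{-t/2}$ it then follows that, for every $z$ with $\Re e\, z > -1/2$, the operator $d(z) = (\eps^{-1}C + D) - z$ is invertible on $X_2$, its inverse being the Laplace transform
$$
d(z)^{-1} = - \int_0^\infty e^{-zt}\, S_\eps(t)\, dt, \qquad
\| d(z)^{-1} \|_{\BBB(X_2)} \le \int_0^\infty e^{-(\Re e\, z)\, t}\, e^{-t/2}\, dt = \frac{1}{1/2 + \Re e\, z} .
$$
Since $\OO_\rho \subset \Delta_{-1/3}$, i.e. $\Re e\, z > -1/3$ on $\OO_\rho$, this yields $1/2 + \Re e\, z > 1/6$ and hence $\| d(z)^{-1} \|_{\BBB(X_2)} \le 6$ for all $z \in \OO_\rho$, which is exactly \eqref{eq:Leps1} — in fact with a constant $C_\rho = 6$ that is uniform in $\rho$ and in $\eps$. (Holomorphy of $z \mapsto d(z)^{-1}$ on $\OO_\rho$ also comes for free from the Laplace representation.)

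The only point that needs some care — and which I regard as the main, though modest, obstacle — is to pin down the realisation of $d(z)$ as a closed operator: one takes $\eps^{-1}C + D$ with its generator domain $\dom(\eps^{-1}C + D) = \{ h \in L^2_{rad} : \eps^{-1}\Delta h + \frac12 x\cdot\nabla h \in L^2 \}$, which contains the radial $H^2$ functions for which $x\cdot\nabla h \in L^2$, so that the unbounded drift is properly accounted for; with this choice the identities above are literal. Should one prefer to avoid the semigroup, the same bound follows from an a priori energy estimate: testing $d(z) u = g$ against $\bar u$, integrating by parts, and using the two-dimensional identity $\Re e \int_{\R^2} (x\cdot\nabla u)\, \bar u\, dx = - \| u \|_{L^2}^2$, one obtains $\eps^{-1}\| \nabla u \|_{L^2}^2 + (1/2 + \Re e\, z)\, \| u \|_{L^2}^2 \le \| g \|_{L^2}\, \| u \|_{L^2}$, hence $\| u \|_{L^2} \le 6\, \| g \|_{L^2}$ on $\OO_\rho$; running the same computation for the formal adjoint $d(z)^* = \eps^{-1}\Delta - \frac12 x\cdot\nabla - 1 - \bar z$ shows that $d(z)$ has dense and closed range, hence is bijective.
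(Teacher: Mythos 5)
Your proposal is correct, and your primary route — exhibiting $\eps^{-1}C+D$ as the generator of an explicit self-similar semigroup $S_\eps(t)h = (e^{\eps^{-1}(e^t-1)\Delta}h)(e^{t/2}\cdot)$, extracting the contraction bound $\|S_\eps(t)\|_{\BBB(X_2)}\le e^{-t/2}$, and invoking the Laplace-transform representation of the resolvent — is genuinely different from the paper's, which is the direct energy estimate: multiply $d(z)v=u$ by $\bar v$, take real parts, and read off $(\tfrac12+\Re e\,z)\|v\|_{L^2}\le\|u\|_{L^2}$. You in fact also sketch this second argument at the end (the identity $\Re e\int(x\cdot\nabla v)\bar v = -\|v\|_{L^2}^2$ and the resulting a priori bound, plus the adjoint computation for surjectivity), so your proposal contains the paper's proof as a special case. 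What the semigroup route buys is that holomorphy of $z\mapsto d(z)^{-1}$ and the explicit bound $C_\rho\le (\tfrac12+\Re e\,z)^{-1}\le 6$, uniform in both $\rho$ and $\eps$, come for free from the Laplace formula; what the paper's energy route buys is brevity and the avoidance of any generator/domain bookkeeping. Both are valid; the only caveat in the semigroup version, which you correctly flag, is the need to pin down the operator's domain so that $d(z)$ is the actual resolvent inverse, and your subsequent a priori estimate on $d(z)^*$ disposes of this.
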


\noindent{\sl Proof of Lemma~\ref{lem:d-1}.} 
Consider the equation 
\beqn\label{eq:d(z)u=f}
 d(z) v = \eps^{-1} \Delta v  + {1 \over 2} x \cdot \nabla v - z \, v = u,
\eeqn
for $z \in  \OO_\rho$. 
 Multiplying the equation by $\bar v$ and the conjugated equation by $v$, we find 
\bear\label{eq:d(z)u=f1}
 {1 \over \eps} \int |\nabla v|^2 + \bigl( {1 \over 2} + \Re e z \bigr) \int |v|^2 
 &=&  -  {1 \over 2} \int v \bar u    -  {1 \over 2} \int \bar v u  
 \\ \nonumber
 &\le&    \| u \|_{L^2} \, \| v \|_{L^2} ,
\eear
and then 
$$
\bigl( {1 \over 2} + \Re e z \bigr) \, \| v\|_{L^2} \le \| u\|_{L^2},
$$
which is nothing but \eqref{eq:Leps1}.
\qed


\begin{lem}\label{lem:bd-1to0} With the above notations, for any $\rho > 0$, there exists $C_{k,a,\rho}>0$ such that  
 \beqn\label{eq:bd-1} 
\sup_{z \in  \OO_\rho} \| b d^{-1} (z) \|_{\BBB(X_2,X_1)} \le C_{k,a,\rho} \, \sqrt{\eps} \to 0. 
\eeqn
\end{lem}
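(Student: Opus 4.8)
The plan is to read off the estimate from the energy identity \eqref{eq:d(z)u=f1} already used in the proof of Lemma~\ref{lem:d-1}, combined with the Gaussian decay \eqref{eq:Gepsbound} of $G_\eps$ and the analogous bound on $\nabla G_\eps$. Fix $\rho > 0$, $z \in \OO_\rho$ and $u \in X_2 = L^2_{rad}$, and set $v := d(z)^{-1} u$, which is radially symmetric (since $d(z)^{-1}$ preserves radial symmetry) and solves \eqref{eq:d(z)u=f}, i.e.\ $\eps^{-1}\Delta v + \tfrac12 x \cdot \nabla v - z\,v = u$. Testing as in \eqref{eq:d(z)u=f1} gives
$$
\frac1\eps \|\nabla v\|_{L^2}^2 + \Bigl(\frac12 + \Re e\, z\Bigr)\|v\|_{L^2}^2 \le \|u\|_{L^2}\,\|v\|_{L^2};
$$
since $z \in \Delta_{-1/3}$ we have $\tfrac12 + \Re e\, z \ge \tfrac16$, so $\|v\|_{L^2} \le 6\,\|u\|_{L^2}$, and, retaining the dissipation term, $\|\nabla v\|_{L^2}^2 \le \eps\, \|u\|_{L^2}\|v\|_{L^2} \le 6\,\eps\,\|u\|_{L^2}^2$. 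Thus
$$
\|v\|_{L^2} \le C_\rho\,\|u\|_{L^2}, \qquad \|\nabla v\|_{L^2} \le C\,\sqrt{\eps}\,\|u\|_{L^2}.
$$

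Next I would estimate $b\,v = B v = -\nabla(G_\eps \nabla v) = -\nabla G_\eps \cdot \nabla v - G_\eps\, \Delta v$ in $L^2_k$. Integrating once the radial form of the first equation of \eqref{eq:KSprofile-intro} yields $G_\eps' = \bigl(V_\eps' - \tfrac r2\bigr) G_\eps$, so that together with \eqref{eq:Gepsbound}--\eqref{eq:borneVeps} one gets $\|\langle x\rangle^k \nabla G_\eps\|_{L^\infty} + \|\langle x\rangle^{k+1} G_\eps\|_{L^\infty} + \|G_\eps\|_{L^\infty_k} \le C$. Hence
$$
\|\nabla G_\eps \cdot \nabla v\|_{L^2_k} \le \|\langle x\rangle^k \nabla G_\eps\|_{L^\infty}\, \|\nabla v\|_{L^2} \le C\,\sqrt{\eps}\,\|u\|_{L^2} .
$$
For the other term I use the equation to write $\Delta v = \eps\bigl(z\,v + u - \tfrac12 x\cdot\nabla v\bigr)$, whence
$$
\|G_\eps\, \Delta v\|_{L^2_k} \le \eps\, \|G_\eps\|_{L^\infty_k}\bigl(|z|\,\|v\|_{L^2} + \|u\|_{L^2}\bigr) + \frac{\eps}{2}\,\|\langle x\rangle^{k+1} G_\eps\|_{L^\infty}\,\|\nabla v\|_{L^2} ,
$$
and, using $|z| \le \rho$ together with the two bounds above, this is $\le C_\rho\,\eps\,\|u\|_{L^2} + C\,\eps^{3/2}\|u\|_{L^2}$. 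Adding the two contributions gives, for $\eps \in (0,1]$,
$$
\|b\,d^{-1}(z)u\|_{L^2_k} = \|Bv\|_{L^2_k} \le C_{k,a,\rho}\bigl(\sqrt{\eps} + \eps\bigr)\,\|u\|_{L^2} \le C_{k,a,\rho}\,\sqrt{\eps}\,\|u\|_{L^2} ,
$$
which is the announced bound.

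Finally I would check that $Bv$ lands in $X_1 = L^2_{rad}\cap L^2_{k,0}$ and not merely in $L^2_k$: $Bv$ is radial because $G_\eps$ is, and since $\nabla v \in L^2$ while $\langle x\rangle^k G_\eps$ decays like a Gaussian, the integration by parts $\int_{\R^2} Bv \, dx = -\int_{\R^2}\nabla(G_\eps\nabla v)\,dx = 0$ is legitimate, so $Bv \in L^2_{k,0}$. I do not expect any serious obstacle; the only points requiring (minor) care are the justification of the test-function manipulations in \eqref{eq:d(z)u=f1} and of this integration by parts, which both follow from the regularity of $v = d(z)^{-1}u$ recorded when $d(z)^{-1}$ is constructed (Appendix~\ref{sec:B}).
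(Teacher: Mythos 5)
Your proof is correct, and its opening step coincides with the paper's: test \eqref{eq:d(z)u=f} against $\bar v$ to get $\|v\|_{L^2}\le C_\rho\|u\|_{L^2}$ and, keeping the dissipative term, $\|\nabla v\|_{L^2}\le C\sqrt{\eps}\,\|u\|_{L^2}$. Where you genuinely diverge from the paper is in controlling the second-order part. The paper introduces a Pohozaev-type multiplier: it tests the equation against $x\cdot\nabla\bar v$ to obtain $\|x\cdot\nabla v\|_{L^2}\le C\|u\|_{L^2}$, then reads $\|\Delta v\|_{L^2}\le C\sqrt{\eps}\,\|u\|_{L^2}$ off the equation in unweighted $L^2$, and only afterwards invokes the Gaussian decay of $G_\eps$ to upgrade to $L^2_k$. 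You instead bypass the Pohozaev identity: you substitute $\Delta v=\eps(zv+u-\tfrac12 x\cdot\nabla v)$ directly inside $\|G_\eps\,\Delta v\|_{L^2_k}$ and let the Gaussian decay of $\langle x\rangle^{k+1}G_\eps$ absorb the extra factor of $|x|$, so that only $\|\nabla v\|_{L^2}$ (not $\|x\cdot\nabla v\|_{L^2}$) is needed. This is a bit more economical and stays closer to what is actually required (the weighted estimate on $Bv$, not an unweighted $\|\Delta v\|_{L^2}$ bound). You also supply two points the paper leaves implicit: the Gaussian decay of $\nabla G_\eps$, obtained by integrating once the radial profile ODE $G_\eps'=(V_\eps'-\tfrac r2)G_\eps$ and using \eqref{eq:Gepsbound}--\eqref{eq:borneVeps}; and the check that $Bv$ has vanishing integral, so it indeed lands in $X_1=L^2_{rad}\cap L^2_{k,0}$. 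Both versions are sound; yours is slightly cleaner in avoiding the $x\cdot\nabla\bar v$ multiplier.
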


\noindent{\sl Proof of Lemma~\ref{lem:bd-1to0}.} 
Consider the equation \eqref{eq:d(z)u=f} again. Coming back to \eqref{eq:d(z)u=f1}, we have  
\beqn\label{eq:thetaTO0}
 \| \nabla v \|_{L^2} \le \sqrt{ { \eps \over {1 \over 2} + \Re e z } } \| u\|_{L^2}.
\eeqn
Next, multiplying the equation \eqref{eq:d(z)u=f} by $x \cdot \nabla \bar v$ and the conjugated equation by $x \cdot \nabla v$, we find 
$$
\int |x \cdot \nabla v|^2 = \int (u-z v) (x \cdot \nabla \bar v) + (\bar u-\bar z \bar v) (x \cdot \nabla   v),
$$
which in turn implies 
$$
\| x \cdot \nabla v \|_{L^2}  \le C_{a,r}.
 $$
Coming back to \eqref{eq:d(z)u=f} and together with \eqref{eq:thetaTO0}, we have proved 
\beqn\label{eq:vTO0}
\| \Delta v \|_{L^2} +  \| \nabla v \|_{L^2} \le C_{a,r}  \sqrt{  \eps } \| u\|_{L^2}.
\eeqn
We then immediately conclude to \eqref{eq:bd-1}. \qed

 \begin{lem}\label{lem:cd-1} With the above notation, for any $\rho > 0$, there exists $C_{k,\rho}>0$ such that  
 \beqn\label{eq:cd-1} 
\sup_{z \in  \OO_\rho} \| c d^{-1} \|_{\BBB(X_1,X_2)} \le C_{k,\rho}. 
\eeqn
\end{lem}

\medskip
\noindent{\sl Proof of Lemma~\ref{lem:cd-1}. } We just have to use appendix~\ref{sec:B}. \qed

\medskip
\noindent{\sl Proof of Proposition~\ref{lem:RepsInHH}.} 
We split the proof into four steps.

\smallskip\noindent
{\it Step 1.} We prove that 
 \beqn\label{eq:R11}
\forall \, \eps \in (0,\eps^*_\rho) \quad \RR^{\Lambda_\eps}_{11} = s_\eps^{-1} \in \HH(\OO_\rho;\BBB(X_1)).
\eeqn
We write 
$$
s_\eps(z) =  A(z) - B C^{-1} - [ B (C + \eps D(z))^{-1} - B C^{-1}] =: s_0(z) - \eps \tilde s(z) ,
$$
and then
$$
s_\eps(z) - \Omega(z) = [s_0(z) - \Omega(z)] - \eps \tilde s(z),
$$
with 
$$
s_0(z) =A(z) - B C^{-1}, \quad \Omega(z) = \Omega - z = A_0(z) - B_0 C^{-1}, \quad \tilde s(z) :=   B (C + \eps D(z))^{-1} (- D(z)) \, C^{-1}.
$$
We remark that 
$$
 s_0(z) - \Omega(z)= s_0 - \Omega = -\nabla( \cdot \nabla (V_\eps - V)) - \nabla ((G_\eps-G)\nabla (\Delta^{-1} \cdot))
$$
does not depend on $z$, and thanks to Corollary~\ref{cor:cvgceGVeps}, we easily get
$$
\| s_0 - \Omega \|_{\BBB(Y_1,  X_1)} \le \eta(\eps) \quad\text{with}\quad \eta(\eps) \to 0 \; \text{as} \; \eps \to 0,
$$
where we recall that $Y_1 = H^1_k \cap L^2_{k,0} \cap L^2_{rad}$ is defined in \eqref{eq:Y}.
Moreover, using Lemma~\ref{lem:tildes}, we get 
$$
\sup_{z \in \OO_\rho} \sup_{\eps \in (0,\eps^*_\rho)} \| \tilde s \|_{\BBB(X_1)} \le C,
$$
from which we deduce, for any $z \in \OO_\rho$ and $\eps \in (0,\eps^*_\rho)$, the bound
$$
\| s_\eps(z) - \Omega(z) \|_{\BBB(Y_1,  X_1)} \le \eta(\eps) + C \eps.
$$
%
%
%
Then, arguing as is \cite[Lemma 2.16]{Tristani}, the operator
$$
\TT_\eps(z) := (-1)^n (s_\eps - \Omega) \RR_{\Omega}(z) \, (\AA_{11} \RR^{\BB_\eps}_{11}(z) )^n
$$
satisfies
$$
\| \TT_\eps (z) \|_{\BBB(X_1)} \le \eta'(\eps) \quad \forall\, z \in \OO_\rho, 
\quad
\eta'(\eps) \to 0 \;\text{ as }\; \eps \to 0, 
$$
where
$$
\RR_{\BB_\eps} =: \left( \begin{array}{cc} \RR^{\BB_\eps}_{11} & \RR^{\BB_\eps}_{12}  \\  \RR^{\BB_\eps}_{21}  &  \RR^{\BB_\eps}_{22}  \end{array} \right),
\quad
\AA =: \left( \begin{array}{cc} \AA_{11} & 0  \\   0  &  0 \end{array} \right), 
$$
and the integer $n$ is defined in the proof of Proposition~\ref{prop:spectre}. As a consequence, 
the operators $I + \TT_\eps(z)$ and $s_\eps(z)$ are invertible for any $z \in \OO_\rho$, and furthermore
$$
\RR_{11}^{\Lambda_\eps} (z) = s_\eps(z)^{-1} = \UU_\eps(z) (1 + \TT_\eps(z))^{-1},
$$
where
$$
\UU_\eps (z) = \sum_{j=0}^{n-1}  (-1)^{j} \RR^{\BB_\eps}_{11}(z) (\AA_{11} \RR^{\BB_\eps}_{11}(z))^{j} +  (-1)^{n} \RR_{\Omega}(z) (\AA_{11} \RR^{\BB_\eps}_{11}(z))^{n}.
$$
We immediately conclude to  \eqref{eq:R11}, because $\Sigma(\Omega) \cap \CC_{a} = \emptyset$ on $X_1$, and $\| R_\Omega (z) \|_{\BBB(X_1)} \le C$ for any $ z \in \OO_{\rho}$ from Theorem \ref{Theorem:theoSOmega}.

\smallskip\noindent
{\it Step 2.} We have 
 \beqn\label{eq:Leps2}
\forall \, \eps \in (0,\eps^*_\rho) \quad \RR^{\Lambda_\eps}_{12}:= - s_\eps^{-1} b d^{-1}   \in \HH(\OO_\rho;\BBB(X_2,X_1)),
\eeqn
as an immediate consequence of Lemmas~\ref{lem:tildes} and \ref{lem:bd-1to0}. 

\smallskip\noindent
{\it Step 3.} We also have  
 \beqn\label{eq:Leps3}
\forall \, \eps \in (0,\eps^*_\rho) \quad \RR^{\Lambda_\eps}_{21} := - d^{-1} c s_\eps^{-1} \in \HH(\OO_\rho;\BBB(X_1,X_2)),
\eeqn
as a  consequence of Lemmas~\ref{lem:tildes} and \ref{lem:cd-1}. 

\smallskip\noindent
{\it Step 4. } We finally have 
 \beqn\label{eq:Leps4}
\forall \, \eps \in (0,\eps^*_\rho) \quad \RR^{\Lambda_\eps}_{22}:= d^{-1} + d^{-1} c s_\eps^{-1} b d^{-1}  \in  \HH(\OO_\rho;\BBB(X_2)),
\eeqn
as an immediate consequence of Step 1 together with Lemmas \ref{lem:bd-1to0} and \ref{lem:cd-1}. \qed

\medskip
\noindent{\sl Proof of Theorem~\ref{theo:S_Lambda}.} 
The proof is a consequence of Proposition~\ref{prop:spectre}, Proposition~\ref{lem:RepsInHH} and Theorem~\ref{Theorem:theoSOmega} together with \cite[Theorem 2.1]{MS*}.
\qed

\section{Nonlinear exponential stability of self-similar solutions}
\label{sec:nonlinear}

\subsection{Linear stability in higher-order norms}

Define the space
\beqn\label{eq:Z}
 Z := Z_1 \times  Z_2, \quad Z_1 := H^1_k \cap L^2_{k,0} \cap L^2_{rad} ,
\quad  Z_2 := H^2 \cap L^2_{rad},
\eeqn
associated to the norm 
\beqn\label{eq:normeZ}
 \| (f,u) \|_{Z}^2 := \| (f,u) \|_{Y}^2+ \| \nabla^2 u \|_{L^2}^2.
\eeqn

We shall prove that the same linear stability estimate in $X$ established in Theorem~\ref{theo:S_Lambda} also holds in $Z$, as stated in the following result.
\begin{prop}\label{prop:stabZ}
Let $a^* > -1/2$ be fixed.
There exists $\eps^* >0$ such that there holds in $Z$
$$
\Sigma(\Lambda_\eps) \cap \CC_{a^*} = \emptyset, \quad \forall\, \eps \in (0,\eps^*).
$$
As a consequence, we have
$$
\| S_{\Lambda_\eps}(t) \|_{\BBB(Z)} \le C e^{at }, \quad \forall\, t\ge0,\; \forall\, \eps \in (0,\eps^*), \,\,  \forall \, a > a^*.
$$

\end{prop}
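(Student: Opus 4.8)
The plan is to transfer to the smaller space $Z$ the exponential decay already established in $X$ in Theorem~\ref{theo:S_Lambda}, by running the splitting argument of Section~\ref{sec:autosimilaire} one notch higher in regularity. I keep the decomposition $\Lambda_\eps = \AA + \BB_\eps$ of \eqref{eq:def-A} and first check that it behaves well on $Z$. That $\AA \in \BBB(Z)$ is immediate, as in Lemma~\ref{lem:Abounded}, since $\AA_2 = 0$ and $\AA_1(f,u) = N\chi_R[f]$ is a compactly supported modification of $f$ and hence maps $Z_1$ into $Z_1$. The substantial point is to show that, for $N$ and $R$ large enough, $\BB_\eps$ is $a$-hypo-dissipative in $Z$ for every $a \in (-1/2,0)$, together with the regularizing estimates $\| S_{\BB_\eps}(t) \|_{\BBB(Y,Z)} \le C(1+t^{-1/2})e^{at}$ and, by composition with the $X\to Y$ smoothing of Lemma~\ref{lem:T}, $\| S_{\BB_\eps}(t) \|_{\BBB(X,Z)} \le C(1+t^{-1})e^{at}$.

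To obtain the $\BB_\eps$ estimates I would differentiate the equation satisfied by $u-\kappa_f$ in \eqref{eq:dfu=B} once more in space and repeat the computations of the proof of Lemma~\ref{lem:T}: the quantity $\nabla^2(u-\kappa_f)$ solves an equation of the same structure, $\partial_t\nabla^2(u-\kappa_f) = \tfrac1\eps\Delta\nabla^2(u-\kappa_f) + \tfrac12 x\cdot\nabla\nabla^2(u-\kappa_f) + \cdots$, producing the good term $-\tfrac1\eps\|\nabla^3(u-\kappa_f)\|_{L^2}^2$, while all the remaining contributions — the commutators with the weights and the terms $\nabla^2\KK_i*(\cdot)$ applied to $xf$, $f\nabla V_\eps$, $G_\eps\nabla u$ and $\chi_R[f]$ — are absorbed, using the $L^\infty$-bounds on $\nabla V_\eps$ and $\Delta V_\eps$ of Lemma~\ref{lem:LinStabBd} (which, by radial symmetry, control the full Hessian $\nabla^2 V_\eps$), the decay $G_\eps,\nabla G_\eps\in L^\infty_k$ from \eqref{eq:Gepsbound}, and the mapping properties of $\KK*$ of Lemma~\ref{lem:K*f}, by a small multiple of $\|\nabla^3(u-\kappa_f)\|_{L^2}^2$ plus the dissipation already extracted in \eqref{eq:dissipY} plus lower-order weighted norms of $f$. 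Adding this new estimate, with a small weight $\eta_2>0$ in front of $\|\nabla^2(u-\kappa_f)\|_{L^2}^2$, to \eqref{eq:Norm1*bis}--\eqref{eq:dissipY} gives, for the equivalent norm $\|(f,u)\|_{Z_*}^2 := \|(f,u)\|_{Y_*}^2 + \eta_2\|\nabla^2(u-\kappa_f)\|_{L^2}^2$ and any $a>-1/2$,
$$
\frac12\frac{d}{dt}\|(f,u)\|_{Z_*}^2 \le a\,\|(f,u)\|_{Z_*}^2 - K\big(\|\nabla^2 f\|_{L^2_k}^2 + \|\nabla^2(u-\kappa_f)\|_{L^2}^2 + \|\nabla^3(u-\kappa_f)\|_{L^2}^2\big),
$$
provided $\delta$ and $\eps$ are small and $N$, $R$ large. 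Dropping the dissipation yields the $a$-hypo-dissipativity of $\BB_\eps$ in $Z$; keeping the term $-K\|\nabla^2(u-\kappa_f)\|_{L^2}^2$, using $\|(f,u)\|_Z^2 \lesssim \|(f,u)\|_Y^2 + \|\nabla^2(u-\kappa_f)\|_{L^2}^2$ (recall $\|\nabla^2\kappa_f\|_{L^2}\le C\|f\|_{L^2}$), and integrating in time against the known bound $\|S_{\BB_\eps}(s)(f,u)\|_Y \le Ce^{as}\|(f,u)\|_Y$ so as to get $\int_0^t\|S_{\BB_\eps}(s)(f,u)\|_Z^2\,ds \le C\|(f,u)\|_Y^2$, one obtains the $Y\to Z$ smoothing by the usual instantaneous-gain argument.

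With these ingredients the conclusion is routine. For $m$ large enough (in fact $m=2$ already suffices), the composition $(S_{\BB_\eps}\AA)^{*m}(t)$ maps $X$ into $Z$ with $\|(S_{\BB_\eps}\AA)^{*m}(t)\|_{\BBB(X,Z)}\le C_a e^{at}$ for any $a\in(-1/2,0)$, since each of the $m$ factors $S_{\BB_\eps}$ gains regularity, the $\AA$'s being bounded on $X$, $Y$ and $Z$, and the resulting time-singularities are integrable. One then uses the factorization
$$
S_{\Lambda_\eps}(t) = \sum_{\ell=0}^{m-1}\big((S_{\BB_\eps}\AA)^{*\ell}*S_{\BB_\eps}\big)(t) + \big((S_{\BB_\eps}\AA)^{*m}*S_{\Lambda_\eps}\big)(t).
$$
In $\BBB(Z)$ the first sum is bounded by $C_{a'}e^{a't}$ for any $a'\in(a,0)$, all the operators acting within $Z$; for the last term, writing it as $\int_0^t(S_{\BB_\eps}\AA)^{*m}(t-s)S_{\Lambda_\eps}(s)\,ds$ and using $Z\hookrightarrow X$, the decay $\|S_{\Lambda_\eps}(s)\|_{\BBB(X)}\le Ce^{-s/3}$ from Theorem~\ref{theo:S_Lambda}, and the $X\to Z$ bound with $a\in(-1/2,-1/3)$, one gets $\|\big((S_{\BB_\eps}\AA)^{*m}*S_{\Lambda_\eps}\big)(t)\|_{\BBB(Z)}\le\int_0^t Ce^{a(t-s)}Ce^{-s/3}\,ds\le Ce^{-t/3}$. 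Hence $\|S_{\Lambda_\eps}(t)\|_{\BBB(Z)}\le Ce^{-t/3}$ for all $t\ge0$ and $\eps\in(0,\eps^*)$, which in turn forces $\Sigma(\Lambda_\eps)\cap\Delta_{-1/3}=\emptyset$ in $Z$ (the spectral bound being dominated by the growth bound); alternatively, the whole conclusion follows from \cite[Theorem~2.1]{MS*} applied with Theorem~\ref{theo:S_Lambda} and the properties of $\AA$, $\BB_\eps$ in $Z$ established above. The main obstacle is the higher-order energy estimate for $\BB_\eps$: verifying that every term produced by the extra spatial derivative — in particular those carrying $\nabla^2$ of the profile $V_\eps$ and of the cut-off $\chi_R$ — is genuinely controlled by the available dissipation, which is precisely where the $L^\infty$ Hessian bound on $V_\eps$ and the sharp mapping estimates for $\KK*$ from Appendix~\ref{sec:A} enter.
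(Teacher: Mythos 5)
Your approach is essentially the paper's: you prove $\AA\in\BBB(Z)$, establish $a$-hypo-dissipativity of $\BB_\eps$ in $Z$ by a higher-order energy estimate on $\nabla^2(u-\kappa_f)$ with the same absorption mechanism (the paper's Lemma~\ref{lem:BBhypoZ}), obtain a regularizing estimate for $S_{\BB_\eps}$ toward $Z$, and then conclude via the enlargement/factorization machinery and Theorem~\ref{theo:S_Lambda}. The only cosmetic difference is that you derive $Y\to Z$ smoothing with $t^{-1/2}$ and compose with the $X\to Y$ bound, whereas the paper goes directly $X\to Z$ with $t^{-1}$ using the interpolation inequalities $\| f \|_{H^1_k}^2 \le C \| f \|_{H^2_k}\| f \|_{L^2_k}$ and $\| u \|_{H^2}^{2} \le C \| u \|_{H^3}^{4/3}\| u \|_{L^2}^{2/3}$; both routes are fine.
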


Before proving Proposition \ref{prop:stabZ}, we state and prove an auxiliary technical result.

\begin{lem}\label{lem:BBhypoZ}
(1) $\AA \in \BBB(Z)$.

(2) There exist $N,R$ large enough such that the operator $\BB_\eps$ is $a$-hypo-dissipative in $Z$ for any $a > -1/2$, i.e.
$$
\| S_{\BB_\eps}(t) \|_{\BBB(Z)} \le C e^{at} , \quad \forall\, t \ge 0.
$$
Moreover, we have the following estimate
$$
\| S_{\BB_\eps}(t) \|_{\BBB(X,Z)} \le C\, t^{-1} \, e^{at} , \quad \forall\, t \ge 0, \,\, \forall \, a > -1/2.
$$

\end{lem}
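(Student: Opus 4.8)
This is immediate and essentially the same computation as in Lemma~\ref{lem:Abounded}. By \eqref{eq:def-A}, $\AA(f,u) = (N\chi_R[f],0)$ with $\chi_R[f] = \chi_R f - \chi_1\langle\chi_R f\rangle$; since $\chi_R,\chi_1$ are fixed smooth compactly supported radial functions with $\langle\chi_1\rangle = 1$, the map $f\mapsto\chi_R[f]$ preserves the radial and mean-zero constraints (so its range lies in $L^2_{k,0}\cap L^2_{rad}$) and $\|\chi_R[f]\|_{H^1_k}\le C(N,R)\,\|f\|_{H^1_k}$, whence $\AA\in\BBB(Z)$ (in fact $\AA\in\BBB(X,Z)$). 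The bulk of the work is therefore part (2). Its starting point is the observation that $Z$ has the \emph{same} first component $Z_1 = Y_1$ as $Y$ and differs from $Y$ only by one extra derivative on the second component ($Z_2 = H^2$ against $Y_2 = H^1$). I would therefore keep the full $Y$-estimate of Lemma~\ref{lem:T} verbatim — in particular inequality \eqref{eq:dissipY} with its spare dissipation $-K(\|\nabla^2 f\|_{L^2_k}^2 + \|\nabla^2(u-\kappa_f)\|_{L^2}^2)$ — and add to it one new $L^2$ differential inequality for $\nabla^2(u-\kappa_f)$. Using $\Delta\kappa_f=-f$ (so $\tfrac1\eps(\Delta u+f)=\tfrac1\eps\Delta(u-\kappa_f)$) and $\partial_t\kappa_f=\kappa*\partial_t f$ with $\partial_t f=\BB_{\eps,1}(f,u)=\Delta f+\nabla(\tfrac12 xf-f\nabla V_\eps-G_\eps\nabla u)-N\chi_R[f]$, the function $u-\kappa_f$ solves
\[
\partial_t(u-\kappa_f)=\tfrac1\eps\Delta(u-\kappa_f)+\tfrac12 x\cdot\nabla(u-\kappa_f)+\tfrac12 x\cdot\nabla\kappa_f-\kappa*\partial_t f,
\]
and differentiating twice, multiplying by $\partial_{ij}(u-\kappa_f)$, integrating and using $\int\phi\,(x\cdot\nabla\phi)=-\int\phi^2$ in $\R^2$ yields
\[
\tfrac12\tfrac{d}{dt}\|\nabla^2(u-\kappa_f)\|_{L^2}^2=-\tfrac1\eps\|\nabla^3(u-\kappa_f)\|_{L^2}^2+\tfrac12\|\nabla^2(u-\kappa_f)\|_{L^2}^2+\int\partial_{ij}(u-\kappa_f)\,\partial_{ij}\big(\tfrac12 x\cdot\nabla\kappa_f-\kappa*\partial_t f\big).
\]

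I would then estimate the last integral by $\delta\|\nabla^2(u-\kappa_f)\|_{L^2}^2+C(\delta)\big(\|\nabla^2(x\cdot\nabla\kappa_f)\|_{L^2}^2+\|\nabla^2\kappa*\partial_t f\|_{L^2}^2\big)$ and treat the two pieces exactly as in Step~1 of Lemma~\ref{lem:T}, one derivative higher. The term $\nabla^2(x\cdot\nabla\kappa_f)$ involves $\nabla^2\kappa*f$ and $\nabla^3\kappa*f=\nabla^2\kappa*\nabla f$ against the weight $\langle x\rangle$, hence is $\le C\|f\|_{H^1_k}$ by the weighted convolution bounds of Lemma~\ref{lem:K*f} (applicable since a radial mean-zero function lies in $L^2_{k,1}$) together with the $L^2$-boundedness of the Calderón–Zygmund operator $\nabla^2\kappa*\,\cdot\,$. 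For $\nabla^2\kappa*\partial_t f$: the leading piece is $\nabla^2\kappa*\Delta f=\nabla^2(\kappa*\Delta f)=-\nabla^2 f$, bounded by $\|\nabla^2 f\|_{L^2_k}$; the pieces $\nabla^2\kappa*\nabla(\tfrac12 xf-f\nabla V_\eps-G_\eps\nabla u)$ are $\le C\|\nabla(\tfrac12 xf-f\nabla V_\eps-G_\eps\nabla u)\|_{L^2}\le C\big(\|f\|_{H^1_k}+\|\nabla u\|_{L^2}+\|\nabla^2 u\|_{L^2}\big)$ using $\nabla V_\eps,\nabla^2 V_\eps,G_\eps,\nabla G_\eps\in L^\infty$ from Lemma~\ref{lem:LinStabBd} and Corollary~\ref{cor:cvgceGVeps}; and $\nabla^2\kappa*(N\chi_R[f])\le CN\|f\|_{L^2_k}$. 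With $\|\nabla u\|_{L^2}\le\|\nabla(u-\kappa_f)\|_{L^2}+C\|f\|_{L^2_k}$ and $\|\nabla^2 u\|_{L^2}\le\|\nabla^2(u-\kappa_f)\|_{L^2}+C\|f\|_{L^2}$ (Lemma~\ref{lem:K*f}), the net result is, for a fixed constant $C_0$,
\[
\tfrac12\tfrac{d}{dt}\|\nabla^2(u-\kappa_f)\|_{L^2}^2\le-\tfrac1\eps\|\nabla^3(u-\kappa_f)\|_{L^2}^2+C_0\|\nabla^2(u-\kappa_f)\|_{L^2}^2+C_0\big(\|f\|_{H^1_k}^2+\|\nabla^2 f\|_{L^2_k}^2+\|\nabla(u-\kappa_f)\|_{L^2}^2\big).
\]
The point to note — this is where the argument is not quite routine — is that through the term $G_\eps\nabla^2 u$ this inequality genuinely contains $\|\nabla^2(u-\kappa_f)\|_{L^2}^2$ with an $O(1)$ coefficient that cannot be made small by the $\delta$-trick; it will instead be absorbed by the \emph{spare} dissipation of \eqref{eq:dissipY}.

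To conclude (2), I introduce $\|(f,u)\|_{Z_*}^2:=\|(f,u)\|_{Y_*}^2+\eta_2\|\nabla^2(u-\kappa_f)\|_{L^2}^2$, equivalent to $\|(f,u)\|_Z^2$ for any $\eta_2>0$ by $\|\nabla^2\kappa_f\|_{L^2}\le C\|f\|_{L^2}$. Adding $\eta_2$ times the last inequality to \eqref{eq:dissipY} and choosing first $\eps$ small (so that $-\tfrac{\eta_2}{\eps}\|\nabla^3(u-\kappa_f)\|_{L^2}^2$ dominates and leaves a residual $-K'\|\nabla^3(u-\kappa_f)\|_{L^2}^2$) and then $\eta_2$ small (so that $\eta_2(\tfrac12+C_0)\|\nabla^2(u-\kappa_f)\|_{L^2}^2$ and $\eta_2 C_0\|\nabla^2 f\|_{L^2_k}^2$ are absorbed by $-K(\|\nabla^2 f\|_{L^2_k}^2+\|\nabla^2(u-\kappa_f)\|_{L^2}^2)$, while $\eta_2 C_0(\|f\|_{H^1_k}^2+\|\nabla(u-\kappa_f)\|_{L^2}^2)\le\eta_2 C_0\|(f,u)\|_{Y_*}^2$ is harmless), one gets, for any $a\in(-1/2,0)$ and all $N,R$ large, $\eps$ small,
\[
\tfrac12\tfrac{d}{dt}\|(f,u)\|_{Z_*}^2\le a\|(f,u)\|_{Z_*}^2-K\big(\|\nabla^2 f\|_{L^2_k}^2+\|\nabla^2(u-\kappa_f)\|_{L^2}^2+\|\nabla^3(u-\kappa_f)\|_{L^2}^2\big),
\]
which gives the $a$-hypo-dissipativity of $\BB_\eps$ in $Z$. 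For the regularization I would argue as at the end of the proof of Lemma~\ref{lem:T}: the three dissipation terms above, combined with the interpolation inequalities $\|g\|_{H^1_k}^2\le C\|g\|_{H^2_k}\|g\|_{L^2_k}$ and $\|\nabla^2 g\|_{L^2}^2\le C\|\nabla^3 g\|_{L^2}\|\nabla g\|_{L^2}$ (the latter being essential, since $\|\nabla^2(u-\kappa_f)\|_{L^2}$ appears in the norm and only a \emph{higher}-order dissipation produces smoothing), turn the above into a super-quadratic Riccati inequality of the form $\tfrac{d}{dt}\|(f,u)\|_{Z_*}^2\le 2a\|(f,u)\|_{Z_*}^2-c\,\|(f,u)\|_{Z_*}^3\,\|(f,u)\|_{X_*}^{-1}$; using $\|(f,u)(t)\|_X\le Ce^{at}\|(f,u)(0)\|_X$ (the $X$-hypo-dissipativity of Lemma~\ref{lem:A&B}) the standard ODE comparison then gives $\|S_{\BB_\eps}(t)\|_{\BBB(X,Z)}\le C\,t^{-1}\,e^{at}$. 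Equivalently and perhaps more transparently, one first proves the $Y\to Z$ smoothing $\|S_{\BB_\eps}(t)\|_{\BBB(Y,Z)}\le C\,t^{-1/2}\,e^{at}$ from the $\nabla^3(u-\kappa_f)$-dissipation alone (gaining one derivative on $u$, exactly as $Y$ was reached from $X$ in Lemma~\ref{lem:T}) and composes it with $\|S_{\BB_\eps}(t/2)\|_{\BBB(X,Y)}\le C\,t^{-1/2}\,e^{at}$ through $S_{\BB_\eps}(t)=S_{\BB_\eps}(t/2)S_{\BB_\eps}(t/2)$.

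In summary, the only genuinely new work is the $\nabla^2(u-\kappa_f)$ differential inequality, and the single delicate point inside it is the term $\nabla^2\kappa*\partial_t f$: one must verify that the contribution of $\nabla(G_\eps\nabla u)$ produces \emph{only} $\|\nabla^2(u-\kappa_f)\|_{L^2}^2$ (with an $O(1)$, not small, coefficient) and nothing of higher order, so that smallness of the new weight $\eta_2$ against the spare dissipation already available from \eqref{eq:dissipY} is enough to close the estimate; every other piece is controlled by the Calderón–Zygmund and weighted-kernel bounds of Lemma~\ref{lem:K*f} and the profile bounds of Lemma~\ref{lem:LinStabBd}–Corollary~\ref{cor:cvgceGVeps}, precisely as in the proof of Lemma~\ref{lem:T}, one derivative higher.
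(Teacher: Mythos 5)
Your proof follows essentially the same route as the paper's: rewrite the $u$-equation using $\tfrac1\eps(\Delta u+f)=\tfrac1\eps\Delta(u-\kappa_f)$, derive a new $L^2$ differential inequality for $\nabla^2(u-\kappa_f)$, treat the commutator with $\partial_t\kappa_f$ by inserting the equation for $\partial_t f$ and using the Calder\'on--Zygmund bounds of Lemma~\ref{lem:K*f} together with the profile bounds, and then add it with a small weight $\eta_2$ to the $Y_*$ estimate so that the non-small $\|\nabla^2(u-\kappa_f)\|_{L^2}^2$ and $\|\nabla^2 f\|_{L^2_k}^2$ contributions are absorbed by the spare dissipation of \eqref{eq:dissipY}. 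The algebraic organization differs slightly (you write the equation for $u-\kappa_f$ directly, the paper writes it for $\partial_{ij}u$ and subtracts $\partial_{ij}\kappa_f$), but after expanding $\partial_{ij}(\tfrac12 x\cdot\nabla w)=\partial_{ij}w+\tfrac12 x\cdot\nabla\partial_{ij}w$ the two bookkeepings coincide term by term, including the $+\tfrac12\|\nabla^2(u-\kappa_f)\|_{L^2}^2$ coefficient. Your identification of the $G_\eps\nabla u$ contribution in $\nabla^2\kappa*\partial_t f$ as the only source of an $O(1)$-coefficient $\|\nabla^2(u-\kappa_f)\|_{L^2}^2$ term is exactly the delicate point the paper handles by the $\eta_2$ weight.

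One small caveat on the final regularization estimate: the interpolation inequality you name first, $\|\nabla^2 g\|_{L^2}^2\le C\|\nabla^3 g\|_{L^2}\|\nabla g\|_{L^2}$, divides by $\|\nabla(u-\kappa_f)\|_{L^2}^2$ rather than by the $X_*$-norm, so it does not immediately produce the Riccati inequality $\tfrac{d}{dt}\|\cdot\|_{Z_*}^2\le 2a\|\cdot\|_{Z_*}^2-c\,\|\cdot\|_{Z_*}^3\,\|\cdot\|_{X_*}^{-1}$ stated; the paper uses instead $\|u\|_{H^2}^2\le C\|u\|_{H^3}^{4/3}\|u\|_{L^2}^{2/3}$, which does give the right exponents directly. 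Your alternative route via semigroup composition, $\|S_{\BB_\eps}(t)\|_{\BBB(X,Z)}\le\|S_{\BB_\eps}(t/2)\|_{\BBB(Y,Z)}\|S_{\BB_\eps}(t/2)\|_{\BBB(X,Y)}\le Ct^{-1}e^{at}$, avoids this issue and is a clean way to conclude, at the price of a separate $Y\to Z$ smoothing estimate proved by the same method.
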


\begin{proof}
Point (1) is straightforward from \eqref{eq:def-A} and we omit the proof. For proving point (2), we consider a solution $(f,u)$ to the equation $\partial_t (f,u) = \BB_\eps (f,u)$. First of all, observe that the norm $\| \cdot \|_Z$ is equivalent to
\beqn\label{eq:normeZ*}
 \| (f,u) \|_{Z_*}^2 := \| (f,u) \|_{Y_*}^2+ \eta_2 \| \nabla^2 (u - \kappa_f) \|_{L^2}^2,
\eeqn
for any $\eta_2 >0$.
We write
$$
\bal
\partial_t (\partial_{ij} u) 
&= \frac1\eps \Delta(\partial_{ij} (u - \kappa_f ))   
+ \partial_{ij}u
+ \frac12 x \cdot \nabla( \partial_{ij}u ) ,
\eal
$$
and then we compute 
$$
\bal
\frac12\frac{d}{dt} \int |\partial_{ij} (u - \kappa_{f})|^2
&= \int \partial_{ij} (u - \kappa_{ f})  \partial_t \{ \partial_{ij} (u - \kappa_f)\} \\
&= \int \partial_{ij} (u - \kappa_{ f}) \frac1\eps \Delta(\partial_{ij} (u - \kappa_f ))  
+\int \partial_{ij} (u - \kappa_{ f}) \partial_{ij}u  \\
&\quad
+\int \partial_{ij} (u - \kappa_{ f}) \frac12 x \cdot \nabla( \partial_{ij}(u- \kappa_f ))  
+\int \partial_{ij} (u - \kappa_{ f}) \frac12 x \cdot \nabla( \partial_{ij} \kappa_f )  \\
&\quad
-\int \partial_{ij} (u - \kappa_{ f}) \partial_{ij} \kappa * (\partial_t f) \\
&=: B_1 + \cdots + B_5. 
\eal
$$
We estimate each term separately.
We easily obtain
$$
B_1 = - \frac1\eps \| \nabla \{ \partial_{ij} (u-\kappa_f) \} \|_{L^2}^2,
$$
moreover
$$
B_2 \le C \| \nabla^2 (u - \kappa_f) \|_{L^2}^2 + C\| \nabla^2 u \|_{L^2}^2
\le C \| \nabla^2 (u - \kappa_f) \|_{L^2}^2 + C\| f \|_{L^2}^2,
$$
by integration by parts $B_3 \le 0$ and also
$$
B_4 \le C \| \nabla^2 (u - \kappa_f) \|_{L^2}^2 + C \| \nabla^3 \kappa * f \|_{L^2_1}^2
 \le C \| \nabla^2 (u - \kappa_f) \|_{L^2}^2 + C \| \nabla f \|_{L^2_1}^2.
$$
For the last term, we get
$$
\bal
B_5 
&= - \int \partial_{ij} (u - \kappa_{ f}) \partial_{ij} \kappa *\left\{ \Delta f + \frac12 \nabla (xf) - \nabla(f \nabla V_\eps) - \nabla (G_\eps \nabla u)   - N \chi_R[f] \right\} \\
&=: B_{51} + \cdots  + B_{55}.
\eal
$$
We have
$$
B_{51} \le C(\delta) \| \nabla^2 (u-\kappa_f) \|_{L^2}^2 + \delta \| \nabla^2 f \|_{L^2}^2
$$ 
and
$$
B_{52} \le C \| \nabla^2 (u-\kappa_f) \|_{L^2}^2 + C \| \nabla^2 \kappa * \nabla(xf) \|_{L^2}^2
\le C \| \nabla^2 (u-\kappa_f) \|_{L^2}^2 + C \| \nabla f \|_{L^2_1}^2 + C \| f \|_{L^2}^2.
$$
Moreover, using that $\nabla V_\eps, \Delta V_\eps \in L^\infty$, we get
$$
\bal
B_{53} 
&\le C \| \nabla^2 (u-\kappa_f) \|_{L^2}^2 + C\| \nabla^2 \kappa * \nabla(f \nabla V_\eps) \|_{L^2}^2 \\
&\le C \| \nabla^2 (u-\kappa_f) \|_{L^2}^2 + C\| \nabla(f \nabla V_\eps) \|_{L^2}^2\\
&\le C \| \nabla^2 (u-\kappa_f) \|_{L^2}^2 + C\| \nabla f  \|_{L^2}^2 +  C \| f \|_{L^2}^2,
\eal
$$
and arguing as above with $G_\eps, \nabla G_\eps \in L^\infty$, we also obtain
$$
\bal
B_{54} 
&\le C \| \nabla^2 (u-\kappa_f) \|_{L^2}^2 + C\| \nabla^2 \kappa * \nabla( G_\eps \nabla u) \|_{L^2}^2 \\
&\le C \| \nabla^2 (u-\kappa_f) \|_{L^2}^2 + C\| \nabla (G_\eps \nabla u) \|_{L^2}^2\\
&\le C \| \nabla^2 (u-\kappa_f) \|_{L^2}^2 + C\| \nabla^2 u  \|_{L^2}^2+ C\| \nabla u  \|_{L^2}^2 \\
&\le C \| \nabla^2 (u-\kappa_f) \|_{L^2}^2 + C\|  f  \|_{L^2_{\ell}}^2,
\eal
$$
where we recall that $\ell \in (3,k)$ is fixed in Lemma \ref{lem:A&B}. For the last term, we easily obtain
$$
\bal
B_{55} \le C \| \nabla^2 (u-\kappa_f) \|_{L^2}^2 + C\| N \chi_R[f] \|_{L^2}^2
 \le C \| \nabla^2 (u-\kappa_f) \|_{L^2}^2 + CN^2 \| f \|_{L^2}^2.
\eal
$$

All the above estimates yield
\beqn\label{eq:step6}
\bal
\frac12\frac{d}{dt} \| \nabla^2 (u - \kappa_{f}) \|_{L^2}^2
&\le - \frac1\eps  \| \nabla^3 (u - \kappa_{\tilde f}) \|_{L^2}^2
+ C(\delta) \| \nabla^2 (u - \kappa_{\tilde f}) \|_{L^2}^2 \\
&\quad
+ C \| f \|_{L^2_{\ell}}^2
+C N^2 \| f \|_{L^2}^2
+ C \| \nabla f \|_{L^2_1}^2
+ \delta \| \nabla^2 f \|_{L^2}^2.
\eal
\eeqn

Putting together that last equation with \eqref{eq:Norm1*}, we get
$$
\bal
&\frac12\frac{d}{dt} \| (f,u) \|^2_{Z_*} \\
& \leq  
\int \Big\{ \varphi(x) + \eta_1 \eps C(\delta) + \eta_1  C\frac{N}{R} {\mathbf 1}_{R/2 \le |x| \le 2R} + \eta_1 C(\delta) N^2 \la x \ra^{-2k}  + \eta_2 C N^2 \la x \ra^{-2k}\\ 
&\qquad\quad
+ [\eta_1 C(\delta) + \eta_1 C(1+N^2) + C(\delta)(1+\eta N^2) + C\frac{N}{R^{\ell-1}}  + \eta_2 C]\la x \ra^{2(\ell - k)}  - N { \chi_R}    \Big\} f^2 \la x \ra^{2k} \\
&\quad
+\eta_1 \int \Big\{ \varphi(x) - \frac12 + \delta  + C\frac{N}{R} {\mathbf 1}_{R/2 \le |x| \le 2R}+ [C(\delta) +C\frac{N}{R^{\ell-1}} ]\la x \ra^{2(\ell-k)} \\
&\qquad\qquad\quad
 + C \la x \ra^{-2k} + \frac{\eta_2}{\eta_1} C \la x \ra^{2(1-k)} - N { \chi_R}    \Big\} | \nabla f |^2 \la x \ra^{2k} \\
&\quad
-(1-\delta)\| \nabla f \|_{L^2_k}^2    
- \eta\left( \frac12 - \delta   \right) \| u - \kappa_f \|_{L^2}^2 
-\eta_1 \left( 1-\delta - \frac{\eta_2}{\eta_1} \delta \right) \| \nabla^2 f \|_{L^2_k}^2 \\
&\quad
- \eta\left( \frac1\eps - C(\delta) - \frac{\eta_1}{\eta} C(\delta)   \right) \| \nabla (u - \kappa_{f}) \|_{L^2}^2
- \eta_1\left( \frac1\eps - C(\delta)  - \frac{\eta_2}{\eta_1}C(\delta)  \right) \| \nabla^2 (u - \kappa_{f}) \|_{L^2}^2 \\
&\quad
-\frac{1}{\eps} \| \nabla^3 (u - \kappa_{\tilde f}) \|_{L^2}^2.
\eal
$$
We can now conclude exactly as in the proof of Lemma \ref{lem:A&B} and Lemma \ref{lem:T}, and we obtain that for any $a>-1/2$,
\beqn\label{eq:Z*}
\frac12\frac{d}{dt} \| (f,u) \|^2_{Z_*} \le a \| (f,u) \|^2_{Z_*} - K ( \| \nabla^2 f \|_{L^2_k}^2 + \| \nabla^3 (u-\kappa_f) \|_{L^2}^2),
\eeqn
for some constant $K>0$, from which $\BB_\eps$ is $a$-hypo-dissipative in $Z$.

From \eqref{eq:Z*} and the interpolation inequalities
$$
\| f \|_{H^1_k}^2 \le C \| f \|_{H^2_k} \, \| f \|_{L^2_k}, \quad
\| u \|_{H^2}^{2} \le C \| u \|_{H^3}^{4/3} \, \| u \|_{L^2}^{2/3}
$$
it follows that
$$
\frac12\frac{d}{dt} \| (f,u) \|^2_{Z_*} 
\le a \| (f,u) \|^2_{Z_*} - K  \| (f,u) \|_{X_*}^{-1} \, \| (f,u) \|_{Z_*}^{3} .
$$
We obtain by standard arguments
$$
\| S_{\BB_\eps}(t) (f,u) \|_{Z} \le C \, t^{-1} \, e^{at} \, \| (f,u) \|_{X},
$$
which concludes the proof.
\end{proof}

\begin{proof}[Proof of Proposition \ref{prop:stabZ}]

From Lemma \ref{lem:A&B}, Lemma \ref{lem:Abounded}, Lemma \ref{lem:BBhypoZ} and \cite[Lemma 2.17]{GMM}, it follows that there is $n \in \N$ such that
$$
\| (\AA S_{\BB_\eps})^{*n} (t) \|_{\BBB(X,Z)} \le C \, e^{at}.
$$
Then the proof of the linear stability result in $Z$ is a consequence of last estimate, Lemma~\ref{lem:BBhypoZ}, Theorem~\ref{theo:S_Lambda} and the ``extension theorem'' \cite[Theorem 1.1]{MM*}.
\end{proof}

\subsection{Dissipative norm}

We define the new norm
$$
\Nt (f,u) \Nt_{Z}^2 := \eta \| (f,u) \|_Z^2 + \int_0^\infty \| S_{\Lambda_\eps}(\tau) (f,u) \|_Z^2 \, d\tau
$$
for some $\eta >0$. 
Thanks to Proposition \ref{prop:stabZ}, the norm $\Nt \cdot \Nt_Z$ is equivalent to $\| \cdot \|_Z$ for any $\eta >0$.
Moreover, considering a solution $(f,u)$ to the linearized equation $\partial_t (f,u) = \Lambda_\eps (f,u)$, we obtain from Proposition~\ref{prop:stabZ}, Lemma~\ref{lem:BBhypoZ} and arguing as in \cite{GMM,EM}, that
\beqn\label{eq:Zdissipative}
\bal
\frac{d}{dt} \Nt (f,u) \Nt_{Z}^2 \le - K \Nt (f,u) \Nt_{Z}^2 - K \{ \| \nabla^2 f\|_{L^2_k}^2 + \|\nabla^3 u  \|_{L^2}^2 \}
=: - K \| (f,u) \|_{\widetilde Z}^2,
\eal
\eeqn
for $\eta>0$ small enough and some constant $K>0$.

\subsection{The nonlinear problem : Proof of theorem \ref{theo:stab}}
We focus now on the nonlinear parabolic-parabolic Keller-Segel system \eqref{eq:KSresc}-\eqref{eq:KSresc2eme} in self-similar variables and we prove Theorem~\ref{theo:stab}. 
Consider a solution $(g,v)$ to \eqref{eq:KSresc}-\eqref{eq:KSresc2eme} and define $f := g-G_\eps$ and $u := v - V_\eps$, which satisfy
\beqn\label{eq:nl}
\bal
\partial_t f &= \Lambda_{\eps,1}(f,u) - \nabla \cdot (f \nabla u)
\\
\partial_t u &= \Lambda_{\eps,2} (f,u),
\eal
\eeqn
together with the initial condition $(f,u)_{|t=0} = (f_0,u_0) := (g_0,v_0) - (G_\eps,V_\eps) \in Z$.

\medskip

We split the proof into three parts.

\subsubsection{A priori estimate}

\begin{lem}\label{lem:stabilite}
The solution $(f_t,u_t)$ to \eqref{eq:nl} satisfies, at least formally, the following differential inequality
\beqn\label{eq:dZdt}
\frac{d}{dt} \Nt (f,u) \Nt_{Z}^2 
\le ( -K + C \Nt (f,u) \Nt_{Z}  ) \, \| (f,u) \|_{ \widetilde Z}^2,
\eeqn
where $ \| \cdot \|_{\widetilde Z} $ is defined in \eqref{eq:Zdissipative}. 

\end{lem}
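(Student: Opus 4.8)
The plan is to treat \eqref{eq:nl} as the linearized equation $\partial_t(f,u)=\Lambda_\eps(f,u)$ perturbed by the quadratic term $\NN(f,u):=\bigl(-\nabla\cdot(f\nabla u),0\bigr)$, and to propagate the linear dissipativity estimate \eqref{eq:Zdissipative}, absorbing the contribution of $\NN$ into the dissipation thanks to the smallness prefactor $\Nt(f,u)\Nt_Z$. Since $\Nt\,\cdot\,\Nt_Z^2$ is a quadratic functional, differentiating it along \eqref{eq:nl} and using the linearity of $S_{\Lambda_\eps}$ splits as
\begin{align*}
\frac{d}{dt}\Nt(f,u)\Nt_Z^2 &= \Bigl[\tfrac{d}{dt}\Nt(f,u)\Nt_Z^2\Bigr]_{\mathrm{lin}} + 2\eta\,\langle (f,u),\NN(f,u)\rangle_Z \\
&\quad + 2\int_0^\infty \bigl\langle S_{\Lambda_\eps}(\tau)(f,u),\,S_{\Lambda_\eps}(\tau)\NN(f,u)\bigr\rangle_Z\,d\tau ,
\end{align*}
where, by definition of the dissipative norm, the first term is exactly the left-hand side of \eqref{eq:Zdissipative}, hence $\le -K\|(f,u)\|_{\widetilde Z}^2$, and the last two terms are the Fréchet differentials of the two pieces of $\Nt\,\cdot\,\Nt_Z^2$ evaluated at $\NN(f,u)$. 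It then suffices to bound the $\NN$-contribution by $C\,\Nt(f,u)\Nt_Z\,\|(f,u)\|_{\widetilde Z}^2$.

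For the pairing $\langle(f,u),\NN(f,u)\rangle_Z$, since $\NN$ acts only on the first component, only the $\|f\|_{L^2_k}^2$ and $\|\nabla f\|_{L^2_k}^2$ pieces of the $Z$-inner product are involved. In each I integrate by parts once to move a derivative off $\nabla\cdot(f\nabla u)$, expand by Leibniz, and bound the resulting trilinear weighted integrals by Hölder together with the two-dimensional Gagliardo--Nirenberg--Sobolev inequality $\|g\|_{L^4}^2\lesssim\|g\|_{L^2}\|\nabla g\|_{L^2}$ (used also with the weight $\langle x\rangle^k$ attached to the $f$-factors) and the Agmon-type bound $\|\nabla u\|_{L^\infty}^2\lesssim\|\nabla u\|_{L^2}\|\nabla^3 u\|_{L^2}$. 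Each such integral is a product of three norms, the worst ones carrying a top-order factor $\nabla^2 f$ in $L^2_k$ or $\nabla^3 u$ in $L^2$ — both controlled by $\|(f,u)\|_{\widetilde Z}$ — the remaining factors being controlled by $\|(f,u)\|_Z\simeq\Nt(f,u)\Nt_Z$; bounding the leftover fractional powers crudely by $\|(f,u)\|_Z\le\|(f,u)\|_{\widetilde Z}$, every term is $\le C\,\Nt(f,u)\Nt_Z\,\|(f,u)\|_{\widetilde Z}^2$.

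For the dissipative--integral term I argue as in \cite{GMM,EM}: combining the decay $\|S_{\Lambda_\eps}(\tau)\|_{\BBB(Z)}\le Ce^{-\tau/3}$ of Proposition~\ref{prop:stabZ} with the regularization estimates of Lemma~\ref{lem:BBhypoZ} and the factorization $S_{\Lambda_\eps}=\sum_{\ell<n}S_{\BB_\eps}*(\AA S_{\BB_\eps})^{*\ell}+S_{\Lambda_\eps}*(\AA S_{\BB_\eps})^{*n}$ already used in the proof of Proposition~\ref{prop:stabZ}, writing $\NN(f,u)=\nabla\cdot(-f\nabla u,0)$ so as to recover a derivative against the (time-singular) smoothing, and splitting the $\tau$-integral near the origin, one reduces to the same weighted bilinear estimates as above; this yields $\le C\,\Nt(f,u)\Nt_Z\,\|(f,u)\|_{\widetilde Z}^2$ for this term as well, and collecting the three contributions gives \eqref{eq:dZdt}. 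As stated, the computation is formal: differentiation under the integral and the integrations by parts are justified a posteriori by a routine regularization of the solution.

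The main obstacle is the derivative-and-weight bookkeeping in $\nabla\cdot(f\nabla u)$. This term has one derivative more than the product $f\nabla u$, while $f$ is measured in weighted spaces for which $\widetilde Z$ provides only two derivatives ($\nabla^2 f\in L^2_k$) and $u$ is measured in unweighted spaces for which $\widetilde Z$ provides three ($\nabla^3 u\in L^2$). One must check, in every piece of the inner product, that after integrating by parts the uncontrolled quantity $\nabla^3 f$ never appears — the extra derivative can always be thrown onto $u$ — and simultaneously that exactly one of the three resulting factors stays in the weak norm $\|\cdot\|_Z$, so that the cubic estimate reads $\Nt\,\cdot\,\Nt_Z\,\|\cdot\|_{\widetilde Z}^2$ rather than $\|\cdot\|_{\widetilde Z}^3$. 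It is precisely here that the two-dimensional Gagliardo--Nirenberg structure (each $L^4$ costing half a derivative) is used in an essential way, and it is the only point requiring real care.
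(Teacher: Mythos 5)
Your overall decomposition is right: split $\frac{d}{dt}\Nt(f,u)\Nt_Z^2$ into the linear part, which by \eqref{eq:Zdissipative} gives $\le -K\|(f,u)\|_{\widetilde Z}^2$, plus the two contributions of the quadratic term $Q(f,u)=(-\nabla\cdot(f\nabla u),0)$ through the Fr\'echet derivative of the dissipative norm. Your weighted Gagliardo--Nirenberg/Agmon bookkeeping for the bilinear pieces, and the observation that $\nabla^3 f$ never appears while exactly one factor can be kept in $\|\cdot\|_Z$, are also what the paper does to get the cubic structure $\Nt\cdot\Nt_Z\,\|\cdot\|_{\widetilde Z}^2$.

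Where you go astray is the treatment of the dissipative--integral term. You propose to invoke the semigroup factorization $S_{\Lambda_\eps}=\sum_{\ell<n}S_{\BB_\eps}*(\AA S_{\BB_\eps})^{*\ell}+S_{\Lambda_\eps}*(\AA S_{\BB_\eps})^{*n}$, ``recover a derivative against the time-singular smoothing'' and ``split the $\tau$-integral near the origin.'' None of this is needed, and as a route it does not obviously close: regularizing from $X$ to $Z$ costs $\tau^{-1}e^{a\tau}$ (Lemma~\ref{lem:BBhypoZ}), which is not integrable at $\tau=0$; regularizing from $Y$ to $Z$ gains nothing on the first component since $Y_1=Z_1$; and in any case a bound on $Q$ in a weaker space still requires $\|\cdot\|_{\widetilde Z}$-type factors (e.g.\ $\|\nabla u\|_{L^\infty}$ or $\|\langle x\rangle^k f\|_{L^\infty}$ cost three derivatives on $u$ or two on $f$), so nothing would be saved. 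The paper's proof is far simpler: Cauchy--Schwarz gives
$$
I_2\le \eta\,\|(f,u)\|_Z\|Q(f,u)\|_Z+\int_0^\infty\|S_{\Lambda_\eps}(\tau)(f,u)\|_Z\,\|S_{\Lambda_\eps}(\tau)Q(f,u)\|_Z\,d\tau
\le C\,\|(f,u)\|_Z\,\|Q(f,u)\|_Z,
$$
using only the $e^{-\tau/3}$ decay of $S_{\Lambda_\eps}$ in $\BBB(Z)$ from Proposition~\ref{prop:stabZ}, and then one estimates $\|Q(f,u)\|_Z$ \emph{directly}: since $\widetilde Z$ already controls $\nabla^2 f$ in $L^2_k$ and $\nabla^3 u$ in $L^2$, the expansion $\nabla\cdot(f\nabla u)=f\Delta u+\nabla f\cdot\nabla u$ and its gradient are bounded in $H^1_k$ by the two-dimensional GNS and the embedding $H^2\hookrightarrow L^\infty$, yielding $\|Q(f,u)\|_Z^2\le C\|(f,u)\|_{\widetilde Z}^4$, hence $I_2\le C\Nt(f,u)\Nt_Z\|(f,u)\|_{\widetilde Z}^2$ with no semigroup factorization, no smoothing and no singular $\tau$-split. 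You should drop the entire ``recover a derivative against the smoothing'' paragraph and instead estimate $\|Q(f,u)\|_Z$ directly; that is the only way the proof actually closes cleanly.
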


\begin{proof}[Proof of Lemma \ref{lem:stabilite}]
Because 
$$
\| (f,u) \|_{Z}^2 = \| f \|_{H^1_k}^2 + \| u  \|_{H^2}^2,
$$
and denoting $Q(f,u) = ( - \nabla\cdot(f \nabla u), 0 ) $, we obtain from \eqref{eq:nl} that,
$$
\bal
\frac12 \frac{d}{dt} \Nt (f , u ) \Nt_{Z}^2
&=   \eta \la (f, u) , \Lambda_\eps (f,u) \ra
+  \int_0^\infty \la  S(\tau) (f , u), S(\tau)\Lambda_\eps (f,u) \ra  \, d\tau\\
&+
\eta \la (f, u) , Q(f,u) \ra
+  \int_0^\infty \la  S(\tau) (f , u), S(\tau)Q(f,u) \ra  \, d\tau =: I_1 + I_2.
\eal
$$ 
For the first (linear) term, we have already obtained in \eqref{eq:Zdissipative} that
$$
\bal
I_1 &:= \eta \la (f, u) , \Lambda_\eps(f,u) \ra
+  \int_0^\infty \la  S(\tau) (f , u), S(\tau)\Lambda_\eps(f,u) \ra  \, d\tau \\
&\le   - K \left\{ \Nt (f,u) \Nt_{Z}^2 +  \| \nabla^2 f\|_{L^2_k}^2 + \|\nabla^3  u  \|_{L^2}^2    \right\} =: - K \| (f,u) \|_{\widetilde Z}^2.
\eal
$$
For the second (nonlinear) term, we use the linear stability in $Z$ from Proposition~\ref{prop:stabZ} to obtain
$$
\bal
I_2 
&:= \eta \la (f, u) , Q(f,u) \ra
+  \int_0^\infty \la  S(\tau) (f , u), S(\tau)Q(f,u) \ra  \, d\tau \\
&\le \eta \| (f,u) \|_{Z} \, \|  Q(f,u) \|_{Z} 
+ \int_0^\infty \| S(\tau) (f , u)\|_{Z} \, \| S(\tau) Q(f,u) \|_{Z}   \, d\tau \\
&\le \eta \| (f,u) \|_{Z} \, \|  Q(f,u) \|_{Z} 
+ C\int_0^\infty   e^{at} \|(f , u)\|_{Z} \, e^{at} \| Q(f,u) \|_{Z}   \, d\tau \\
&\le C  \| (f,u) \|_{Z} \, \|  Q(f,u) \|_{Z} .
\eal
$$
Now, we have to compute
$$
\|  Q(f,u) \|_{Z}^2 = \| ( - \nabla\cdot(f \nabla u), 0 )  \|_{Z}^2
= \| \nabla\cdot(f \nabla u) \|_{L^2_k}^2 + \| \nabla\cdot(f \nabla u) \|_{\dot H^1_k}^2.
$$
We split $\nabla\cdot(f \nabla u) = f \Delta u + \nabla f \cdot \nabla u$ and compute
$$
\bal
\| f \Delta u \|_{L^2_k}^2 
&= \int f^2 |\Delta u|^2 \la x \ra^{2k}  
\le C  \| \nabla^2 u \|_{L^2}^2 \, \| \la x \ra ^k  f \|_{L^\infty}^2 \\
&\le C  \| \nabla^2 u \|_{L^2}^2 \, \| f \|_{H^2_k}^2  
\le C  \| (f,u) \|_{Z}^2 \, \| (f,u) \|_{\widetilde Z}^2, 
\eal
$$
where we have used the embedding $H^2 (\R^2) \hookrightarrow  L^\infty(\R^2)$. Moreover, we have
$$
\bal
\| \nabla f \cdot \nabla u \|_{L^2_k}^2 
&= \int |\nabla f|^2 |\nabla u|^2 \la x \ra^{2k} \le C  \| \nabla u \|_{L^\infty}^2 \, \| \nabla f \|_{L^2_k}^2 \\
&\le C  \| u \|_{H^3}^2 \, \| \nabla f \|_{L^2_k}^2
\le C  \| (f,u) \|_{\widetilde Z}^2 \, \| (f,u) \|_{Z}^2.
\eal
$$
Furthermore, we have
$$
\bal
\| \nabla ( f \Delta u) \|_{L^2_k}^2 
&\le C 
\int f^2 |\nabla^3 u|^2 \la x \ra^{2k} + C\int |\nabla f|^2 |\nabla^2 u|^2 \la x \ra^{2k} \\
&\le C 
\| \nabla^3 u \|_{L^2}^2 \, \| \la x \ra ^k  f \|_{L^\infty}^2  
+ C\| \la x \ra^k  \nabla f \|_{L^4}^2 \, \| \nabla^2 u \|_{L^4}^2  \\
&\le C 
\| \nabla^3 u \|_{L^2}^2 \, \| f \|_{H^2_k}^2  
+ C\| f \|_{L^2_k} \, \| f \|_{H^2_k} \, \| \nabla u \|_{L^2} \, \| \nabla^3 u \|_{L^2}^2  \\
&\le C \| (f,u) \|_{\widetilde Z}^4,
\eal
$$
where we have used the Gagliardo-Niremberg-Sobolev inequality $\| h \|_{L^4(\R^2)}^2 \le C  \| h \|_{L^2(\R^2)} \, \| \nabla h \|_{L^2(\R^2)} $. For the last term, we have
$$
\bal
\| \nabla ( \nabla f \cdot \nabla u) \|_{L^2_k}^2 
&\le C 
\int | \nabla f|^2 |\nabla^2 u|^2 \la x \ra^{2k} + C \int |\nabla^2 f|^2 |\nabla u|^2 \la x \ra^{2k} \\
&\le C 
\| \la x \ra^k  \nabla f \|_{L^4}^2 \, \| \nabla^2 u \|_{L^4}^2
+ C \| \nabla u \|_{L^\infty}^2 \, \| \nabla^2 f \|_{L^2_k}^2   \\
&\le C 
\| f \|_{L^2_k} \, \| f \|_{H^2_k} \, \| \nabla u \|_{L^2} \, \| \nabla^3 u \|_{L^2}^2
+ C \|  u \|_{H^3}^2 \, \| \nabla^2 f \|_{H^2_k}^2  \\
&\le C \| (f,u) \|_{\widetilde Z}^4.
\eal
$$
Finally, gathering the above estimates, the solution $(f,u)$ of \eqref{eq:nl} verifies
$$
\frac{d}{dt} \Nt (f,u) \Nt_{Z}^2 
\le - K \| (f,u) \|_{\widetilde Z}^2 + C \| (f,u) \|_{Z} \, \| (f,u) \|_{\widetilde Z}^2
\le ( -K + C \Nt (f,u) \Nt_{Z}  ) \, \| (f,u) \|_{\widetilde Z}^2,
$$
which concludes the proof.
\end{proof}

\subsubsection{Regularity} We prove that starting close enough to the self-similar profile, the solution to the Keller-Segel equation \eqref{eq:KS}
satisfies some strong and uniform in time estimates. 

%
%

\begin{prop}\label{prop:existence}
There is $\delta>0$ such that, if $\Nt (f_0,u_0) \Nt_{Z} \le \delta$,  there exists a solution $(f,u)  \in C([0,\infty); Z)$ to \eqref{eq:nl} (and thus to the Keller-Segel equation \eqref{eq:KS}) that verifies
\beqn\label{eq:fu<delta}
\forall \, t \ge 0, \qquad \Nt (f,u)(t) \Nt_{Z}^2 + \frac{K}2 \int_0^t \| (f,u) (\tau) \|_{\widetilde Z}^2 \, d\tau \le 2 \delta^2.
\eeqn

\end{prop}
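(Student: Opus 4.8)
The plan is to prove Proposition~\ref{prop:existence} by a standard continuation (bootstrap) argument based on the a~priori differential inequality \eqref{eq:dZdt} from Lemma~\ref{lem:stabilite}, combined with the local well-posedness and smoothing already established in Section~\ref{sec:aposteriori} (in particular Lemma~\ref{lem:fsmooth}, which guarantees that a weak solution becomes a classical solution for positive times, so that the formal computations of Lemma~\ref{lem:stabilite} are rigorously justified for $t>0$).

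First I would fix $\delta>0$ small enough that $C\,\Nt(f,u)\Nt_Z \le K/2$ whenever $\Nt(f,u)\Nt_Z^2 \le 2\delta^2$; here $C$ and $K$ are the constants from \eqref{eq:dZdt} and $\Nt\cdot\Nt_Z$ is the dissipative norm equivalent to $\|\cdot\|_Z$ introduced in Section~5.2. Starting from $\Nt(f_0,u_0)\Nt_Z\le\delta$, Theorem~\ref{theo:uniq} (together with the subcritical-mass global existence theory and the regularisation of Lemma~\ref{lem:fsmooth}) provides a unique solution $(f,u)$ which is classical for $t>0$; by continuity in $Z$ there is a maximal time $T^\star\in(0,\infty]$ such that $\Nt(f(t),u(t))\Nt_Z^2 \le 2\delta^2$ for all $t\in[0,T^\star)$. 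On that interval the choice of $\delta$ turns \eqref{eq:dZdt} into
\beqn\label{eq:existence-bootstrap}
\frac{d}{dt}\Nt(f,u)\Nt_Z^2 \le -\frac{K}{2}\,\|(f,u)\|_{\widetilde Z}^2 \le 0,
\eeqn
so that $t\mapsto\Nt(f(t),u(t))\Nt_Z^2$ is nonincreasing; integrating in time gives
$$
\Nt(f(t),u(t))\Nt_Z^2 + \frac{K}{2}\int_0^t \|(f,u)(\tau)\|_{\widetilde Z}^2\,d\tau \le \Nt(f_0,u_0)\Nt_Z^2 \le \delta^2 \le 2\delta^2
$$
for every $t\in[0,T^\star)$. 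In particular $\Nt(f(t),u(t))\Nt_Z^2$ stays bounded by $\delta^2<2\delta^2$, strictly below the threshold, which by the usual continuation criterion (the solution cannot leave the ball $\{\Nt\cdot\Nt_Z^2\le 2\delta^2\}$ in finite time, and it cannot blow up in $Z$ while staying in that ball) forces $T^\star=+\infty$. This yields a global solution $(f,u)\in C([0,\infty);Z)$ satisfying \eqref{eq:fu<delta}.

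The main obstacle is making the formal inequality \eqref{eq:dZdt} of Lemma~\ref{lem:stabilite} rigorous, i.e.\ justifying that the solution produced by Theorem~\ref{theo:uniq} actually has enough regularity in the weighted higher-order space $Z$ for the nonlinear estimates on $Q(f,u)=(-\nabla\cdot(f\nabla u),0)$ to hold and for $t\mapsto\Nt(f(t),u(t))\Nt_Z^2$ to be absolutely continuous. I would handle this by first running the argument on a mollified/truncated problem or, more simply, by using the Duhamel formula $(f,u)(t)=S_{\Lambda_\eps}(t)(f_0,u_0)+\int_0^t S_{\Lambda_\eps}(t-s)\,Q(f,u)(s)\,ds$ together with the smoothing estimate $\|S_{\BB_\eps}(t)\|_{\BBB(X,Z)}\le C t^{-1}e^{at}$ from Lemma~\ref{lem:BBhypoZ} (extended to $S_{\Lambda_\eps}$ via Proposition~\ref{prop:stabZ}) to build the solution directly in $C([0,\infty);Z)$ by a fixed-point argument for small data; uniqueness then identifies it with the weak solution of Theorem~\ref{theo:uniq}, and the energy identity \eqref{eq:existence-bootstrap} holds along this regular solution. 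The remaining steps — the Gagliardo–Nirenberg and weighted $L^\infty$ bounds controlling $\|Q(f,u)\|_Z$ and $\|\nabla Q(f,u)\|_{L^2_k}$ by $\|(f,u)\|_Z\|(f,u)\|_{\widetilde Z}$ — are exactly those already carried out in the proof of Lemma~\ref{lem:stabilite}, so no new computation is needed beyond assembling them into the continuation scheme.
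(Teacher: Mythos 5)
Your proposal is correct and takes essentially the same route as the paper: fix $\delta$ small enough that the coefficient $-K + C\,\Nt(f,u)\Nt_Z$ in \eqref{eq:dZdt} is at most $-K/2$ on the ball $\{\Nt\cdot\Nt_Z^2\le 2\delta^2\}$, observe that the resulting differential inequality makes $t\mapsto\Nt(f,u)(t)\Nt_Z^2$ nonincreasing so the solution never leaves the ball, integrate to get \eqref{eq:fu<delta}, and defer the rigor of this formal argument to an iterative/fixed-point construction of the solution directly in $C([0,\infty);Z)$. The paper does exactly this, choosing $\delta = K/(2C)$ and referring to \cite[Theorem 5.3]{GMM} for the rigorous iterative scheme, which is in spirit the Duhamel fixed-point argument you describe; your detour through Theorem~\ref{theo:uniq} and Lemma~\ref{lem:fsmooth} is a slight detour compared with constructing the solution directly in $Z$, but it is not wrong.
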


\begin{proof}[Proof of Proposition \ref{prop:existence}]
At least formally, taking $\delta:= K/(2C)$ in \eqref{eq:dZdt}, we see that $t \mapsto \Nt (f,u)(t) \Nt_{Z}$ is decreasing if $\Nt (f_0,u_0) \Nt_{Z} \le \delta$, 
and then the a priori estimate \eqref{eq:fu<delta} immediately follows from  \eqref{eq:dZdt}. We refer to  \cite[Theorem 5.3]{GMM} for a completely rigorous proof based on an iterative scheme. 
\end{proof}

\subsubsection{Sharp exponential convergence to the equilibrium}
We now complete the proof of Theorem~\ref{theo:stab} following \cite{GMM}. Applying Lemma~\ref{lem:stabilite} to the solution $(f,u)$ constructed above and using the estimate \eqref{eq:fu<delta}, we obtain
$$
\bal
\frac{d}{dt} \Nt (f,u) \Nt_{Z}^2 
&\le (-K + C \Nt (f,u) \Nt_{Z} ) \, \| (f,u) \|_{\widetilde Z}^2\\
&\le (-K + C' \delta^2 ) \, \| (f,u) \|_{\widetilde Z}^2
\le (-K + C' \delta^2 ) \, C'' \, \Nt (f,u) \Nt_{Z}^2.
\eal
$$
If $\delta>0$ is small enough so that $-K + C' \delta^2 \le - K/2$, this differential inequality implies the exponential decay
$$
\Nt (f,u)(t) \Nt_{Z} \le e^{-\frac{KC''}{4} t} \, \Nt (f_0,u_0) \Nt_{Z}.
$$
Finally, we can recover the optimal decay rate $O(e^{at})$ of the linearized semigroup in Proposition~\ref{prop:stabZ} by performing a bootstrap argument as in \cite[Proof of Theorem 5.3]{GMM}, and that concludes the proof.

\appendix

\section{Orlicz space, interpolation and a convex function}

We describe here some classical and technical results on Orlicz spaces and interpolation spaces. We then apply these results to the function $\xi \mapsto \xi^2 (\widetilde \log \, \xi)^2$ used during the proof of Lemma~\ref{lem:BdLp}.

\subsection{Basic notions}\label{app:orlicz}
We recall some basic notions of the theory of Orlicz spaces, and we refer 
to \cite{MR0126722,MR1113700} for more details. 
We say that a function $\Phi : \R^+ \to \R^+$ is a $N$-function 
 if it is continuous, convex, $\Phi(t) >0$ for any $t>0$ and satisfies
$$
\lim_{t\to 0} \frac{\Phi(t)}{t} = 0, \quad
\lim_{t\to \infty} \frac{\Phi(t)}{t} = \infty.  
$$
For such a function $\Phi$, we define the  \emph{Orlicz space} $L_\Phi$ by
$$
f \in L_\Phi \quad\Leftrightarrow \quad \exists \lambda>0 \text{ such that } \int \Phi \left( \frac{f}{\lambda} \right) < \infty,
$$
and we endow $L_\phi$ with the Luxembourg norm
$$
\| f \|_{L_\Phi} := \inf \{ \lambda >0 \;:\; \int \Phi \left( \frac{f}{\lambda} \right) \le 1  \}.
$$
Moreover, when $\Phi$ satisfies the following $\Delta_2$-condition, 
$$
\exists \, c, s > 0, \quad  \forall \,  t \ge s, \quad   \Phi(2 t) \le c \, \Phi(t), 
$$
we have
$$
\| f \|_{L_\Phi} < \infty \quad\Leftrightarrow\quad  \forall\, \lambda>0, \; \int \Phi \left( \frac{f}{\lambda} \right) < \infty. 
$$

%
%
%
%
%
%
\subsection{Interpolation}\label{app:inter}
We state an interpolation result on Orlicz spaces from \cite{MR0199696} (see also \cite{MR1838797} and the references therein) which is used in Lemma \ref{lem:BdLp}.

\begin{theo}
Consider an $N$-function $\Phi$ such that
$$
1 \le p < \inf_{s >0} \frac{s \Phi'(s)}{\Phi(s)} \le \sup_{s >0} \frac{s \Phi'(s)}{\Phi(s)} < q < \infty. 
$$
Then the Orlicz space $L_\Phi$ is an interpolation space between $L^p$ and $L^q$. In other words, if $U : L^r \to L^r$ is bounded for $r=p,q$, then 
$U : L_\Phi \to L_\Phi$ is bounded. 
\end{theo}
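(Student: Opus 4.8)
The plan is to translate the hypothesis on $\Phi$ into power-type growth estimates, deduce from them that the dilation (Boyd) exponents of the rearrangement-invariant space $L_\Phi$ lie strictly between $p$ and $q$, and then invoke the Calderón--Mityagin interpolation principle for the couple $(L^p,L^q)$. First, the hypothesis provides $p',q'$ with $p<p'\le \frac{s\Phi'(s)}{\Phi(s)}\le q'<q$ for all $s>0$; integrating $\frac{d}{ds}\log\Phi(s)=\frac{\Phi'(s)}{\Phi(s)}$ between $\nu s$ and $s$ gives, for every $0<\nu\le 1$,
\[
\nu^{q'}\,\Phi(s)\ \le\ \Phi(\nu s)\ \le\ \nu^{p'}\,\Phi(s)\qquad(s>0).
\]
In particular both $\Phi$ and its complementary function satisfy the $\Delta_2$-condition, $L_\Phi$ is a rearrangement-invariant Banach function space, and one has the continuous embeddings $L^p\cap L^q\hookrightarrow L_\Phi\hookrightarrow L^p+L^q$.

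Next I would estimate the dilation norm. Let $(D_\tau h)(x):=h(x/\tau)$ and $h_{L_\Phi}(\tau):=\|D_\tau\|_{\BBB(L_\Phi)}$. Testing the Luxemburg norm against the previous bounds (using $\int\Phi(|D_\tau h|/\lambda)=\tau^{N}\int\Phi(|h|/\lambda)$ together with $\Phi(\nu s)\le\nu^{p'}\Phi(s)$ and $\Phi(\nu s)\ge\nu^{q'}\Phi(s)$) one gets, for some $c\ge 1$,
\[
c^{-1}\tau^{1/q'}\le h_{L_\Phi}(\tau)\le c\,\tau^{1/p'}\ \ (\tau\ge1),\qquad
c^{-1}\tau^{1/p'}\le h_{L_\Phi}(\tau)\le c\,\tau^{1/q'}\ \ (0<\tau\le1).
\]
Since $p<p'$ and $q'<q$, this shows that $L_\Phi$ has Boyd exponents strictly inside the interval $(p,q)$.

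For the interpolation step, let $U$ be bounded on $L^p$ and on $L^q$ with norms $M_p,M_q$. From the definition of the $K$-functional, $K(t,Uf;L^p,L^q)\le\max(M_p,M_q)\,K(t,f;L^p,L^q)$ for all $t>0$. Since $(L^p,L^q)$ is a Calderón couple, it then suffices to prove that $L_\Phi$ is $K$-monotone, i.e.\ that $K(\cdot,g)\le K(\cdot,f)$ implies $\|g\|_{L_\Phi}\le C\|f\|_{L_\Phi}$. Using Holmstedt's formula $K(t,f;L^p,L^q)\simeq\big(\int_0^{t^\alpha}(f^*)^p\big)^{1/p}+t\big(\int_{t^\alpha}^{\infty}(f^*)^q\big)^{1/q}$ with $\tfrac1\alpha=\tfrac1p-\tfrac1q$, this reduces to the $L_\Phi$-boundedness of the Hardy-type averaging operators occurring in that formula (equivalently, to boundedness on $L_\Phi$ of the Calderón operator of the couple $(L^p,L^q)$), and that boundedness is precisely guaranteed by the Boyd-exponent estimates of the previous step. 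Chaining the three facts gives $\|Uf\|_{L_\Phi}\le C\|f\|_{L_\Phi}$, which is the asserted interpolation property. Alternatively one may simply cite Boyd's interpolation theorem (see \cite{MR0199696,MR1838797} and the references therein), after which the first two steps are all that is needed.

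The routine parts are the power estimates and the dilation computation (elementary integrations and manipulations of the Luxemburg norm). The genuinely substantive point is the $K$-monotonicity of $L_\Phi$: it rests on Holmstedt's representation of the $K$-functional for $(L^p,L^q)$, on the fact that $(L^p,L^q)$ is a Calderón couple, and — crucially using the \emph{strict} inequalities $p<p'$ and $q'<q$ — on the $L_\Phi$-boundedness of the Hardy operators $g\mapsto\frac1t\int_0^t g^*$ and $g\mapsto\int_t^\infty g^*(s)\,\frac{ds}{s}$, suitably rescaled to the pair $(p,q)$. Bookkeeping the constants in the dilation estimates and the equivalence between $K(\cdot,g)\le C\,K(\cdot,f)$ and the pair of rearrangement inequalities $\int_0^u (g^*)^p\le C\int_0^u (f^*)^p$, $\int_u^\infty (g^*)^q\le C\int_u^\infty (f^*)^q$ is where the main technical effort lies.
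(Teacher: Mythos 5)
Your argument is sound in outline, but note that the paper does not prove this statement at all: it is quoted as a known result, with the proof delegated to Simonenko \cite{MR0199696} (see also \cite{MR1838797}). So there is nothing internal to compare against; what you have written is a genuine proof sketch along the modern rearrangement-invariant route, whereas Simonenko's original argument is a more hands-on Marcinkiewicz-type proof working directly with the Orlicz modular and the indices $\inf_s s\Phi'(s)/\Phi(s)$, $\sup_s s\Phi'(s)/\Phi(s)$ (now called Simonenko indices), without Boyd indices, $K$-functionals or Holmstedt's formula. Your route buys generality (it identifies $L_\Phi$ as an interpolation space for the whole couple, via Boyd's theorem), at the price of invoking heavier machinery; the first step, $\nu^{q'}\Phi(s)\le\Phi(\nu s)\le\nu^{p'}\Phi(s)$ for $0<\nu\le1$ with $p<p'\le q'<q$, is correct and is exactly the bridge between the hypothesis and either proof. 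Two small points to tidy up: (i) your dilation computation is dimensionally inconsistent — with $(D_\tau h)(x)=h(x/\tau)$ on $\R^N$ the modular scales by $\tau^N$, so the operator norm bounds carry exponents $N/p'$, $N/q'$; the clean statement is that the dilation should act on decreasing rearrangements on $(0,\infty)$, where one gets $\tau^{1/p'}$, $\tau^{1/q'}$ and hence Boyd indices of $L_\Phi$ in $[p',q']\subset(p,q)$, which is all Boyd's theorem needs; (ii) the appeal to $(L^p,L^q)$ being a Calder\'on couple is superfluous for the direction you use — exact $K$-monotonicity of $L_\Phi$ by itself implies the interpolation property, since $K(t,Uf)\le\max(M_p,M_q)\,K(t,f)$; the Calder\'on-couple property is the converse implication. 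Neither point is a gap, and citing Boyd's interpolation theorem after your first two steps, as you suggest, gives a complete and correct proof.
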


\subsection{The function $s \mapsto s^2 (\widetilde \log \, s)^2$.}\label{app:example}
With the notation of the proof of Lemma~\ref{lem:BdLp}, we define  $\Phi(s) := s^2 (\widetilde \log \, s)^2$,  as well as 
$\Phi^*$ the conjugate function of $\Phi$ by 
$$
\Phi^*(t) = \sup_{s} \{ ts - \Phi(s)\} \quad \forall \, t \ge 0.
$$ 
We show below that there exists a constant $C>0$ such that 
\beqn\label{Phi*}
\Phi^*(t) \le C \, t^2 (\widetilde  \log \, t)^{-2}.
\eeqn
In order to prove such a claim, we first observe that, for any $t > 0$, the above supremum is reached at some unique point $s_t \in (0,\infty)$, which is implicitly given by the equation 
$t = \Phi'(s_t)$. In particular, $s_t \to \infty$ when $t\to\infty$ (because $\Phi'$ is increasing and bijective from $\R^*_+$ onto $\R^*_+$) so that there exists 
$t^* > 0$ such that $s_t > e$ for any $t > t^*$. For $t > t^*$, we have 
$$
4 s_t (\log s_t)^2 \ge t = \Phi'(s_t) = 2 s_t (\log s_t)^2 + 2 s_t^2 (\log s_t)^2 \ge 2 s_t (\log s_t)^2.
$$
We then compute
\bean
{t \over (\log t)^2} \ge {2 s_t (\log s_t)^2  \over (\log [4 s_t (\log s_t)^2])^2}
\ge {s_t \over 2}
\eean
for  $t > t^{**}$, $t^{**}$ large enough. We conclude that 
$$
\Phi^*(t) = t s_t - \Phi(s_t) \le 2 {t^2 \over (\log t)^2}
$$
for any $t > t^{**}$. We then easily conclude to \eqref{Phi*} by observing that $\Phi^*(t) = 4t^2$ for $t$ small enough. 

%
%


\section{Estimates on the solutions to the Poisson equation}
\label{sec:A}

In this section, we give some technical (and we believe standard) estimates on the solutions to the Poisson equation in $\R^2$ that are useful in the paper. 
We recall the notation 
$$
 \kappa(z) = - {1 \over 2\pi} \log |z|, \quad \KK(z) := \nabla \kappa(z) =  - {1 \over 2\pi} \, {z \over  |z|^2}
$$
and  
$$
\kappa_f := \kappa * f, \quad \KK_f := \KK * f,
$$
so that there holds 
$$
\kappa_f \in C^2(\R^2), \quad |\kappa_f| \le C \, (1 + \log \langle x \rangle), \quad
- \Delta \kappa_f = f. 
$$

\begin{lem}\label{lem:kappa*f} For any integer $j \ge 1$ and real number $k  > j + 2$, there exists $C_{k,j}$ such that 
\beqn\label{eq:INEGkappa*fLk1}
\| \kappa * f \|_{ L^2_{ j-1}} \le C_{k,j}  \| f \|_{L^2_{k}} \quad \forall \, f \in L^2_{k,j}. 
\eeqn
 
\end{lem}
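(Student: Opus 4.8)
The statement to prove is the weighted $L^2$ estimate $\|\kappa * f\|_{L^2_{j-1}} \le C_{k,j}\|f\|_{L^2_k}$ for $f \in L^2_{k,j}$, i.e.\ for $f$ with vanishing moments up to order $j$. The idea is to exploit the moment cancellation to gain decay of $\kappa * f$ at infinity, compensating the fact that $\kappa$ itself has only logarithmic growth. I would split the analysis into the near field $|x| \le 1$ and the far field $|x| \ge 1$, and within the convolution integral split the variable $y$ according to whether $|y| \le |x|/2$, $|y| \ge 2|x|$, or $|x|/2 \le |y| \le 2|x|$.

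For the near-field region $|x|\le 1$, the weight $\langle x\rangle^{j-1}$ is bounded, so one only needs $\kappa * f \in L^2_{\mathrm{loc}}$; this follows from Young's inequality splitting $\kappa = \kappa\,\mathbf 1_{|z|\le 1} + \kappa\,\mathbf 1_{|z|\ge 1}$, using $\kappa\,\mathbf 1_{|z|\le 1} \in L^1$ (the logarithmic singularity is integrable in $\R^2$) together with $f \in L^2 \subset L^2_{\mathrm{loc}}$, and for the tail $\kappa\,\mathbf 1_{|z|\ge 1}$ one uses $|\kappa(z)| \le C\log\langle z\rangle$ against $f \in L^1$ (which holds since $k > 2$). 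For the far-field region $|x|\ge 2$: in the piece $|y|\le |x|/2$ one Taylor-expands $\kappa(x-y)$ around $y=0$ to order $j$; the moment conditions $\int y^\alpha f(y)\,dy = 0$ for $|\alpha|\le j$ kill the first $j+1$ terms of the expansion, leaving a remainder bounded by $C|x|^{-(j+1)}\,|y|^{j+1}$ times a factor coming from the $(j+1)$-st derivatives of $\kappa$ (which are $\sim |x-y|^{-(j+1)} \sim |x|^{-(j+1)}$ on this region). Hence this contribution is bounded pointwise by $C|x|^{-(j+1)}\int |y|^{j+1}|f(y)|\,dy \le C|x|^{-(j+1)}\|f\|_{L^2_k}$ (using $k > j+2$ so that $|y|^{j+1}\langle y\rangle^{-k} \in L^2$), and since $j-1 < j+1$ the weight $\langle x\rangle^{j-1}$ times $|x|^{-(j+1)}$ is in $L^2(|x|\ge 2)$. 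In the piece $|y|\ge 2|x|$ one has $|x-y| \sim |y|$, so $|\kappa(x-y)| \le C\log\langle y\rangle$ and the contribution is controlled by $C\int_{|y|\ge 2|x|}\log\langle y\rangle\,|f(y)|\,dy$, which decays rapidly in $|x|$ thanks to the strong weight on $f$; Cauchy–Schwarz against $\langle y\rangle^{-k}$ suffices. In the intermediate annulus $|x|/2 \le |y| \le 2|x|$, one splits $\kappa(x-y)$ into its singular part $\mathbf 1_{|x-y|\le 1}$, handled by Young with the local $L^1$ norm of $\kappa$ and the smallness of $\|f\|_{L^2(|y|\sim|x|)}$ (again using the weight), and its bounded part $\log\langle x-y\rangle \lesssim \log\langle x\rangle$, handled directly.

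The main technical obstacle is the bookkeeping in the Taylor-remainder estimate: one must check that all the boundary/annular terms and the remainder genuinely produce decay $|x|^{-(j+1)}$ (or faster) rather than merely $|x|^{-j}$, since only the former yields an $L^2$-integrable bound after multiplying by $\langle x\rangle^{j-1}$; the margin is exactly the hypothesis $k > j+2$, which must be used both to make $|y|^{j+1}\langle y\rangle^{-k} \in L^2$ and to absorb the log factors in the far tails. An alternative, perhaps cleaner, route is to work on the Fourier side: the moment conditions translate into $\widehat f$ vanishing to order $j$ at the origin, and $\widehat{\kappa * f}(\xi) = |\xi|^{-2}\widehat f(\xi)$, so one would need to show $|\xi|^{-2}\widehat f \in H^{j-1}$ near $\xi = 0$ using that $\widehat f$ vanishes to order $j$ — but this requires $\widehat f \in C^{j-1}$ with controlled derivatives near $0$, which is precisely the Fourier transcription of $f \in L^2_k$, $k > j+1$; I would expect the real-variable argument above to be more robust and is the one I would carry out. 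Either way the conclusion $\kappa * f \in L^2_{j-1}$ with the quantitative bound follows by summing the (finitely many) pieces.
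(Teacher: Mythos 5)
Your plan is correct, but it goes by a genuinely different route than the paper. You carry out a real-variable argument: split $x$ into near/far field, split $y$ into the regions $|y|\le|x|/2$, $|y|\ge 2|x|$, and the intermediate annulus, Taylor-expand $\kappa(x-y)$ in $y$ around $0$ in the first region, and use the vanishing moments $\int y^\alpha f=0$, $|\alpha|\le j$, to delete the low-order terms. The paper instead works entirely on the Fourier side: it records that $f\in L^2_{k,j}$ with $k>j+2$ gives $\hat f\in C^{j+1}_b$ with $\partial_\xi^\alpha\hat f(0)=0$ for $|\alpha|\le j$, uses the relation $|\xi|^2\hat\kappa_f=\hat f$, writes $\int|x|^{2(j-1)}|\kappa_f|^2 = \int |D^{j-1}\hat\kappa_f|^2$ by Plancherel, splits the $\xi$-integral into $B_1$ and $B_1^c$, and controls the singular $B_1$ part via the second-order Taylor remainder for $\partial_\xi^\alpha\hat f$ at $0$. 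You explicitly consider this Fourier route as an alternative and elect the real-variable one. The trade-off is roughly this: the paper's Fourier proof is shorter to write but leans on the regularity of $\hat f$ near the origin (this is exactly where $k>j+2$ is consumed, so that $\langle x\rangle^{j+1}f\in L^1$ and hence $\hat f\in C^{j+1}_b$); your real-variable proof is more elementary and does not invoke $\hat f$, but requires the bookkeeping you flag — in particular the boundary corrections arising because $\int_{|y|\le|x|/2}y^\alpha f\,dy$ is not exactly zero but equals $-\int_{|y|>|x|/2}y^\alpha f\,dy$, which must itself be shown to decay using the weight on $f$. With those corrections carried out, both arguments close under the same hypothesis $k>j+2$, so your approach is a valid alternative to the paper's.
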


\begin{proof}[Proof of Lemma \ref{lem:kappa*f}]
Consider $f \in L^2_{k,j}$,   so that $\hat f \in C^{j+1}_b$ and  $\partial_\xi^\alpha \hat f(0) = 0$ for any $|\alpha| \le j$ thanks to the moments condition. 
For a multi-index $\alpha \in \N^2$, $|\alpha| \le j-1$, we may thus write the Taylor expansion
$$
\partial_\xi^\alpha \hat f(\xi) =   \int_0^1(1-s) D^2_\xi \partial_\xi^\alpha \hat f (s \, \xi) : \xi^{\otimes 2}  \, ds. 
$$
In Fourier variables the Laplace equation writes $|\xi|^2 \hat \kappa_f (\xi) = \hat f(\xi)$, and then for a multi-index $\alpha \in \N^2$ 
$$
\bal
\int |x^\alpha|^2 \, | \kappa_f|^2  = \int  |\partial^\alpha_\xi \hat \kappa_f|^2     = \int \frac{|\partial^\alpha_\xi \hat f|^2}{|\xi|^4}.
\eal
$$
All together, we have 
$$
\bal
  \int | x |^{2(j-1)} \, | \kappa_f|^2
&\le   \int_{B^c_1} \frac{ |D^{j-1} \hat \kappa_f|^2}{|\xi|^4} \, d\xi + \int_{B_1} \sup_{\eta \in B_1} | D^{j+1} \hat f ( \eta )|^2 \, d\xi \\
&\le  \| f \|_{L^2_{j-1}}^2 + C \|  f  \|_{L^1_{j+1}}^2
\le C \| f \|_{L^2_k}^2,
\eal
$$
where we have used $\| g \|_{L^1} \le C  \| g \|_{L^2_r}$ for $r>1$ with $g = \langle x \rangle^{j+1} f$ in the last line, which gives $k > j +2$.
\end{proof}

\begin{lem}\label{lem:K*f} For any  integer $j \ge 0$ and real number $k > j+2$, there exists $C_{k,j}$ such that 
\beqn\label{eq:INEGLkj}
\| \KK * f \|_{L^2_j} \le C_{k,j}  \| f \|_{L^2_{k}} \quad \forall \, f \in L^2_{k,j}. 
\eeqn

\end{lem}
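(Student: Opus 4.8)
The plan is to mimic verbatim the Fourier-analytic argument used for Lemma~\ref{lem:kappa*f}, the only change being that the multiplier attached to $\KK*\,\cdot\,$ is $\widehat{\KK}(\xi)=i\xi/|\xi|^2$, which is homogeneous of degree $-1$ instead of $-2$ and is therefore milder at the origin. By Plancherel, $\|\KK*f\|_{L^2_j}^2$ is comparable to $\sum_{|\alpha|\le j}\int_{\R^2}|\partial_\xi^\alpha(\widehat{\KK}(\xi)\,\hat f(\xi))|^2\,d\xi$, so it suffices to bound this by $C_{k,j}\|f\|_{L^2_k}^2$. First I would record the preliminary facts: since $f\in L^2_{k,j}$ with $k>j+2$ we have $k-(j+1)>1$, hence $\langle x\rangle^{j+1}f\in L^1$ (by Cauchy--Schwarz against $\langle x\rangle^{-(j+1)}\in L^2$), so $\hat f\in C^{j+1}_b(\R^2)$; the moment conditions give $\partial_\xi^\beta\hat f(0)=0$ for all $|\beta|\le j$; and Taylor's formula with integral remainder yields, for every $|\gamma|\le j$, the pointwise bound $|\partial_\xi^\gamma\hat f(\xi)|\le C\,|\xi|^{\,j+1-|\gamma|}\,\sup_{\eta\in B_1}|D^{j+1}\hat f(\eta)|$ on $B_1$, together with $\sup_{B_1}|D^{j+1}\hat f|\le C\,\|f\|_{L^1_{j+1}}\le C\,\|f\|_{L^2_k}$.

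Then I would split each integral $\int_{\R^2}|\partial_\xi^\alpha(\widehat{\KK}\hat f)|^2$ into the regions $B_1^c$ and $B_1$. On $B_1^c$ the multiplier and all its derivatives are bounded, since $|\partial_\xi^\beta(\xi_l/|\xi|^2)|\le C|\xi|^{-1-|\beta|}\le C$ there, so Leibniz' rule gives $|\partial_\xi^\alpha(\widehat{\KK}\hat f)|\le C\sum_{|\gamma|\le j}|\partial_\xi^\gamma\hat f|$ on $B_1^c$, whence $\int_{B_1^c}|\partial_\xi^\alpha(\widehat{\KK}\hat f)|^2\le C\sum_{|\gamma|\le j}\|x^\gamma f\|_{L^2}^2\le C\|f\|_{L^2_j}^2\le C\|f\|_{L^2_k}^2$ by Plancherel. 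On $B_1$, for $|\alpha|=m\le j$, Leibniz' rule expands $\partial_\xi^\alpha(\widehat{\KK}\hat f)$ as a sum of terms of the form $\partial_\xi^\beta(\xi_l/|\xi|^2)\,\partial_\xi^{\alpha-\beta}\hat f$ with $\beta\le\alpha$; the first factor is $O(|\xi|^{-1-|\beta|})$ by homogeneity, and $\partial_\xi^{\alpha-\beta}\hat f$ vanishes to order $j+1-(m-|\beta|)\ge 0$ at the origin, so it is $O\big(|\xi|^{\,j+1-m+|\beta|}\,\sup_{B_1}|D^{j+1}\hat f|\big)$ by the Taylor bound above. The product is therefore $O\big(|\xi|^{\,j-m}\,\sup_{B_1}|D^{j+1}\hat f|\big)$, which is $O\big(\sup_{B_1}|D^{j+1}\hat f|\big)$ uniformly on $B_1$ because $m\le j$; integrating over the finite-measure set $B_1$ and inserting the bound on $\sup_{B_1}|D^{j+1}\hat f|$ gives $\int_{B_1}|\partial_\xi^\alpha(\widehat{\KK}\hat f)|^2\le C\|f\|_{L^2_k}^2$. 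Summing both contributions over $|\alpha|\le j$ concludes the proof.

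The only slightly delicate point — the one I would write out carefully — is the bookkeeping of homogeneity degrees after Leibniz' rule on $B_1$: one must check that the loss of $1+|\beta|$ negative powers of $|\xi|$ coming from differentiating $\xi_l/|\xi|^2$ is always absorbed by the extra $|\xi|^{\,j+1-m+|\beta|}$ decay of $\partial_\xi^{\alpha-\beta}\hat f$ forced by the vanishing moments, which is exactly what makes the restriction $|\alpha|\le j$ (equivalently $k>j+2$ for the moment order) work; this is the precise analogue of the estimate on $B_1$ in the proof of Lemma~\ref{lem:kappa*f}, and everything else is routine.
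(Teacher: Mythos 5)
Your proof is correct and follows essentially the same route as the paper: pass to Fourier variables, use Plancherel, exploit the vanishing moments via a Taylor expansion of $\hat f$ near the origin together with the $L^1_{j+1}$ (hence $L^2_k$) control of $f$, and use boundedness of the multiplier and its derivatives away from the origin. The only difference is that you spell out the Leibniz/homogeneity bookkeeping for $\partial_\xi^\alpha\bigl(\widehat{\KK}\,\hat f\bigr)$, which the paper's sketch (``the proof is similar to Lemma~\ref{lem:kappa*f}'') leaves implicit.
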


\begin{proof}[Proof of Lemma \ref{lem:K*f}]	
The proof is similar to Lemma \ref{lem:kappa*f}. 
In the case $j=0$ for instance, we write in Fourier variables $|\xi|^2 \hat \kappa_f (\xi) = \hat f(\xi)$, we observe that
$$
\bal
\int |\nabla \kappa_f|^2 = \int |\xi|^2 |\hat \kappa_f|^2 = \int \frac{|\hat f|^2}{|\xi|^2},
\eal
$$
and we use the moments conditions to conclude. 
\end{proof}

\section{Estimates on the $c^{-1} d$ operator}
\label{sec:B}

With the notations of Section~\ref{subsec:localizationSigma}, we consider the equation 
\beqn\label{eq:Appendix:edu}
L_\eps u := c^{-1} d u  =   \Delta u + {\eps \over 2} x \cdot \nabla u - \eps z u  =f
\eeqn
with $z \in \CC_{-1/2}$.

\begin{lem}\label{lem:du=f}    For any $f \in L^2_{k,0}$, $k > 2$, $z \in  \CC_{-1/2} \backslash \{ 0 \}$, there exists a solution $u \in H^2_{loc}$ to the 
equation $L_\eps u = f$ and there exists a constant $C$ (which does not depend on $\eps > 0$ and $z \in  \CC_{-1/2} \backslash \{ 0 \}$) such that 
$$
\| \nabla u \|_{L^2} + \| \Delta u \|_{L^2} \le C \, \| f \|_{L^2_k}. 
$$
 
\end{lem}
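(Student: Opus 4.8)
The plan is to solve the equation \eqref{eq:Appendix:edu} by an energy/variational argument in the weighted space and then bootstrap to the $H^2$ estimate. First I would set up the weak formulation: multiply $L_\eps u = f$ by $\bar u$, integrate over $\R^2$, and integrate by parts. The drift term $\frac\eps2 x\cdot\nabla u$ contributes, after integration by parts, $-\frac\eps2\int |u|^2 \,\diver(x)\cdot\frac12 = -\frac\eps2\int |u|^2$ together with an antisymmetric piece, so that taking real parts gives
\[
\|\nabla u\|_{L^2}^2 + \eps\Bigl(\tfrac12 + \Re e\, z\Bigr)\|u\|_{L^2}^2 = -\Re e\int f\,\bar u \le \|f\|_{L^2}\,\|u\|_{L^2}.
\]
Since $z\in\Delta_{-1/2}$ we have $\frac12+\Re e\,z>0$; this controls $\|\nabla u\|_{L^2}$ and $\eps\|u\|_{L^2}$ (the latter with an $\eps$-dependent constant, which is harmless here since we only need the gradient and Laplacian bounds). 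Existence of a solution $u\in \dot H^1$ then follows from Lax--Milgram applied to the sesquilinear form $a(u,w):=\int \nabla u\cdot\nabla\bar w + \eps(\tfrac12+\Re e\,z)\int u\bar w + (\text{antisymmetric drift part})$, whose coercivity is exactly the identity above; one should note $\|f\|_{L^2}\le C\|f\|_{L^2_k}$ for $k>2$ to make sense of the right-hand side as a bounded antilinear functional.

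Next I would upgrade to the $L^2$ bound on $\Delta u$. Rewrite the equation as $\Delta u = f - \tfrac\eps2 x\cdot\nabla u + \eps z u$, so it suffices to bound $\|x\cdot\nabla u\|_{L^2}$ and $\eps\|u\|_{L^2}$ uniformly. Both are obtained by testing the equation against $x\cdot\nabla\bar u$ (mimicking the argument in Lemma~\ref{lem:bd-1to0}): the Laplacian term gives $\int\Delta u\, (x\cdot\nabla\bar u)$, which after integration by parts produces a controlled multiple of $\|\nabla u\|_{L^2}^2$; the drift term $\frac\eps2\int (x\cdot\nabla u)(x\cdot\nabla\bar u) = \frac\eps2\|x\cdot\nabla u\|_{L^2}^2$ appears with a good sign; and the remaining terms are bounded by $\|f\|_{L^2}\|x\cdot\nabla u\|_{L^2} + |z|\,\|u\|_{L^2}\|x\cdot\nabla u\|_{L^2}$ and absorbed by Young's inequality. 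This yields $\|x\cdot\nabla u\|_{L^2}\le C\|f\|_{L^2_k}$ with $C$ uniform in $\eps$ and in $z$ on bounded subsets of $\Delta_{-1/2}$; feeding this back into the rewritten equation gives $\|\Delta u\|_{L^2}\le C\|f\|_{L^2_k}$. Combined with the weighted-$L^1\hookrightarrow L^2$ embedding used to control $\|f\|_{L^2}$ (which forces $k>2$), this is the claimed estimate, and elliptic regularity upgrades $u$ to $H^2$.

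The main obstacle I anticipate is handling the unboundedness of the drift coefficient $x$ and making the test-function computations rigorous: $x\cdot\nabla\bar u$ is not obviously in $L^2$ a priori, so one should first work with a solution of a regularized/truncated problem (e.g.\ replace $x$ by a cutoff $x\,\chi(x/R)$ or add $\delta\la x\ra^2$ damping), obtain the estimates with constants independent of the regularization, and then pass to the limit. A secondary technical point is verifying that the constant in the $\|x\cdot\nabla u\|_{L^2}$ bound is genuinely uniform as $\Re e\,z \downarrow -1/2$ and as $\eps\to 0$; the statement only asks for a fixed constant on $\Delta_{-1/2}$ (so $z$ ranging over all of $\Delta_{-1/2}$, but in the applications $z\in\OO_\rho$ is bounded), and one checks the coercivity constant $\tfrac12+\Re e\,z$ never enters the final $\nabla u$ and $\Delta u$ bounds, only the auxiliary $\eps\|u\|_{L^2}$ bound, so no blow-up occurs.
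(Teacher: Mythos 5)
Your energy identity in Step 1 is correct, but the claim ``this controls $\|\nabla u\|_{L^2}$'' is where the argument breaks. The identity reads
\[
\|\nabla u\|_{L^2}^2 + \eps\Bigl(\tfrac12+\Re e\,z\Bigr)\|u\|_{L^2}^2 \le \|f\|_{L^2}\,\|u\|_{L^2},
\]
and there is no Poincar\'e inequality on $\R^2$, so the only way to close it is to absorb the right-hand side into the $\eps\|u\|_{L^2}^2$ term; that yields $\|\nabla u\|_{L^2}\lesssim \eps^{-1/2}\,(\tfrac12+\Re e\,z)^{-1/2}\,\|f\|_{L^2}$, which is not uniform in $\eps$ (nor as $\Re e\,z\downarrow -1/2$). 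Since you never use the moment condition $\int f = 0$ encoded in $f\in L^2_{k,0}$, your proof cannot distinguish the present situation from one where the statement is simply false: for a generic $f$, $\nabla u$ does \emph{not} admit an $\eps$-uniform bound. The same degeneracy propagates into your Step~2, which takes a uniform $\|\nabla u\|_{L^2}$ bound (and, depending on how carefully one tracks the terms, a uniform $\eps\|u\|_{L^2}$ bound and boundedness of $|z|$) as input.

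The missing idea is precisely the one the paper uses: because $\int f=0$, one may replace $\int f\,\bar u$ by $\int f\,(\bar u - \langle \bar u\rangle_1)$ for any averaging constant $\langle \bar u\rangle_1$ (the paper takes the average of $u$ over $B(0,1)$), and then represent
\[
u(x) - \langle u\rangle_1 = \int_{B(0,1)}\int_0^1 \nabla u\bigl(y+s(x-y)\bigr)\cdot(x-y)\,ds\,dy,
\]
so that the right-hand side of the energy identity is bounded by a small multiple of $\|\nabla u\|_{L^2}^2$ plus $C\|f\|_{L^2_k}^2$ (the weight $\langle x\rangle^{2k}$, $k>2$, is exactly what makes the $y$- and $s$-integrals converge after splitting the $s$-interval). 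This decouples the $\nabla u$ estimate from the degenerate $\eps\|u\|_{L^2}^2$ term and gives a constant independent of $\eps$ and $z$. The bound on $\|\Delta u\|_{L^2}$ then follows from the second energy identity obtained by testing against $\Delta\bar u$, which is essentially the route you sketch. Your suggested ``test against $x\cdot\nabla\bar u$'' step is not needed for this lemma (the paper uses that argument in Lemma~\ref{lem:bd-1to0}, in a different context where the $\eps^{-1}\Delta$ operator provides coercivity). To repair your proof, insert the zero-mean/Poincar\'e trick into Step~1 before anything else.
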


\begin{proof}[Proof of Lemma \ref{lem:du=f}]
Multiplying equation \eqref{eq:Appendix:edu} by $\bar u$ and $\Delta \bar u$ and its conjugate by $u$ and $\Delta u$, we find
\beqn\label{eq:Appendix:edu1}
\int |\nabla u|^2 + \eps \bigl( {1 \over 2} + \Re e  z \bigr) \int |u|^2 = -   {1 \over 2} \int (f \bar u + \bar f u)
\eeqn
and
\beqn\label{eq:Appendix:edu2}
\int |\Delta u |^2 + (\eps \, \Re e z) \int |\nabla u|^2 = {1 \over 2}\int f \Delta \bar u + {1 \over 2}\int \bar f  \Delta   u.
\eeqn
Writing 
$$
u(x) = u(y) + \int_0^1 \nabla u (z_s) \, (x-y) \, ds, \quad z_s := y + s \, (y-x), 
$$
we have 
$$
u(x) = \langle u \rangle_1 + \int_{B(0,1)} \int_0^1 \nabla u (z_s) \, (x-y) \, ds dy, \quad \langle u \rangle_1 := \int_{B(0,1)} u(y) \, dy.
$$ 
From the above equation \eqref{eq:Appendix:edu1} and the moment condition, we obtain
\bean
\int |\nabla u|^2
&\le& - \frac12 \int_{\R^2}  \int_{B(0,1)} \int_0^1 \{ \bar f(x)  \nabla u (z_s) + f(x)  \nabla \bar u (z_s) \} \cdot  (x-y) \, ds dy dx 
\\
&\le& {1 \over 2\alpha}    \int_{\R^2}   \int_{B(0,1)} \int_0^1 |f(x)|^2 \, \langle x \rangle^{2\ell} \, |x-y|^2  \, ds dy dx 
\\
&&+ {\alpha \over 2}    \int_{\R^2}   \int_0^{1/2} \Bigl\{  \int_{B(0,1)} |\nabla u(s x + (1-s) y )|^2 \, dy \Bigr\}  \,    ds   {dx \over \langle x \rangle^{2\ell}}
\\
&&+ {\alpha \over 2}    \int_{B(0,1)}  \int_{1/2}^{1} \Bigl\{  \int_{\R^2}   |\nabla u(s x + (1-s) y )|^2 \, dx \Bigr\}  \,    ds   dy 
\\
&\le& {1 \over \alpha} \, \Bigl\{  \int_{B(0,1)}  dy     \Bigr\}  \int_{\R^2}  |f(x)|^2 \, \langle x \rangle^{2(\ell+1)} \,  dx 
\\
&&+ {\alpha \over 2}    \int_{\R^2}  {dx \over \langle x \rangle^{2\ell}} \,  \int_0^{1/2} {ds \over (1-s)^2}    \int_{\R^2}  |\nabla u(z)|^2 \, dz   
\\
&&+ {\alpha \over 2}  \Bigl\{  \int_{B(0,1)}  dy     \Bigr\}    \int_{1/2}^{1} {ds \over s^2} \,    \int_{\R^2}  |\nabla u(z)|^2 \, dz  , 
\eean
and we deduce that 
$$
\int |\nabla u|^2 \le C \int_{\R^2}  |f(x)|^2 \, \langle x \rangle^{2k} \,  dx ,
$$
by choosing $\ell = k-1$ and $\alpha > 0$ small enough. 
From \eqref{eq:Appendix:edu2}, we have
$$
\| \Delta u \|_{L^2}^2 \le \frac{\eps}{2} \| \nabla u \|_{L^2}^2 + \| f \|_{L^2} \| \Delta u \|_{L^2},
$$
and we conclude the proof thanks to the above estimate on $\| \nabla u \|_{L^2}$ and Young's inequality. 
\end{proof}

\begin{lem}\label{lem:du=fbis}  There exists a constant $C$ such that for any $\eps  \in (0,1)$, any $z \in \CC_{-1/2}$ and any radially symmetric function $f $  
the equation 
$$
L_\eps u = f, \quad u \,\,\hbox{radially symmetric}, \quad u(0) = u'(0) = 0, 
$$
has a unique solution which furthermore satisfies
\beqn\label{depsrad}
\| u \, e^{-(1+\eps|z|) r} \|_{L^\infty} +
\| u' \, e^{-(1+\eps|z|) r} \|_{L^\infty} \le C \, \| f \, e^{-r/2} \|_{L^\infty}.
\eeqn
\end{lem}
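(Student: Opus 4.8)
\noindent{\it Proof strategy.} The plan is to turn the radial equation $L_\eps u = f$ into a Volterra integral equation, solve it, and then read off the sharp exponential bound by a Gronwall-type estimate. Set $M := \| f\, e^{-r/2} \|_{L^\infty}$ (assume $M<\infty$, otherwise there is nothing to prove) and $\mu := 1+\eps|z|$, so that $\mu \ge 1$ for every $\eps\in(0,1)$ and $z\in\Delta_{-1/2}$. For a radial function one has, recalling the definition \eqref{eq:Appendix:edu} of $L_\eps$, that $L_\eps u = \frac1r(ru')' + \frac\eps2 r u' - \eps z u$, and a one-line computation gives $\frac{d}{dr}\big( e^{\eps r^2/4}\, r\, u'(r) \big) = e^{\eps r^2/4}\, r\, \big( f(r) + \eps z\, u(r) \big)$. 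Integrating from $0$, which is both allowed by and forces $u(0)=u'(0)=0$, yields the representation
\[
u'(r) = \frac1r\, e^{-\eps r^2/4} \int_0^r s\, e^{\eps s^2/4} \big( f(s) + \eps z\, u(s) \big)\, ds, \qquad u(r) = \int_0^r u'(\rho)\, d\rho .
\]
Conversely, any $u$ satisfying this pair of identities solves $L_\eps u = f$ with the stated conditions at $0$. Existence and uniqueness for the integral equation follow from the standard contraction argument for Volterra equations (contraction after finitely many iterations on each $[0,T]$, then patching; or directly a contraction in a weighted $L^\infty$-space with weight $e^{-C_0(1+\eps|z|)r}$ for $C_0$ a large universal constant, in which the contraction factor is $\le 2/C_0$), and uniqueness reduces via the representation to showing that $\tilde u'(r) = \frac{\eps z}{r}\, e^{-\eps r^2/4}\int_0^r s\, e^{\eps s^2/4}\, \tilde u(s)\, ds$ with $\tilde u = \int_0^\cdot \tilde u'$ forces $\tilde u \equiv 0$, which a double-integral iteration and Gronwall give on every $[0,T]$.

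The heart of the matter is the exponential estimate, and the key elementary inequality behind it is the kernel bound: for all $\lambda>0$ and $r>0$,
\[
\frac1r\, e^{-\eps r^2/4} \int_0^r s\, e^{\eps s^2/4}\, e^{\lambda s}\, ds \ \le\ \frac{e^{\lambda r}}{\lambda + \eps r/4} \ \le\ \frac{e^{\lambda r}}{\lambda},
\]
which I would obtain from $r^2 - s^2 \ge (r-s)r$ and $s\le r$ on $[0,r]$ followed by the substitution $t = r-s$. Applying this with $\lambda = 1/2$ to the $f$-term of the representation (using $|f|\le M e^{r/2}$ and $e^{(1/2-\mu)r}\le 1$) and with $\lambda = \mu$ to the $u$-term (using $|u(s)| \le e^{\mu s}\, \| u\, e^{-\mu\cdot}\|_{L^\infty}$), together with $|u(r)| \le \int_0^r |u'|$, one gets — working first with $\sup_{0<r\le T}$ to guarantee finiteness, then letting $T\to\infty$ — the pair of estimates
\[
\| u'\, e^{-\mu r}\|_{L^\infty} \le 2M + \frac{\eps|z|}{\mu}\, \| u\, e^{-\mu r}\|_{L^\infty}, \qquad \| u\, e^{-\mu r}\|_{L^\infty} \le \frac1\mu\, \| u'\, e^{-\mu r}\|_{L^\infty}.
\]
Substituting the second into the first gives $\| u'\, e^{-\mu r}\|_{L^\infty} \le 2M + \frac{\eps|z|}{\mu^2}\, \| u'\, e^{-\mu r}\|_{L^\infty}$; since $\frac{\eps|z|}{\mu^2} = \frac{\eps|z|}{(1+\eps|z|)^2} \le \frac14$ uniformly, the last term is absorbed, and one concludes $\| u'\, e^{-\mu r}\|_{L^\infty} \le \frac83 M$ and hence $\| u\, e^{-\mu r}\|_{L^\infty} \le \frac83 M$ (using $\mu\ge 1$), which is precisely \eqref{depsrad} with a universal constant. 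For complete rigour one first runs this argument on the solution produced by the weighted contraction above (which already has at most polynomial growth), so all quantities are finite, and then the estimate upgrades the weight from $C_0(1+\eps|z|)$ down to $1+\eps|z|$.

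The step I expect to be the main obstacle — indeed essentially the only nontrivial point — is the uniformity of the constant with respect to $\eps\in(0,1)$ and $z\in\Delta_{-1/2}$, in particular for $|z|$ large. The reason the exact weight $e^{-(1+\eps|z|)r}$ is the right one is the gain of a factor $\mu^{-1}$ when recovering $u$ from $u'$ by integration: the estimate for $u'$ alone only produces the multiplier $\frac{\eps|z|}{\mu}$, which is $<1$ but not bounded away from $1$, whereas feeding $|u|\le\frac1\mu\|u'\,e^{-\mu\cdot}\|_{L^\infty}e^{\mu r}$ back in turns it into $\frac{\eps|z|}{\mu^2}\le\frac14$, and that is exactly what closes the Gronwall loop. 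Everything else — the kernel bound, the Volterra solvability, and the routine manipulations — is elementary.
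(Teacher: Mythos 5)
Your proof is correct and produces the stated estimate with an explicit constant, but it takes a genuinely different route from the paper. The paper introduces the scalar energy $U := |u|^2 + |u'|^2$ and obtains, directly from the radial ODE and Young's inequality, the pointwise differential inequality $U' \le (2+\eps|z|)U + |f|^2$; Gronwall's lemma with $U(0)=0$ then gives $U(r) \le e^{(2+\eps|z|)r}\,\| f\, e^{-\cdot/2}\|_{L^\infty}^2$, which is even marginally stronger than \eqref{depsrad} since $2+\eps|z| \le 2(1+\eps|z|)$. Your approach instead converts the ODE into a Volterra integral equation via the integrating factor $e^{\eps r^2/4}\,r$, establishes the kernel bound $\frac1r\, e^{-\eps r^2/4}\int_0^r s\, e^{\eps s^2/4}\, e^{\lambda s}\, ds \le e^{\lambda r}/\lambda$, and closes a fixed-point loop using the numerical fact $\eps|z|/(1+\eps|z|)^2 \le 1/4$. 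The paper's energy argument is shorter and delivers existence and uniqueness essentially for free (Cauchy--Lipschitz for the second-order radial ODE once the removable singularity at $r=0$ is noted); your argument requires a separate Volterra-solvability discussion, but in exchange it makes transparent why $e^{-(1+\eps|z|)r}$ is precisely the weight that closes the iteration --- the $\mu^{-1}$ gain in passing from $u'$ back to $u$ converts the borderline factor $\eps|z|/\mu$ into the uniformly small $\eps|z|/\mu^2 \le 1/4$ --- and yields an explicit constant (your computation gives $C \le 16/3$).
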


\begin{proof}[Proof of Lemma \ref{lem:du=fbis}] 
We may write the equation as 
\beqn\label{eq:edoKummer}
u'' + \bigl( {1 \over r} + \eps \, r \bigr) \, u' +\eps     z u = f, \quad \forall \, r > 0,
\eeqn
with the additional boundary conditions $u(0) = u'(0) = 0$. Defining $U := |u|^2 + |u'|^2$, we have 
\bean
U' 
&=& u \bar u' + u' \bar u + u' \bar u'' + u'' \bar u'  
\\
&\le& 2 (1 + \eps |z|) \, |u| \, |u'| -   \bigl( {1 \over r} + \eps \, r \bigr) \, |u'|^2 + 2 \, |u'| \, |f|
\\
&\le&   (2 + \eps |z|) \, U +  |f|^2,
\eean
from which we immediately get \eqref{depsrad} thanks to Gronwall's lemma. 
\end{proof}


\end{document}